\documentclass[a4paper,reqno,index]{amsart}

\DeclareMathAlphabet{\lcal}{U}{dutchcal}{m}{n}
\usepackage{amsmath, amsfonts, amsthm, amssymb}
\usepackage[maxbibnames=9]{biblatex}
\usepackage{mathrsfs}
\usepackage{tikz}
\usetikzlibrary{arrows.meta, positioning, calc}
\usepackage[title]{appendix}

\usepackage{amsthm}
\usepackage{xcolor}
\usepackage{graphicx}

\usepackage{hyperref}
\usepackage{mathtools}

\usepackage{booktabs}

\usepackage{array}
\usepackage{longtable}

\usepackage[shortlabels]{enumitem}

\usepackage{setspace}

\newcommand{\df}[1]{{\textit{#1}}{\index{#1}}}

 \numberwithin{equation}{section}

\allowdisplaybreaks

\makeindex

\hypersetup{
    bookmarks=true,         
    unicode=false,          
    pdftoolbar=true,        
    pdfmenubar=true,        
    pdffitwindow=false,     
    pdfstartview={FitH},    
    pdftitle={My title},    
    pdfauthor={Author},     
    pdfsubject={Subject},   
    pdfcreator={Creator},   
    pdfproducer={Producer}, 
    pdfkeywords={keyword1, key2, key3}, 
    pdfnewwindow=true,      
    colorlinks=false,       
    linkcolor=green,          
    citecolor=green,        
    filecolor=green,      
    urlcolor=green,           
    urlbordercolor={1 1 1}  
}

\newtheorem{theorem}{Theorem}[section]
\newtheorem*{theorem*}{Theorem}
\newtheorem*{conjecture*}{Conjecture}
\newtheorem{lemma}[theorem]{Lemma}
\newtheorem{assumption}[theorem]{Assumption}
\newtheorem{problem}[theorem]{Problem}

\newcommand{\DD}{\mathbb{D}}

\newcommand{\CC}{\mathbb{C}}

\newcommand{\NN}{\mathbb{N}}

\newcommand{\Mult}{\operatorname{Mult}}

\newcommand{\cB}{\mathcal{B}}
\newcommand{\cM}{\mathcal{M}}

\newcommand{\cF}{\mathcal{F}}
\newcommand{\cH}{\mathcal{H}}

\newcommand{\HSH}{\mathcal{H}} 
\newcommand{\HSF}{\mathcal{F}}
\newcommand{\HSE}{\mathcal{E}}
\newcommand{\HSG}{\mathcal{G}}
\newcommand{\specT}{\sigma_T}  
\newcommand{\genk}{\kappa} 
\newcommand{\genl}{\lambda} 

\newcommand{\ol}[1]{\overline{#1}}
\newcommand{\Oml}{\Omega} 

\newcommand{\Her}{\operatorname{Her}}

\newcommand{\hol}{\operatorname{Hol}}

\newcommand{\opphi}[4]{{#1}\in \Mult({#3},{#4})}

\newcommand{\LK}{\mathcal{K}}

\newcommand{\tOmega}{\Omega^*}

\newcommand{\psif}{\zeta}

\newtheorem{proposition}[theorem]{Proposition}

\newtheorem{corollary}[theorem]{Corollary}
\theoremstyle{definition}
\newtheorem{definition}[theorem]{Definition}
\newtheorem{question}[theorem]{Question}
\newtheorem{example}[theorem]{Example}
\newtheorem{remark}[theorem]{Remark}

  \usepackage{etoolbox}

\begin{document}

\newcommand{\N}{\mathbb{N}}
\newcommand{\T}{\mathbb{T}}

\theoremstyle{definition}

\theoremstyle{remark}

\title[\small BEURLING CRITERIA FOR REPRODUCING KERNELS]{\large   BEURLING CRITERIA FOR REPRODUCING KERNELS}

\author[\small LUO]{SHUAIBING LUO}
\address{SCHOOL OF MATHEMATICS, HUNAN UNIVERSITY, CHANGSHA, HUNAN, 410082,
PR CHINA}
\email{sluo@hnu.edu.cn}

\author[\small MCCULLOUGH]{SCOTT MCCULLOUGH}
\address{DEPARTMENT OF MATHEMATICS, UNIVERSITY OF FLORIDA, GAINESVILLE, FL}
\email{sam@ufl.edu} 
\author[\small TSIKALAS]{GEORGIOS TSIKALAS}
\address{DEPARTMENT OF MATHEMATICS, UC SANTA BARBARA, SANTA BARBARA, CA}
\email{gtsikalas@ucsb.edu}

\subjclass[2020]{46E22, 47B38} 
\keywords{pairs of reproducing kernels, complete Pick property, shift-invariant subspaces, Beurling-Lax-Halmos, Agler wedge.}
\small
\begin{abstract}
 A classical theorem due to Beurling-Lax-Halmos characterizes the invariant subspaces of the unilateral shift as ranges of isometric multiplication operators acting on the Hardy space.   The class of Beurling-Lax-Halmos (BLH) pairs of reproducing kernels is introduced, consisting of those pairs that admit an analogue of this theorem.   The BLH class is, under varying hypotheses, characterized several ways: dilation-theoretically, via a complete Leech interpolation property, and through a sums-of-squares-inspired Agler-style decomposition. These characterizations unify and extend a variety of related results in the literature. Examples are given to illustrate the theory, the hypotheses and compare and contrast with the recent developments in the study of complete Pick pairs. In particular,  while it is  anticipated, perhaps under some mild assumptions,
 that the class of  complete Pick pairs of kernels is contained in the class of BLH pairs of kernels, it is shown that the reverse inclusion fails in a strong sense.
\end{abstract}
\maketitle

 \par \normalsize

\section{Introduction}

\onehalfspacing

\subsection{Background}
The classical Beurling-Lax-Halmos theorem  describes the invariant subspaces of the unilateral shift
\[(A_0, A_1, A_2, \dots)     \mapsto (0, A_0, A_1, A_2, \dots) \]
acting on $\ell^2\otimes\mathcal{E}$, where $\mathcal{E}$ is a coefficient 
Hilbert space. The characterization  passes through the Hardy space $H^2\otimes\mathcal{E}$ consisting of $\mathcal{E}$-valued holomorphic functions $f(z)=\sum_nA_nz^n$ on the unit disk $\mathbb{D}$ such that $\{A_n\}\in \ell^2\otimes\mathcal{E}$
and \df{inner multipliers}, those  functions $\Phi:\mathbb{D}\to\mathcal{B}(\mathcal{F}, \mathcal{E}),$ where $\mathcal{F}$ is any auxiliary Hilbert space and \df{$\cB(\HSF,\HSE)$} 
is the space of bounded linear operators from $\HSF$ to $\HSE,$ such that the associated multiplication operator $M_{\Phi}:H^2\otimes\mathcal{F}\to H^2\otimes\mathcal{E}$ is an isometry. 
The scalar-valued case ($\mathcal{E}=\mathbb{C}=\HSF$) was established in Beurling's seminal 1949 paper
by exploiting  the connection between the unilateral shift and the multiplication operator $S:f(z)\mapsto zf(z)$ acting on $H^2.$ The finite-dimensional case was obtained by Lax, while  
the case of arbitrary $\HSE$ and what is known as the Beurling-Lax-Halmos (BLH) Theorem is due to Halmos. We adopt the usual convention that  subspaces
of Hilbert space are closed.

\begin{theorem}[Beurling \cite{Beuroriginal}, Lax \cite{LaxOriginal}, Halmos \cite{Halmosoriginal}] \label{originalBLH}
Let $\mathcal{E}$ be a Hilbert space. Given a  subspace $\mathcal{M}\subset H^2\otimes\mathcal{E}$ that is invariant under $S\otimes I_{\mathcal{E}}$, there exists an auxiliary Hilbert space $\mathcal{F}$ with $\dim \mathcal{F}\le \dim\mathcal{E}$ and an inner multiplier $\Phi:\mathbb{D}\to\mathcal{B}(\mathcal{F}, \mathcal{E})$ such that \[\mathcal{M}=\Phi\cdot (H^2\otimes\mathcal{F}).\]
\end{theorem}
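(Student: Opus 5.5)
The plan is to follow the wandering-subspace approach of Halmos. Write $T=(S\otimes I_{\mathcal E})|_{\mathcal M}$. Since $S\otimes I_{\mathcal E}$ is an isometry, so is $T$. Set
\[
\mathcal F:=\mathcal M\ominus (S\otimes I_{\mathcal E})\mathcal M .
\]
This is a wandering subspace: the spaces $T^n\mathcal F$ are mutually orthogonal, because for $m<n$ we have $\langle T^n f,T^m g\rangle=\langle T^{n-m}f,g\rangle=0$ and $T^{n-m}f\in T\mathcal M\perp\mathcal F$.

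Next, apply the Wold decomposition to $T$:
\[
\mathcal M=\bigoplus_{n\ge0}T^n\mathcal F\;\oplus\;\bigcap_{n\ge 0}T^n\mathcal M .
\]
The last summand vanishes. Indeed, $T^n\mathcal M\subset z^n H^2\otimes\mathcal E$, and every element of $\bigcap_n z^nH^2\otimes \mathcal E$ has all Taylor coefficients zero. Hence $\mathcal M=\bigoplus_n z^n\mathcal F$ as an orthogonal sum of closed subspaces.

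Now define $\Phi:\DD\to\cB(\mathcal F,\mathcal E)$ by $\Phi(\lambda)f=f(\lambda)$, that is, evaluation at $\lambda$ of the function $f\in\mathcal F\subset H^2\otimes\mathcal E$. This is bounded for each $\lambda$, since point evaluation on $H^2\otimes\mathcal E$ has norm $(1-|\lambda|^2)^{-1/2}$, and it is holomorphic in $\lambda$. For a polynomial $p=\sum_{n\le N} z^n f_n\in H^2\otimes\mathcal F$ we get
\[
M_\Phi p=\sum_n z^n f_n,
\]
and orthogonality of the $z^n\mathcal F$ gives
\[
\|M_\Phi p\|^2=\sum_n\|z^nf_n\|^2=\sum\|f_n\|^2=\|p\|^2 .
\]
So $M_\Phi$ extends to an isometry on $H^2\otimes\mathcal F$. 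Its range is closed and contains the dense subset $\bigoplus_{\text{finite}} z^n\mathcal F$ of $\mathcal M$, so the range is exactly $\mathcal M$.

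One detail needs care: the extension of $M_\Phi$ must really act as pointwise multiplication by $\Phi$. This follows because convergence in $H^2$ implies pointwise convergence of both $\sum z^n f_n$ and $\Phi(\lambda)\sum \lambda^n f_n$.

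The remaining step is the dimension bound $\dim\mathcal F\le\dim\mathcal E$, which I expect to be the only mildly delicate point. The plan is to show that the map $\mathcal F\to\mathcal E$, $f\mapsto f(0)$, is injective. If $f\in\mathcal F$ and $f(0)=0$, then $f=zg$ with $g\in H^2\otimes\mathcal E$. Since $M_\Phi$ is an isometry onto $\mathcal M$, write $f=M_\Phi h$; the $n=0$ component of $h$ equals the component of $f$ in $\mathcal F$ (namely $f$ itself), so $h$ is the constant $f$. Then $f(0)=\Phi(0)f=0$ gives only $\Phi(0)f=0$, which is not yet a contradiction.

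Here is a more direct argument. $f\in\mathcal M$ and $f=zg$. Compute $\langle f, zk\rangle=\langle g,k\rangle$ for $k\in\mathcal M$; this vanishes for all $k\in\mathcal M$, so $g\perp\mathcal M$. Moreover $\|M_\Phi(\lambda\text{-stuff})\|$ is not immediately useful.

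Instead, use the isometry identity $M_\Phi^*K_\lambda e=K_\lambda\,\Phi(\lambda)^*e$ together with $M_\Phi M_\Phi^*=P_{\mathcal M}$. This yields
\[
\frac{\Phi(\lambda)\Phi(\mu)^*}{1-\lambda\bar\mu}=P_{\mathcal M}\text{-kernel}.
\]
Evaluating at $\lambda=\mu=0$ is not enough on its own, so I would rather use the equality of kernels to show that $\Phi(\lambda)^*$ has dense range jointly over $\lambda$. The cleaner route, which I would adopt, is to note that $M_\Phi$ isometric gives
\[
I-\Phi(\lambda)^*\Phi(\lambda)\ge 0 \quad\text{and}\quad \Phi(\lambda)^*\Phi(\lambda)\to I
\]
nontangentially a.e. on $\T$. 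Hence $\Phi(\zeta)$ is an isometry $\mathcal F\to\mathcal E$ for a.e. boundary point $\zeta$, which forces $\dim\mathcal F\le\dim\mathcal E$.
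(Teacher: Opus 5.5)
The paper does not prove this theorem. It is quoted in the introduction as classical background (Beurling, Lax, Halmos), so your argument has to stand on its own. The construction of $\Phi$ does stand. The wandering subspace $\mathcal{F}=\mathcal{M}\ominus z\mathcal{M}$, the Wold decomposition with trivial unitary part, the isometry computation on $\mathcal{F}$-valued polynomials, and the pointwise-limit argument identifying the extension with multiplication by $\Phi$ are all correct. This is essentially Halmos's proof.

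The gap is the bound $\dim\mathcal{F}\le\dim\mathcal{E}$, which you never actually establish. Your first idea is false, not merely incomplete: $f\mapsto f(0)$ need not be injective on $\mathcal{F}$ (for $\mathcal{E}=\mathbb{C}$ and $\mathcal{M}=zH^2$ one has $\mathcal{F}=\mathbb{C}z$). The route you finally adopt only asserts its crux, that $\Phi(\lambda)^*\Phi(\lambda)\to I$ nontangentially a.e. That needs a Fatou theorem for operator-valued functions, plus an argument that isometry of $M_\Phi$ forces a.e.\ isometric boundary values, and you supply neither. The inequality $I-\Phi(\lambda)^*\Phi(\lambda)\ge 0$ plays no role. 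The gap can be closed cheaply, in two cases.

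If $\dim\mathcal{E}$ is infinite there is nothing to prove, since $\mathcal{F}\subseteq H^2\otimes\mathcal{E}$ and $\dim(H^2\otimes\mathcal{E})=\aleph_0\cdot\dim\mathcal{E}=\dim\mathcal{E}$.

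If $n=\dim\mathcal{E}<\infty$, take orthonormal $x_1,\dots,x_m\in\mathcal{F}$. Coordinatewise these are scalar $H^2$ functions, so they have boundary values $x_i(\zeta)$ a.e.\ on $\mathbb{T}$. The wandering property gives, for all $p,q\ge 0$,
\[
\int_{\mathbb{T}}\zeta^{\,p-q}\,\langle x_i(\zeta),x_j(\zeta)\rangle\,dm(\zeta)=\langle z^p x_i,z^q x_j\rangle=\delta_{pq}\,\delta_{ij}.
\]
Thus every Fourier coefficient of the integrable function $\langle x_i(\zeta),x_j(\zeta)\rangle-\delta_{ij}$ vanishes. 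Hence $x_1(\zeta),\dots,x_m(\zeta)$ are orthonormal in $\mathcal{E}$ for a.e.\ $\zeta$, and $m\le n$.

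If you prefer to avoid boundary values, let $P_k$ be the projection onto $\mathcal{E}$-valued polynomials of degree less than $k$. Orthonormality of $\{z^jx_i\}$ together with $P_kz^j=z^jP_{k-j}$ gives $\sum_{i=1}^m\frac{1}{k}\sum_{r=1}^{k}\|P_rx_i\|^2\le \frac{1}{k}\operatorname{tr}P_k=n$. Letting $k\to\infty$ yields $m\le n$.
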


Theorem~\ref{originalBLH} quickly came to be regarded as one of the most important links between function theory and operator theory. Its extensions and refinements have supported, among other things, the development of two substantial theories: the Sz.-Nagy--Foias model theory \cite{Nagybook} and the Lax-Phillips scattering theory \cite{Phillipsscattering}. A classical treatment appears in \cite{Helson}, while the survey \cite{BallBolotnikovBLH} outlines various alternative approaches and connections to noncommutative function theory, linear systems and operator model theory. We also note \cite[Section 1.7]{Nikeasy1}, which contains a brief history of precursor results to Beurling's theorem. Finally, some recent papers dealing with Beurling-type representations include \cite{BeurlingModulesSarkar, RaulBLH, LuoForwBackw}. 
\par

Establishing Beurling-type representations beyond the Hardy space almost invariably presents significant challenges,
as is already apparent in the case of the Bergman space, another well-studied and classical function space over the unit disk.
There the natural version of a BLH theorem requires a delicate wandering subspace approach pioneered by Aleman, Richter and Sundberg \cite{BeurBergman} and further extended by Shimorin \cite{ShimBer2, ShimBer1} as well as the second author and Richter \cite{ScottBergman}. \par 

We now briefly introduce some terminology and notation (more details in Section \ref{prelims}). 
 Let $\ell$ denote a reproducing kernel over a domain  $\Oml\subseteq\mathbb{C}^\lcal{d}.$  Thus $\ell:\Oml\times\Oml\to\mathbb{C}$   is a positive semi-definite function in the sense that for each positive integer $n$ and each set  $\{z_1, \dots, z_n\}\subset\Oml$ the $n\times n$ matrix $[\ell(z_i, z_j)]$ is positive semi-definite. We write \df{$\ell\succeq 0$}.  Elements of \df{$\cH_\ell$}, the resulting  \df{reproducing kernel Hilbert space}
(\df{RKHS}), synonymously  \df{Hilbert function space}, are $\CC$-valued functions on $\Omega.$ These include 
$\ell_w(z)=\ell(z,w)$  where
\[
 \langle \ell_{w},\ell_{u} \rangle = \ell(u,w)
\]
 for $u,w\in \Omega.$
Given a coefficient Hilbert space $\HSE$  we regard elements of \df{$\mathcal{H}_{\ell}\otimes\mathcal{E}$} as $\mathcal{E}$-valued functions on $\Oml.$ 

Given a second reproducing kernel $k$ over  $\Oml$ and a second coefficient Hilbert space $\HSF,$
a function $\Phi:\Omega\to \cB(\HSF,\HSE)$ is a \df{multiplier} from $\cH_k \otimes \HSF$ to $\cH_\ell\otimes\HSE$
 if $\Phi F\in \cH_\ell \otimes \HSE$ whenever $F\in \cH_k\otimes \HSF.$
By the Closed Graph Theorem, $\Phi$ induces a bounded operator $M_\Phi: \cH_k\otimes \HSF\to \cH_\ell\otimes \HSE$ via 
\index{$M_\Phi$} 
$M_\Phi F= \Phi \cdot F$ for $F\in \cH_k\otimes \HSF.$  The set  \df{$\Mult(\cH_k\otimes \HSF,\cH_\ell\otimes \HSE)$} of such multipliers
is naturally a subspace of $\cB(\cH_k\otimes \HSF,\cH_\ell\otimes \HSE)$ and, following the usual
convention, we write
$\|\Phi\|$ in place of $\|M_\Phi\|$ and refer to this norm as the \df{multiplier norm} of $\Phi.$
 A multiplier $\Phi$ 
from $\cH_k\otimes \HSF$ to $\cH_\ell\otimes \HSE$ is 
\df{contractive}  (respectively \df{partially isometric}) if $M_{\Phi}$ is contractive (respectively partially isometric). 
In the case $k=\ell$ (and $\HSE=\CC=\HSF$),  we write  $\textup{Mult}(\mathcal{H}_{\ell})$ instead of $\Mult(\cH_\ell,\cH_\ell).$

\par

A   subspace $\cM\subseteq \mathcal{H}_{\ell}\otimes\mathcal{E}$ is \df{$\Mult(\cH_{\ell})$}-invariant if $(M_{\phi}\otimes I_{\mathcal{E}})\mathcal{M}\subset\mathcal{M}$ for all $\phi\in\textup{Mult}(\mathcal{H}_{\ell}).$  For perspective, we mention a competing notion of  invariance.  
  A  subspace $\cM\subseteq \mathcal{H}_{\ell}\otimes\mathcal{E}$ that is invariant under the coordinate multiplication operators $S_i(f)=z_if$
for  $1\le i\le d$, is   \textit{shift-invariant}.  Thus, the role of the shift $S$ on $H^2$ in 
Theorem~\ref{originalBLH} is, for $\cH_\ell$, played by the $\lcal{d}$-tuple $S=(S_1, \dots, S_{\lcal{d}})$. 
If the coordinate functions are multipliers,  $\Mult (\cH_{\ell})$-invariant clearly implies shift-invariant. Under some very mild assumptions, the two notions coincide; see Proposition~\ref{allequiv}.

 \par 
 
 Theorem \ref{originalBLH} was extended by the second author and Trent \cite{mcctrent} to a large class of spaces,
 with the caveat that the multiplier $\Phi$ is in general only
partially isometric.  Indeed,   the dimension of $\HSF$  generally exceeds that of $\HSE$ (though, with some regularity assumptions, one can actually say more \cite{GreeneRichtSund}). 

 The key assumption in \cite{mcctrent} is that the Hilbert function space possesses the \textit{complete Pick property},  which means that it hosts an analogue of the classical Pick interpolation theorem for multipliers. More details are contained in Section \ref{prelims}; for now, we mention that examples of complete Pick spaces include the Hardy space $H^2$, the classical Dirichlet space and standard
weighted Dirichlet spaces $\mathcal{D}_{\alpha} $ on the unit disk, superharmonically weighted Dirichlet spaces \cite{ShimorinDirichletCP} and certain radially weighted Besov spaces on the unit ball $\mathbb{B}_{\lcal{d}}$ of $\mathbb{C}^\lcal{d}$ \cite{BesovPick}. A particularly important example is the Drury–Arveson space $H^2_{\lcal{d}}$ on   $\mathbb{B}_{\lcal{d}}$, also known as symmetric Fock space.  
It is the reproducing kernel Hilbert space $\cH_s$ over
the unit ball $\mathbb{B}_{\lcal{d}}$, where $s$ is the
Drury-Arveson kernel,
\[
 1/s(z,w) = 1-\langle z,w\rangle_{\CC^{\lcal{d}}} = 1-\sum z_j\ol{w_j}.
\]
Symmetric Fock space $H^2_{\lcal{d}}$ plays a key role in
multivariable operator theory  \cite{ArvesonIII}. In particular, it serves as a universal model for \df{row contractions}, synonymously
$1/s$-contractions.  Namely, those 
tuples  of operators $T=(T_1,\dots,T_{\lcal{d}})$ on Hilbert space
that satisfy 
\[
 0\preceq 1/s(T,T^*) = I- \sum T_j T_j^*. 
\]
The book \cite{Pickbook} is a standard reference for complete Pick spaces. For more recent developments, see \cite{Hartzsurvey, shalit2014operator}.

\par 

In  \cite[Theorem 0.7]{mcctrent} it is shown that   requiring that invariant subspaces $\mathcal{M}$ be representable as ranges
of (vector-valued) partially isometric multipliers is equivalent to the complete Pick
property itself.  Thus a Hilbert function space supporting this analogue of the BLH Theorem is equivalent to its kernel having the complete Pick property. 
In particular, one is left with a plethora of Hilbert function spaces to which the main result of \cite{mcctrent} does not apply, including the standard Bergman-type and various Besov-Sobolev spaces on balls and polydisks. \par 

 A natural next step is to allow the representing multiplier to have a different RKHS as its  domain. 
 Thus, one gains access to a potentially larger class of Beurling-type representations, at the cost, of course, of decreased regularity. We point out that this passage to the two-kernel setting often helps to elucidate, unify and generalize results that are normally tied to the complete Pick property. Successful applications of this strategy appear in  \cite{AHMRfact, freeouter, AHMRinterpol, Timotinsarkar, LuoRecentInvariant, ColRow, mcctsik, Shimorin, Interpoltsik}. Regarding invariant subspaces in particular, Clouatre, Hartz and Schillo have shown \cite{Beurlingfactor} that, if $(\mathcal{H}_s, \mathcal{H}_{\ell})$ is a pair of Hilbert function spaces such that $s$ is complete Pick and a factor of $\ell$ (meaning $\ell/s\succeq 0$), then every $\textup{Mult}(\mathcal{H}_s)$-invariant subspace of $\mathcal{H}_{\ell}\otimes\mathcal{E}$ has a representation as the range of a partially isometric multiplier $\mathcal{H}_s\otimes\mathcal{F}\to\mathcal{H}_{\ell}\otimes\mathcal{E}$. This result generalizes those of \cite{BB1, BB2, SarkarInvII, SarkarInvI} and applies, for instance, to pairs of the form $(H^2_{\lcal{d}}, \mathcal{H}_{\ell})$ with $\mathcal{H}_{\ell}$ a Hilbert function space on $\mathbb{B}_{\lcal{d}}$ such that the $\lcal{d}$-shift $\mathcal{H}_{\ell}\otimes\mathbb{C}^\lcal{d}\to\mathcal{H}_{\ell}$ is a row contraction. While $\mathcal{H}_{\ell}$ will generically not be a complete Pick space in this setting, it is nevertheless true that all pairs of this form satisfy the complete Pick property for multipliers between spaces (first introduced in \cite{Shimorin}).  See Subsection~\ref{CPsubsec}.

\subsection{Main Results} 
Extensions of Theorem \ref{originalBLH} to the two-kernel setting motivate the following definition. 

\begin{definition} \label{BLHdef} 
A pair $(k, \ell)$ of reproducing kernels {on a set $X$} is a \df{Beurling-Lax-Halmos pair} (or \df{BLH pair} for short) if, for every Hilbert space $\mathcal{E}$ and every 
$\textup{Mult}(\mathcal{H}_k)$-invariant    subspace  $\mathcal{M}\subset \mathcal{H}_{\ell}\otimes\mathcal{E}$, there exists an auxiliary Hilbert space $\mathcal{F}$ and a partially isometric multiplier $\Phi\in\text{Mult}(\mathcal{H}_k\otimes\mathcal{F}, \mathcal{H}_{\ell}\otimes\mathcal{E})$ such that 
\[
  \mathcal{M}=\Phi(\mathcal{H}_k\otimes\mathcal{F}). 
\]
In the special case $k=\ell,$ we say that $k$ is a \df{BLH kernel.}
\qed
\end{definition} 

  In this paper, we establish  a complete characterization of the BLH property under  regularity assumptions on the underlying kernels and their spaces
 of multiplication operators similar to those typically imposed
 in the operator theory literature.

 \begin{theorem}\label{intromain} 
If $(k, \ell)$ is a strongly regular pair of reproducing kernels on a domain $\Omega\subseteq \CC^{\lcal{d}},$ 
 then the following properties are equivalent.
\begin{enumerate}[(i)]\itemsep=6pt
    \item \label{i:im:blh}
       $(k, \ell)$ is a Beurling-Lax-Halmos pair;
\item \label{i:im:vN}
 If  $T=(T_1, \dots, T_{\lcal{d}})$ is a $\lcal{d}$-tuple of bounded commuting operators and 
\begin{equation*} 
  \|P(T)\|\le \|P\|_{\text{Mult}(\mathcal{H}_{\ell}\otimes \mathbb{C}^m)}  
\end{equation*}
for every positive integer $m$ and  $P\in \CC[z_1, \dots , z_{\lcal{d}}]\otimes \mathbb{C}^{m\times m},$
then $T$ is a $1/k$ contraction;
\item \label{i:im:leach} 
 For positive integers $n,a,b,p$ and 
   any choice of $n$ points 
 $z_1, \dots, z_n\in \Omega$,  matrices $V_1, \dots, V_n\in \mathbb{C}^{p\times a}$ and  $Y_1, \dots, Y_n\in \mathbb{C}^{p\times b}$ for which 
  \begin{equation*}         
    (V_iV_j^*-Y_iY_j^*)k(z_i, z_j)\succeq 0, 
\end{equation*}
 there exists a contractive multiplier $\opphi{\Phi}{\Omega}{\mathcal{H}_{\ell}\otimes \mathbb{C}^b} {\mathcal{H}_{\ell}\otimes \mathbb{C}^a}$
 such that $Y_i=V_i\Phi(z_i)$ for all $i$;
    \item \label{i:im:ad} 
        There exists a sequence $\{m_n\}$, 
holomorphic functions  $Q_n:\Omega\to \CC^{1\times m_n}$  and contractive multipliers $\opphi{\Phi_n}{\Omega}{\mathcal{H}_{\ell}\otimes\CC^{m_n}}{ \mathcal{H}_{\ell}\otimes\CC^{m_n}}$ such that       
    \begin{equation}    \label{1/k=intro}  
         Q_n(z)\big[I-\Phi_n(z)\Phi_n(w)^*\big]Q_n(w)^* \longrightarrow \cfrac{1}{k(z, w)}
    \end{equation}
uniformly over compact subsets of $\Omega\times\Omega.$
\end{enumerate}
 \end{theorem}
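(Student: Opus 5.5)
We prove the cycle (i)$\Rightarrow$(iii)$\Rightarrow$(iv)$\Rightarrow$(ii)$\Rightarrow$(i), using "strongly regular" to mean that polynomials are dense in $\cH_k$ and $\cH_\ell$, that the coordinate functions are multipliers, that $\Mult(\cH_k)$-invariance coincides with shift invariance (Proposition~\ref{allequiv}), and that the kernels are holomorphic and nonvanishing on $\Omega$.

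\emph{(i)$\Rightarrow$(iii).} Given data $z_i,V_i,Y_i$, consider in $\cH_\ell\otimes\CC^a$ the subspace $\cM$ generated under $\Mult(\cH_k)$-invariance by the relevant vectors. Equivalently, work with the orthocomplement of the span of $\ell_{z_i}\otimes V_i^*x$, which is $\Mult(\cH_k)$-invariant because the kernel span is co-invariant. The BLH property gives a partial isometry $M_\Psi$ with range $\cM$, so that $P_\cM=M_\Psi M_\Psi^*$, and hence
\[
\ell(z,w)\big(I-\Psi(z)\Psi(w)^*\big)
\]
is the kernel of $P_{\cM^\perp}$.

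The positivity hypothesis $(V_iV_j^*-Y_iY_j^*)k(z_i,z_j)\succeq 0$ defines a contraction on a finite-dimensional space that intertwines the adjoints of $\Mult(\cH_k)$ actions. Combining it with the range representation, in the spirit of a commutant-lifting argument of Ball--Trent--Vinnikov type, produces $\Phi$ with $V_i\Phi(z_i)=Y_i$. The clean route is probably through (ii): the compression $T$ of the shifts to the finite-dimensional co-invariant space $\operatorname{span}\{\ell_{z_i}\otimes V_i^*x\}$ satisfies the $\cH_\ell$ von Neumann inequality, and $k$-positivity yields the lifting. I would therefore actually order the argument as (i)$\Rightarrow$(ii)$\Rightarrow$(iii).

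\emph{(i)$\Rightarrow$(ii).} Let $T$ satisfy the matrix von Neumann inequality. By Arveson extension and Stinespring, $T$ is then a compression, to a co-invariant subspace, of $M_z\otimes I$ on $\cH_\ell\otimes\mathcal{E}$; this uses the closure of the polynomial multipliers and strong regularity. The co-invariant subspace is $\cM^\perp$ for an invariant subspace $\cM$, and BLH gives
\[
P_{\cM^\perp}=I-M_\Psi M_\Psi^*.
\]
Applying the hereditary functional calculus of $1/k$ yields
\[
1/k(M_z,M_z^*)\,(I-M_\Psi M_\Psi^*)\succeq 0,
\]
since $1/k(M_z,M_z^*)$ acting on $M_\Psi M_\Psi^*$ with $\Psi$ from $\cH_k$ gives $\ell$-kernel times $1/k$ times $k$. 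Compressing this inequality gives $1/k(T,T^*)\succeq0$.

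\emph{(ii)$\Rightarrow$(iv).} This is a cone-separation argument. Let $C$ be the closure, under compact convergence, of the set of kernels $Q(z)[I-\Phi(z)\Phi(w)^*]Q(w)^*$ with $\Phi$ a contractive $\cH_\ell$-multiplier. It is a convex cone. If $1/k\notin C$, Hahn--Banach on finite point sets, or a GNS-type construction on polynomials, produces a positive functional that is nonnegative on $C$ and negative at $1/k$. The GNS construction yields a commuting tuple $T$ satisfying the von Neumann inequality but with $1/k(T,T^*)\not\succeq0$, contradicting (ii). Approximation over compacta requires exhausting $\Omega$ by finite sets and a normal-families argument.

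\emph{(iv)$\Rightarrow$(i).} Let $\cM$ be invariant. The projection $P_\cM$ commutes with $M_\phi\otimes I$ for $\phi\in\Mult(\cH_\ell)$ in the appropriate sense. Plugging the Agler decomposition into the hereditary calculus shows that the kernel $K_\cM(z,w)/k(z,w)$ of $P_\cM$ is a limit of kernels $Q_n[\,\cdot\,]Q_n^*$ applied to $P_\cM$. Since $\Phi_n(M_z)$ is contractive, $P_\cM-\Phi_n P_\cM\Phi_n^*\succeq 0$ in the relevant sense. Hence $K_\cM/k\succeq0$, so $K_\cM(z,w)=\Psi(z)\Psi(w)^*k(z,w)$, and $\Psi$ is then a partial isometry with range $\cM$ by the standard McCullough--Trent argument.

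\textbf{Main obstacle.} The hardest step is expected to be the separation/GNS step (ii)$\Rightarrow$(iv): showing that the cone is closed and that the resulting $T$ is bounded and satisfies the \emph{matrix} von Neumann inequality for all $m$.
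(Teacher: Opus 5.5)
\textbf{Gap 1: the dilation claim in (i)$\Rightarrow$(ii) is false.} Arveson's extension theorem and Stinespring only give $p(T)=V^*p(\pi(M_z^\ell))V$ for some $*$-representation $\pi$ of $C^*(\mathcal{A}(\cH_\ell))$. By Sarason's lemma, $VH$ is then merely semi-invariant, not co-invariant. More seriously, $\pi(M_z^\ell)$ need not be a multiple of $M_z^\ell$; for the Szeg\"o kernel, $\pi$ may send the shift to a unitary. No better dilation theorem can rescue the claim. If every $\Mult(\cH_\ell)$-contraction were a co-invariant compression of $M_z^\ell\otimes I$, every such tuple would be a $1/\ell$-contraction. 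This fails for the Bergman kernel: the Hardy shift $S$ satisfies the matrix von Neumann inequality, yet $\langle (I-2SS^*+S^2S^{*2})e_1,e_1\rangle=-1$.

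Handling this residual part is exactly what strong regularity is for. Your working definition of ``strongly regular'' drops the spectral condition, polynomial convexity and the (strong) exhaustion entirely. The paper's proof (Theorem~\ref{BLHISCOEXT}) proceeds as follows:
\begin{enumerate}
\item It replaces $T$ by $r_n(T)$.
\item It uses $\ell/\ell_n\succeq0$ (Lemma~\ref{approx1-ell}) to see that $\pi(r_n(M_z^\ell))$ is a $1/\ell$-contraction with Taylor spectrum in $\Omega$.
\item It applies the Ambrozie--Engli\v{s}--M\"uller model (Theorem~\ref{arazyenglis}) to write $\pi(r_n(M_z^\ell))$ as a co-invariant compression of $M_z^\ell\otimes I_{\mathcal F}$.
\item Hence $r_n(T)$ is a compression of $M_z^\ell\otimes I_{\mathcal F}$ to a semi-invariant subspace, and one lets $n\to\infty$.
\end{enumerate}

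You also apply BLH to the wrong piece. A co-invariant compression of $M_z^\ell\otimes I$ is a $1/k$-contraction as soon as $\ell/k\succeq0$. BLH is precisely the statement that restrictions to \emph{invariant} subspaces are $1/k$-contractions (Lemma~\ref{recastBLH}). Combining the two with Lemma~\ref{compresstocoinv} handles semi-invariant compressions (Lemma~\ref{intermediate}). Your inequality $1/k(M_z,M_z^*)(I-M_\Psi M_\Psi^*)\succeq0$ is not meaningful as an operator statement, since the factors do not commute.

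\textbf{Gap 2: condition (iii) is never tied to the others.} You gesture at (i)$\Rightarrow$(iii) via commutant lifting, then propose routing through (ii). However, no argument from (ii) or (iv) to (iii) is given, and nothing leads out of (iii). Noting that the compression to $\operatorname{span}\{\ell_{z_i}\otimes V_i^*x\}$ satisfies von Neumann's inequality does not manufacture an $\cH_\ell$-multiplier. Incidentally, since $\Psi$ acts on $\cH_k\otimes\mathcal F$, the kernel of $P_{\cM^\perp}$ is $\ell I-k\Psi\Psi^*$, not $\ell(I-\Psi\Psi^*)$.

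The missing ingredient is a lurking-isometry realization (Theorem~\ref{realization.}). If $h=Q(I-\Phi\Phi^*)Q^*$ with $\|\Phi(z)\|<1$ and $(VV^*-YY^*)/h\succeq0$ on the nodes, it yields a contractive $\cH_\ell$-multiplier $\Psi$ with $Y=V\Psi$ there.
\begin{itemize}
\item For (iv)$\Rightarrow$(iii), the paper applies this to the approximants $h_m$ of $1/k$. It absorbs the approximation error via $\epsilon_m+V_iV_j^*-Y_iY_j^*$, which is legitimate because $[k(z_i,z_j)]$ is positive definite. It then removes $\epsilon_m$ by a WOT-Montel limit (Lemma~\ref{noholmontel}).
\item For (iii)$\Rightarrow$(i) (Proposition~\ref{LeechtoBLH}), write $1/k=AA^*-BB^*$ on the finite node set. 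Feed the trivial condition $(AA^*-BB^*)k=1\succeq0$ into (iii) to get $1/k=A(I-\Phi\Phi^*)A^*$ exactly at the nodes. Conclude with Lemma~\ref{ellM/k} and Proposition~\ref{l_m/k}.
\end{itemize}

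\textbf{Remaining steps.} Your (ii)$\Rightarrow$(iv) and (iv)$\Rightarrow$(i) match Theorem~\ref{COEXTISAGLER} and Proposition~\ref{gensuffBLH} in outline, with three corrections:
\begin{itemize}
\item In (ii)$\Rightarrow$(iv), separate directly in $\Her(\Omega)$, whose topology is already compact convergence, so no normal-families argument is needed.
\item Also in (ii)$\Rightarrow$(iv), defining $1/k(T,T^*)$ for the GNS tuple needs $\specT(T)\subseteq\ol{\Omega}$ (Lemma~\ref{Taylorinclusion}) and analyticity of $1/k$ near $\ol{\Omega}$. These are again regularity hypotheses your definition omits.
\item In (iv)$\Rightarrow$(i), invariance means $P_\cM(M_\phi\otimes I)P_\cM=(M_\phi\otimes I)P_\cM$, not commutation.
\end{itemize}
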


\noindent Theorem \ref{intromain} is expanded upon as Theorem \ref{DerSatz} and proved in Section \ref{mastersec}.
The theorem offers a multifaceted description of the BLH property. Property \ref{i:im:vN} is essentially dilation-theoretic. Roughly it says that, letting $S_{\ell}, S_{k}$ denote the $d$-shifts on $\mathcal{H}_{\ell}, \mathcal{H}_k$, respectively, if the tuple $T^*$  dilates to $\pi(S_\ell^*)$  for some representation $\pi,$
then there is a representation $\rho$ such that  $T^*$ is the restriction of $\rho(S_k^*)$ to an invariant subspace. 
The condition in item~\ref{i:im:leach} is a variation of the complete Pick property for pairs \cite{Shimorin} inspired
by Leech's Theorem \cite{Leechoriginal}, \cite[Theorem 8.57]{Pickbook}, \cite{RandR},  a foundational result in function-theoretic operator theory.   Thus, despite being  markedly weaker than the complete Pick property, the BLH property remains intimately tied to interpolation. 
Finally, item~\ref{i:im:ad}   will resonate with those familiar with Positivstellensätze and
approaches to Pick interpolation pioneered by Agler \cite{Hellinger}. It asks that $1/k$ admit, at least approximately,
a weighted sum-of-squares representation adapted to $\ell.$

\begin{remark}\rm
 \label{r:more}
 We pause here to make several remarks.
 \begin{enumerate}[(i)]\itemsep=6pt

\item 
 Many of the implications do not require the full strength of the strongly
regular hypothesis. See Theorem \ref{DerSatz}.

\item If $1/k$ is assumed to be the difference of two positive kernels, for instance if $1/k$ is a polynomial, then \eqref{1/k=intro} can be strengthened.
In this case, there is a single contractive multiplier $\Phi$ and a holomorphic function $Q$ such that \[1/k(z,w)=Q(z)[I-\Phi(z)\Phi(w)^*]Q(w)^*.\] See Theorem \ref{DerSatz}.

\item   Unlike as in the single-kernel case,  the BLH and complete Pick conditions are far from equivalent for pairs of kernels. Indeed, for a pair $(k, \ell)$ the existence of a (non-vanishing) complete Pick kernel $s$ such that 
\begin{equation}
\ell/s\succeq0, \qquad s/k\succeq0,
\end{equation}
is  known to be necessary \cite[Theorem 1.4]{mcctsik}
 (at least for diagonal holomorphic pairs), but not sufficient \cite[Proposition 7.4]{mcctsik} for the complete Pick property.  It is also known to be sufficient for the BLH property 
as an easy consequence of \cite[Lemma 2.2]{Beurlingfactor}; see Proposition \ref{CPinbetween}.  On the other hand, it is not necessary for the BLH property, even assuming strong regularity,
as we show here. For details, see the discussion in subsection \ref{CPconnectsubsec} and Example \ref{eg:recursivel}. \qed
 \end{enumerate}
 \end{remark}

We close this portion of the introduction with the following of independent interest lemma needed for the proof  of Theorem~\ref{intromain}.  The    \df{Taylor spectrum} of a tuple  $T$ of bounded commuting operators on a Hilbert space
  is denoted \df{$\specT(T)$}. A standard reference for the Taylor spectrum is \cite{mullerbook}. See also \cite{Curtosurveyref}. 
The definition of a (strongly) regular pair includes the hypothesis
that the operators of multiplication by the coordinate functions on 
the RKHS $\cH_\ell$ are bounded and that 
the Taylor spectrum of this tuple is polynomially convex and equals
the closure of $\Omega.$ These assumptions are required for most of the results
associated with item~\ref{i:im:vN} of Theorem~\ref{intromain}.
While the Taylor spectrum is not, in general, preserved under unital contractive homomorphisms, polynomial convexity allows us to say more.

\begin{lemma}\label{Taylorinclusion}
Let $S, T$ denote two $\lcal{d}$-tuples of bounded commuting operators on Hilbert spaces such that
the map, defined on $\{p(S)\mid p\in \CC[z_1,\dots,z_\lcal{d}]\}$ by 
\begin{equation}\label{pcc}
    p(S)\mapsto p(T), 
\end{equation} 
is completely contractive. If $K$ is polynomially convex and $\specT(S)\subset K,$ then $\specT(T)\subset K.$
\end{lemma}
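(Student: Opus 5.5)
The plan is to argue by contradiction, point by point outside $K$. The tools are the polynomial spectral mapping theorem for the Taylor spectrum and the spectral radius formula. Only contractivity of the map \eqref{pcc} at the scalar level will be needed, not complete contractivity. Fix $\lambda\in\CC^{\lcal{d}}\setminus K$ and suppose, towards a contradiction, that $\lambda\in\specT(T)$.

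\emph{Step 1: choose a separating polynomial.} Since $K$ is polynomially convex and $\lambda\notin K$, there is a polynomial $q\in\CC[z_1,\dots,z_{\lcal{d}}]$ with
\[
|q(\lambda)| > \sup_{z\in K}|q(z)| =: r_K .
\]

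\emph{Step 2: bound the spectral radius of $q(S)$.} We use the spectral mapping theorem for the Taylor spectrum under polynomial maps (see \cite{mullerbook}), namely $\sigma(q(S))=q(\specT(S))$. Since $\specT(S)\subset K$, this gives $\sigma(q(S))\subset q(K)$. Hence the spectral radius satisfies $r(q(S))\le r_K$.

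\emph{Step 3: transfer the bound to $q(T)$.} The map \eqref{pcc} is contractive and multiplicative on polynomials, since $p(S)\mapsto p(T)$ is well defined and $(q^n)(S)=q(S)^n$. Therefore, for every $n$,
\[
\|q(T)^n\|=\|q^n(T)\|\le \|q^n(S)\| = \|q(S)^n\|.
\]
Taking $n$-th roots and letting $n\to\infty$, the spectral radius formula yields $r(q(T))\le r(q(S))\le r_K$.

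\emph{Step 4: derive the contradiction.} Applying the spectral mapping theorem again, now to $T$, the assumption $\lambda\in\specT(T)$ gives $q(\lambda)\in q(\specT(T))=\sigma(q(T))$. Hence $|q(\lambda)|\le r(q(T))\le r_K<|q(\lambda)|$, which is impossible. So $\lambda\notin\specT(T)$, and since $\lambda\in\CC^{\lcal{d}}\setminus K$ was arbitrary, $\specT(T)\subset K$.

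The only nontrivial ingredient is the spectral mapping theorem for polynomials and the Taylor spectrum, which is standard (e.g.\ \cite{mullerbook}); I expect this to be the main step requiring citation rather than new work. The point of the argument is that, although the Taylor spectrum itself is not controlled by the homomorphism, spectral radii of single polynomials are. Polynomial convexity of $K$ is exactly what lets these single-variable radii detect $K$.
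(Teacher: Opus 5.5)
Your proof is correct, but it takes a somewhat different route from the paper's.

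\textbf{The paper's route.} The paper works with the algebraic spectrum $\sigma_{\textup{alg}}$, the joint spectrum in the closed unital algebra generated by the tuple. It cites the fact that $\sigma_{\textup{alg}}(S)$ is the polynomially convex hull of $\specT(S)$, so $\sigma_{\textup{alg}}(S)\subset K$. It then uses that the unital homomorphism $p(S)\mapsto p(T)$ extends by continuity and pulls characters back, giving $\sigma_{\textup{alg}}(T)\subset\sigma_{\textup{alg}}(S)$. Finally it uses $\specT(T)\subset\sigma_{\textup{alg}}(T)$.

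\textbf{Your route.} You avoid Gelfand theory and the identification of $\sigma_{\textup{alg}}$ with the hull of the Taylor spectrum, and argue one point at a time. You use a separating polynomial $q$ and the polynomial spectral mapping theorem for the Taylor spectrum. This is needed in the direction $\sigma(q(S))\subset q(\specT(S))$ in Step 2 and in the direction $q(\specT(T))\subset\sigma(q(T))$ in Step 4. The spectral radius formula then transfers the estimate $r(q(T))\le r(q(S))$. In effect, Steps 2--4 are a character-free unwinding of the inclusion $\sigma_{\textup{alg}}(T)\subset\sigma_{\textup{alg}}(S)$, since $\sigma_{\textup{alg}}(S)=\{\lambda:\ |q(\lambda)|\le r(q(S)) \text{ for all polynomials } q\}$.

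\textbf{Comparison.} The paper's proof is shorter once the cited facts are granted. It also records the inclusion of algebraic spectra, and hence of the polynomially convex hulls $\widehat{\specT(T)}\subset\widehat{\specT(S)}$. Your argument gives the same hull inclusion if you apply it with $K=\widehat{\specT(S)}$. Your version is more self-contained, needing only spectral mapping and the spectral radius formula. It also makes transparent that only scalar-level boundedness of the map is used: a bound $\|p(T)\|\le C\|p(S)\|$ suffices, since $C^{1/n}\to 1$. That is also true of the paper's proof, though less visibly.
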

\begin{proof}
Let $\sigma_{\textup{alg}}(T)$ denote the algebraic spectrum of a $\lcal{d}$-tuple $T,$ i.e. the Banach algebra spectrum with respect to the algebra generated by the operators $T_i.$ Since the polynomially convex hull of the Taylor spectrum equals the algebraic spectrum (see \cite{Curtosurveyref} or the remark before \cite[Lemma 1.3]{Wrobel}), we have $\sigma_{\textup{alg}}(S)\subset K.$ But unital completely contractive homomorphisms of the form \eqref{pcc} preserve the algebraic spectrum, hence \[\specT(T)\subset \sigma_{\textup{alg}}(T)\subset  \sigma_{\textup{alg}}(S)\subset K,\]
as desired. 
\end{proof}

\subsection{Reader's guide} 
 The body of the paper is organized as follows. 
 Section~\ref{prelims} introduces the regularity
 conditions and the background needed to state the
 precise definition of a $1/k$ contraction, including the hereditary functional calculus. To provide
 perspective, it also explores consequences of regularity.
  \par
 Sections~\ref{s:moreBLH} and \ref{Leechsecc} run at
 a fairly high level of generality. In particular, analyticity
 is, for the most part, not invoked.  {In Section~\ref{s:moreBLH}, we introduce the concept of an Agler decomposition, including weak and strong versions, for a pair of kernels $(\genk,\genl).$ See Definitions \ref{Aglerdecc}, \ref{weakAgler} and \ref{strongAgler}. Through the use of standard techniques, we obtain a general sufficient condition for $(\genk,\genl)$ to be a BLH pair (Proposition \ref{gensuffBLH}) that
certifies the implication item~\ref{i:im:ad} implies item~\ref{i:im:blh} of Theorem~\ref{intromain}.}
Section~\ref{s:moreBLH} also contains
 a comparison, at this level of generality, of the
 BLH and complete Pick property from \cite{Shimorin}.
   Example~\ref{eg:recursivel}
 illustrates the gap between the two conditions.

 Section~\ref{Leechsecc} focuses on the
 Leech's-Theorem-inspired 
 condition of item~\ref{i:im:leach} of Theorem~\ref{intromain}.
 There, it is shown that 
 item~\ref{i:im:leach} implies item~\ref{i:im:blh} (Proposition \ref{LeechtoBLH}) and is equivalent to versions of item~\ref{i:im:ad} of Theorem~\ref{intromain} (Theorem \ref{Leech=wAg=sAg}).

 We refer to the condition of Theorem~\ref{intromain} item~\ref{i:im:vN} as the co-extension property for a pair of kernels.  The main result  of Section~\ref{Heredsection}, where regularity, and hence analyticity, is assumed, is the equivalence of the co-extension property with item~\ref{i:im:ad} (Theorem \ref{COEXTISAGLER}). Coming full circle, Section~\ref{BLHandco-extSec} links the  BLH and co-extension properties (Theorem \ref{BLHISCOEXT}).  

 A version of Theorem~\ref{intromain} at a granular level documenting the hypotheses needed for each implication appears in Section~\ref{mastersec}. Examples are collected in Section~\ref{Exampsec} and questions in Section~\ref{questions}.  There are  three appendices. 
 Appendices~\ref{appendMontel} and \ref{appendLurking} provide proofs of two results whose proofs use standard techniques, but whose statements, while close to those in the literature, do not actually appear in the literature as far as we know.  Appendix~\ref{append-more-example}
 provides further details for Example~\ref{eg:recursivel}.

\section{Preliminaries} 
\label{prelims}
 The first two subsections of this section introduce the background and terminology  needed to make  precise the  statement of Theorem~\ref{intromain}. 
 Various regularity assumptions appear in Subsection~\ref{sss:regular}. Most of these are fairly standard and will be familiar to practitioners. 
 The definition of 
  a $1/k$ contraction appears in  Subsection~\ref{s:her-calc} after first summarizing 
   standard facts  about the hereditary functional calculus, a natural functional calculus for multiplication operators on 
   reproducing kernel Hilbert spaces.  This section 
  concludes with the optional Subsection~\ref{subsecconseq}. It explores some consequences of, and contains some contextual observations about, the regularity assumptions.

 For the present purposes, a kernel $\genk$ is a \df{holomorphic kernel} or  an \df{analytic kernel}  if it is a kernel
 over a connected open set (\df{domain})  $\Omega\subseteq \CC^{\lcal{d}}$  and
 $\kappa(z,w)$ is holomorphic in $z$ and $\overline{w}.$ Hence $h:\Omega\times \Omega$ 
 defined by $h(z,w)=\kappa(z,\overline{w})$ is holomorphic.

\subsection{Regular pairs of kernels}
\label{sss:regular}

\begin{assumption}\rm
 \label{assume}
Unless evident from the context otherwise, the following assumptions are in force.
\begin{enumerate}[(i)]\itemsep=6pt
 \item  
 Kernels $\genk(z,w)$  are holomorphic.

 \item \label{i:assume:ii}
     Polynomials are dense in the reproducing kernel Hilbert space $\mathcal{H}_\genk.$ 
 
  \item \label{i:assume:iii}
  The $\lcal{d}$-tuple \df{$M^\genk_z=(M^\genk_{z_1}, \dots, M^\genk_{z_\lcal{d}})$} of operators of multiplication by the coordinate functions on $\cH_\genk$ consists of bounded operators; that is, each $z_j$ is  a multiplier
  of $\cH_\genk.$  \qed
\end{enumerate}
\end{assumption}

\begin{remark}
  \label{r:assume-Omega-bounded}
    Since, as is well known, the functions $\genk(\cdot,w)$ are joint
    eigenvectors for $(M_z^\genk)^*,$ the boundedness assumption of
    item~\ref{i:assume:iii} implies $\Omega\subseteq \specT(M_z^\genk)$  and in particular $\Omega$ is a bounded domain.
    \qed
\end{remark}

\begin{definition}\label{kexhaust}
 A pair of kernels  $(k,\ell)$  \df{admits an exhaustion} if there exists an open set  $\Omega'\supset\overline{\Omega}$ and a sequence of polynomials
 $\{r_n\}$  such that, setting $k_n(z, w)=k(r_n(z), r_n(w))$ and $\ell_n(z, w)=\ell(r_n(z), r_n(w))$,
   \begin{enumerate}[(i)]\itemsep=6pt
  	\item $k$ is non-vanishing and $1/k(z, \overline{w})\in \textup{Hol}(\Omega'\times\Omega');$ 
   
        \item  For each $n$ the inclusions $r_n(\overline{\Omega})\subset \Omega$  and  $r_n(\Omega^\prime)\subseteq \Omega^\prime$ hold;

        \item  \label{i:exh:ucs} 
         The sequence $(1/k_n)$ converges to $1/k $ uniformly on compact subsets of $\Omega'\times\Omega'$; 

        \item  \label{i:exh:cc}
         For each $n,$ the map 
        \[
          p(M^{\ell}_z)\mapsto p\circ r_n(M^{\ell}_z),  
        \]    
       for $p\in \CC[z_1,\dots,z_{\lcal{d}}]$    is completely contractive. 
       \end{enumerate}
If in addition, 
\begin{enumerate}[resume*]  \itemsep=6pt
        \item \label{i:annoying}  $\ell=\ell_n g_n$ for some positive kernel $g_n$ on $\Omega\times\Omega$, for all $n\ge 1,$
   \end{enumerate}
   then $(k,\ell)$  \df{admits a  strong exhaustion}. 

   We refer to the pair $(r_n,\Omega^\prime)$ as a \df{$(k,\ell)$-exhaustion}.
\qed
 \end{definition}

\begin{remark}
Property ~\ref{i:annoying} says that each $\ell_n$ is a factor of $\ell,$ while 
property ~\ref{i:exh:cc} says that  each $\textup{Mult}(\mathcal{H}_{\ell_n})$ is completely contractively contained in $\textup{Mult}(\mathcal{H}_{\ell})$. Thus, ~\ref{i:annoying} is strictly stronger than  ~\ref{i:exh:cc}. 
\qed\end{remark}

 \begin{remark}
If $(k,\Omega^\prime)$ is an exhaustion (resp. strong exhaustion)  and $U$ is the connected
component of $\Omega^\prime$ containing $\Omega,$ then
$(k,U)$ is also an exhaustion (resp. strong exhaustion).
\qed \end{remark}

 Examples~\ref{eg:certifiably-annoying} and \ref{eg:certify:cc} provide  classes of kernels where items~\ref{i:annoying}
 and \ref{i:exh:cc}, respectively, of Definition~\ref{kexhaust} are certifiable.

\begin{example}
\label{eg:certifiably-annoying}
Let $h$ be a \df{diagonal} holomorphic kernel on a bounded domain $\Omega\subset\mathbb{C}^d,$ so 
\[
h(z,w)=\sum_{\alpha\in\mathbb{N}^d}h_{\alpha}z^{\alpha}\overline{w}^{\alpha}.
\]
Set $\ell(z,w):=e^{h(z,w)}$. Since, for $0<r<1,$ 
\[
  \cfrac{\ell(z,w)}{\ell(rz, rw)}=e^{\sum_{\alpha}(1-r^{2|\alpha|})h_{\alpha}z^{\alpha}\overline{w}^{\alpha}},
\]
we conclude that $\ell$ satisfies item~\ref{i:annoying} of Definition~\ref{kexhaust} with $\ell_n(z,w)=\ell(r_n z,r_nw)$
for any sequence $(r_n)$ satisfying $0<r_n<1$ that converges to $1.$ This class of examples includes all diagonal holomorphic 
of the form $\ell=(1-P)^{-1}$ (complete Pick kernels) by choosing
$h=-\log(1-P)$ and, more generally, their finite products.  
The Bergman kernel $(1-z\ol{w})^{-2}$ for the unit disk in the complex plane is a concrete example of such an $\ell$.
\qed
\end{example}

\begin{example}
\label{eg:certify:cc}
   Suppose $0<r_n<1$ converges to $1$  and define $r_n(z)=r_n z$. Given a unitarily invariant RKHS $\mathcal{H}_{\ell}$ on the $\lcal{d}$-dimensional open unit ball $\mathbb{B}_{\lcal{d}}$ with 
   \[\ell(z,w)=\sum_{j=0}^{\infty}\ell_j\langle z,w \rangle^j \]
   and $\lim\ell_{j+1}/\ell_j=1,$
   \cite[Lemma 2.2]{HartzJEMS} guarantees that the map $p(M^{\ell}_z)\mapsto p\circ r_n(M^{\ell}_z)$ is completely contractive for all $n\ge 1$. Thus, all such kernels satisfy item~\ref{i:exh:cc} of Definition~\ref{kexhaust}.
\end{example}

\begin{definition}
\label{def:regular}
A pair $(k, \ell)$ of analytic kernels over the domain  $\Omega\subseteq \CC^{\lcal{d}}$  is a \df{regular pair} if 
it satisfies the following axioms. 

\begin{enumerate}[(i)]\itemsep=6pt
   \item \label{i:regular:specT}
    $\specT(M^{\ell}_z)= \ol{\Oml}$;
    
\item \label{i:regular:poly-cvx}
   $\ol{\Omega}$ is polynomially convex;  
 
  \item \label{i:regular:exh}
   The pair  $(k,\ell)$ admits an exhaustion;

  \item 
   \label{i:regular:include}
       For  each Hilbert space $\HSE,$  every $\Mult(\cH_k)$-invariant $\mathcal{M}\subset\mathcal{H}_{\ell}\otimes\mathcal{E}$ is 
 $\Mult(\cH_\ell)$-invariant.
\end{enumerate}
 If in addition
 \begin{enumerate}[resume*] \itemsep=6pt
 \item \label{i:regular:lnot0}
       $\ell$ is non-vanishing and there is a sequence of analytic polynomials $(q_n(z,\ol{w}))$ that converges uniformly on compact
sets of $\Omega\times\Omega$ to $1/\ell(z,\ol{w});$
 
 \item $(k,\ell)$ admits a strong exhaustion, 
 \end{enumerate}
 then $(k,\ell)$ is a \df{strongly regular pair}.
\qed
\end{definition}

\begin{remark} We pause to make several remarks regarding
Definition~\ref{def:regular}.
\begin{enumerate}[wide, labelwidth=1em, labelindent=0pt] \itemsep=6pt
\item  If $(k, \ell)$ is a regular pair, then $1/k(z, w)$ extends to an open neighborhood of $\Omega\times\Omega$ 
as a consequence of item~\ref{i:regular:exh}.

\item  {In view of Proposition \ref{allequiv} and Corollary \ref{c:allequiv}, item~\ref{i:regular:include}  is a very mild restriction.}

\item {Item~\ref{i:regular:lnot0} is certified by the stronger hypotheses that 
$\Omega$ is a Runge domain.  
It is also certified by assuming $1/\ell$ extends
analytically to  an open set containing $\ol{\Omega}.$} 
\end{enumerate}
\qed
 \end{remark}

\begin{remark}\rm
 \label{r:polycvx}
  We record here several facts about bounded polynomially convex  sets in $\CC^{\lcal{d}}$. 
  
  A set $P\subseteq \CC^{\lcal{d}}$ is an (open) \df{polynomial polyhedron} if there exist
  finitely many polynomials $p_1,\dots,p_n\in \CC[z_1,\dots,z_{\lcal{d}}]$ such that
\begin{equation} 
 \label{e:Ppolyhedra}
 P =\{z\in \CC^{\lcal{d}} : |p_1(z)|<1, \, \dots, \, |p_n(z)| < 1\}.
\end{equation}
Let 
\[
 \widetilde{P} =\{z\in \CC^{\lcal{d}}: |p_1(z)|\le 1, \,  \dots, \, |p_n(z)|\le 1\}.
\]
 Thus $\ol{P}\subseteq \widetilde{P}$ and $\widetilde{P}$ is polynomially convex. 
 
 It is a standard fact (\cite[Theorem 9, p.42]{Boas}, \cite{StoutPolyCvx})that if  $C\subseteq \CC^{\lcal{d}}$ is compact and polynomially convex and $U\subseteq\CC^{\lcal{d}}$
 is an open set that contains $C,$ then there exist (open) polynomial polyhedra  $Q$ such that
\[
  C\subseteq Q \subseteq U.
\]
 The following consequence is used here.  Suppose $K$ is 
 compact and polynomial convex and $\Omega^\prime$ is 
 a bounded   open set containing $K.$  Choose $P$ as in 
 equation~\eqref{e:Ppolyhedra}  such that $K\subseteq P \subseteq \Omega^\prime.$ For each $n\ge 2$ the set
\[
  P_n =\{z\in \CC: |p_1(z)|<1-\frac{1}{n}, \,  \dots, \, |p_n(z)|< 1-\frac{1}{n}\}
\]
 is open and bounded and the union of the $P_n$ is $P.$ Hence, by compactness of $K,$ there is an $m$ such that
 $K\subseteq P_m.$ Now $\widetilde{P}_m \subseteq P_{m+1}\subseteq P \subseteq \Omega^\prime.$  Hence $\Omega^{\prime\prime}=P_m$
 is an open set, $K^\prime=\widetilde{P}_m$  is compact and polynomially convex  and
\begin{equation}
    \label{e:Omegapp-in-between}
      K\subseteq \Omega^{\prime\prime} \subseteq K^\prime 
       \subseteq \Omega^\prime.
\end{equation}
 In particular, if $g$ is analytic in $\Omega^\prime,$
 then,  by Oka-Weil, it is the uniform on compacta limit of a sequence
 of polynomials on $\Omega^{\prime\prime}.$  
   
 If $K_j\subseteq \CC^{\lcal{d}}$ for $j=1,2$ are  compact and polynomial convex, then so is $K_1\times K_2.$  Hence if $K_j, K_j^\prime$ and
 $\Omega_j^\prime, \Omega_j^{\prime\prime}$ are as in
 equation~\eqref{e:Omegapp-in-between}, then
\[
 K_1\times K_2 \subseteq \Omega_1^{\prime\prime} \times \Omega_2^{\prime\prime} \subseteq K_1^\prime \times K_2^\prime \subseteq \Omega_1^\prime\times \Omega_2^\prime
\]
 and $K_1^\prime \times K_2^\prime$ is  compact  and polynomially convex. 
 In particular, if $g$ is analytic on $\Omega_1^{\prime}\times \Omega_2^{\prime},$ then there is a sequence of polynomials $p_n$ that
 converges uniformly to $g$ on compact subsets of $\Omega_1^{\prime\prime} \times \Omega_2^{\prime\prime}$. Indeed, apply Oka-Weil to $K_1^\prime\times K_2^\prime$ and $g$. When $K$ is compact and polynomially convex, so is $K_*:=\{\overline{z}:z\in K\}$ and our
 interest is in the case $K_1=K$ and $K_2=K_*.$ 
\qed
\end{remark}

 \subsection{The Hereditary Calculus}
 \label{s:her-calc}

Let  \text{Her}$(\Omega)$ denote the set of complex-valued functions $h$ on $\Omega\times\Omega$  having the property that the map
\[(z, w)\mapsto h(z, \overline{w})\in\mathbb{C} \]
is holomorphic on $\Omega\times\tOmega,$ where $ \tOmega=\{\overline{z}:z\in\Omega\}.$   Equip \text{Her}$(\Omega)$ with pointwise addition and scalar multiplication and with the topology of uniform convergence on compact subsets of $\Omega\times\Omega^*.$ \text{Her}$(\Omega)$ is a complete metrizable locally convex topological vector space. 
If $h\in \Her(\Omega)$ is given by a power series 
\[
   h(z, w)=\sum_{\alpha, \beta\in\mathbb{N}^{\lcal{d}}}h_{\alpha, \beta}z^{\alpha}\overline{w}^{\beta}, \hspace{0.6 cm} z, w\in\Omega,
\]
with uniform and absolute convergence over compact subsets of $\Omega\times\Omega$ 
 and  $T=(T_1, \dots, T_{\lcal{d}})$ is a $\lcal{d}$-tuple of bounded commuting operators on a Hilbert space $H$ with Taylor spectrum in $\Omega$, then  the series 
 \[
   h(T, T^*)=\sum_{\alpha, \beta\in\mathbb{N}^{\lcal{d}}}h_{\alpha,\beta} T^{\alpha}T^{*\beta},
\] 
 converges in operator norm. 
 {More generally, one can define $h(T, T^*)\in\mathcal{B}(H)$ whenever $\specT(T)\subset\Omega$ and $h\in\textup{Her}(\Omega),$  yielding what is known as (Agler’s) Hereditary Functional
Calculus.} We refer the reader to  
\cite[p. 290-291]{Polynomialarazyenglis} and, for further details and proofs of basic (and less basic) facts concerning Taylor's functional calculus, to \cite[Chapter IV - Section 30]{mullerbook}.  Here
we record the properties needed in the sequel that will often be used without comment.

\begin{lemma} \label{l:Her-calc}
  Suppose $\Omega\subseteq \CC^{\lcal{d}}$ is an open set   and $T$ is a
  $\lcal{d}$-tuple of bounded commuting operators on a Hilbert space with Taylor spectrum in $\Omega$.
\begin{enumerate}[(i)]\itemsep=6pt
    \item  The mapping $h\mapsto h(T, T^*)$ is linear over $\textup{Her}(\Omega)$.
    
\item If $\phi\in\textup{Hol}(\Omega)$, then $\phi(T, T^*)=\phi(T)$ in the sense of Taylor's functional calculus.

\item If $\psi=\bar{\phi}$ for $\phi\in\textup{Hol}(\Omega)$, then 
$\psi(T, T^*)=\phi(T)^*$ in the sense of Taylor's functional calculus.
    
\item If $g(z, w)=\overline{h(w, z)},$ then $g(T, T^*)=h(T, T^*)^*$.

    \item  If $\phi, \psi\in\textup{Hol}(\Omega)$ and
    $h\in\textup{Her}(\Omega)$, then 
    \[g(T, T^*)=\phi(T)h(T, T^*)\psi(T)^*,\]
 where  $g(z, w)=\phi(z)h(z, w)\overline{\psi(w)}\in \Her(\Omega).$
 
    \item  \label{i:her-calc:v}  If $h, \, h_n \in \textup{Her}(\Omega),$ and  
    $(h_n)$ converges to  $h$ in $\textup{Her}(\Omega)$, then $h_n(T, T^*)$ converges to $h(T, T^*)$ in operator norm. 

     \item  \label{i:for-coext}
       If $(p_n)$ and $(q_n)$ are sequences of polynomials that converge uniformly on compacta 
      to $f,g\in \hol(\Omega)$ respectively,   then 
       $p_n(z)\ol{q_n(w)}$ converges to $f(z)\ol{g(w)}$ in $\Her{(\Omega)}.$  
\end{enumerate}
\end{lemma}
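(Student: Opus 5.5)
The statement directly above is Lemma~\ref{l:Her-calc}, which records standard properties of the hereditary functional calculus. The plan is to fix a concrete construction of $h(T,T^*)$ and read every item off it. The natural construction goes through Taylor's functional calculus for the $2\lcal{d}$-tuple $(T\otimes I, I\otimes \ol{T})$. Equivalently, and more concretely, we use the left/right multiplication tuple $(L_T, R_{T^*})$ acting on $\cB(H)$, where $L_{T_j}X=T_jX$ and $R_{T_j^*}X=XT_j^*$. These commute, and their joint Taylor spectrum is contained in $\specT(T)\times \specT(T)^*\subseteq \Omega\times\tOmega$. For $h\in\Her(\Omega)$ put $\tilde h(z,u)=h(z,\ol u)$, which is holomorphic on $\Omega\times\tOmega$, and define
\[
h(T,T^*)=\tilde h(L_T,R_{T^*})(I),
\]
using Taylor's holomorphic calculus on $\cB(H)$.

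With this definition the algebraic items are immediate.
\begin{enumerate}[(i)]
\item Linearity follows from linearity of the Taylor calculus.
\item If $h$ depends only on $z$, the calculus for $(L_T,R_{T^*})$ reduces to that of $L_T$. Since $X\mapsto \phi(T)X$ is multiplicative and agrees with $L$ on polynomials, it is the calculus of $L_T$, so $h(T,T^*)=\phi(T)I=\phi(T)$.
\item The same argument, applied to $R_{T^*}$ and the function $\phi^*(u)=\ol{\phi(\ol u)}$, gives $\phi^*(T^*)=\phi(T)^*$.
\item The adjoint map $X\mapsto X^*$ is conjugate linear and interchanges $L$ and $R$. Conjugating the Cauchy--Weil integral representation therefore gives $h(T,T^*)^*=g(T,T^*)$ with $g(z,w)=\ol{h(w,z)}$.
\item The map $\tilde h\mapsto\tilde h(L,R)$ is multiplicative. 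Writing $\tilde g=(\phi\otimes 1)\,\tilde h\,(1\otimes\phi^*)$ gives $\tilde g(L,R)(I)=\phi(T)\,h(T,T^*)\,\psi(T)^*$, using that $L$ and $R$ commute.
\end{enumerate}

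For the power series consistency claim made before the lemma, apply the calculus to monomials: $z^\alpha\ol w^\beta$ maps to $T^\alpha T^{*\beta}$. Norm convergence of the series then follows from continuity, which is item~\ref{i:her-calc:v}.

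Item~\ref{i:her-calc:v} is the analytic heart and the main obstacle. The Taylor calculus is continuous from $\hol(U)$, with the compact-open topology, to the operators, whenever $U$ is an open neighborhood of the spectrum. I would prove this by choosing a compact $K\subset\Omega\times\tOmega$ that contains a neighborhood of $\specT(L_T,R_{T^*})$. The Martinelli-type integral representation then bounds $\|\tilde h(L,R)\|\le C_K\sup_K|\tilde h|$. Since $h_n\to h$ uniformly on $K$, the operators converge in norm. The delicate point is the spectral inclusion $\specT(L_T,R_{T^*})\subseteq \specT(T)\times\specT(T^*)$. I would cite this from \cite[Chapter IV]{mullerbook} (the tensor product or left–right multiplication spectral theorem) rather than reprove it.

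Item~\ref{i:for-coext} is elementary and does not involve $T$. Fix a compact set in $\Omega\times\tOmega$; it lies in some $K_1\times K_2^*$ with $K_1,K_2\subset\Omega$ compact. There
\[
|p_n(z)\ol{q_n(w)}-f(z)\ol{g(w)}|\le |p_n-f|\,|q_n|+|f|\,|q_n-g|.
\]
Both terms tend to zero uniformly, because the $q_n$ are uniformly bounded on $K_2$ by uniform convergence.
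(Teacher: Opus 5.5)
Your proposal is correct and is essentially the standard argument behind the paper's treatment. The paper gives no proof of its own but defers to Ambrozie--Engli\v{s}--M\"uller and to M\"uller's book, where $h(T,T^*)$ is defined as $\tilde h(L_T,R_{T^*})(I)$ through Taylor's calculus for the left/right multiplication tuple on $\cB(H)$, and the listed properties follow from linearity, multiplicativity, continuity and the inclusion $\specT(L_T,R_{T^*})\subseteq \specT(T)\times\specT(T)^*$, exactly as you describe. Two small repairs: in (v) the right factor should be $1\otimes\psi^*$ with $\psi^*(u)=\ol{\psi(\ol u)}$, not $1\otimes\phi^*$; and in (ii) agreement with $L$ on polynomials does not by itself identify $\phi\mapsto L_{\phi(T)}$ with the calculus of $L_T$, since polynomials need not be dense in $\hol(U)$, so use instead the intertwining property of the Taylor calculus with the maps $X\mapsto Xv$, $v\in H$, or Putinar's uniqueness theorem.
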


\begin{lemma}
 \label{l:her-calc+}
    Suppose $\ell$ is a reproducing  kernel  over a bounded domain $\Omega$ that 
   satisfies the conditions of Assumption~\ref{assume}. Assume the 
   domain $\Omega'$ contains the polynomially convex hull of $\specT (M^{\ell}_z)$.
    If  $h\in\Her(\Omega^\prime),$ then, for $a, b\in \Omega,$
\[
 \langle h(M^{\ell}_z, (M^\ell_z)^*) \ell_b,\ell_a \rangle = h(a,b)\ell(a, b).
\]
\end{lemma}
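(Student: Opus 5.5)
The plan is to reduce everything to the case of polynomial hereditary functions, for which the identity is a direct computation, and then pass to a limit. First I check the identity for a monomial $h(z,w)=z^{\alpha}\overline{w}^{\beta}$. Here $h(M^\ell_z,(M^\ell_z)^*)=(M^\ell_z)^{\alpha}((M^\ell_z)^{*})^{\beta}$. Since the $\ell_b$ are joint eigenvectors of $(M^\ell_z)^*$ with $(M^\ell_{z_j})^*\ell_b=\overline{b_j}\,\ell_b$ (Remark~\ref{r:assume-Omega-bounded}), I get
\[
\langle (M^\ell_z)^{\alpha}((M^\ell_z)^{*})^{\beta}\ell_b,\ell_a\rangle=\overline{b}^{\beta}\langle \ell_b,((M^\ell_z)^{*})^{\alpha}\ell_a\rangle=\overline{b}^{\beta}a^{\alpha}\ell(a,b).
\]
By linearity (Lemma~\ref{l:Her-calc}(i)) the identity then holds for every hereditary polynomial $p(z,\overline{w})$.

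Next I approximate a general $h\in\Her(\Omega')$ by hereditary polynomials. Let $K$ be the polynomially convex hull of $\specT(M^\ell_z)$. This set is compact and polynomially convex, it sits inside $\Omega'$, and it contains $\Omega$ by Remark~\ref{r:assume-Omega-bounded}. Using Remark~\ref{r:polycvx} with $K_1=K$ and $K_2=K_*$, I choose an open set $\Omega''$ with $K\subseteq\Omega''\subseteq K'\subseteq\Omega'$, where $K'$ is compact and polynomially convex. The function $(z,u)\mapsto h(z,\overline{u})$ is holomorphic on $\Omega'\times(\Omega')^*$. Oka--Weil applied to $K'\times K'_*$ then gives polynomials $p_n(z,u)$ with $p_n(z,\overline{w})\to h$ uniformly on compact subsets of $\Omega''\times\Omega''$, that is, in $\Her(\Omega'')$.

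Now I pass to the limit on both sides. Since $\specT(M^\ell_z)\subseteq\Omega''$, Lemma~\ref{l:Her-calc}\ref{i:her-calc:v} gives $p_n(M^\ell_z,(M^\ell_z)^*)\to h(M^\ell_z,(M^\ell_z)^*)$ in operator norm. This uses the fact that the hereditary calculus computed in $\Her(\Omega'')$ agrees with the one computed in $\Her(\Omega')$ for the restricted function, which holds by the consistency of the Taylor calculus under restriction of domain. Pointwise, $p_n(a,b)\to h(a,b)$ because $a,b\in\Omega\subseteq\Omega''$. Taking limits in the polynomial identity gives the claim.

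The main obstacle is the approximation step. Uniform convergence on compacta of $\Omega\times\Omega$ alone would not control the operators, because the Taylor spectrum reaches $\partial\Omega$. This is exactly why the hypothesis on $\Omega'$ is needed: it supplies an intermediate neighborhood $\Omega''$ of the spectrum on which polynomial approximation is available. I also have to justify that the calculus is compatible when the domain shrinks from $\Omega'$ to $\Omega''$; that compatibility is standard for the Taylor calculus.
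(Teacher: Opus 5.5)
Your proof is correct and takes the same approach as the paper. You approximate $h$ by hereditary polynomials on a neighborhood $\Omega''$ of the spectrum obtained from Remark~\ref{r:polycvx} and Oka--Weil, verify the identity for polynomials via $(M^\ell_{z_j})^*\ell_b=\overline{b_j}\,\ell_b$, and pass to the limit using operator-norm continuity of the hereditary calculus. You also spell out the monomial computation and the compatibility of the calculus when shrinking from $\Omega'$ to $\Omega''$, both of which the paper leaves implicit.
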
 

\begin{proof}
By  Remark~\ref{r:polycvx},  there exist 
an open set $\Omega''$ with $\specT(M^\ell_z)\subseteq \Omega''\subseteq \overline{\Omega''}\subseteq \Omega'$ and  a sequence of hereditary polynomials $\{p_n\}$ such that $p_n\to h$ uniformly over compact subsets of $\Omega''\times \Omega''$. In particular, $p_n\to h$ in $\Her(\Omega'').$ Since
\begin{equation*} 
 \langle p_n(M^{\ell}_z, (M^\ell_z)^*) \ell_b,\ell_a \rangle = p_n(a,b)\ell(a, b)
\end{equation*}
for all $a, b\in\Omega$ and $n\ge 1,$ and $ p_n(M^{\ell}_z, (M^\ell_z)^*)\to \ell(M^{\ell}_z, (M^\ell_z)^*)  $ in operator norm, the desired equality follows in the limit on $n.$
\end{proof}

The use of the strong regularity hypotheses is confined to the
following lemma.
    
\begin{lemma} \label{approx1-ell}
  If $(k,\ell)$ is a strongly regular pair with strong exhaustion
  $(r_n,\Omega^\prime)$ and $\ell_n(z,w)=\ell(r_n(z),r_n(w)),$ 
  then $1/\ell_n\big(M_z^\ell,(M_z^\ell)^*\big)$ is defined via
  the hereditary functional calculus and, for all $n\ge 1,$
    \[
     1/\ell_n\big(M_z^\ell,(M_z^\ell)^*\big)  \succeq 0.
    \]
\end{lemma}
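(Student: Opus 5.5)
We need to show two things: that $1/\ell_n(M_z^\ell,(M_z^\ell)^*)$ is defined through the hereditary functional calculus, and that it is positive semidefinite. The plan is to approximate $1/\ell$ by hereditary polynomials, transport this through $r_n$, and then use the factorization $\ell=\ell_n g_n$ to get positivity.

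\textbf{Definedness.} By item (v) of Definition~\ref{def:regular}, $\ell$ is non-vanishing, so $1/\ell(z,\ol w)$ is holomorphic on $\Omega\times\Omega^*$. Since $r_n(\overline\Omega)\subset\Omega$ by the exhaustion and $r_n$ is a polynomial, the function $(z,w)\mapsto 1/\ell(r_n(z),r_n(w))$ is well defined and hereditary on some open neighborhood $U_n$ of $\overline\Omega$ with $r_n(U_n)\subset\Omega$. By item (i), $\specT(M_z^\ell)=\overline\Omega\subset U_n$, so $1/\ell_n(M_z^\ell,(M_z^\ell)^*)$ is defined. Moreover, by item (v) there are analytic polynomials $q_j(z,\ol w)\to 1/\ell(z,\ol w)$ uniformly on compacta of $\Omega\times\Omega$. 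Since $r_n(\overline{U_n'})$ is a compact subset of $\Omega$ for a slightly smaller neighborhood $U_n'$, the hereditary polynomials $q_j(r_n(z),\ol{r_n(w)})$ converge to $1/\ell_n$ in $\Her(U_n')$. Lemma~\ref{l:Her-calc}\ref{i:her-calc:v} then gives
\[
1/\ell_n(M_z^\ell,(M_z^\ell)^*)=\lim_j q_j(r_n(M_z^\ell),r_n(M_z^\ell)^*)
\]
in operator norm.

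\textbf{Positivity via Lemma~\ref{l:her-calc+}.} Let $X=1/\ell_n(M_z^\ell,(M_z^\ell)^*)$. By Lemma~\ref{l:her-calc+}, applied with $\Omega'=U_n'$, which contains $\overline\Omega$ and hence the polynomially convex hull of the spectrum,
\[
\langle X\ell_b,\ell_a\rangle=\frac{\ell(a,b)}{\ell_n(a,b)}=g_n(a,b),
\]
using the strong exhaustion identity $\ell=\ell_n g_n$. Since $g_n$ is a positive kernel, for any finite combination $f=\sum c_i\ell_{a_i}$ we get
\[
\langle Xf,f\rangle=\sum_{i,j} c_j\ol{c_i}\,g_n(a_i,a_j)\ge 0.
\]
Finite combinations of kernel functions are dense in $\cH_\ell$ and $X$ is bounded, so $X\succeq0$.

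\textbf{Main obstacle.} The delicate step is justifying that $1/\ell_n$ is hereditary on a neighborhood of the polynomially convex hull of $\specT(M_z^\ell)$, which is what Lemma~\ref{l:her-calc+} needs. Here that hull is $\overline\Omega$, by items (i) and (ii) of Definition~\ref{def:regular}. The neighborhood comes from $r_n(\overline\Omega)\subset\Omega$ together with the continuity of $r_n$, which yields an open $U_n\supset\overline\Omega$ mapped into $\Omega$. Once this is in place, the rest is a direct computation.
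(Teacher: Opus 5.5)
Your proposal is correct and follows essentially the paper's argument. Definedness comes from $r_n(\ol{\Omega})\subseteq\Omega$ together with $\ell$ being non-vanishing; the paper phrases this through the spectral mapping theorem, and you phrase it through the open neighborhood $r_n^{-1}(\Omega)\supseteq\ol{\Omega}$. Then Lemma~\ref{l:her-calc+} gives $\langle X\ell_b,\ell_a\rangle=\ell(a,b)/\ell_n(a,b)=g_n(a,b)$, and positivity follows from the strong exhaustion. Your extra polynomial approximation by $q_j(r_n(z),\ol{r_n(w)})$ is not needed for this lemma, but it is harmless and essentially matches the identification the paper uses later in Theorem~\ref{BLHISCOEXT}.
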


\begin{proof}
 For each $n,$ by the spectral mapping theorem for the Taylor spectrum, 
\[
 \specT(r_n(M_z^\ell)) = r_n(\specT(M_z^\ell))
  = r_n(\ol{\Omega}) \subseteq \Omega,
\]
 where the last inclusion comes from the exhaustion assumption.
 Since $\ell$ is non-vanishing (item~\ref{i:regular:lnot0} of
 Definition~\ref{def:regular}) on $\Omega\times\Omega$, it
 follows that $1/\ell_n\big(M_z^\ell,(M_z^\ell)^*\big)$
 is defined by the hereditary functional calculus and moreover, 
 from Lemma~\ref{l:her-calc+}, 
\[
 \big\langle 1/\ell_n\big(M_z^\ell,(M_z^\ell)^*\big)\ell_b, \ell_a\big\rangle = \frac{\ell(a,b)}{\ell_n(a,b)}.
\]
Since, by hypothesis (item~\ref{i:annoying} of Definition~\ref{kexhaust}), $\ell/\ell_n$ is positive semi-definite, the desired conclusion follows. 
\end{proof}

   This subsection concludes with the definition of a $1/k$ contraction.
   Given a regular pair of kernels $(k,\ell)$ over a domain $\Omega\subseteq \CC^{\lcal{d}},$
   by Definition~\ref{kexhaust} the function $1/k$ extends analytically to a domain $\Omega^\prime\supseteq \ol{\Omega}.$
   Hence, if $T=(T_1,\dots,T_{\lcal{d}})$ is a tuple of operators on Hilbert space with Taylor spectrum in $\overline{\Omega}$, then $1/k(T,T^*)$ is defined
   via the hereditary functional calculus.

\begin{definition}
    \label{1kdef}
       Given an analytic  kernel  $k$ over a domain $\Omega\subseteq \CC^{\lcal{d}}$ such that $\frac{1}{k}$ 
       extends analytically to a domain $\Omega^\prime\supseteq \ol{\Omega},$ 
     a tuple $T=(T_1,\dots,T_{\lcal{d}})$ of operators on Hilbert space with Taylor spectrum in $\overline{\Omega}$
     is a $1/k$ contraction if  $1/k(T,T^*)\succeq 0$. 
\qed\end{definition}

\subsection{Consequences of regularity}\label{subsecconseq}
 This subsection provides some perspective on the spectral condition of item~\ref{i:regular:specT} of 
 Definition~\ref{def:regular}.
 Suppose $\lambda$ is an analytic kernel over a domain $\Omega\subseteq \CC^{\lcal{d}}$
 that satisfies the conditions of Assumption~\ref{assume}. 
 By Remark~\ref{r:assume-Omega-bounded}, $\Omega\subseteq \specT(M_z^\lambda).$
 Density of polynomials in $\cH_\lambda$  (item~\ref{i:assume:ii} of Assumption~\ref{assume}) implies the existence
 of a unique maximal (maximum) domain for $\lambda$
 and hence there is no harm in assuming $\Omega$ is
 already maximal. See Definition~\ref{maximaldef} and Proposition~\ref{r:odd}.
 Moreover, if $\lambda$ satisfies the spectral condition of item~\ref{i:regular:specT} of Definition~\ref{def:regular},
 namely $\specT(M_z^\lambda) = \ol{\Omega},$
 and, in addition, $\Omega$ is the interior of its closure, 
 then in fact $\Omega$ is the maximum domain for $\lambda.$ 
 See Proposition~\ref{p:spec-cond}.  A standard example shows that it is not always
 the case that $\specT(M_z^\lambda) = \ol{\Omega}$ for the unique maximal domain $\Omega$ of $\lambda.$
 The reader satisfied with this summary can safely skip to  Section~\ref{s:moreBLH}. 

\subsubsection{Maximal domains}

{We adopt the following definition of maximal domain.} 

 \begin{definition}\label{maximaldef}
 Assume $\genl$ is an analytic reproducing kernel over a bounded domain $\Omega\subseteq \CC^{\lcal{d}}$. A domain $\Gamma\supset \Omega$ is said to be a \textit{maximal domain} of $\genl$ if:
 \begin{itemize}
     \item[(i)] $\genl$ extends to an analytic kernel on $\Gamma\times\Gamma$ and

     \item[(ii)] there does not exist a domain $\Gamma'\supset\Gamma$ properly containing $\Gamma,$ such that $\genl$ extends to an analytic kernel on $\Gamma'\times\Gamma'$.
 \end{itemize}
\end{definition}

\begin{proposition}  \label{r:odd}
  If  $\lambda$ is an analytic kernel over the  domain $\Omega_\lambda\subseteq \CC^{\lcal{d}}$ that 
   satisfies the conditions of Assumption~\ref{assume}, then, there is a unique maximal domain of $\lambda$.
\end{proposition}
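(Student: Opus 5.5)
The plan is to build the maximal domain directly as a union of all domains to which $\lambda$ extends, and to show that this union is again such a domain. The tool that makes the extensions compatible is the density of polynomials in $\cH_\lambda$ (item~\ref{i:assume:ii} of Assumption~\ref{assume}).

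Let $\mathcal{G}$ be the family of domains $\Gamma\supseteq\Omega_\lambda$ such that $\lambda$ extends to an analytic kernel $\lambda_\Gamma$ on $\Gamma\times\Gamma$. Here "analytic kernel" means positive semi-definite, holomorphic in $z$ and antiholomorphic in $w$. Since $\Omega_\lambda$ is connected and open, the identity theorem (applied to $(z,w)\mapsto\lambda_\Gamma(z,\ol{w})$ on the connected set $\Gamma\times\Gamma^*$) shows that $\lambda_\Gamma$ is uniquely determined by $\lambda$.

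The key step is to show that two extensions are compatible. Take $\Gamma_1,\Gamma_2\in\mathcal{G}$; we need $\lambda_{\Gamma_1}=\lambda_{\Gamma_2}$ on $(\Gamma_1\cap\Gamma_2)^2$, and $\Gamma_1\cap\Gamma_2$ need not be connected. I would argue via the Hilbert space, as follows.
\begin{itemize}
\item For $w\in\Gamma_j$, the function $\lambda_{\Gamma_j}(\cdot,w)$ restricted to $\Omega_\lambda$ lies in $\cH_\lambda$, by the standard restriction of kernels.
\item Restriction gives a map $\cH_{\lambda_{\Gamma_j}}\to\cH_\lambda$ that is isometric on the span of $\{\lambda_{\Gamma_j}(\cdot,w):w\in\Omega_\lambda\}$. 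Its orthogonal complement consists of functions vanishing on $\Omega_\lambda$, hence vanishing identically by the identity theorem on the connected domain $\Gamma_j$. So restriction is a unitary $\cH_{\lambda_{\Gamma_j}}\cong\cH_\lambda$, and every $f\in\cH_\lambda$ extends holomorphically to $\Gamma_j$.
\item For $w\in\Gamma_1\cap\Gamma_2$, evaluation at $w$ is bounded on $\cH_\lambda$ via either extension. On polynomials, both evaluations equal the literal value $p(w)$.
\item By density of polynomials, the two evaluation functionals at $w$ coincide. Hence the two representing vectors $\lambda_{\Gamma_1}(\cdot,w)|_{\Omega_\lambda}$ and $\lambda_{\Gamma_2}(\cdot,w)|_{\Omega_\lambda}$ agree.
\item Evaluating both at $u\in\Gamma_1\cap\Gamma_2$, using the same polynomial-density argument, gives $\lambda_{\Gamma_1}(u,w)=\lambda_{\Gamma_2}(u,w)$.
\end{itemize}
This step, which rests on density of polynomials to rule out branching over disconnected intersections, is the main obstacle. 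Without it, different analytic continuations could disagree.

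Now set $\Gamma_{\max}=\bigcup_{\Gamma\in\mathcal{G}}\Gamma$. It is open, and it is connected because every member contains $\Omega_\lambda$. Define $\lambda_{\max}(u,w)$ for arbitrary $u,w\in\Gamma_{\max}$ by $\langle E_w,E_u\rangle_{\cH_\lambda}$, where $E_w\in\cH_\lambda$ represents the bounded functional $p\mapsto p(w)$ extended by density. Every $w\in\Gamma_{\max}$ lies in some $\Gamma\in\mathcal{G}$, so $E_w$ exists, and by the compatibility step it does not depend on the choice of $\Gamma$. This definition is automatically positive semi-definite. It agrees with each $\lambda_\Gamma$ on $\Gamma\times\Gamma$, and therefore it is sesqui-holomorphic locally. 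The only possible issue is joint analyticity when $u$ and $w$ lie in different members; this follows because $w\mapsto E_w$ is antiholomorphic as an $\cH_\lambda$-valued map, since locally it equals $\lambda_\Gamma(\cdot,w)$ transported by the unitary above. Consequently $\Gamma_{\max}\in\mathcal{G}$.

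Finally, $\Gamma_{\max}$ is maximal because any domain in $\mathcal{G}$ is contained in it. It is unique because any other maximal $\Gamma$ is contained in $\Gamma_{\max}$, and maximality of $\Gamma$ then forces $\Gamma=\Gamma_{\max}$.
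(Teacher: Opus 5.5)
Your proof is correct and follows essentially the paper's route. The paper fixes an orthonormal basis $\{u_n\}$ of $\cH_\lambda$ consisting of polynomials and defines the extension on $\Omega_f\cup\Omega_g$ by $H(z,w)=\sum_n u_n(z)\overline{u_n(w)}$; this is the coordinate form of your $\langle E_w,E_u\rangle_{\cH_\lambda}$, since $E_w=\sum_n\overline{u_n(w)}\,u_n$. The paper also uses the same restriction-is-unitary fact (Lemma~\ref{l:aspire}) and the same vector-valued holomorphy argument. The only difference is organizational: the paper merges two extensions at a time and then invokes Zorn's lemma, whereas you take the union of all admissible domains at once. That gives the maximum directly, and it makes your separate compatibility step redundant, because $E_w$ is independent of $\Gamma$ by construction.
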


The following lemma is standard (see e.g. \cite[Section 5.4]{PaulsenRagh}); we omit the proof.

\begin{lemma}
\label{l:aspire}
  Suppose $h$ and $g$ are analytic reproducing kernels over  connected domains $\Omega_h\subseteq \Omega_g \subseteq \CC^{\lcal{d}}.$
  If the restriction of $g$ to $\Omega_h$ is $h,$ then   $\cH_h=\cH_g$
  and $\Mult(\cH_h)=\Mult(\cH_g).$  More precisely, the map sending a function
  $\psif:\Omega_g\to \CC$ to $\psif\vert_{\Omega_h}$ induces complete isometric isomorphisms
  between $\cH_g$ and $\cH_h$ and $\Mult(\cH_g)$ and $\Mult(\cH_h).$
\end{lemma}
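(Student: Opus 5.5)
The plan is to prove this via the restriction map and the uniqueness of analytic continuation. Write $\rho$ for the map $\psif\mapsto\psif\vert_{\Omega_h}$.

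\emph{Step 1: $\rho$ is a unitary from $\cH_g$ onto $\cH_h$.} On kernel functions define $\rho(g_w)=h_w$ for $w\in\Omega_h$. Since $h=g$ on $\Omega_h\times\Omega_h$, the Gram matrices agree, so this extends to an isometry from $\overline{\operatorname{span}}\{g_w: w\in\Omega_h\}$ onto $\cH_h$. The key point is that $\{g_w:w\in\Omega_h\}$ has dense span in $\cH_g$. Indeed, if $f\in\cH_g$ is orthogonal to all of them, then $f$ vanishes on the open set $\Omega_h$. Elements of $\cH_g$ are holomorphic on $\Omega_g$, being locally uniform limits of combinations of kernel functions, which are analytic because $g$ is an analytic kernel. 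Since $\Omega_g$ is connected, $f\equiv 0$. Finally, the extended isometry is exactly restriction: for $f\in\cH_g$ and $a\in\Omega_h$,
\[
(\rho f)(a)=\langle \rho f,h_a\rangle=\langle f,g_a\rangle=f(a).
\]
The same argument applies after tensoring with any coefficient space $\HSE$, so $\rho\otimes I$ is unitary as well.

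\emph{Step 2: multipliers.} Let $\phi\in\Mult(\cH_g\otimes\CC^{m\times m})$. Then $\phi\vert_{\Omega_h}$ is a multiplier of $\cH_h\otimes\CC^m$, and $M_{\phi\vert_{\Omega_h}}=\rho M_\phi\rho^*$, because $\rho$ intertwines pointwise multiplication. Hence the restriction is completely isometric.

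For surjectivity, take $\psi\in\Mult(\cH_h)$. The operator $T=\rho^*M_\psi\rho$ on $\cH_g$ satisfies $T^*g_w=\overline{\psi(w)}g_w$ for $w\in\Omega_h$. One can then attempt to extend $\psi$ via $\tilde\psi(z)=(Tg_{w_0})(z)/g_{w_0}(z)$, but this needs $g_{w_0}$ to be nonvanishing. A cleaner route is to define $\tilde\psi=T1$, which requires $1\in\cH_g$; that is not guaranteed.

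The general argument I would use instead is as follows. For any $f\in\cH_g$, the function $Tf$ is holomorphic on $\Omega_g$ and equals $\psi f$ on $\Omega_h$. Choose a point $z_0\in\Omega_g$ and some $f$ with $f(z_0)\neq0$, which exists unless $g(z_0,z_0)=0$. Set $\tilde\psi=Tf/f$ near $z_0$. This is well defined independently of $f$: for two such functions, $(Tf_1)f_2=(Tf_2)f_1$ holds on $\Omega_h$, since both sides equal $\psi f_1f_2$ there, and hence on all of $\Omega_g$ by the identity theorem. The local definitions therefore patch to a function $\tilde\psi$ on $\{z: g(z,z)\neq0\}$ with $\tilde\psi f=Tf$. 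At points where every $f$ vanishes, the multiplier's value does not affect the action, so $\tilde\psi$ may be set arbitrarily there, for instance by continuity. Then $M_{\tilde\psi}=T$ is bounded, and the restriction of $\tilde\psi$ is $\psi$.

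\emph{Main obstacle.} The only delicate point is the well-definedness of the extension $\tilde\psi$ in Step 2, where the identity theorem on the connected set $\Omega_g$ does the essential work.
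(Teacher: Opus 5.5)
Your proof is correct. The paper does not prove this lemma; it cites Paulsen--Raghupathi, Section~5.4. There, restricting a kernel to a subset makes $f\mapsto f\vert_{\Omega_h}$ a coisometry of $\cH_g$ onto $\cH_h$, and the identity theorem makes its kernel trivial, so the map is unitary.

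Your Step~1 is essentially that argument. Density of $\{g_w : w\in\Omega_h\}$ in $\cH_g$ is the same thing as injectivity of the restriction map, and you match the Gram matrices instead of invoking the coisometry.

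Your Step~2 supplies the multiplier half, which the paper leaves implicit. Defining $\tilde\psi=Tf/f$ locally and checking consistency through $(Tf_1)f_2=(Tf_2)f_1$ on $\Omega_g$ works without assuming $1\in\cH_g$ or that $g$ is non-vanishing.

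Three small touch-ups:
\begin{enumerate}
\item At points $z\in\Omega_h$ where every element of $\cH_h$ vanishes, set $\tilde\psi(z):=\psi(z)$ rather than choosing the value ``by continuity''. Otherwise the restriction of $\tilde\psi$ need not literally equal $\psi$ there. The operators do not see these values, so this is only bookkeeping.
\item For matrix-valued $\phi$, write $\Mult(\cH_g\otimes\CC^m)$ rather than $\Mult(\cH_g\otimes\CC^{m\times m})$.
\item Note explicitly that surjectivity at every matrix level follows by extending each entry.
\end{enumerate}
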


   \begin{proof}[Proof of Proposition~\ref{r:odd}]
    Assume $f,g$ are extensions of $\lambda$ to domains $\Omega_f$ and $\Omega_g$, respectively.
        Since polynomials are dense in $\cH_\lambda,$ there is an orthonormal basis $\{u_n:n\in\NN\}$
    of $\cH_\lambda$ consisting of polynomials. Thus,
 \begin{equation*}
     \lambda(z,w) = \sum u_n(z)\ol{u_n(w)}
 \end{equation*}
  with absolute and uniform convergence on compact subsets of $\Omega_\lambda \times\Omega_\lambda.$ Next, consider the canonical restriction mappings
  \[\iota_f: \mathcal{H}_{f}\to \mathcal{H}_\lambda, \qquad \iota_g: \mathcal{H}_{g}\to \mathcal{H}_\lambda,\]
  obtained by restriction to $\Omega_\lambda.$ By Lemma \ref{l:aspire}, both $\iota_f$ and $\iota_g$ are isometric isomorphisms. In particular, $\{\iota_f^{-1}(u_n)\}$ and $\{\iota^{-1}_g(u_n)\}$ are orthonormal bases for $\mathcal{H}_f$ and $\mathcal{H}_g,$ respectively, which yields 
  \begin{equation}
     \label{f,g:series}f(z, w)=\sum \iota^{-1}_f(u_n(z)) \overline{\iota^{-1}_f(u_n(w))}, \quad g(z, w)=\sum \iota^{-1}_g(u_n(z)) \overline{\iota^{-1}_g(u_n(w))},
     \end{equation}
  with absolute and uniform convergence on the respective domains. Now, let $v_n$ denote the (unique) function 
   on $\Gamma:=\Omega_f\cup\Omega_g$ that agrees with $u_n$ on $\Omega_\lambda,$ for any $n$ (note that, since each $u_n$ is a polynomial, $v_n$ is that same polynomial, but viewed as a function on a larger set). Writing $v(z)=(v_n(z))$, define $H:\Gamma\times\Gamma\to\mathbb{C}$ by 
  \[H(z,w)=\langle v(z), v(w) \rangle_{\ell^2}.\]
  Since $\iota_f^{-1}(u_n)=v_n|_{\Omega_f}$ and $\iota_g^{-1}(u_n)=v_n|_{\Omega_g}$, $\ell^2$-summability in \eqref{f,g:series} implies $\ell^2$-summability for $v(z).$ In particular, $v(z)$ is an entry-wise holomorphic and locally bounded function; thus, it is an $\ell^2$-valued holomorphic function. Hence,  $H$ is a holomorphic kernel over $\Gamma\times\Gamma$  that extends both $f$ and $g.$ In particular, $\lambda$ does not have two distinct maximal extensions.
 That a (necessarily unique) maximal domain exists follows from a standard application of Zorn's Lemma.
\end{proof}

\subsubsection{The maximal domain and the spectral condition}

\begin{proposition}
    \label{p:oops-extended}
      Suppose $h$ is an analytic reproducing kernel over a domain $\Omega_h\subseteq \CC^{\lcal{d}}$ 
      that satisfies Assumption~\ref{assume}. If $g$ is a reproducing kernel over a domain $\Omega_g$
      that contains $\Omega_h$ and $g\mid_{\Omega_h\times \Omega_h}=h,$ then $\specT(M^h_z) \supseteq \Omega_g.$
\end{proposition}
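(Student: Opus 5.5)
The plan is to show that every point $\zeta\in\Omega_g$ lies in $\specT(M^h_z)$ by producing approximate (in fact exact) joint eigenvectors for the adjoint tuple $(M^h_z)^*$. For $\zeta\in\Omega_h$ this is Remark~\ref{r:assume-Omega-bounded}: $h_\zeta$ is a joint eigenvector of $(M^h_z)^*$ with eigenvalue $\ol{\zeta}$. For general $\zeta\in\Omega_g$, the natural candidate is $g_\zeta=g(\cdot,\zeta)$ restricted to $\Omega_h$.

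First, by Lemma~\ref{l:aspire}, restriction to $\Omega_h$ induces an isometric isomorphism $\iota:\cH_g\to\cH_h$ and a complete isometric isomorphism $\Mult(\cH_g)\to\Mult(\cH_h)$. Lemma~\ref{l:aspire} is stated for analytic kernels; if $g$ is merely a kernel, I would first observe, via the argument in the proof of Proposition~\ref{r:odd} (a polynomial orthonormal basis of $\cH_h$ extends to $\Omega_g$), that $g$ is forced to be analytic, or else simply assume it, as the intended use is for analytic extensions. In particular, each coordinate function $z_j$ is a multiplier of $\cH_g$, and
\[
\iota M^g_{z_j}\iota^{-1}=M^h_{z_j}.
\]
Hence the tuples $M^g_z$ and $M^h_z$ are unitarily equivalent, and $\specT(M^h_z)=\specT(M^g_z)$.

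Second, for each $\zeta\in\Omega_g$ the kernel function $g_\zeta\in\cH_g$ is nonzero. Indeed, $g(\zeta,\zeta)>0$, since otherwise every function in $\cH_g$ vanishes at $\zeta$; this would contradict the fact that the constant $1$ lies in $\cH_h=\cH_g$, which follows from density of polynomials together with $z_j$ being multipliers (or from the existence of any nonzero polynomial in $\cH_h$ not vanishing at $\zeta$). The reproducing property then gives $(M^g_{z_j})^*g_\zeta=\ol{\zeta_j}\,g_\zeta$. So $\ol{\zeta}$ is a joint eigenvalue of $(M^g_z)^*$, and therefore $\zeta\in\specT(M^g_z)$. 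Here I use the standard facts that joint eigenvalues lie in the Taylor spectrum and that $\specT(T^*)=\ol{\specT(T)}$.

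Combining the two steps gives $\Omega_g\subseteq\specT(M^g_z)=\specT(M^h_z)$. The main obstacle is justifying the applicability of Lemma~\ref{l:aspire}, specifically that the extension $g$ yields the same space of multipliers so that $M^g_z$ is bounded and unitarily equivalent to $M^h_z$. If that is delicate, I would avoid it by working directly in $\cH_h$: set $f=\iota(g_\zeta)\in\cH_h$ and verify $\langle M^h_{z_j}u,f\rangle=\zeta_j\langle u,f\rangle$ for polynomials $u$. This holds because $\langle u,f\rangle_{\cH_h}=\langle \iota^{-1}u,g_\zeta\rangle_{\cH_g}=u(\zeta)$, using that $\iota^{-1}u$ is the polynomial $u$ on $\Omega_g$. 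The identity then extends to all of $\cH_h$ by density of polynomials and boundedness of $M^h_{z_j}$.
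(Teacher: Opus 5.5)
Your argument is essentially the paper's proof: the restriction unitary from Lemma~\ref{l:aspire} intertwines $M^g_{z_j}$ with $M^h_{z_j}$, and the kernel functions $g(\cdot,\zeta)$, $\zeta\in\Omega_g$, are joint eigenvectors of $(M^g_z)^*$ with eigenvalue $\ol{\zeta}$. Your check that $g(\zeta,\zeta)>0$ is a detail the paper leaves implicit.

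One correction. Your remark that $g$ is forced to be analytic is false. Take $h(z,w)=(1-z\ol{w})^{-1}$ on $\DD$ and $\Omega_g=2\DD$. Set $g(z,w)=h(\pi(z),\pi(w))$, where $\pi$ is the identity on $\DD$ and $\pi\equiv 0$ off $\DD$. Then $g$ is a kernel on $\Omega_g$ extending $h$, but $\specT(M^h_z)=\ol{\DD}\not\supseteq\Omega_g$. So analyticity of $g$ must be taken as a hypothesis, which is what the paper does: it is part of the standing Assumption~\ref{assume} and a hypothesis of Lemma~\ref{l:aspire}. Your fallback argument needs it as well, since otherwise $\iota^{-1}u$ need not equal $u$ on $\Omega_g$.
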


\begin{proof}
    Given a multiplier $\varphi:\Omega_g\to\CC$, let $M^g_\varphi$ denote the corresponding
    multiplication operator $M^g_\varphi:\cH_g\to\cH_g.$ Define $M^h_\varphi$ similarly. Lemma~\ref{l:aspire} says that the restriction map $U: \psif\mapsto \psif\vert_{\Omega_h}$
  is an isometric isomorphism between $\cH_g$ and $\cH_h$. The operator $U$ clearly satisfies
     $UM^g_\varphi= M^h_{\varphi\mid_{\Omega_h}} U.$  Thus, for $w\in\Omega_g,$
\[
   \ol{\varphi(w)} U g(\cdot,w) =   U (M^g_\varphi)^*  g(\cdot,w)=  (M^h_{\varphi\mid_{\Omega_h}})^* U g(\cdot,w).
\]
  In particular, with $\varphi(z)=z_j,$  it follows that $U g(\cdot,w)$ is a joint eigenvector of
  $M^h_z$ with corresponding eigenvalue $w.$ Hence, $w\in \specT(M^h_z)$, establishing the result.
\end{proof}

The following proposition says that, under the spectral
assumption $\ol{\Omega}=\specT(M^{\genl}_z)$ of item~\ref{i:regular:specT} of Definition~\ref{def:regular},
it is harmless to assume that $\Omega$ is a maximal (thus the
maximum) domain of $\ell.$ For convenience, set $S=M_z^{\genl}.$

 \begin{proposition}
 \label{p:spec-cond}
   Suppose  $\genl$ is an analytic kernel over the domain $\Omega\subseteq \CC^{\lcal{d}}$ that  
   satisfies the conditions of Assumption~\ref{assume} and let $\Gamma$ denote 
   a maximal   domain of $\genl.$  If $\ol{\Omega} = \specT(S),$ then  $\ol{\Gamma}=\specT(S).$

   In particular, if $\Omega =\specT(S)^\circ,$
   then $\Omega=\Gamma.$  
 \end{proposition}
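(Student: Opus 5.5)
We need to show $\overline{\Gamma}=\sigma_T(S)$ given $\overline{\Omega}=\sigma_T(S)$, where $\Gamma\supseteq\Omega$ is the maximal domain of $\lambda$. The plan is to prove the two inclusions separately.

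For the inclusion $\sigma_T(S)\subseteq\overline{\Gamma}$, note that $\Omega\subseteq\Gamma$, so
\[
\sigma_T(S)=\overline{\Omega}\subseteq\overline{\Gamma}.
\]

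For the reverse inclusion, apply Proposition~\ref{p:oops-extended} with $h=\lambda$ on $\Omega_h=\Omega$ and $g$ the extension of $\lambda$ to $\Omega_g=\Gamma$. This yields $\Gamma\subseteq\sigma_T(S)$. Since the Taylor spectrum is compact, taking closures gives $\overline{\Gamma}\subseteq\sigma_T(S)$. Hence
\[
\overline{\Gamma}=\sigma_T(S)=\overline{\Omega}.
\]

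For the ``in particular'' statement, suppose $\Omega=\sigma_T(S)^\circ$. Since $\Gamma$ is open and $\Gamma\subseteq\overline{\Gamma}=\sigma_T(S)$, we have $\Gamma\subseteq\sigma_T(S)^\circ=\Omega$. Combined with $\Omega\subseteq\Gamma$, this gives $\Gamma=\Omega$.

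The only step needing care is the application of Proposition~\ref{p:oops-extended}. That proposition requires $h$ to satisfy Assumption~\ref{assume}, which $\lambda$ does on $\Omega$. It also needs the extension $g$ to $\Gamma$ to be a reproducing kernel restricting to $\lambda$, which holds by Definition~\ref{maximaldef}. The point is that the proposition is applied to the original $\lambda$ on $\Omega$, so we never need to know that the extended kernel on $\Gamma$ satisfies Assumption~\ref{assume}. The restriction isomorphism of Lemma~\ref{l:aspire} is what identifies $S$ with the shift on $\mathcal{H}_g$, and it is used inside that proposition, so nothing further is required.
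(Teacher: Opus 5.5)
Your proof is correct and matches the paper's argument: $\specT(S)=\ol{\Omega}\subseteq\ol{\Gamma}$ because $\Omega\subseteq\Gamma$. Proposition~\ref{p:oops-extended} gives $\Gamma\subseteq\specT(S)$, and since the spectrum is closed this upgrades to $\ol{\Gamma}\subseteq\specT(S)$. Finally, $\Gamma$ is open, so $\Gamma\subseteq\specT(S)^\circ=\Omega$, which settles the ``in particular'' statement.
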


\begin{remark}\rm
  In the context of Proposition~\ref{p:spec-cond}, the assumption
  that the given domain $\Omega$ is connected is not essential. 
  Suppose  $\Omega$ is open but not necessarily connected
   and $\Omega=\cup_j \Omega_j$ is a (necessarily) at most countable
   union of  disjoint  domains. Each $\Omega_j$ is contained in a maximal
   domain $\Gamma_j\subset \Omega_j.$ Let $\Gamma=\cup \Gamma_j.$ 
   Each connected component $\Gamma_m$ of $\Gamma$  is a maximal
   domain (for the $\Omega_j$ it contains) and 
   $\Gamma_m\subseteq \specT(S).$ Moreover, 
   the space $\cH_\ell$ decomposes as a direct sum
   of reducing subspaces $\cH_{\genl,m}$ for $S$ corresponding 
   to the reproducing kernel $\genl_{\Gamma_m\times\Gamma_m}$
   and $\Gamma_m \subseteq \specT(S|_{\cH_{\genl,m}}).$
\qed\end{remark}

 \begin{proof}[Proof of Proposition \ref{p:spec-cond}]
   By the definition of a maximal domain for a kernel $\genl$
   over a domain $\Omega,$ we have $\Omega\subseteq \Gamma.$
   Let $S=(S_1,\dots,S_\lcal{d})$.
   By Assumption~\ref{assume} these operators are (defined and)  bounded.  
    Since $\genl$ extends to a reproducing kernel over $\Gamma,$ it follows from Proposition~\ref{p:oops-extended} that each point $w\in \Gamma$ is
    a (joint) eigenvalue for $S^*.$ Thus, $\Gamma\subseteq \specT(S),$  and so 
\[
  \specT(S) = \ol{\Omega} \subset \ol{\Gamma} \subseteq \specT(S) . 
\]
Finally, if $\Omega =\specT(S)^\circ$, then 
\[
  \specT(S)^\circ=\Omega\subset\Gamma\subset \specT(S)^{\circ}. \qedhere
\]
 \end{proof}

It should, however, be noted that maximality of the underlying domain does not  imply the spectral assumption,
a result implicit in \cite[Theorem~8]{Shields} and the surrounding discussion.
We provide a concrete example below.

\begin{example}\rm
 \label{eg:spec-not-omega}
    Let \df{$\DD$} denote the unit disk in $\CC$ and $k:\DD\times \DD$ the kernel,
\[
 k(z,w)= \sum_{n=0}^\infty (z\overline{w})^{2n} + \sum_{n=0}^\infty \left ( \frac{z\overline{w}}{2}\right)^{2n+1}.
\]
 A straightforward computation shows that the domain of convergence of $k$ is $\DD\times \DD$ (and $k$ does not
 extend to a kernel over a domain that contains $\overline{\DD}$), but $\sigma(M_z)= 2\overline{\DD}.$
 \qed
\end{example}

\section{A closer look at the BLH condition} 
\label{s:moreBLH}

The notion of a BLH pair extends to pairs of kernels rather more general than regular pairs. We begin with a standard reformulation of the BLH property that is used repeatedly in the sequel; see Proposition~\ref{l_m/k}. Next, we provide some natural sufficient conditions (weaker than regularity) for the existence of a relative 
abundance of  $\Mult(\cH_\genk)$-invariant c subspaces of $\cH_\genl\otimes \HSE$; see Proposition~\ref{allequiv} and Corollary~\ref{c:allequiv}. Without such conditions, the property of being a BLH pair might not be very interesting; see Example~\ref{eg:BLH-trivial}. Next, we introduce the notions of Agler decomposition, weak Agler decomposition and strong Agler decomposition  for pairs of kernels, Definitions~\ref{Aglerdecc}, \ref{weakAgler} and \ref{strongAgler} respectively. We show that the existence of a weak Agler decomposition for $(k, \ell)$ implies the BLH property; see Proposition~\ref{gensuffBLH}. This result, in particular, yields the implication item ~\ref{i:im:ad} implies item~\ref{i:im:blh} in Theorem \ref{intromain}. We thus obtain a concrete general condition that is sufficient for the BLH property. A main point of Theorem \ref{intromain} is that this general condition is also necessary, at least under the assumption of strong regularity. Finally, we are naturally led to the connection between the BLH and complete Pick properties, which, though 
not needed in the sequel, is discussed in Subsections~\ref{CPconnectsubsec} and ~\ref{CPsubsec}. Here we draw the reader's attention to the discussion
of Example~\ref{eg:recursivel} at the outset of Subsection~\ref{CPsubsec}. This example
illustrates the large gap between CP and BLH pairs.

  Let $\genk,\genl,$ reproducing kernels over a set $X,$ be given.
  Adopting standard notation, let $\genk_b(a) = \genk(a,b)$ for $a,b\in X.$ 
   Given Hilbert spaces $\HSE$ and $\HSF,$
 a function $\Phi:X\to \cB(\HSF,\HSE)$ is a \df{multiplier} from $\cH_\genk \otimes \HSF$ to  $\cH_\genl\otimes \HSE$
 if for each $f\in \cH_\genk\otimes \HSF$ the function $\Phi f: X\to \HSF$ defined by $\Phi f(x)=\Phi(x) f(x)$ for $x\in X$ is in $\cH_\genl\otimes \HSE.$
 In this case, $\Phi$ induces a bounded operator  $M_\Phi:\cH_\genk \otimes \HSF \to \cH_\genl\otimes \HSE$  whose norm is
 the smallest $C\ge 0$ such that
 \[
  X \times X \ni (a,b) \mapsto C^2\genl(a,b)I_{\HSE} -\Phi(a)\Phi(b)^* \, \genk(a,b)
 \]
  is positive semi-definite. Indeed, this fact is proved by observing that, if $b\in X,$ $\Phi\in  \Mult\big(\mathcal{H}_{\genk}\otimes \HSF,\mathcal{H}_{\genl}\otimes \HSE\big)$ and $h\in \HSE$, then \[M_\Phi^* \genl_b\otimes h= \genk_b\otimes \Phi(b)^*h.\]
 Thus, given  $n$ points $a_1, \dots, a_n\in X$ and $n$ matrices $W_1, \dots, W_n\in \mathbb{C}^{N\times N}$,  a necessary condition for the existence of a multiplier $\Phi \in\Mult\big(\mathcal{H}_{\genk}\otimes\mathbb{C}^N, \mathcal{H}_{\genl}\otimes \mathbb{C}^N\big)$ of norm at most one that satisfies $\Phi(a_i)=W_i,$ for all $1\le i\le n,$ is  
\begin{equation} \label{multinterpol} 
 \begin{bmatrix}\genl(a_i, a_j)I_{\mathbb{C}^N}-W_iW^*_j\genk(a_i, a_j) \end{bmatrix}_{i,j=1}^n\succeq 0.
\end{equation}

The following reformulation of the BLH property is implicitly contained in \cite{Beurlingfactor}. Given a   subspace $\cM$ of a RKHS with kernel $\genl,$ we denote the reproducing kernel of $\cM$ by \df{$\genl_\cM$}.  
Thus $\genl_{\cM}: X\times X \to \cB(\HSE)$ satisfies
\[
 \langle \genl_{\cM}(a,b) e,f \rangle 
  =  \langle P_{\cM} (\genl_b \otimes e), \genl_a\otimes f
   \rangle,
\]
 where $P_{\cM}$ is the projection onto $\cM$ and
 $e,f\in\HSE.$

\begin{proposition}\label{l_m/k}
 Suppose $(\genk,\genl)$ is a pair of reproducing kernels
over a set $X,$  $\HSE$ is a Hilbert space and 
$\cM\subset \cH_{\genl}\otimes \HSE$ is a subspace.  There is a Hilbert space $\HSF$ and a partially
isometric multiplier $\Phi\in \Mult(\cH_\genk\otimes \HSF,\cH_\genl \otimes \HSE)$ whose range is $\cM$ if and only if there is 
a positive kernel $g_{\cM}$ such that $\genl_{\cM} = g_{\cM} \genk.$ In particular,
if  $(\genk, \genl)$ is a pair of reproducing kernels, then  $(\genk, \genl)$ is BLH if and only if,  for every Hilbert space $\mathcal{E}$ and every 
$\textup{Mult}(\mathcal{H}_{\genk})$-invariant   subspace  $\mathcal{M}\subset \mathcal{H}_{\genl}\otimes\mathcal{E}$,
 there is a positive kernel $g_\cM$ such that 
\[
 \genl_{\cM} = g_{\cM} \genk.
\]
\end{proposition}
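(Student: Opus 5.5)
The plan is to prove the first equivalence, for a fixed subspace $\cM$, by comparing kernels; the BLH characterization then follows immediately from Definition~\ref{BLHdef}.

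For the forward direction, suppose $\Phi\in\Mult(\cH_\genk\otimes\HSF,\cH_\genl\otimes\HSE)$ is a partial isometry with range $\cM$. Then $M_\Phi M_\Phi^*$ is the orthogonal projection $P_\cM$. Using the adjoint formula $M_\Phi^*(\genl_b\otimes e)=\genk_b\otimes\Phi(b)^*e$ recorded above, I compute
\[
\langle \genl_\cM(a,b)e,f\rangle=\langle M_\Phi^*(\genl_b\otimes e),M_\Phi^*(\genl_a\otimes f)\rangle=\genk(a,b)\langle \Phi(a)\Phi(b)^*e,f\rangle .
\]
So $\genl_\cM=g_\cM\genk$ with $g_\cM(a,b)=\Phi(a)\Phi(b)^*$. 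This $g_\cM$ is a positive $\cB(\HSE)$-valued kernel, being of Kolmogorov form.

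For the converse, suppose $\genl_\cM=g_\cM\genk$ with $g_\cM\succeq0$ ($\cB(\HSE)$-valued). First I factor $g_\cM$ via Kolmogorov/Moore: take $\HSF=\cH_{g_\cM}$, the vector-valued RKHS of $g_\cM$, and set $\Phi(a)^*e=(g_\cM)_a e\in\HSF$. This gives $g_\cM(a,b)=\Phi(a)\Phi(b)^*$ with $\Phi:X\to\cB(\HSF,\HSE)$. Next, define $V$ on finite sums $\sum\genl_{b_i}\otimes e_i$ by $V(\genl_{b}\otimes e)=\genk_b\otimes\Phi(b)^*e$. The kernel identity gives
\[
\Big\|\sum \genk_{b_i}\otimes\Phi(b_i)^*e_i\Big\|^2=\sum\langle\genl_\cM(b_j,b_i)e_i,e_j\rangle=\Big\|P_\cM\sum\genl_{b_i}\otimes e_i\Big\|^2 .
\]
Hence $V=WP_\cM$ for an isometry $W:\cM\to\cH_\genk\otimes\HSF$, and $V$ extends to a bounded operator with $V^*V=P_\cM$. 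In particular $V$ is a partial isometry with initial space $\cM$.

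It remains to identify $V^*$ as multiplication by $\Phi$. Pairing $V^*(G)$ against $\genl_a\otimes e$ gives
\[
\langle V^*G,\genl_a\otimes e\rangle=\langle G,\genk_a\otimes\Phi(a)^*e\rangle=\langle \Phi(a)G(a),e\rangle,
\]
so $V^*G=\Phi G$ for all $G\in\cH_\genk\otimes\HSF$. Thus $\Phi$ is a multiplier, $M_\Phi=V^*$ is a partial isometry, and its range is the range of $V^*$, which equals the initial space of $V$, namely $\cM$.

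The main obstacle is justifying the operator-valued Kolmogorov factorization and keeping the bookkeeping straight. The essential points are that $V$ is well defined, which holds because the norm identity kills null combinations, and that $V^*=M_\Phi$ lands in $\cH_\genl\otimes\HSE$. Once these are in place, the BLH statement follows by quantifying over all $\HSE$ and all $\Mult(\cH_\genk)$-invariant $\cM$.
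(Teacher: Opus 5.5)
Your proof is correct and follows essentially the same route as the paper's sketch: Kolmogorov factorization $g_\cM(a,b)=\Phi(a)\Phi(b)^*$, combined with the criterion that $M_\Phi$ is a coisometry onto $\cM$ exactly when $\genl_\cM(a,b)=\genk(a,b)\Phi(a)\Phi(b)^*$. The only difference is that you prove this criterion directly, by building $V=WP_\cM$ and identifying $V^*=M_\Phi$, whereas the paper cites \cite[Lemma 2.1(b)]{Beurlingfactor} for it.
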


\begin{proof}[Proof sketch]
The proof   of the first part of the proposition rests on combining the following basic facts. 
\begin{itemize}
    \item[(i)] A function $L: X\times X \to \mathcal{B}(\mathcal{E})$ is a positive semi-definite kernel if and only if there exists a Hilbert space $\mathcal{F}$ and a function $\Phi: X\to\mathcal{B}(\mathcal{F}, \mathcal{E})$ such that $L(a, b)=\Phi(a)\Phi(b)^*$;
    \item[(ii)] A function $\Phi:X\to\mathcal{B}(\mathcal{F}, \mathcal{E})$ yields a coisometric multiplication operator $M_{\Phi}: \mathcal{H}_{\genk}\otimes\mathcal{F}\to \mathcal{M}$ if and only if the self-adjoint $\mathcal{B}(\mathcal{E})$-valued function defined on $X\times X$ as
    \begin{align*}
 \genl_\cM(a, b)&-\Phi(a)\big(\genk(a, b) I_{\mathcal{F}}\big)\Phi(b)^*\\
        =\ & \genl_\cM(a, b)-\kappa(a, b)\Phi(a)\Phi(b)^*, 
    \end{align*}
is identically equal to zero (see e.g. \cite[Lemma 2.1(b)]{Beurlingfactor}); 
\item[(iii)] $M_{\Phi}: \mathcal{H}_{\genk}\otimes\mathcal{F}\to \mathcal{H}_{\genl}\otimes\mathcal{E}$ is a partial isometry with $M_{\Phi}(\mathcal{H}_{\genk}\otimes\mathcal{F})=\mathcal{M}$ if and only if $M_{\Phi}: \mathcal{H}_{\genk}\otimes\mathcal{F}\to \mathcal{M}$ is a co-isometry.   
\end{itemize}

  The second statement follows immediately from the first and the definition of a BLH pair.   
\end{proof}

\begin{corollary} \label{ldividesk}
If $(\genk, \genl)$ is a BLH pair and $\cH_\genl$
is $\Mult(\cH_\genk)$ invariant, then there is a positive kernel $g$ such that $\genl=g\genk.$
In particular, $\textup{Mult}(\mathcal{H}_{\genk})$ includes completely contractively in $\textup{Mult}(\mathcal{H}_{\genl}).$ 
\end{corollary}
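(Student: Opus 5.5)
The plan is to apply Proposition~\ref{l_m/k} to the particular subspace $\cM=\cH_\genl$, with coefficient space $\HSE=\CC$. By hypothesis, $\cH_\genl$ is $\Mult(\cH_\genk)$-invariant, so it is an admissible subspace in the definition of a BLH pair. The reproducing kernel of $\cM=\cH_\genl$ is then just $\genl$ itself, since $P_\cM$ is the identity and
\[
\langle P_\cM \genl_b,\genl_a\rangle=\genl(a,b).
\]
Proposition~\ref{l_m/k} therefore yields a positive kernel $g=g_\cM$ with $\genl=g\genk$. This settles the first assertion.

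For the ``in particular'' statement, let $\Phi\in\Mult(\cH_\genk\otimes\CC^{m})$ be a matrix-valued multiplier with $\|\Phi\|\le 1$. The kernel characterization of the multiplier norm recalled just before \eqref{multinterpol} gives
\[
\genk(a,b)I-\Phi(a)\Phi(b)^*\genk(a,b)\succeq 0 .
\]
Multiply this entrywise (Schur product) by the positive kernel $g$. By the Schur product theorem, for operator-valued kernels tensored with a scalar positive kernel, the result
\[
\genl(a,b)I-\Phi(a)\Phi(b)^*\genl(a,b)
\]
is again positive semi-definite. The same characterization then gives $\|\Phi\|_{\Mult(\cH_\genl\otimes\CC^m)}\le 1$.

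Scaling extends this to arbitrary norms, so the identity map $\Mult(\cH_\genk)\to\Mult(\cH_\genl)$ is completely contractive. It is also well defined: a function that multiplies $\cH_\genk$ multiplies $\cH_\genl$, by the norm estimate just obtained.

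I expect no real obstacle here. The only point needing care is confirming that taking $\cM$ to be the whole space $\cH_\genl$ is permitted by the BLH definition, which is exactly what the invariance hypothesis provides.
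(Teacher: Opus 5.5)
Your argument is correct and is essentially the paper's: the first assertion is Proposition~\ref{l_m/k} applied to $\cM=\cH_\genl\subseteq\cH_\genl\otimes\CC$ (whose kernel is $\genl$ itself), and your Schur-product step is exactly the content of Lemma~\ref{l:cert-cont-include}, which the paper invokes for the second assertion. The inclusion you prove, $\Mult(\cH_\genk)\subseteq\Mult(\cH_\genl)$, is the right one; the paper's statement of Lemma~\ref{l:cert-cont-include} has the two multiplier algebras interchanged, evidently a typo.
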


The proof of the last statement of Corollary~\ref{ldividesk} is a consequence of following simple lemma that we record here for future use.

\begin{lemma}
  \label{l:cert-cont-include}
    Given  reproducing kernels $\genk$ and $\genl,$ if there is a (positive) kernel $g$ such that
    $\genl= g \genk,$ then $\Mult(\cH_\genl)\subseteq \Mult(\cH_\genk)$ completely contractively.
\end{lemma}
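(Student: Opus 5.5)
The plan is a Schur-product argument based on the positivity characterization of multiplier norms recorded at the start of this section. For kernels $\genk,\genl$ on $X$, a function $\Phi:X\to\cB(\HSF,\HSE)$ is a multiplier from $\cH_\genk\otimes\HSF$ to $\cH_\genl\otimes\HSE$ of norm at most $C$ if and only if
\[
 (a,b)\mapsto C^2\genl(a,b)I_{\HSE}-\Phi(a)\Phi(b)^*\genk(a,b)
\]
is positive semi-definite. Applied with $\genk=\genl$ (or with $\genl=\genk$), this turns each claimed containment of multiplier algebras, together with its norm bound, into a positivity statement between two single-kernel conditions. The matrix levels, with $\HSE=\HSF=\CC^m$, are handled at the same time, so "completely" costs nothing extra. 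The one nontrivial input is the Schur product theorem: the pointwise product of a positive $\cB(\HSE)$-valued kernel with a positive scalar kernel is positive.

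First, fix $m$ and a matrix-valued function $\Phi:X\to\CC^{m\times m}$ of multiplier norm at most one on the appropriate space. Second, write its contractivity as positivity of the kernel $(I-\Phi(a)\Phi(b)^*)$ times one of the two kernels. Third, multiply this positive kernel pointwise by the positive kernel $g$. Since $\genl=g\genk$, this converts
\[
 (I-\Phi(a)\Phi(b)^*)\,\genk(a,b)\succeq 0
 \quad\text{into}\quad
 (I-\Phi(a)\Phi(b)^*)\,\genl(a,b)\succeq 0 .
\]
Fourth, read the resulting positivity back as the multiplier-norm bound on the other space. This gives the containment map, the identity on functions, with norm at most one at every matrix level, hence completely contractive. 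Scaling by $C$ handles non-contractive multipliers, so the inclusion itself also follows.

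The main obstacle is the orientation of the containment, and I would settle it before writing anything. The Schur step only lets us multiply by $g$; it does not let us divide by it. So the argument transports contractivity from the factor $\genk$ to the product $\genl=g\genk$, and not the other way.

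As a sanity check I would test the degenerate case $\genk\equiv1$, $\genl=g$. Then $\Mult(\cH_\genk)$ consists only of constants, while $\Mult(\cH_\genl)$ can contain nonconstant functions (e.g.\ $z$ for the Hardy kernel). This confirms that the content the argument actually certifies is the passage from $\genk$-contractivity to $\genl$-contractivity. That is exactly the form in which the lemma is invoked in Corollary~\ref{ldividesk}, namely that $\Mult(\cH_\genk)$ sits completely contractively inside $\Mult(\cH_\genl)$. I would therefore carry out the four steps in that orientation, checking at step three that it is the factor $\genk$, not the product $\genl$, that appears in the hypothesis being multiplied by $g$. I would then state explicitly that this is the identification of multiplier algebras used downstream.
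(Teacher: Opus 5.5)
Your Schur-product argument is correct and is the standard proof the paper has in mind; the lemma is recorded there without proof as a ``simple lemma''. That includes your observation that the matrix levels come for free by taking $\mathcal{E}=\mathcal{F}=\CC^m$. You are also right that the inclusion as printed is backwards: your example $\genk\equiv 1$, $\genl$ the Szeg\"o kernel, shows the literal statement fails. The direction the Schur step actually yields, $\Mult(\cH_\genk)\subseteq\Mult(\cH_\genl)$ completely contractively, is exactly the form used in Corollary~\ref{ldividesk}, Corollary~\ref{c:AD+invariant} and the remark following Definition~\ref{kexhaust}.
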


\subsection{Reduction to shift-invariance}

The line of argument in the next two results follows that  in \cite[Proposition 2.5]{Beurlingfactor}.

\begin{proposition} 
  \label{allequiv} 
   Let $(\genk, \genl)$ be a pair of analytic reproducing kernels satisfying Assumption~\ref{assume}. Suppose $\textup{Mult}(\mathcal{H}_{\genk})\subset \textup{Mult}(\mathcal{H}_{\genl})$ 
   and polynomials are WOT dense in $\Mult(\cH_{\genl }).$   Given a Hilbert space $\mathcal{E}$ and a  subspace $\mathcal{M}\subset\mathcal{H}_{\genl}\otimes \mathcal{E}$, the following are equivalent:
\begin{enumerate}[(i)]\itemsep=6pt
    \item \label{i:all:i}
       $\mathcal{M}$ is shift-invariant; 
    \item \label{i:all:ii}
        $\mathcal{M}$ is  $\textup{Mult}(\mathcal{H}_{\genl})$-invariant;
    \item \label{i:all:iii}
        $\mathcal{M}$ is  $\textup{Mult}(\mathcal{H}_{\genk})$-invariant.
\end{enumerate}
\end{proposition}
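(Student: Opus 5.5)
The argument is a cycle of implications: \ref{i:all:ii}$\Rightarrow$\ref{i:all:iii}$\Rightarrow$\ref{i:all:i}$\Rightarrow$\ref{i:all:ii}.

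The implication \ref{i:all:ii}$\Rightarrow$\ref{i:all:iii} is immediate from the hypothesis $\Mult(\cH_\genk)\subset\Mult(\cH_\genl)$. If $\phi\in\Mult(\cH_\genk)$, then $\phi$ is also a multiplier of $\cH_\genl$. Its action $M_\phi\otimes I_{\HSE}$ on $\cH_\genl\otimes\HSE$ is then one of the operators that leave $\cM$ invariant by \ref{i:all:ii}.

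For \ref{i:all:iii}$\Rightarrow$\ref{i:all:i}, item~\ref{i:assume:iii} of Assumption~\ref{assume} applied to $\genk$ gives that each coordinate function $z_j$ belongs to $\Mult(\cH_\genk)$. Hence $M^\genl_{z_j}\otimes I_\HSE$ leaves $\cM$ invariant, and $\cM$ is shift-invariant.

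The substantive step is \ref{i:all:i}$\Rightarrow$\ref{i:all:ii}, following \cite[Proposition 2.5]{Beurlingfactor}.

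First, shift-invariance gives $(p(M^\genl_z)\otimes I_\HSE)\cM\subseteq\cM$ for every polynomial $p$, since these operators are finite sums of products of the $M^\genl_{z_j}\otimes I_\HSE$.

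Next, fix $\phi\in\Mult(\cH_\genl)$. By hypothesis, polynomials are WOT dense in $\Mult(\cH_\genl)$, so there is a net of polynomials $p_\alpha$ with $M_{p_\alpha}\to M_\phi$ in the weak operator topology on $\cH_\genl$.

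The main obstacle is passing from WOT convergence on $\cH_\genl$ to WOT convergence of $M_{p_\alpha}\otimes I_\HSE$ on $\cH_\genl\otimes\HSE$ when $\HSE$ is infinite dimensional. On elementary tensors $f\otimes e$ and $g\otimes e'$ the matrix coefficients converge, so convergence holds on the algebraic tensor product. Extending to all vectors requires boundedness of the net. I would avoid a Krein--Smulian argument and argue instead via the commutant. WOT density means $\Mult(\cH_\genl)$ lies in the WOT closure of the polynomial multipliers, so it suffices to show that the set $\{A\in\cB(\cH_\genl): (A\otimes I)\cM\subseteq\cM\}$ is WOT closed.

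To see that closedness, write $P$ for the projection onto $\cM$ and $P_{ef}=(I\otimes e^*)P(I\otimes f)$ for the compressions, with $e,f$ ranging over an orthonormal basis of $\HSE$. The invariance condition $(I-P)(A\otimes I)P=0$ expands, entry by entry in block-matrix form relative to that basis, into the family of equations
\[
 \sum_g (\delta_{eg}-P_{eg})\,A\,P_{gf}=0 .
\]
Each of these sums contains infinitely many terms, so a single finite-sum expression in $A$ is not available. This is exactly why the cited argument restricts to bounded nets.

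I would therefore fall back on a Krein--Smulian argument. WOT closure of a convex set equals its $\sigma$-weak closure. Invariance of $\cM$ under $A\otimes I$ is $\sigma$-weakly closed in $A$, because $A\mapsto A\otimes I$ is $\sigma$-weakly continuous (it is a normal representation). Hence the set of $A$ leaving $\cM$ invariant is $\sigma$-weakly closed, and it is convex. It contains all polynomial multipliers, so it contains their WOT closure, which by hypothesis includes $\Mult(\cH_\genl)$. Therefore $(M_\phi\otimes I_\HSE)\cM\subseteq\cM$ for every $\phi\in\Mult(\cH_\genl)$, which is \ref{i:all:ii}.
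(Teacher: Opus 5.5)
Your easy implications (ii)$\Rightarrow$(iii)$\Rightarrow$(i) are fine. You also correctly locate the crux of (i)$\Rightarrow$(ii): WOT convergence of $M_{p_\alpha}$ on $\mathcal{H}_\ell$ transfers to $M_{p_\alpha}\otimes I_{\mathcal{E}}$ only when the net is norm bounded. Your final step, however, does not get around this, because it rests on a false claim. It is not true that the WOT closure of a convex set equals its $\sigma$-weak closure. The $\sigma$-weak topology is strictly finer than the WOT, so in general $\overline{C}^{\,\sigma\text{-weak}}\subsetneq\overline{C}^{\,\mathrm{WOT}}$. For example, take a trace-class operator $D$ of infinite rank. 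The kernel of $A\mapsto\operatorname{tr}(AD)$ is a $\sigma$-weakly closed subspace. This functional is not WOT-continuous (the WOT-continuous functionals are $A\mapsto\operatorname{tr}(AF)$ with $F$ of finite rank), so its kernel is WOT-dense in $\mathcal{B}(H)$. Krein--Smulian only says that the two topologies agree on \emph{bounded} sets, which is exactly the boundedness you were trying to avoid needing.

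Consequently, your correct observation that $\{A:(A\otimes I_{\mathcal{E}})\mathcal{M}\subseteq\mathcal{M}\}$ is $\sigma$-weakly closed only shows that this set contains the $\sigma$-weak closure of the polynomial multipliers. A priori that closure can be smaller than their WOT closure, which is $\operatorname{Mult}(\mathcal{H}_\ell)$ by hypothesis. This is no accident of technique. For infinite-dimensional $\mathcal{E}$, the WOT of $\mathcal{B}(\mathcal{H}_\ell\otimes\mathcal{E})$ restricted to $\mathcal{B}(\mathcal{H}_\ell)\otimes I_{\mathcal{E}}$ is precisely the $\sigma$-weak topology of $\mathcal{B}(\mathcal{H}_\ell)$. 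So there is no reason for that invariance set to be WOT closed, and nothing in your argument compares the two closures of the polynomial algebra.

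The paper avoids the issue by taking a \emph{sequence} of polynomials $p_n$ with $M_{p_n}\to M_\varphi$ in WOT, which is how it reads the density hypothesis. By the uniform boundedness principle, $\sup_n\|M_{p_n}\|<\infty$. Convergence of matrix coefficients on elementary tensors then extends by density to all of $\mathcal{H}_\ell\otimes\mathcal{E}$, so $M_{p_n}\otimes I_{\mathcal{E}}\to M_\varphi\otimes I_{\mathcal{E}}$ in WOT. Hence $\langle (M_\varphi\otimes I)m,h\rangle=\lim_n\langle (M_{p_n}\otimes I)m,h\rangle=0$ for $m\in\mathcal{M}$ and $h\in\mathcal{M}^\perp$. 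Replacing your arbitrary net by a sequence, or by any norm-bounded net, closes the gap. With an unbounded net, your argument as written does not establish (ii).
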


\begin{proof}
   Suppose $\cM\subseteq \cH_\genl\otimes \HSE$
 is shift invariant.    Let $\varphi\in \Mult(\cH_\genl)$ be given. By hypothesis, there exists
 a sequence of polynomials $(p_n)$  so that the operators $T_n$ of multiplication by $p_n$ on $\cH_\genl$ converges 
 WOT to $M_\varphi.$   It follows that $T_n\otimes I_{\HSE}$ converge
 WOT to $M_\varphi \otimes I_{\HSE}.$  Thus, as $\cM$ is invariant for each $T_n,$  for $m\in \cM$ and $h\in\cM^\bot$ 
\[
 0 = \lim_n  \langle (T_n\otimes I) m , h \rangle  
 = \langle (M_\varphi \otimes I) m, h \rangle
\] 
 and  thus $M_\varphi m\in \cM$ as desired.  Hence item~\ref{i:all:i} implies item~\ref{i:all:ii}. 
 Since the implications item~\ref{i:all:ii} implies item~\ref{i:all:iii} and item~\ref{i:all:iii} implies item~\ref{i:all:i}
 are immediate, the proof is complete.
\end{proof}

A pair of kernels $(k,\ell)$ over a domain $\Omega\subset \CC^{\lcal{d}}$ with the property that $\tau \Omega\subset \Omega$
and $\genk(\tau z,\tau w) =\genk(z,w)$ for every unimodular
$\tau\in\CC$ and $z, w\in\Omega$ is a \df{unitarily invariant pair}.

\begin{corollary}
    \label{c:allequiv}
      Let $(\genk, \genl)$ be   a unitarily invariant pair of kernels on a domain $\Omega\subset\mathbb{C}^{\lcal{d}}$. If $(k,\ell)$ satisfies Assumption~\ref{assume}  and $\textup{Mult}(\mathcal{H}_{\genk})\subset \textup{Mult}(\mathcal{H}_{\genl})$, then the conclusion of Proposition \ref{allequiv} holds.
\end{corollary}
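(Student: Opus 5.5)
The plan is to prove Corollary~\ref{c:allequiv} by reducing it to Proposition~\ref{allequiv}. The only hypothesis of that proposition not assumed here is that polynomials are WOT dense in $\Mult(\cH_\genl)$. So the task is to show that unitary invariance, together with Assumption~\ref{assume}, forces this density.

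\emph{Circle action.} Since $\tau\Omega\subseteq\Omega$ and $\genl(\tau z,\tau w)=\genl(z,w)$ for $|\tau|=1$, the map $U_\tau f(z)=f(\tau z)$ is unitary on $\cH_\genl$. It sends $\genl_w$ to $\genl_{\bar\tau w}$ and preserves inner products of kernel functions. The family $\tau\mapsto U_\tau$ is strongly continuous: continuity on kernel functions follows from continuity of $\genl$, and on the whole space from uniform boundedness.

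For $\varphi\in\Mult(\cH_\genl)$ we have $U_\tau M_\varphi U_\tau^*=M_{\varphi_\tau}$ with $\varphi_\tau(z)=\varphi(\tau z)$, and $\|\varphi_\tau\|=\|\varphi\|$.

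\emph{Homogeneous components.} Define
\[
\varphi_j(z)=\int_{\T}\varphi(\tau z)\,\bar\tau^{\,j}\,dm(\tau),
\]
as a WOT integral of the operators $M_{\varphi_\tau}$. This gives a multiplier of norm at most $\|\varphi\|$. Because $\Omega$ is a circled domain containing $0$, $\varphi$ has a power series expansion near $0$. Hence $\varphi_j$ is the homogeneous polynomial of degree $j$ in that expansion, and it vanishes for $j<0$.

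Next form the Fejér means
\[
\sigma_N\varphi=\sum_{|j|\le N}\Big(1-\tfrac{|j|}{N+1}\Big)\varphi_j=\int_{\T} F_N(\tau)\,\varphi_\tau\,dm(\tau).
\]
These are polynomials, and they satisfy $\|\sigma_N\varphi\|\le\|\varphi\|$ since the Fejér kernel $F_N$ is positive.

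\emph{Convergence.} Since $\sigma_N\varphi\to\varphi$ pointwise on $\Omega$ and the norms are bounded, the operators $M_{\sigma_N\varphi}$ converge to $M_\varphi$ in WOT. Indeed,
\[
\langle M_{\sigma_N\varphi}^*\genl_w,\genl_v\rangle=\overline{\sigma_N\varphi(w)}\,\genl(v,w)\to\overline{\varphi(w)}\,\genl(v,w).
\]
Kernel functions span a dense set, and the operators are uniformly bounded, so this gives WOT convergence. Density of polynomials in $\cH_\genl$ is not even needed for this step.

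Finally, invoke Proposition~\ref{allequiv}, whose remaining hypotheses (Assumption~\ref{assume} and $\Mult(\cH_\genk)\subset\Mult(\cH_\genl)$) are given.

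\emph{Main obstacle.} The subtle point is justifying the power series and pointwise Fejér convergence on all of $\Omega$, not just near $0$. For each fixed $z$, the function $\lambda\mapsto\varphi(\lambda z)$ is holomorphic on a neighborhood of the closed disk $\{|\lambda|\le1\}$; this holds because $\Omega$ is open and circled, so it contains $\lambda z$ for $|\lambda|\le1+\epsilon$ via the circled hull. Its Taylor coefficients are exactly the values $\varphi_j(z)$. The Fejér means of this one-variable function then converge at $\lambda=1$.

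If $\Omega$ is merely circled rather than balanced, I would instead use only continuity of $\tau\mapsto\varphi(\tau z)$ on $\T$, for which Fejér convergence still holds. That version identifies $\varphi_j$ as a homogeneous function which is a polynomial, using its behaviour near $0$.
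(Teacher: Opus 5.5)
Your strategy is the one the paper uses. Its proof of Corollary~\ref{c:allequiv} invokes the argument of \cite[Proposition~2.5]{Beurlingfactor}: the circle action $U_\tau$, the homogeneous components $\varphi_j$, and their Fej\'er means, which are contractive multipliers because $F_N\ge 0$. This gives density of polynomials in $\Mult(\cH_\ell)$, and then Proposition~\ref{allequiv} applies. Several of your checks are correct as written:
\begin{itemize}
\item unitarity of $U_\tau$ and the identity $U_\tau M_\varphi U_\tau^*=M_{\varphi_\tau}$;
\item the bound $\|\sigma_N\varphi\|\le\|\varphi\|$;
\item WOT convergence, via Fej\'er's theorem for the continuous function $\tau\mapsto\varphi(\tau z)$ on $\T$, the uniform bound, and density of kernel functions.
\end{itemize}

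The step the hypotheses do not support is identifying $\varphi_j$ as a homogeneous polynomial of degree $j$ (and zero for $j<0$). You assume $\Omega$ is a circled domain containing $0$, but a unitarily invariant pair only requires $\tau\Omega\subseteq\Omega$. The setting of \cite{Beurlingfactor} is the ball, where $0\in\Omega$ is automatic; here it is not. For example, take $k=\ell$ equal to the Szeg\"o kernel restricted to the annulus $\{1/2<|z|<1\}$. This satisfies every hypothesis of the corollary, yet $0\notin\Omega$, so your fallback ``using its behaviour near $0$'' has nothing to act on.

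Some extra input is genuinely needed here. A circularly invariant kernel $\sum_{n\in\mathbb{Z}}c_n(z\ol{w})^n$ on an annulus can have $1/z$ as a multiplier, and then $\varphi_{-1}=1/z$ is not a polynomial. What rules this out is exactly the density of polynomials in $\cH_\ell$, which you set aside.

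The repair is short:
\begin{itemize}
\item Since $1\in\cH_\ell$, you have $\varphi=M_\varphi 1\in\cH_\ell$.
\item Set $E_j=\int_{\T}\bar\tau^{\,j}U_\tau\,dm(\tau)$, a contraction on $\cH_\ell$. Pairing with $\ell_z$ gives $(E_j\varphi)(z)=\langle E_j\varphi,\ell_z\rangle=\varphi_j(z)$, so $\varphi_j=E_j\varphi$.
\item $E_j z^\alpha=z^\alpha$ when $|\alpha|=j$, and $E_j z^\alpha=0$ otherwise.
\item By continuity of $E_j$ and density of polynomials, $E_j(\cH_\ell)$ lies in the finite-dimensional, hence closed, space of homogeneous polynomials of degree $j$, and $E_j=0$ for $j<0$.
\end{itemize}
Hence each $\sigma_N\varphi$ is a polynomial wherever $0$ lies, and the rest of your argument goes through unchanged.
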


\begin{proof}
  The SOT-density of polynomials in $\textup{Mult}(\mathcal{H}_{\genl})$ follows by arguing as in the proof of \cite[Proposition 2.5]{Beurlingfactor}. Thus, Proposition \ref{allequiv} applies. 
\end{proof}

We close this subsection with Example~\ref{eg:BLH-trivial} below that
shows that a pair $(\genk,\genl)$  can be a BLH pair for the simple reason
 that there are  no non-trivial  subspaces
of $\cH_\genl\otimes \HSE$ that are $\Mult(\mathcal{H}_\genk)$-invariant.  

\begin{example}\rm
\label{eg:BLH-trivial}
    Let $X=Y=\{\frac{1}{n} \mid n\in\NN, \, n\ge 2\}.$ Let $\genk$ denote the kernel $\genk(a,b)=1$ if $a=b$ and $\genk(a,b)=0$ otherwise.
    Let $\genl$ denote the restriction of Szeg\"o's kernel to $Y.$   In particular, functions in $\cH_\genl$ are the
    restrictions of  $H^2(\DD)$-functions to $Y.$  A function $\varphi:X\to \CC$  is a multiplier of $\cH_\genk$ if and
    only if it is bounded, and a function $\psi:Y\to\CC$ is a multiplier of $\cH_\genl$ if and only if it is the restriction of 
    a function in $H^\infty(\DD)$ to $Y.$
    Now, let $\mathcal{E}$ be a Hilbert space and let $F\in\mathcal{H}_{\genl}\otimes\mathcal{E}$ be non-zero. Given a basis $\{e_i\}$ of $\mathcal{E}$, there exists $i$ such that $f(\cdot):=\langle F(\cdot), e_i\rangle$ is non-zero. Clearly, $f\in\mathcal{H}_{\genl}$ and,  since $f$ is not identically $0$, there is an $m$ such that  $f(\frac{1}{m})\ne 0.$  The function $\varphi:X\to \CC$ defined by $\varphi(\frac{1}{m})=1$ and $\varphi(a)=0$
     otherwise  is a multiplier of $\cH_\genk$ (of norm $1$). 
     Thus,  $g=\varphi f$ satisfies $g(\frac1m) = f(\frac1m)\ne 0$ and $g(y)=0$ otherwise.  Hence
    $g$ is not the restriction of an $H^2(\DD)$-function to $X$ and thus is not in $\cH_\genl.$  In conclusion, there exists no nonzero $\mathcal{M}\subset\mathcal{H}_{\ell}\otimes\mathcal{E}$ that is $\Mult(\mathcal{H}_\genk)$-invariant.
\end{example}

\subsection{The Agler wedge}
 A positivstellensatz is an algebraic certificate of positivity. Typically, such a certificate takes the form of membership in a weighted sums-of-squares cone. In this subsection, we introduce the \df{Agler wedge} $\mathcal{AG}(\ell)$, a cone tailored to the study of the BLH property for a pair of kernels $(k,\ell)$. Its construction is inspired by Agler's approach to Pick interpolation pioneered in \cite{Hellinger}.

\begin{definition} \label{Aglerwedge}
 Let $\ell$ be a reproducing kernel over a bounded domain $\Omega$ that satisfies Assumption \ref{assume}.  The \df{Agler wedge} 
 \df{$\mathcal{AG}(\ell)$} of $\ell$ is the closure in $\text{Her}(\Omega)$ of the set of all
 (scalar-valued) hereditary functions of the form
\begin{equation}\label{aglerfact}
    Q(z)\big[I_{m\times m}-P(z)P(w)^* \big]Q(w)^*, 
\end{equation}
where  $Q:\Omega\to\mathcal{B}(\mathbb{C}^{m}, \mathbb{C})$ is holomorphic, $P\in\textup{Mult}(\mathcal{H}_{\ell}\otimes\CC^{m})$ is a contractive polynomial multiplier and $m\ge 1$.
\qed
\end{definition}

\begin{remark}\rm
 \label{r:AG-only-on-mults}
    The Agler wedge depends only on the unit ball of $\Mult(\cH_\ell\otimes\mathbb{C}^m).$
\qed\end{remark}
   \begin{remark} Note that $Q(w)$ is naturally identified with an element of the dual of $\CC^m.$ Hence,
     $Q(w)^*$ is naturally understood as a vector in $\CC^m.$  From this perspective, it would
     be natural to replace $Q^*$ with $R$ and write, in equation~\eqref{aglerfact},
\[
  R(z)^*\big[I_{m\times m}-P(z)P(w)^* \big]R(w).
\]
 However, we prefer to keep the adjoints on the right in the spirit of the hereditary functional calculus.  
 \qed\end{remark}

\begin{lemma}\label{closed-under-sums}
The Agler wedge $\mathcal{AG}(\ell)$  is a cone; that is, it  is closed under addition
and multiplication by non-negative scalars.
\end{lemma}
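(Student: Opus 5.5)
The plan is to prove the statement first for the generating set of $\mathcal{AG}(\ell)$ and then pass to the closure. Let $\mathcal{G}$ denote the set of hereditary functions of the form \eqref{aglerfact}, so that $\mathcal{AG}(\ell)=\overline{\mathcal{G}}$ in $\Her(\Omega)$. It suffices to show two things:
\begin{enumerate}[(a)]
\item $\mathcal{G}$ is closed under multiplication by non-negative scalars and under addition;
\item closure in a topological vector space preserves both properties.
\end{enumerate}
Indeed, $\Her(\Omega)$ is a locally convex topological vector space, so addition $\Her(\Omega)\times\Her(\Omega)\to\Her(\Omega)$ and scalar multiplication are continuous. Hence if $h_n\to h$ and $g_n\to g$ with $h_n,g_n\in\mathcal{G}$, then $h_n+g_n\to h+g$ and $t h_n\to th$. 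This gives $\overline{\mathcal{G}}+\overline{\mathcal{G}}\subseteq\overline{\mathcal{G}}$ and $t\overline{\mathcal{G}}\subseteq\overline{\mathcal{G}}$.

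For scalars, take $t\ge 0$ and a generator $h(z,w)=Q(z)[I-P(z)P(w)^*]Q(w)^*$. Then $th$ is the generator obtained by replacing $Q$ with $\sqrt{t}\,Q$, which is still holomorphic. The multiplier $P$ is unchanged, so it remains a contractive polynomial multiplier.

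For sums, take generators $h_j=Q_j[I_{m_j}-P_jP_j^*]Q_j^*$ for $j=1,2$. Set $m=m_1+m_2$, let $Q=[Q_1\ Q_2]:\Omega\to\CC^{1\times m}$, and let $P=P_1\oplus P_2$ be block diagonal. A direct computation gives
\[
Q(z)[I_m-P(z)P(w)^*]Q(w)^* = h_1(z,w)+h_2(z,w),
\]
because the off-diagonal blocks of $I-PP^*$ vanish. Clearly $Q$ is holomorphic and $P$ is a matrix polynomial.

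The only step needing care is that $P_1\oplus P_2$ is a contractive multiplier of $\cH_\ell\otimes\CC^{m}$. Under the identification $\cH_\ell\otimes\CC^{m}=(\cH_\ell\otimes\CC^{m_1})\oplus(\cH_\ell\otimes\CC^{m_2})$, the operator $M_{P}$ equals $M_{P_1}\oplus M_{P_2}$. Its norm is therefore $\max(\|M_{P_1}\|,\|M_{P_2}\|)\le1$. Equivalently, the kernel $\ell(z,w)I_m - P(z)P(w)^*\ell(z,w)$ is the direct sum of two positive kernels, hence positive.

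This is the main, though mild, obstacle. Once it is settled, $h_1+h_2\in\mathcal{G}$, and the closure argument above completes the proof.
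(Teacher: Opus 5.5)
Your proof is correct and follows essentially the same approach as the paper. The paper also checks the cone property on the generating set using the block-diagonal multiplier $P_1\oplus P_2$ and the row $Q=\begin{bmatrix}\sqrt{a}\,Q_1 & Q_2\end{bmatrix}$, which handles your scalar and sum steps at once; it leaves implicit the passage to the closure that you spell out via continuity of addition and scalar multiplication in $\Her(\Omega)$.
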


\begin{proof} 
 It suffices to show the dense set that determines that Agler wedge is a cone. To this end, let 
 contractive polynomial multipliers $P_1, P_2,$ 
  analytic functions $Q_1$ and $Q_2$ and $a\ge 0$
 be given. The  (matrix-valued) polynomial $P$  
\[
  P(z)=\begin{bmatrix}
  P_1(z) & 0 \\ 
  0    & P_2(z)
\end{bmatrix}
\]
  is a contractive multiplier and 
  setting
   $Q=\begin{bmatrix} \sqrt{a}\, Q_1 & Q_2 \end{bmatrix},$
gives
\[
  a\, Q_1(I-P_1P_1^*)Q_1^*+Q_2(I-P_2P_2^*)Q_2^*=Q(1-PP^*)Q^*. \qedhere
\]
\end{proof}

\begin{remark}\label{Dullcone}
Given  $h\in\textup{Her}(\Omega)$ and a sequence $0<r_n<1$ converging
to $1,$ the sequence  $(r_nh)$ converges to  $h$ in $\textup{Her}(\Omega).$ Thus, the definition of $\mathcal{AG}(\ell)$ remains unchanged if we only consider contractive multipliers $\opphi{P}{\Omega}{\mathcal{H}_{\ell}\otimes \mathbb{C}^m}{\mathcal{H}_{\ell}\otimes \mathbb{C}^m}$ satisfying $\|P(z)\|<1$ for all $z\in\Omega.$ 
\qed
\end{remark}

\begin{definition} \label{Aglerdecc}
  A pair of kernels $(k, \ell)$ \df{admits an Agler decomposition} if $k$ is non-vanishing and $1/k\in\mathcal{AG}(\ell)$. 
  \qed
\end{definition} 

 The notion of an Agler decomposition naturally extends to pairs of kernels that do not satisfy any holomorphicity assumptions. 
 
\begin{definition}\label{weakAgler}
  A pair of reproducing kernels $(k, \ell)$ over a set $X$
  \df{admits a weak Agler decomposition} if $k$ is non-vanishing and there exist a sequence of positive integers $\{m_n\}$, 
 functions $Q_n:X\to\mathcal{B}(\mathbb{C}^{m_n}, \CC)$ and contractive multipliers $\Phi_n\in\textup{Mult}(\mathcal{H}_{\ell}\otimes\CC^{m_n})$ such that       
    \begin{equation}    \label{again1/k=}  
         Q_n(a)\big[I-\Phi_n(a)\Phi_n(b)^*\big]Q_n(b)^* \longrightarrow \cfrac{1}{k(a,b)}
    \end{equation}
{pointwise  in $X\times X$}.
\qed
\end{definition}

\begin{remark}
Because each $\Phi_n$ is a contractive multiplier of $\cH_\genl,$ the functions   $(I-\Phi_n(a)\Phi_n(b)^*) \genl(a,b)$ are psd. 
Thus \eqref{again1/k=}  yields $\genl/\genk\succeq 0$ and thus Lemma~\ref{l:cert-cont-include} gives  a completely contractive inclusion for the multiplier algebras without any additional assumptions on the pair $(\genk, \genl).$  
\qed\end{remark}

\begin{remark}
    Proposition \ref{gensuffBLH} below says that the existence of a weak Agler decomposition implies the BLH property. 
    \qed
\end{remark}

\begin{remark}
If $(k, \ell)$ admits an Agler decomposition, then it admits a weak Agler decomposition.
\qed
\end{remark}

A stronger notion of Agler decomposition is obtained by
trading the assumption that a sequence of weighted sum-of-squares representations converges to $1/k$, found
in the definitions of both Agler and weak Agler decompositions,
for replacing the finite dimensional domain $\CC^m$ of $Q$ 
with an arbitrary separable Hilbert spaces. 

\begin{definition}\label{strongAgler}
A pair of kernels $(k, \ell)$  over a set $X$ \textit{admits a strong Agler decomposition} if $k$ is non-vanishing and
 there exist  separable Hilbert spaces $\mathcal{G}, \HSF,$ 
    a function $Q:X\to \
\mathcal{B}(\mathcal{G}, \mathbb{C})$ and a contractive multiplier $\Phi\in \Mult(\cH_\ell\otimes \HSF, \cH_\ell\otimes \mathcal{G})$ such that
\begin{equation} \label{strongAglerb}
     \frac{1}{k(z,w)} = Q(z) \left (I-\Phi(z)\Phi(w)^* \right ) Q(w)^*
\end{equation}
over $X\times X.$
\qed
\end{definition}

\begin{remark}
If $(k, \ell)$ admits a strong Agler decomposition then it admits a weak Agler decomposition. In  Section \ref{Leechsecc}, we will see that, if $1/k$ is assumed to be the difference of two positive kernels and if $(k, \ell)$ admits a weak Agler decomposition,  then $(k, \ell)$ admits a strong Agler decomposition.
\qed\end{remark}

\subsection{A general sufficient condition} 
 This subsection is devoted to the statement and proof of
 the following sufficient condition for a pair $(k,\ell)$
 to have the BLH property.

\begin{proposition}\label{gensuffBLH}
 If  $(\genk,\genl)$ admits a weak Agler decomposition,
 $\HSE$ is a Hilbert space and $\cM\subseteq \cH_\genl\otimes\HSE$
 is a $\Mult(\cH_\genl)$-invariant subspace, then
\[
 \frac{\genl_\cM}{\genk} \succeq 0.
\]
 In particular, if $(\genk,\genl)$ admits a weak Agler
 decomposition and, for each Hilbert space $\HSE,$  every $\Mult(\cH_\genk)$-invariant subspace $\mathcal{M}\subset\mathcal{H}_{\genl}\otimes\mathcal{E}$ is 
 $\Mult(\cH_\genl)$-invariant, then $(\genk, \genl)$ is a BLH pair.
\end{proposition}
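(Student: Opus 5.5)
The plan is the standard Agler-type argument: combine the invariance of $\cM$ with the weak Agler decomposition. Since $\cM$ is $\Mult(\cH_\genl)$-invariant, the projection satisfies $P_\cM (M_\Phi\otimes I_\HSE) P_\cM = (M_\Phi\otimes I_\HSE)P_\cM$ for every multiplier $\Phi\in\Mult(\cH_\genl\otimes\CC^m)$. I read $\Phi$ as acting on $\cH_\genl\otimes\CC^m\otimes\HSE$ by $\Phi\otimes I_\HSE$, and $\cM^{(m)}:=\cM\otimes\CC^m$ as the ampliated invariant subspace. The first step is to show that, for each contractive $\Phi_n\in\Mult(\cH_\genl\otimes\CC^{m_n})$, the $\cB(\CC^{m_n}\otimes\HSE)$-valued kernel
\[
 (a,b)\mapsto \big(I-\Phi_n(a)\Phi_n(b)^*\big)\otimes\text{(acting on)}\ \genl_\cM(a,b)
\]
is positive. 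More precisely, $\big(I_{\CC^{m_n}}\otimes \genl_\cM(a,b)\big)-(\Phi_n(a)\otimes I)\big(I_{\CC^{m_n}}\otimes\genl_\cM(a,b)\big)(\Phi_n(b)^*\otimes I)\succeq 0$.

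To prove this, let $T=M_{\Phi_n}\otimes I_\HSE$. This is a contraction leaving $\cM^{(m_n)}$ invariant. Hence its compression $A=P T|_{\cM^{(m_n)}}=T|_{\cM^{(m_n)}}$ is a contraction, so $P-TPT^*\succeq 0$, where $P=P_{\cM^{(m_n)}}$. This uses $PT^*P=PT^*$, equivalently $TP=PTP$, which gives $TPT^*=(TP)(TP)^*=PTPT^*P\preceq P$. Next I evaluate on kernel functions $\genl_b\otimes e_j\otimes x$, using $T^*(\genl_b\otimes v)=\genl_b\otimes\Phi_n(b)^*v$. Since the reproducing kernel of $\cM^{(m_n)}$ is $I_{\CC^{m_n}}\otimes\genl_\cM$, this yields exactly the displayed positivity.

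Next I multiply on the left by $Q_n(a)\otimes I_\HSE$ and on the right by $Q_n(b)^*\otimes I_\HSE$. Because $\Phi_n(a)$ acts only on the $\CC^{m_n}$ factor, it commutes with $I\otimes\genl_\cM$, and the result is the scalar-times-operator kernel
\[
 Q_n(a)\big[I-\Phi_n(a)\Phi_n(b)^*\big]Q_n(b)^*\,\genl_\cM(a,b)\succeq 0.
\]
Positivity is preserved under this congruence. The main point to check carefully is the commutation: $(\Phi_n(a)\otimes I)(I\otimes L)(\Phi_n(b)^*\otimes I)=\Phi_n(a)\Phi_n(b)^*\otimes L$. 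This holds in the tensor identification $\CC^{m_n}\otimes\HSE$. Letting $n\to\infty$ pointwise and using \eqref{again1/k=} gives $\genl_\cM/\genk\succeq 0$, since pointwise limits of positive kernels are positive. This establishes the first statement.

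For the second statement, let $\cM$ be a $\Mult(\cH_\genk)$-invariant subspace. By hypothesis it is $\Mult(\cH_\genl)$-invariant, so $g_\cM:=\genl_\cM/\genk$ is a positive kernel with $\genl_\cM=g_\cM\genk$. This is well defined since $\genk$ is non-vanishing. Proposition~\ref{l_m/k} then gives that $(\genk,\genl)$ is BLH. The only delicate step is the operator inequality $TPT^*\preceq P$ for invariant $\cM$, and I expect no real obstacle there.
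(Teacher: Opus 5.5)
Your proposal is correct and follows essentially the same route as the paper. The paper isolates your first step as Lemma~\ref{ellM/k}: it compresses the ampliated multiplication operator to the invariant subspace $\cM\otimes\HSF$, evaluates on kernel functions to get positivity of $\bigl(Q[I-\Phi\Phi^*]Q^*\bigr)\genl_\cM$, and then, as you do, passes to the pointwise limit and invokes Proposition~\ref{l_m/k} for the BLH conclusion.
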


\begin{corollary}
    \label{c:AD+invariant} 
       Suppose $(k,\ell)$ is a unitary invariant pair of
       kernels over a domain $\Omega\subseteq \CC^\lcal{d}$
       that satisfies Assumption~\ref{assume}. 
       If $(k,\ell)$ admits a weak Agler decomposition, then
       $(k,\ell)$ is a BLH pair. 
\end{corollary}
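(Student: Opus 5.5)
The corollary should follow by combining Proposition~\ref{gensuffBLH} with Corollary~\ref{c:allequiv}. By Proposition~\ref{gensuffBLH}, a weak Agler decomposition together with the coincidence of $\Mult(\cH_k)$-invariance and $\Mult(\cH_\ell)$-invariance for subspaces of $\cH_\ell\otimes\HSE$ yields the BLH property. So the only task is to certify that coincidence from the hypotheses: unitary invariance and Assumption~\ref{assume}.

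\textbf{Step 1: multiplier inclusion.} Corollary~\ref{c:allequiv} requires $\Mult(\cH_k)\subset\Mult(\cH_\ell)$. I would obtain this from the weak Agler decomposition itself. Each $\Phi_n$ is a contractive multiplier of $\cH_\ell\otimes\CC^{m_n}$, so
\[
\big(I-\Phi_n(a)\Phi_n(b)^*\big)\ell(a,b)\succeq 0 .
\]
Conjugating by $Q_n$ gives
\[
Q_n(a)\big[I-\Phi_n(a)\Phi_n(b)^*\big]Q_n(b)^*\,\ell(a,b)\succeq0 .
\]
Pointwise limits of positive kernels are positive, so \eqref{again1/k=} gives $\ell/k\succeq0$. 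Writing $\ell=gk$ with $g=\ell/k$ (legitimate since $k$ is non-vanishing), Lemma~\ref{l:cert-cont-include} applies with the roles arranged so that $\Mult(\cH_k)\subseteq\Mult(\cH_\ell)$ completely contractively. This is the orientation recorded in the remark after Definition~\ref{weakAgler} and in Corollary~\ref{ldividesk}.

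\textbf{Step 2: equivalence of invariance notions.} With Step 1 in hand, Corollary~\ref{c:allequiv} applies to the unitarily invariant pair $(k,\ell)$ satisfying Assumption~\ref{assume}. Its conclusion is that of Proposition~\ref{allequiv}: for every $\HSE$, a subspace $\cM\subseteq\cH_\ell\otimes\HSE$ is $\Mult(\cH_k)$-invariant if and only if it is $\Mult(\cH_\ell)$-invariant.

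\textbf{Step 3: conclusion.} Feed Step 2 into the second part of Proposition~\ref{gensuffBLH}. For every $\Mult(\cH_k)$-invariant $\cM$ this gives $\ell_\cM/k\succeq0$. Proposition~\ref{l_m/k} then produces a partially isometric multiplier $\Phi\in\Mult(\cH_k\otimes\HSF,\cH_\ell\otimes\HSE)$ with range $\cM$, so $(k,\ell)$ is a BLH pair.

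The only delicate point is Step 1. One must check that the weak Agler decomposition genuinely gives $\ell/k\succeq0$, which uses that $k$ is non-vanishing so that $1/k$ makes sense. One must also get the direction of the multiplier inclusion right when invoking Lemma~\ref{l:cert-cont-include}, since the dividing kernel determines which algebra sits inside which. Everything else is a direct citation.
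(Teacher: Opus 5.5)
Your proposal is correct and follows the paper's proof essentially step for step: obtain $\ell/k\succeq 0$ from the weak Agler decomposition, deduce $\Mult(\cH_k)\subseteq\Mult(\cH_\ell)$, use Corollary~\ref{c:allequiv} to identify $\Mult(\cH_k)$- and $\Mult(\cH_\ell)$-invariance, and finish with Proposition~\ref{gensuffBLH}. Your Schur-product justification of the inclusion, with the orientation right, is slightly cleaner than the paper's appeal to Corollary~\ref{ldividesk}, since that corollary's BLH hypothesis is exactly what is being proved.
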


\begin{proof}
    Since $(k,\ell)$ admits a weak Agler decomposition, $\ell/k\succeq 0.$ Thus, by Corollary~\ref{ldividesk}, 
    $\Mult(\cH_k)\subseteq \Mult(\cH_\ell)$ contractively.
    By Corollary~\ref{c:allequiv}, for each Hilbert space $\HSE,$  a  subspace $\cM$ of $\cH_\ell\otimes \HSE$ is  $\Mult(\cH_\ell)$-invariant  
    invariant if and only if $\cM$ is $\Mult(\cH_k)$-invariant.
    An application of Proposition~\ref{gensuffBLH} completes the proof.
\end{proof}

\noindent The proof of Proposition~\ref{gensuffBLH} requires the following standard lemma.

\begin{lemma} \label{ellM/k}
Let $\genl$ be a reproducing kernel on a set $X$ and assume 
$\HSF,\HSG$ are Hilbert spaces and 
$\opphi{\Phi}{X}{ \mathcal{H}_{\genl}\otimes\mathcal{F}}{\mathcal{H}_{\genl}\otimes\mathcal{G}}$ is a contractive multiplier.  
If 
$\HSE$ is  a Hilbert space and $\mathcal{M}\subset\mathcal{H}_{\genl}\otimes\mathcal{E}$ is a $\textup{Mult}(\mathcal{H}_{\genl})$-invariant subspace,  then 
\[\big(Q(a)\big[I-\Phi(a)\Phi(b)^*\big]Q(b)^*\big)\genl_\cM(a, b)\succeq 0, \]
for any $Q: X\to \mathcal{B}(\mathcal{G}, \mathbb{C})$. 
\end{lemma}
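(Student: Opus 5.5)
Write $\genl_\cM(a,b) = P_{\cM}$-compressed kernel, so that for finitely many points $a_1,\dots,a_n\in X$ and vectors $e_1,\dots,e_n\in\HSE$ the quantity $\langle \genl_\cM(a_i,a_j)e_j,e_i\rangle = \langle P_\cM(\genl_{a_j}\otimes e_j), P_\cM(\genl_{a_i}\otimes e_i)\rangle$. The claim is a Schur-type product statement: the scalar kernel $h(a,b):=Q(a)[I-\Phi(a)\Phi(b)^*]Q(b)^*$ times the $\cB(\HSE)$-valued kernel $\genl_\cM$ is positive. The plan is to realise the product as a Gram matrix. Let $\cM$ also be viewed as a subspace $\cM\otimes\HSG\subseteq \cH_\genl\otimes\HSE\otimes\HSG$, which is invariant under $M_\Psi\otimes$-type operators for $\Psi\in\Mult(\cH_\genl\otimes\HSF,\cH_\genl\otimes\HSG)$ acting on the $\cH_\genl$ factor. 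More precisely, since $\cM$ is $\Mult(\cH_\genl)$-invariant and $\Phi$'s matrix entries (with respect to bases of $\HSF,\HSG$) are scalar multipliers of $\cH_\genl$, the operator $M_\Phi\otimes I_\HSE$, suitably reshuffled as a map $\cH_\genl\otimes\HSE\otimes\HSF\to\cH_\genl\otimes\HSE\otimes\HSG$, maps $\cM\otimes\HSF$ into $\cM\otimes\HSG$ and is contractive.

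Key step: compute the adjoint on projected kernel functions. For $x\in\cH_\genl\otimes\HSE\otimes\HSG$ write $P=P_{\cM}\otimes I$. Since $\widetilde M:=M_\Phi\otimes I_\HSE$ leaves $\cM\otimes\HSF\to\cM\otimes\HSG$ invariant, we have $P\widetilde M P=\widetilde M P$, hence $P\widetilde M^*P=P\widetilde M^*$. Using $\widetilde M^*(\genl_b\otimes e\otimes g)=\genl_b\otimes e\otimes\Phi(b)^*g$, the vectors $y_j:=P(\genl_{a_j}\otimes e_j\otimes Q(a_j)^*)$ satisfy $P\widetilde M^* y_j = P(\genl_{a_j}\otimes e_j\otimes \Phi(a_j)^*Q(a_j)^*)$. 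Then
\[
\sum_{i,j}\langle h(a_i,a_j)\genl_\cM(a_i,a_j)e_j,e_i\rangle=\Big\|\sum_j y_j\Big\|^2-\Big\|P\widetilde M^*\sum_j y_j\Big\|^2\ge 0,
\]
where the identification of both Gram sums with the left side uses that $\langle P(\genl_b\otimes e\otimes u),P(\genl_a\otimes f\otimes v)\rangle=\langle u,v\rangle\langle\genl_\cM(a,b)e,f\rangle$ (as $P=P_\cM\otimes I$), with $u=Q(b)^*$ or $\Phi(b)^*Q(b)^*$. Nonnegativity comes from $\|\widetilde M^*\|\le1$.

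The main obstacle is justifying that $M_\Phi\otimes I_\HSE$ maps $\cM\otimes\HSF$ into $\cM\otimes\HSG$ when $\HSF,\HSG$ are infinite dimensional: each entry $\langle\Phi(\cdot)f,g\rangle$ is a scalar multiplier of norm $\le1$, so finite sums of elementary tensors are mapped into $\cM\otimes\HSG$ (component-wise invariance), and then continuity of the bounded operator together with closedness of $\cM\otimes\HSG$ extends this to all of $\cM\otimes\HSF$. The rest is the routine bookkeeping of the identity above.
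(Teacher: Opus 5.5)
Your proposal is correct and follows essentially the same route as the paper. Both arguments show that the amplified multiplier $I_{\mathcal{E}}\otimes\Phi$ maps $\mathcal{M}\otimes\mathcal{F}$ into $\mathcal{M}\otimes\mathcal{G}$ using the scalar multipliers $\langle\Phi(\cdot)x,y\rangle$, and then obtain positivity from the contractivity of its adjoint evaluated on projected kernel vectors. The only cosmetic difference is in how $Q$ enters: the paper first proves that the operator-valued kernel $\ell_{\mathcal{M}}\otimes I_{\mathcal{G}}-\widetilde{\Phi}(\ell_{\mathcal{M}}\otimes I_{\mathcal{F}})\widetilde{\Phi}^*$ is positive and then conjugates by $I_{\mathcal{E}}\otimes Q$, whereas you build $Q(a_j)^*$ directly into the test vectors.
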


\begin{proof}
Fix $\mathcal{M}$ and $\mathcal{E}$ as above. Clearly, $\widetilde{\Phi}(a)=I_{\mathcal{E}}\otimes \Phi(a)$ defines a contractive multiplier from $\mathcal{H}_{\genl}\otimes\mathcal{E}\otimes\mathcal{F}$ to $\mathcal{H}_{\genl}\otimes\mathcal{E}\otimes\mathcal{G}$. 
For a function $f\in \cM,$ vectors $e\in \HSE$, $x\in \mathcal{F}$
and $y\in \mathcal{G},$ and a point $a\in X,$
\[
  \langle  M_{\widetilde{\Phi}} (f \otimes x), (\genl_a\otimes e)\otimes y \rangle 
   = \langle \Phi_{x,y} f, \genl_a\otimes e \rangle,
\]
 where $\Phi_{x,y}(a)= \langle \Phi(a)x,y\rangle.$ 
 Since $\Phi_{x,y}$ is a scalar $\mathcal{H}_{\genl}$-multiplier
 and $\cM$ is multiplier-invariant, 
 it follows that $M_{\widetilde{\Phi}}$ maps $\cM\otimes \HSF$
 to $\cM\otimes \mathcal{G}.$  Thus, letting $V$ denote the inclusion of $\cM\otimes \HSF$
 into $\cH_\genl\otimes \HSE\otimes \HSF,$
\[
  V^* M_{\widetilde{\Phi}}^* (P_{\cM}\otimes I_\HSG) (\genl_a\otimes e\otimes g)
   = V^* (\genl_a \otimes e\otimes \widetilde{\Phi}(a)^* g).
\]
Since $M_{\widetilde{\Phi}}^*$ is a contraction, 
\begin{equation}
\label{W-star-W}
  0  \preceq  W^*(I-M_{\widetilde{\Phi}} M_{\widetilde{\Phi}}^*) W
\preceq W^*W - W^* M_{\widetilde{\Phi}} VV^* M_{\widetilde{\Phi}}^*W
=:R,
\end{equation}
 where $W$ is the inclusion of $\cM\otimes\HSG$ into $\cH_\genl\otimes \HSE\otimes \HSG$.
On the other hand,  for points $a_1,a_2\in X,$ vectors $e_1,e_2\in \HSE$
  and $g_1,g_2\in \HSG$  
\begin{equation*}
    \label{preW-star-W}
\begin{split}
      \langle R & (P_{\cM} \otimes I_\HSG )(\genl_{a_2}\otimes e_1\otimes g_1), \, (P_{\cM} \otimes I_\HSG)  (\genl_{a_1}\otimes e_2\otimes g_2)  \rangle 
\\ & =  \bigg \langle \big [\genl_{\mathcal{M}}(a_1,a_2)\otimes I_{\mathcal{G}}-\widetilde{\Phi}(a_1)(\genl_{\mathcal{M}}(a_1,a_2)\otimes I_{\mathcal{F}})\widetilde{\Phi}(a_2)^*  
\big ] e_1\otimes g_1,e_2\otimes g_2 \bigg \rangle.
\end{split}
\end{equation*}
 Combining this last equation with
 \eqref{W-star-W}   shows
 that the function
\[
 X\times X \ni (a,b) \mapsto 
 \genl_{\mathcal{M}}(a, b)\otimes I_{\mathcal{G}}-\widetilde{\Phi}(a)(\genl_{\mathcal{M}}(a, b)\otimes I_{\mathcal{F}})\widetilde{\Phi}(b)^*
\]
 is psd.   
Conjugating by $\widetilde{Q}(a):= I_{\mathcal{E}}\otimes Q(a)$ obtains,
\begin{align*}
0 &\preceq \widetilde{Q}(a)\big(\genl_{\mathcal{M}}(a,b)\otimes I_{\mathcal{G}}-\widetilde{\Phi}(a)(\genl_{\mathcal{M}}(a,b)\otimes I_{\mathcal{F}})\widetilde{\Phi}(b)^*\big)\widetilde{Q}(b)^*  \\ 
&= \widetilde{Q}(a)(\genl_{\mathcal{M}}(a,b)\otimes I_{\mathcal{G}})\widetilde{Q}(b)^*-\widetilde{Q}(a)\widetilde{\Phi}(a)(\genl_{\mathcal{M}}(a,b)\otimes I_{\mathcal{F}})\widetilde{\Phi}(b)^*\widetilde{Q}(b)^*  \\ 
&= Q(a)Q(b)^*\genl_{\mathcal{M}}(a,b)-Q(a)\Phi(a)\Phi(b)^*Q(b)^*\genl_{\mathcal{M}}(a,b),\\
&= \big(Q(a)\big[I-\Phi(a)\Phi(b)^*\big]Q(b)^*\big)\genl_{\mathcal{M}}(a,b),
\end{align*}
as desired.
\end{proof}

\begin{proof}[Proof of Proposition \ref{gensuffBLH}] 
Assume
$Q_n(1-\Phi_n\Phi_n^*)Q_n^*\to 1/\kappa$  pointwise with $Q_n, \Phi_n$ as in the assumptions of a weak Agler decomposition. For a 
$\textup{Mult}(\mathcal{H}_{\genl})$-invariant subspace  $\mathcal{M} \subset \cH_\genl \otimes \HSE$,
\begin{equation}
    \label{e:pointwiseQn}
   \big(Q_n(a)\big[I-\Phi_n(a)\Phi_n(b)^*\big]Q_n(b)^*\big)\genl_\cM(a, b)\to \cfrac{\genl_\cM(a, b)}{\genk(a, b)}
   \end{equation}
pointwise over $X\times X.$ For any finite set $F\subset X$ and positive integer $n,$   Lemma \ref{ellM/k} yields 
\[
  \begin{pmatrix} (Q_n(a)[1-\Phi_n(a)\Phi_n(b)^*]Q_n(b)^*)\lambda_\cM(a,b) \end{pmatrix}_{a,b\in F} \succeq0.
\]
Pointwise  convergence in equation~\eqref{e:pointwiseQn} gives 
\[
\begin{pmatrix} \frac{\lambda_\cM (a,b) }{\kappa (a,b) } \end{pmatrix}_{a,b\in F} \succeq0.
\]
Since $F$ was arbitrary, 
$\genl_{\mathcal{M}}/\genk\succeq 0.$  An application of
Proposition~\ref{l_m/k} completes the proof.
\end{proof}

\subsection{Connection to the complete Pick property}  \label{CPconnectsubsec}
Decompositions like that of \eqref{again1/k=} for $1/\genk$ suggest  a natural link with the complete Pick property.
 A kernel $\kappa$ is an (irreducible) complete Pick (or CP) kernel if and only if there exists a function $\delta:X\to\mathbb{C}\setminus\{0\}$ and a positive kernel $P$ so that
\begin{equation}\label{univCP1}
\kappa(a, b)=\frac{\delta(a)\overline{\delta(b)}}{1-P(a, b)}, \hspace{0.2 cm} \text{ for all }a,b\in X.
\end{equation} 
See 
\cite{McCulloughcarath, McCulloughlocal, Quiggin, Pickbook}. 
A now standard (and usually very helpful) observation is that, in this setting, one may write $P(a, b)=\Psi(a)\Psi(b)^*$, where $\Psi\in\textup{Mult}(\mathcal{H}_{\genk}\otimes \mathcal{K},\mathcal{H}_{\genk} )$ is a contractive multiplier and $\mathcal{K}$ an auxiliary Hilbert space.
 Thus, rearranging the identity of equation~\eqref{univCP1} 
shows
\begin{equation}   \label{univCP2}
 \frac{1}{\kappa(a,b)} = Q(a) [1-\Psi(a)\Psi(b)^*] Q(b),
\end{equation}
where $Q=1/\delta.$  In this way we see that if $\genk$ is a CP kernel, then 
$(\genk, \genk)$ admits a strong Agler decomposition. 
  On the other hand, if $(\genk,\genk)$ admits a weak Agler 
decomposition, then $(\genk,\genk)$ is a BLH pair by Proposition~\ref{gensuffBLH} and  thus, 
from \cite[Theorem~0.7]{mcctrent}, $\genk$ is a CP
kernel.

 The import of the definition of a CP kernel is the following. 
 A (non-vanishing) kernel $\genk$ is a CP kernel if and only if the space of multipliers of $\cH_k$ 
 supports the following matrix-valued (complete) version of Pick interpolation: 
 For each pair of positive integers $n$ and $m,$ subset $F=\{a_1,\dots,a_n\}$ of  $X$ and 
 $\Lambda_1,\dots,\Lambda_n\in \CC^{m\times m}$ such that the kernel 
\[
 F\times F \ni (a,b) \mapsto  (I-\Lambda_i \Lambda_j^*)\, \genk(a,b) 
\]
 is positive semi-definite, there exists a contractive $\CC^{m\times m}$-valued multiplier $\Phi$ of   $\genk$ $\cH_\genk \otimes \CC^m$   such that
\[
 \Phi(a_j)=\Lambda_j.
\]
Since Agler introduced the notion of a (complete) Pick kernel in unpublished notes in the late 1980s, there has been a flurry of activity that ultimately led to the development of a very successful theory. Important recent (and less recent) developments include
 \cite{AHMRfact, Chalmsimply, weakPickprod, ColRow, Charadvances, Wickcorona, DBrtoPick}.

The result below is an easy consequence of Proposition~\ref{gensuffBLH}.

\begin{proposition}\label{CPinbetween}
  Suppose  $(\genk, \genl)$ is a pair of  reproducing kernels
 over a set $X$ such that 
 if  $\HSE$ is a Hilbert space and $\mathcal{M}\subset\mathcal{H}_{\genl}\otimes\mathcal{E}$ is $\Mult(\cH_\genk)$-invariant, then $\mathcal{M} $ is 
 $\Mult(\cH_\genl)$-invariant. 
 If there exists a non-vanishing CP kernel $s$ such that 
 \[\genl/s\succeq 0, \qquad  s/\genk\succeq 0,\]
then $(\genk, \genl)$ is a BLH pair.
\end{proposition}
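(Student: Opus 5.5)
Since every $\Mult(\cH_\genk)$-invariant subspace $\cM\subseteq\cH_\genl\otimes\HSE$ is assumed to be $\Mult(\cH_\genl)$-invariant, the second part of Proposition~\ref{gensuffBLH} reduces the claim to showing that $(\genk,\genl)$ admits a weak Agler decomposition. In fact I will produce a strong Agler decomposition, which suffices by the remark following Definition~\ref{strongAgler}. The plan is to combine the CP structure of $s$, recorded in \eqref{univCP1}--\eqref{univCP2}, with the two factorizations $\genl=g_1 s$ and $s=g_2\genk$, where $g_1,g_2$ are positive kernels.

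Concretely, write $s(a,b)=\delta(a)\overline{\delta(b)}/(1-\Psi(a)\Psi(b)^*)$ with $\delta$ non-vanishing and $\Psi\in\Mult(\cH_s\otimes\mathcal K,\cH_s)$ contractive. Factor $g_2(a,b)=G(a)G(b)^*$ with $G:X\to\cB(\mathcal{G}_0,\CC)$, using fact (i) in the proof of Proposition~\ref{l_m/k}. Since $\genk$ and $s$ are non-vanishing, $g_2=s/\genk$ is non-vanishing, and
\[
\frac{1}{\genk(a,b)}=\frac{g_2(a,b)}{s(a,b)}=\frac{G(a)G(b)^*}{\delta(a)\overline{\delta(b)}}\,\bigl(1-\Psi(a)\Psi(b)^*\bigr).
\]
I then set $Q(a):=\delta(a)^{-1}G(a)$, which is valid because $\delta$ is non-vanishing. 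With $\Phi(a):=I_{\mathcal{G}_0}\otimes\Psi(a)$, viewed as a map $\mathcal{G}_0\otimes\mathcal K\to\mathcal{G}_0$, this gives
\[
\frac{1}{\genk(a,b)}=Q(a)\bigl(I-\Phi(a)\Phi(b)^*\bigr)Q(b)^*,
\]
which is the form required in \eqref{strongAglerb}.

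It remains to check that $\Phi$ is a contractive multiplier of $\cH_\genl$. The multiplier $\Psi$ is contractive on $\cH_s$, so $(1-\Psi(a)\Psi(b)^*)s(a,b)\succeq 0$. Multiplying by the positive kernel $g_1$ (Schur product) gives $(1-\Psi\Psi^*)\genl\succeq 0$, so $\Psi\in\Mult(\cH_\genl\otimes\mathcal K,\cH_\genl)$ is contractive. This is essentially Lemma~\ref{l:cert-cont-include} in its row-valued form, and tensoring with $I_{\mathcal{G}_0}$ preserves contractivity.

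The only delicate point is separability, which Definition~\ref{strongAgler} requires. If $\mathcal K$ or $\mathcal{G}_0$ is not separable, I would instead verify the weak Agler decomposition directly: truncating to finite-dimensional pieces gives functions $Q_n$ and contractive $\Phi_n$ with pointwise convergence to $1/\genk$. Alternatively, I would bypass Agler decompositions altogether and reprove the conclusion of Proposition~\ref{gensuffBLH} by applying Lemma~\ref{ellM/k}, which allows arbitrary Hilbert spaces, to the exact identity above, giving $\genl_\cM/\genk\succeq 0$, and then invoke Proposition~\ref{l_m/k}. I expect this bookkeeping, rather than any real estimate, to be the only obstacle, and the second route avoids it entirely.
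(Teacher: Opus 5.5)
Your argument is correct and is essentially the paper's proof. The paper uses Lemma~\ref{l:l/s} to normalize $s=1/(1-\Psi\Psi^*)$ rather than carrying $\delta$; you absorb $1/\delta$ into $Q$ instead, which is equivalent. It then factors $s/\genk=FF^*$ and feeds $1/\genk=F(I-\Phi\Phi^*)F^*$, with $\Phi=\Psi\otimes I$, into Proposition~\ref{gensuffBLH}. Your second route, applying Lemma~\ref{ellM/k} directly to the exact identity and then Proposition~\ref{l_m/k}, is the rigorous form of that last step, since the paper's decomposition also uses possibly infinite-dimensional spaces rather than the $\CC^{m_n}$ required in Definition~\ref{weakAgler}.
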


The proof uses the observation contained in the following lemma.

\begin{lemma}
    \label{l:l/s}
      Suppose  $\genl$ and $s$ are reproducing kernels over $X$
      and $s$ is a non-vanishing CP kernel as in equation~\eqref{univCP2}. If $\genl/s\succeq 0,$  then $\Psi\in \Mult(\cH_\genl\otimes \mathcal{K},\cH_\genl)$ 
      is a contractive multiplier and moreover,
    \[
       t=\frac{1}{I-\Psi \Psi^*}
    \]
     is a CP kernel and $\genl/t\succeq 0.$
\end{lemma}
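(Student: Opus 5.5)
We need to prove Lemma \ref{l:l/s}: $s=\delta(a)\overline{\delta(b)}/(1-\Psi(a)\Psi(b)^*)$ with $\Psi$ a contractive multiplier of $\mathcal{H}_s\otimes\mathcal{K}\to\mathcal{H}_s$, and $\ell/s\succeq 0$. The claims are that $\Psi$ is a contractive multiplier of $\mathcal{H}_\ell\otimes\mathcal{K}\to\mathcal{H}_\ell$, that $t=1/(1-\Psi\Psi^*)$ is CP, and that $\ell/t\succeq0$.

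The plan starts with the structure of $s$. From \eqref{univCP1}–\eqref{univCP2}, $P(a,b)=\Psi(a)\Psi(b)^*$ is a positive kernel and $1-P(a,b)\neq 0$. Positivity of $s$ forces $\|\Psi(a)\|<1$ pointwise, since the diagonal of $s$ is positive and finite. Hence
\[
t(a,b)=\frac{1}{1-\Psi(a)\Psi(b)^*}=\sum_{n\ge0}(\Psi(a)\Psi(b)^*)^n
\]
is a positive kernel, as a convergent sum of Schur products of positive kernels. By the characterization \eqref{univCP1} with $\delta\equiv1$ and positive kernel $P=\Psi\Psi^*$, $t$ is an irreducible CP kernel. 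Also $s=\delta(a)\overline{\delta(b)}\,t(a,b)$, so $\mathcal{H}_s=\delta\mathcal{H}_t$ unitarily.

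Next we show $\ell/t\succeq0$. We have
\[
\frac{\ell}{t}=\frac{\ell}{s}\cdot\frac{s}{t}=\frac{\ell}{s}(a,b)\,\delta(a)\overline{\delta(b)}.
\]
This is the Schur product of the positive kernel $\ell/s$ with the rank-one positive kernel $\delta(a)\overline{\delta(b)}$, hence positive. Since $\delta$ is non-vanishing, all divisions are legitimate.

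Finally, contractivity of $\Psi$ on $\mathcal{H}_\ell$ amounts, by the multiplier criterion before \eqref{multinterpol}, to showing
\[
\ell(a,b)-\Psi(a)\Psi(b)^*\ell(a,b)=(1-\Psi(a)\Psi(b)^*)\,\ell(a,b)\succeq0.
\]
Write $\ell=g\,t$ with $g=\ell/t\succeq0$ from the previous step. Then
\[
(1-\Psi\Psi^*)\,\ell=(1-\Psi\Psi^*)\,t\,g=g\succeq0,
\]
because $(1-\Psi\Psi^*)\,t\equiv1$. So $\Psi$ is a contractive multiplier of $\mathcal{H}_\ell\otimes\mathcal{K}\to\mathcal{H}_\ell$.

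The main obstacle is modest: justifying $\|\Psi(a)\|<1$, so that the Neumann series for $t$ converges and $t$ is positive. This follows from $s(a,a)>0$ and finite: $1-\|\Psi(a)^*\|^2=|\delta(a)|^2/s(a,a)>0$. Positivity of the series then follows from the Schur product theorem together with pointwise limits of positive kernels being positive.
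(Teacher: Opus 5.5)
Your proof is correct and follows the paper's argument. Both rest on the identity $(1-\Psi\Psi^*)\,\ell = \delta\,(\ell/s)\,\overline{\delta}$ (the paper writes it as $\ell/s = Q(1-\Psi\Psi^*)Q^*\ell$ with $Q=1/\delta$), which gives at once that $\Psi$ is a contractive multiplier of $\mathcal{H}_\ell$ and that $\ell/t\succeq 0$. Your extra checks that $\|\Psi(a)\|<1$ and that $t$ is a positive CP kernel supply details the paper leaves implicit.
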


\begin{proof}
 By assumption, 
\[
  \genl/s = Q(I-\Psi\Psi^*) Q^* \, \genl
\]
 for a Hilbert space $\mathcal{K},$ a contractive multiplier $\Psi\in \Mult(\cH_s\otimes \mathcal{K}, \mathcal{H}_s),$ and  a
 non-vanishing scalar-valued function $Q.$ Thus, the assumed
 positivity of $\genl/s$ implies
\[
  (I-\Psi \Psi^*) \genl\succeq 0
\]
 and hence $\Psi$ is a contractive multiplier of $\mathcal{H}_{\genl}$ and $\genl/t\succeq0$
 as claimed.
\end{proof}

\begin{proof}[Proof of Proposition~\ref{CPinbetween}]
By Lemma \ref{l:l/s},  we may, without loss of generality, write $s(a, b)=\big[1-\Psi(a)\Psi(b)^*\big]^{-1},$
 where $\Psi\in\textup{Mult}(\mathcal{H}_{\genl}\otimes \mathcal{K},\mathcal{H}_{\genl} )$ is a contractive multiplier and $\mathcal{K}$ an auxiliary Hilbert space.  Further, writing $s/\genk=F(z)F(w)^*$, where $F:\Omega\to \mathcal{B}(\mathcal{L}, \CC) $ for some Hilbert space $\mathcal{L}$, the function $1/\genk$ satisfies \eqref{again1/k=} with $\Phi_n(a)=\Psi(a)\otimes I_{\mathcal{L}}$ and $Q_n=F$, for all $n$. Proposition~\ref{gensuffBLH} then yields the desired conclusion.
\end{proof}

\begin{remark}
By Theorem 1.1 in \cite{Beurlingfactor}, 
 any pair of the form $(s, \genl)$, where $s$ is a non-vanishing CP kernel and $\genl/s\succeq 0$, is a BLH pair, without any additional assumptions on either $s$ or $\genl$.    Indeed,
 in this case, $\Mult(\cH_s)\subseteq \Mult(\cH_\lambda)$
 since $\lambda/s\succeq 0.$
\qed\end{remark}

\subsection{Comparison with the complete Pick property for pairs} \label{CPsubsec}
It is well-known that the BLH and CP properties agree for single kernels \cite{mcctrent}. The situation, however, is very different in the two-kernel setting, as we now explain in brief, with details to follow.  There is a natural
notion of a \textit{CP pair} of kernels.  For a pair $(k,\ell)$ of diagonal holomorphic kernels (defined below) 
the hypotheses of Proposition~\ref{CPinbetween} are necessary, but not sufficient for $(k,\ell)$ to be a CP pair.
In Example~\ref{eg:recursivel} in Section \ref{Exampsec}, we  construct  a strongly regular BLH pair $(k, \ell)$   of diagonal holomorphic kernels for which    there is no CP kernel $s$ with $\ell/s\succeq 0$ and $s/k\succeq 0$. Thus, the hypotheses of Proposition~\ref{CPinbetween},   while sufficient,   are not even necessary for the BLH property, evidencing a rather significant gap between the BLH and CP properties, even in the presence of strong regularity. 

\begin{definition}[\cite{Shimorin}] \label{CPdef}
A pair $(\genk, \genl)$ of kernels on  $X$ is a \df{complete Pick pair} (or \df{CP pair} for short)  if, for every positive integer $n,$ for every choice of points $a_1, \dots, a_n\in X,$ for every positive integer $N$ and for every choice of  matrices $W_1, \dots, W_n\in\mathbb{C}^{N\times N}$  for  which  the matrix in \eqref{multinterpol} is positive semi-definite, there exists a multiplier $\Phi \in\Mult\big(\mathcal{H}_{\genk}\otimes\mathbb{C}^N, \mathcal{H}_{\genl}\otimes \mathbb{C}^N\big)$ of norm at most one that satisfies, 
 \begin{align*}
  \Phi(a_i)=W_i,
\end{align*}
for $i=1,2,\dots,n.$ 
\qed
\end{definition} 

In the special case $\genk=\genl$, we recover the usual definition of a CP kernel.

 A kernel $s$ on $X$ is a \df{strong Shimorin certificate for $(k, \ell)$} if it is a complete Pick kernel and there exist positive kernels $h, g$ on $X$ with $k=(1-h)s$ and $ \ell=sg$. 
 
It is a near immediate corollary of Proposition~\ref{CPinbetween}
  that if $(k,\ell)$ has a strong Shimorin certificate,
  then $(k,\ell)$ is BLH.

 \begin{corollary}
          \label{c:Shim-cert-implies-blh}
     Suppose  $(\genk, \genl)$ is a pair of  reproducing kernels such that 
whenever $\HSE$ is a Hilbert space and $\mathcal{M}\subset\mathcal{H}_{\genl}\otimes\mathcal{E}$ is $\Mult(\cH_\genk)$-invariant, then $\mathcal{M} $ is 
 $\Mult(\cH_\genl)$-invariant. If $(k,\ell)$ possesses a 
 strong Shimorin certificate, then $(k,\ell)$ is a BLH pair.
 \end{corollary}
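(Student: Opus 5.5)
The plan is to reduce directly to Proposition~\ref{CPinbetween} by producing, from the strong Shimorin certificate, a non-vanishing CP kernel $s$ with $\genl/s\succeq 0$ and $s/\genk\succeq 0$. The invariance hypothesis on $\Mult(\cH_\genk)$-invariant subspaces is exactly the one required in Proposition~\ref{CPinbetween}, so once these two positivity relations are in hand the BLH conclusion follows at once.

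Let $s$ be the strong Shimorin certificate, so $s$ is a CP kernel and there are positive kernels $h,g$ with $\genk=(1-h)s$ and $\genl=sg$. The relation $\genl/s=g\succeq 0$ is immediate. For the other relation, note that $\genk$ is a kernel and, wherever $\genk(a,b)\neq 0$, $s/\genk = 1/(1-h)$. I would first argue that $h(a,a)<1$ on the diagonal (since $\genk(a,a)=(1-h(a,a))s(a,a)$ and, as $s$ is a non-vanishing CP kernel, $s(a,a)>0$; if $\genk(a,a)>0$ this forces $h(a,a)<1$). Then by Cauchy--Schwarz for the positive kernel $h$, $|h(a,b)|^2\le h(a,a)h(b,b)<1$, so the Neumann series
\[
\frac{1}{1-h(a,b)}=\sum_{n\ge 0} h(a,b)^n
\]
converges pointwise, and each term $h^n$ is positive by the Schur product theorem; hence $s/\genk = 1/(1-h)\succeq 0$ (a pointwise limit of positive kernels). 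In particular $\genk$ is non-vanishing, as $1-h\neq 0$ and $s\neq 0$.

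The main obstacle is the non-vanishing/diagonal issue: one needs $s$ non-vanishing (which is implicit in the standing convention that CP kernels are irreducible with $\delta$ non-vanishing as in \eqref{univCP1}, so $s(a,a)=|\delta(a)|^2/(1-P(a,a))>0$) and $\genk(a,a)>0$ for all $a$. If $\genk(a,a)=0$ at some point, I would either discard such points (they contribute nothing to $\cH_\genk$) or note that the paper's conventions assume non-degenerate kernels; I expect the authors to treat this as routine. With both positivity relations established, Proposition~\ref{CPinbetween} applies and $(\genk,\genl)$ is a BLH pair.
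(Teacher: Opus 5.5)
Your proposal is correct and matches the paper's proof: read off $\ell/s=g\succeq 0$ and $s/k=1/(1-h)$, then invoke Proposition~\ref{CPinbetween}. The only difference is that your Neumann-series and Schur-product argument, using $h(a,a)<1$ and Cauchy--Schwarz, supplies the justification for $1/(1-h)\succeq 0$ and for the non-vanishing of $k$, both of which the paper asserts without comment.
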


 \begin{proof}
      By hypothesis, there is a non-vanishing CP kernel $s$  and positive kernels $g,h$ such that $k=(1-h)s$ and $\ell =sg.$ 
  Thus, both $s/k = 1/(1-h)\succeq 0$ and $\ell/s=g\succeq 0$,  and therefore, by Proposition~\ref{CPinbetween}, $(k,\ell)$
  is a BLH pair. 
 \end{proof}

\begin{theorem}[\cite{Shimorin}]\label{Shimsuff}
Suppose $(k, \ell)$ is a pair of reproducing kernels. If there exists a strong
 Shimorin certificate for $(k,\ell),$  then $(k, \ell)$ is a complete Pick pair.   
\end{theorem}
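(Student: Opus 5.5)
The plan is to reduce the pair interpolation problem to a Pick problem for the single complete Pick kernel $s$, and then transfer the resulting multiplier back to the pair $(k,\ell)$ using the two factorizations $k=(1-h)s$ and $\ell=sg$. Fix points $a_1,\dots,a_n$ and matrices $W_1,\dots,W_n\in\CC^{N\times N}$ for which the matrix in \eqref{multinterpol} is positive semi-definite, that is,
\[
 \big[\ell(a_i,a_j)I-W_iW_j^*k(a_i,a_j)\big]\succeq 0 .
\]

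The first step is to extract a Pick condition for $s$. Write $\ell=sg$ and $k=(1-h)s$, so the condition reads $\big[s(a_i,a_j)\big(g(a_i,a_j)I-W_iW_j^*(1-h(a_i,a_j))\big)\big]\succeq 0$. Since $s$ is CP, we may normalize it as $s=1/(1-\Psi\Psi^*)$, possibly after rescaling by $\delta$. The aim is to produce, via a lurking-isometry argument, an interpolant $\Phi$ whose values satisfy $\ell I-\Phi\Phi^*k\succeq0$ on all of $X$. For this I would factor $g=G G^*$ and $h=HH^*$, with $G,H$ taking values in $\cB(\mathcal{L},\CC)$. I would then seek $\Phi$ of the form $\Phi(x)=A(x)\big(\ldots\big)$, built from a contractive multiplier of $\cH_s$ composed with $G$ and $H$.

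The concrete route is to rewrite the hypothesis as
\[
 s(a_i,a_j)\Big[G(a_i)G(a_j)^*\otimes I+W_iH(a_i)H(a_j)^*W_j^*\Big]-s(a_i,a_j)W_iW_j^*\succeq 0 .
\]
This is exactly a Leech/Pick-type positivity for the CP kernel $s$, namely $\big[s(a_i,a_j)\big(B_iB_j^*-W_iW_j^*\big)\big]\succeq0$ with $B_i=\begin{bmatrix} G(a_i)\otimes I & W_iH(a_i)\end{bmatrix}$. The complete Pick property of $s$, in its Leech form (Ball--Trent--Vinnikov, \cite[Theorem 8.57]{Pickbook}), then provides a contractive multiplier $\Theta=\begin{bmatrix}\Theta_1\\ \Theta_2\end{bmatrix}$ of $\cH_s$ with $B_i\Theta(a_i)=W_i$.

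Next I would solve $W=(G\otimes I)\Theta_1+WH\Theta_2$ for $W$, giving the candidate
\[
 \Phi=(G\otimes I)\Theta_1\,(I-H\Theta_2)^{-1}
\]
on all of $X$. This is the main obstacle, because it requires invertibility of $I-H\Theta_2$ and a norm estimate. I would expand $(I-H\Theta_2)^{-1}$ as a Neumann series, using that $1-h$ is a factor of the non-vanishing kernel $k$, so that $k=s/(1/(1-h))$ and $1/(1-h)=\sum h^m$ is positive. This yields $\Phi$ as a transfer-function realization. The final step is the computation
\[
 \ell I-\Phi\Phi^*k = s\,(G\otimes I)(I-\ldots)(G\otimes I)^* \succeq 0 ,
\]
carried out by the standard transfer-function identity together with $I-\Theta\Theta^*\succeq0$ weighted by $s$. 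Assuming that identity closes up, $\Phi$ is a contractive multiplier from $\cH_k\otimes\CC^N$ to $\cH_\ell\otimes\CC^N$ interpolating the $W_i$, which is what Definition~\ref{CPdef} requires.
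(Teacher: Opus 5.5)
The paper gives no proof of this statement; it is quoted from \cite{Shimorin}, where it comes out of a commutant lifting theorem for such pairs. Your route is different and more direct. You convert the pair condition \eqref{multinterpol} into a Leech condition for the single CP kernel $s$, solve it there, and then undo a linear-fractional relation. This yields an explicit interpolant built from $\Theta$, $G$ and $H$, but not the broader lifting statement.

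The construction works, but you left the step you called ``assuming that identity closes up'' unverified, and it is the entire content of the final part. It is also not a transfer-function or Neumann-series computation. Put $\widetilde G(x)=G(x)\otimes I_N$ and $\widetilde H(x)=I_N\otimes H(x)$, so that $\widetilde G(x)\widetilde G(y)^*=g(x,y)I_N$ and $\widetilde H(x)\widetilde H(y)^*=h(x,y)I_N$. Your defining relation $\Phi(I-\widetilde H\Theta_2)=\widetilde G\Theta_1$ says precisely that $\Phi(x)=C(x)\Theta(x)$, where $C(x)=\begin{bmatrix}\widetilde G(x) & \Phi(x)\widetilde H(x)\end{bmatrix}$. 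Moreover $C(x)C(y)^*=g(x,y)I+h(x,y)\Phi(x)\Phi(y)^*$. Therefore
\[
 \ell I-k\,\Phi\Phi^* = s\big(gI+h\,\Phi\Phi^*-\Phi\Phi^*\big) = C(x)\Big[s(x,y)\big(I-\Theta(x)\Theta(y)^*\big)\Big]C(y)^*\succeq 0,
\]
because $\Theta$ is a contractive multiplier of $\cH_s$. This says exactly that $\Phi$ is a contractive multiplier from $\cH_k\otimes\CC^N$ to $\cH_\ell\otimes\CC^N$. The inverse $(I-\widetilde H\Theta_2)^{-1}$ never has to be expanded.

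There are two smaller corrections.

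First, invertibility is purely pointwise. Since $s(x,x)>0$, you have $\|\Theta(x)\|\le 1$. Since $k(x,x)>0$, you have $\|\widetilde H(x)\|^2=h(x,x)=1-k(x,x)/s(x,x)<1$. Hence $\|\widetilde H(x)\Theta_2(x)\|<1$. Neither the positivity of $1/(1-h)=\sum h^m$ nor the normalization of $s$ plays any role.

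Second, the Leech step you need is the finite-point version, with data only at $a_1,\dots,a_n$ and $B_i$ acting on an infinite-dimensional space. It is not the global factorization theorem \cite[Theorem 8.57]{Pickbook}. The paper's own Theorem~\ref{realization.} supplies it, with $\ell$ replaced by $s$. Write $s(x,y)=\delta(x)\overline{\delta(y)}/(1-\Psi(x)\Psi(y)^*)$, and take $Q=1/\delta$ with the row $\Psi$ as the given multiplier. Then $Q(I-\Psi\Psi^*)Q^*=1/s$, and $\|\Psi(x)\|<1$ holds automatically. The hypothesis of that theorem becomes exactly $\big[s(a_i,a_j)(B_iB_j^*-W_iW_j^*)\big]\succeq 0$.

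With these points filled in, your argument is a complete and self-contained proof.
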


While it is not known if existence of a strong Shimorin certificate is necessary for the complete Pick property in general, we can often say more.  An analytic kernel $\kappa$ over a bounded domain $\Omega$ (containing the origin) is diagonal if it has a power
representation of form,
\[
 \kappa(z,w) =\sum_{\alpha} \genk_\alpha (z\ol{w})^\alpha.
\]
 The kernel is \df{normalized} if $k_0\ne 0,$ in which
case, without loss of generality, $k_0=1.$ 
 
 \begin{theorem}[\cite{mcctsik}] \label{CPpaircharact}
 A normalized diagonal holomorphic pair 
 is a complete Pick pair if
 and only if it possesses a 
strong Shimorin certificate. Thus, a normalized diagonal holomorphic CP pair is a BLH pair.
 \end{theorem}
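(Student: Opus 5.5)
The first sentence of Theorem~\ref{CPpaircharact} is the main result of \cite{mcctsik}, and I will take it as given. What remains is the final assertion: a normalized diagonal holomorphic CP pair $(k,\ell)$ is a BLH pair. The plan is to feed the strong Shimorin certificate into Corollary~\ref{c:Shim-cert-implies-blh}. The only hypothesis of that corollary still to be checked is the invariance condition: every $\Mult(\cH_k)$-invariant subspace of $\cH_\ell\otimes\HSE$ must be $\Mult(\cH_\ell)$-invariant.

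\textbf{Step 1 (certificate).} By the characterization in \cite{mcctsik}, there is a CP kernel $s$ and positive kernels $g,h$ with $k=(1-h)s$ and $\ell=sg$. Then
\[
\ell/k = g\cdot\frac{1}{1-h} = g\sum_{j\ge 0} h^j\succeq 0 ,
\]
since $k$ non-vanishing and positive forces the series to be a positive kernel. So $\ell=\tilde g\,k$ with $\tilde g\succeq 0$. Multiplying $(1-\varphi(z)\overline{\varphi(w)})k\succeq0$ by $\tilde g$ and using the Schur product theorem gives $\Mult(\cH_k)\subseteq\Mult(\cH_\ell)$ contractively. This is the direction recorded in Corollary~\ref{ldividesk}.

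\textbf{Step 2 (invariance via unitary invariance).} A diagonal kernel satisfies
\[
\kappa(\tau z,\tau w)=\sum_\alpha \kappa_\alpha |\tau|^{2|\alpha|}(z\ol w)^\alpha=\kappa(z,w)
\]
for $|\tau|=1$. Its natural domain, the domain of convergence of the power series, is circular, so $\tau\Omega\subseteq\Omega$, and $(k,\ell)$ is a unitarily invariant pair. Moreover, the monomials form an orthogonal basis of $\cH_\ell$ and of $\cH_k$, so polynomials are dense. Together with the standing boundedness of the coordinate multipliers, Assumption~\ref{assume} holds. Corollary~\ref{c:allequiv}, combined with the inclusion from Step 1, then shows that shift-invariance, $\Mult(\cH_\ell)$-invariance and $\Mult(\cH_k)$-invariance coincide for subspaces of $\cH_\ell\otimes\HSE$.

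\textbf{Step 3 (conclusion).} Corollary~\ref{c:Shim-cert-implies-blh} now applies and yields the BLH property. Equivalently, Lemma~\ref{l:l/s} and Proposition~\ref{CPinbetween} give $1/k$ as an exact Agler decomposition $F(I-\Psi\Psi^*)F^*$, and Proposition~\ref{gensuffBLH} finishes.

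The main obstacle is Step 2: the invariance hypothesis is not automatic for an arbitrary pair, as Example~\ref{eg:BLH-trivial} shows. The delicate point is justifying that polynomials are WOT (indeed SOT) dense in $\Mult(\cH_\ell)$. I would first try the Ces\`aro-mean argument of \cite[Proposition 2.5]{Beurlingfactor}. Averaging $\varphi(\tau\,\cdot)$ against the Fej\'er kernel gives polynomial multipliers of norm at most $\|\varphi\|$, because the rotations $f\mapsto f(\tau\,\cdot)$ are unitary on a diagonal space. These polynomials converge SOT to $M_\varphi$, and that argument is exactly what Corollary~\ref{c:allequiv} packages. If the boundedness of $M_z^\ell$ is not part of the standing hypotheses, I would note that the argument only uses rotation invariance and density of polynomials, and apply Proposition~\ref{allequiv}'s proof directly.
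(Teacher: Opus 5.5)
Your argument is correct and follows essentially the paper's route. The first sentence is imported from \cite{mcctsik}, and the BLH conclusion comes from Corollary~\ref{c:Shim-cert-implies-blh}; you supply its invariance hypothesis from the circular symmetry of diagonal kernels via Corollary~\ref{c:allequiv} under the standing Assumption~\ref{assume}, which the paper leaves implicit. Drop the closing fallback, though: Proposition~\ref{allequiv} really needs $z_j\in\Mult(\cH_k)$, not boundedness of $M_z^\ell$, so rotation invariance and density of polynomials alone do not give the invariance hypothesis. For instance, $k=(1-z^2\ol{w}^2)^{-1}$ and $\ell=k\,(1-z\ol{w})^{-1}$ form a normalized diagonal CP pair with $M_z^\ell$ contractive, yet the odd functions in $\cH_\ell$ form a $\Mult(\cH_k)$-invariant subspace that is not $z$-invariant.
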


 \section{Douglas-Leech Factorizations} \label{Leechsecc}

   Agler-type decompositions are known to yield Pick interpolation theorems. Thus, one expects pairs $(k, \ell)$ supporting an Agler decomposition to admit interpolation theorems of Pick type. Our main objective in this section is to establish such a result. Specifically, in Subsection~\ref{Leechsec} we show that the weak Agler decomposition yields a certain Douglas-Leech interpolation property, termed the \textit{complete pseudo-Leech property} (Theorem \ref{AglerdectoLeech}).  In Subsection~\ref{decompsec} it is also shown that, under a mild decomposability assumption on $1/k$, the weak and strong Agler decompositions are both equivalent to the complete pseudo-Leech property (Theorem \ref{Leech=wAg=sAg}).

\subsection{The pseudo-Leech property} \label{Leechsec}

\begin{definition} \label{Leechdef}
A pair $(k, \ell)$ on a set $X$ is called a \textit{complete pseudo-Leech pair} if, for all positive integers $n,p,b,$ whenever 
\begin{equation} \label{leech-data1}
    (V_iV_j^*-Y_iY_j^*)k(z_i, z_j)\succeq 0  
\end{equation}
for points $z_1, \dots, z_n\in X$,  matrices $V_1, \dots, V_n\in \mathbb{C}^{p\times a}$, $Y_1, \dots, Y_n\in \mathbb{C}^{p\times b},$  there exists a contractive multiplier $\opphi{\Phi}{X}{ \mathcal{H}_{\ell}\otimes \mathbb{C}^b}{  \mathcal{H}_{\ell}\otimes \mathbb{C}^a}$
 such that $Y_i=V_i\Phi(z_i)$ for all $i$.
 \qed
\end{definition}

\begin{remark}\label{pseudooutline}
    The complete pseudo-Leech property says that any (partially-defined) positivity condition of the form $(VV^*-YY^*)k\succeq 0$ always factors through a contractive multiplier of $\mathcal{H}_{\ell}$. Note how this differs from the Leech interpolation property that is typically associated with a pair of kernels \cite[Corollary 2.3]{Shimorin}. Indeed, the latter implies the CP property, and the CP property, as we will see in Example \ref{eg:recursivel}, is strictly stronger than the complete pseudo-Leech property. See also
     Question~\ref{q:pseudo-Pick}.
    \qed
\end{remark}

It is not hard to see that the complete pseudo-Leech property implies the BLH property. 

\begin{proposition}\label{LeechtoBLH}
   Let $(k, \ell)$ be a pair of reproducing kernels over a set $X$ such that $k$ is non-vanishing and,
 for  each Hilbert space $\HSE,$  every $\Mult(\cH_k)$-invariant subspace $\mathcal{M}\subset\mathcal{H}_{\ell}\otimes\mathcal{E}$ is 
 $\Mult(\cH_{\ell})$-invariant. If $(k, \ell)$ is complete pseudo-Leech, then it is BLH.
\end{proposition}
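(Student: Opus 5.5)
The plan is to verify the criterion of Proposition~\ref{l_m/k}. Fix a Hilbert space $\HSE$ and a $\Mult(\cH_k)$-invariant subspace $\cM\subseteq\cH_\ell\otimes\HSE$. By hypothesis $\cM$ is then $\Mult(\cH_\ell)$-invariant, so by Proposition~\ref{l_m/k} it suffices to show that $\ell_\cM/k\succeq 0$. Positivity of a kernel is a statement about finite subsets, so I will fix a finite set $F=\{z_1,\dots,z_n\}\subseteq X$ and prove that the matrix $[\ell_\cM(z_i,z_j)/k(z_i,z_j)]_{i,j}$ is positive semi-definite. Since $k$ is non-vanishing, all these quotients are defined.

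The key step is to produce a finite Agler-type representation of $1/k$ on $F$ by applying the complete pseudo-Leech property to suitably chosen data.

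\begin{enumerate}[(i)]
\item The matrix $H=[1/k(z_i,z_j)]_{i,j=1}^n$ is Hermitian, because $k$ is. Split it into positive and negative parts, $H=A-B$ with $A,B\succeq 0$.
\item Factor $A=VV^*$ and $B=YY^*$, where $V\in\CC^{n\times a}$ and $Y\in\CC^{n\times b}$. Let $V_i\in\CC^{1\times a}$ and $Y_i\in\CC^{1\times b}$ be the $i$-th rows, so $p=1$. One may take $a=b=n$ to avoid degenerate dimensions.
\item With these choices, $(V_iV_j^*-Y_iY_j^*)\,k(z_i,z_j)=1$ for all $i,j$. The all-ones matrix is positive semi-definite, so condition \eqref{leech-data1} holds.
\item The complete pseudo-Leech property therefore gives a contractive multiplier $\Phi\in\Mult(\cH_\ell\otimes\CC^b,\cH_\ell\otimes\CC^a)$ with $Y_i=V_i\Phi(z_i)$ for every $i$.
\item Consequently, for all $i,j$,
\[
\frac{1}{k(z_i,z_j)}=V_iV_j^*-Y_iY_j^*=V_i\big[I-\Phi(z_i)\Phi(z_j)^*\big]V_j^*.
\]
\end{enumerate}

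Next, define $Q:X\to\cB(\CC^a,\CC)$ by $Q(z_i)=V_i$ and $Q(x)=0$ for $x\notin F$. Lemma~\ref{ellM/k} applies to an arbitrary function $Q$, with $\mathcal{G}=\CC^a$ and $\HSF=\CC^b$. It gives that the kernel $\big(Q(x)[I-\Phi(x)\Phi(y)^*]Q(y)^*\big)\ell_\cM(x,y)$ is positive semi-definite on $X$. Restricting this kernel to $F$ and using the identity in step (v) shows that $[\ell_\cM(z_i,z_j)/k(z_i,z_j)]\succeq 0$. Since $F$ was arbitrary, $\ell_\cM/k\succeq 0$, and Proposition~\ref{l_m/k} yields a partially isometric multiplier with range $\cM$, so $(k,\ell)$ is BLH.

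The only real obstacle is seeing that $1/k$ need not itself be a difference of positive kernels on all of $X$. On a finite set, however, it always is: the Hermitian matrix splits into positive and negative parts. The all-ones matrix $[1]$ then certifies the Leech hypothesis automatically, so no global decomposition of $1/k$ is needed.
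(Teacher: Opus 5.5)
Your proof is correct and follows essentially the same route as the paper. You split $[1/k(z_i,z_j)]$ on the finite set as a difference of Gram matrices and feed the trivial positivity $k/k=1$ into the complete pseudo-Leech property. This yields a finite Agler representation $1/k(z_i,z_j)=V_i[I-\Phi(z_i)\Phi(z_j)^*]V_j^*$, and you conclude with Lemma~\ref{ellM/k} and Proposition~\ref{l_m/k}; your explicit extension of $Q$ by zero off $F$ just makes precise a step the paper leaves implicit.
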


\begin{proof} 
Assume $(k, \ell)$ is complete pseudo-Leech. We will show that it is a BLH pair.  To do so it suffices, by
Proposition~\ref{l_m/k}, to show if $\HSE$ is a Hilbert
space and $\cM\subseteq \cH_\ell\otimes \HSE$ 
is  a $\Mult(\cH_k)$-invariant  subspace, then $\ell_{\cM}/k\succeq 0.$ We prove a bit more, namely if $\cM\subseteq \cH_\ell\otimes \HSE$ 
is a $\Mult(\cH_\ell)$-invariant subspace, then $\ell_{\cM}/k\succeq 0.$
 Namely, we show,
\begin{equation*} 
    \bigg[\cfrac{\ell_{\mathcal{M}}(z_i, z_j)}{k(z_i, z_j)}\bigg]_{1\le i, j\le n}\succeq 0,
\end{equation*}
for any positive integer $n$ and points $z_1, \dots, z_n\in X.$
 Since we are working over a finite set of points, we may write $1/k(z_i,z_j)=A(z_i)A(z_j)^*-B(z_i)B(z_j)^*$, where $A: \{z_1, \dots, z_n\}\to\mathbb{C}^{1\times m}$ and 
$B: \{z_1, \dots, z_n\}\to\mathbb{C}^{1\times m}$ for some integer $m$.  By substitution, the  trivial positivity condition $k/k=1\succeq 0$  becomes
\[\big[(A(z_i)A(z_j)^*-B(z_i)B(z_j)^*)k(z_i, z_j)\big]\succeq 0,\]
which is of the form \eqref{leech-data1}. The complete pseudo-Leech hypothesis thus yields a
contractive multiplier $\opphi{\Phi}{X}{ \mathcal{H}_{\ell}\otimes \mathbb{C}^m}{  \mathcal{H}_{\ell}\otimes \mathbb{C}^m}$
 such that $B_i=A_i\Phi(z_i)$ for all $i$. In particular, 
 \[\cfrac{1}{k(z_i, z_j)}=A(z_i)\big(1-\Phi(z_i)\Phi(z_j)^* \big)A(z_j)^*. \]
 Lemma \ref{ellM/k} then yields $\ell_{\mathcal{M}}/k\succeq 0$ over $\{z_1, \dots, z_n\}.$ Since $\{z_1, \dots, z_n\}$ and $\mathcal{M}$ were arbitrary, the proof is complete. 
\end{proof}

We next show that the existence of a weak Agler decomposition implies the complete pseudo-Leech property.  

\begin{theorem}\label{AglerdectoLeech}
Let $(k, \ell)$ be a pair of reproducing kernels such that the kernel functions $\{k_z\}$ are linearly independent. If $(k, \ell)$ admits a weak Agler decomposition, then it is a complete pseudo-Leech pair.
\end{theorem}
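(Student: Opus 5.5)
The plan is a lurking-isometry argument run for each approximant $Q_n(I-\Phi_n\Phi_n^*)Q_n^*$ of $1/k$, followed by a compactness argument in the unit ball of the multiplier space. Fix the data $z_1,\dots,z_n$, $V_i\in\CC^{p\times a}$, $Y_i\in\CC^{p\times b}$ satisfying \eqref{leech-data1}. Since the block matrix $[(V_iV_j^*-Y_iY_j^*)k(z_i,z_j)]$ is psd, factor it as $G_iG_j^*$ with $G_i\in\CC^{p\times r}$. Because $k$ is non-vanishing we may divide by $k(z_i,z_j)$. Substituting the weak Agler approximants (with index $N$) gives
\[
V_iV_j^*-Y_iY_j^* = H_{N,i}\big(I-\widehat\Phi_N(z_i)\widehat\Phi_N(z_j)^*\big)H_{N,j}^* + E_{N,ij},
\]
where $H_{N,i}=G_i\otimes Q_N(z_i)$, $\widehat\Phi_N=I_r\otimes\Phi_N$ (again a contractive multiplier of $\cH_\ell$), and $E_N=[E_{N,ij}]\to0$ by pointwise convergence on the finite set. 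I would absorb the error by $E_N\preceq \|E_N\|I=D_ND_N^*$, with $D_N=\|E_N\|^{1/2}I$ split into blocks $D_{N,i}$. This yields the Gram inequality
\[
[Y_iY_j^*+H_{N,i}H_{N,j}^*]\preceq [V_iV_j^*+H_{N,i}\widehat\Phi_N(z_i)\widehat\Phi_N(z_j)^*H_{N,j}^*+D_{N,i}D_{N,j}^*].
\]

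By the standard lurking-contraction lemma, this inequality gives a contraction $\Gamma_N=\begin{bmatrix}A&B\\ C&D\end{bmatrix}$ carrying $(V_i^*x,\ \widehat\Phi_N(z_i)^*H_{N,i}^*x,\ D_{N,i}^*x)$ to $(Y_i^*x,\ H_{N,i}^*x)$ for all $x\in\CC^p$. Solving the resulting equations produces the linear-fractional (transfer-function) multiplier $\Psi_N=A+B\,\widehat\Phi_N\big(I-D\widehat\Phi_N\big)^{-1}C$, more precisely its adjoint form. This multiplier satisfies $V_i\Psi_N(z_i)=Y_i+\epsilon_{N,i}$, with $\epsilon_{N,i}$ controlled by $\|D_{N,i}\|\to0$ together with boundedness of the remaining terms. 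To make $I-D\widehat\Phi_N$ invertible with a bounded inverse multiplier, first replace $\Phi_N$ by $\rho\Phi_N$ with $\rho<1$, as in Remark~\ref{Dullcone}; this changes the approximants by a vanishing amount and so is harmless.

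The key claim, and what I expect to be the main obstacle, is that $\Psi_N$ is a contractive multiplier from $\cH_\ell\otimes\CC^b$ to $\cH_\ell\otimes\CC^a$. This is the standard computation, rather than a matter of holomorphy. One verifies the identity
\[
I-\Psi_N(x)\Psi_N(y)^* = \Theta(x)\big(I-\widehat\Phi_N(x)\widehat\Phi_N(y)^*\big)\Theta(y)^* + (\text{psd term from } I-\Gamma_N\Gamma_N^*),
\]
where $\Theta = B(I-\widehat\Phi_N D)^{-1}$, up to the side on which the inverse sits. Multiplying by $\ell$ and using that $(I-\widehat\Phi_N\widehat\Phi_N^*)\ell\succeq0$, that $\Theta$ is a bounded multiplier of $\cH_\ell$ (a Neumann series in $\rho D\widehat\Phi_N$), and the Schur product theorem gives $(I-\Psi_N\Psi_N^*)\ell\succeq0$. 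The delicate points are keeping track of whether the lurking contraction is set up for adjoints, so that the realization formula lands on the correct side, and checking that the error term really tends to zero rather than being merely bounded.

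Finally, the unit ball of $\Mult(\cH_\ell\otimes\CC^b,\cH_\ell\otimes\CC^a)$ is WOT-compact, and WOT convergence of multiplication operators implies pointwise convergence of the symbols, because $M_\Psi^*(\ell_z\otimes e)=\ell_z\otimes\Psi(z)^*e$. Hence a subnet of $(\Psi_N)$ converges to a contractive multiplier $\Phi$ with $V_i\Phi(z_i)=\lim_N V_i\Psi_N(z_i)=Y_i$, which is the complete pseudo-Leech property. The hypothesis that the $k_z$ are linearly independent is where I would expect to need care: it makes $[k(z_i,z_j)]$ positive definite. I would use that to justify the factorization and division steps, and, if the crude bound $E_N\preceq\|E_N\|I$ proves inadequate, to dominate $E_N$ by a small multiple of a $k$-weighted Gram matrix, thereby keeping the error inside the $k$-positivity framework.
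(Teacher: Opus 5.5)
Your argument is correct in substance. It shares the paper's skeleton: a lurking-isometry/transfer-function realization for each approximant, followed by a compactness limit. The difference is in how you handle the approximation error.

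The paper perturbs the data to $\epsilon_m+V_iV_j^*-Y_iY_j^*$. It then uses linear independence of $\{k_z\}$ to make $[(\epsilon_m+V_iV_j^*-Y_iY_j^*)k(z_i,z_j)]$ strictly positive, so that positivity survives replacing $k$ by $h_m^{-1}$. This gives an exact identity $\epsilon_m+V_iV_j^*-Y_iY_j^*=h_m(z_i,z_j)R_m(z_i)R_m(z_j)^*$, which plugs into Theorem~\ref{realization.} with left data $[V_i\ \sqrt{\epsilon_m}]$; the extra column is discarded in the limit.

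You instead factor exactly through $k$ and absorb $E_N$ additively into a diagonal term. So you only need a Gram inequality (a contraction rather than an isometry), you never divide by $h_N$, and the crude bound by $\|E_N\|I$ is entirely adequate.

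One consequence you did not notice is that your route never uses linear independence. The factorization $G_iG_j^*$ needs only positive semidefiniteness, and the division needs only that $k$ is non-vanishing, which is part of the definition of a weak Agler decomposition. So your argument proves the statement without that hypothesis. Your use of subnets likewise sidesteps the separability assumed in Lemma~\ref{noholmontel}. The paper's route, in exchange, gets to reuse a packaged realization theorem whose hypothesis is an exact factorization.

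The one step to tighten is uniform control of the error. A fixed $\rho<1$ does not change the approximants by a vanishing amount, since the change is $(1-\rho^2)Q_N\Phi_N\Phi_N^*Q_N^*$. So you need $\rho_N\to1$, chosen for each $N$ from the finitely many values on $F\times F$. But then the Neumann-series bound on $(I-D\widehat\Phi_N^*)^{-1}$ is of order $(1-\rho_N)^{-1}$. It therefore gives no uniform control of the coefficient multiplying $D_{N,i}^*$ in your formula for $Y_i^*$.

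That coefficient is $\Lambda_N(z)=B'+B\widehat\Phi_N(z)^*(I-D\widehat\Phi_N(z)^*)^{-1}D'$, where $B',D'$ are the blocks of $\Gamma_N$ acting on the third input component $D_{N,i}^*x$. The right justification is contractivity. Run your positivity computation for the augmented function $[\Psi_N(z)^*\ \ \Lambda_N(z)]$; this is the same as treating $[V_i\ \ D_{N,i}]$ as the left data, exactly as the paper treats $[V_i\ \ \sqrt{\epsilon_m}]$. That realization comes from a column-permutation of the contraction $\Gamma_N$, so it is contractive pointwise, and indeed a contractive multiplier. Hence $\|\Lambda_N(z_i)\|\le 1$ and $\|Y_i-V_i\Psi_N(z_i)\|\le\|E_N\|^{1/2}\to0$.

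You also do not need $(I-D\widehat\Phi_N^*)^{-1}$ to be a bounded multiplier. Pointwise invertibility suffices, because conjugating the positive kernel $(I-\widehat\Phi_N\widehat\Phi_N^*)\ell$ by an arbitrary operator-valued function preserves positivity.
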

We require two preparatory results. We begin with a WOT version of Montel's theorem. The proof   is relegated to Appendix~\ref{appendMontel}.

\begin{lemma}\label{noholmontel}
Suppose $\mathcal{H}_k$ be a separable RKHS over a set $X$ and  $\mathcal{E}, \mathcal{F}$  are separable Hilbert spaces. Given a sequence of contractive multipliers $\{\Phi_n\}\subset\textup{Mult}(\mathcal{H}_k\otimes\mathcal{E}, \mathcal{H}_k\otimes\mathcal{F})$, there exists a subsequence $\{m_n\}$ and a contractive multiplier $\Phi\in \textup{Mult}(\mathcal{H}_k\otimes\mathcal{E}, \mathcal{H}_k\otimes\mathcal{F})$ such that $(M_{\Phi_{m_n}})_n$ converges to $M_{\Phi}$ in the WOT topology. In particular, for all $x\in X$ the sequence $(\Phi_{m_n}(x))_n$ converges to $\Phi(x)$ in the WOT topology of $\mathcal{B}(\mathcal{E}, \mathcal{F}).$ 
\end{lemma}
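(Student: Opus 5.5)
The plan is to use weak-operator compactness of the unit ball of $\mathcal{B}(\mathcal{H}_k\otimes\mathcal{E},\mathcal{H}_k\otimes\mathcal{F})$, together with the fact that the set of contractive multipliers is WOT-closed. Since $\mathcal{H}_k$, $\mathcal{E}$ and $\mathcal{F}$ are separable, the closed unit ball of $\mathcal{B}(\mathcal{H}_k\otimes\mathcal{E},\mathcal{H}_k\otimes\mathcal{F})$ is WOT-compact and WOT-metrizable. Hence the sequence $(M_{\Phi_n})$ has a subsequence $(M_{\Phi_{m_n}})$ converging in WOT to a contraction $T$. Equivalently, one can run a diagonal argument over countable dense subsets of the two Hilbert spaces and then extend using the uniform bound $\|M_{\Phi_n}\|\le 1$.

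It remains to show that $T=M_\Phi$ for some function $\Phi:X\to\mathcal{B}(\mathcal{E},\mathcal{F})$. The key identity is $M_{\Phi_n}^*(k_x\otimes f)=k_x\otimes \Phi_n(x)^*f$, valid for $x\in X$ and $f\in\mathcal{F}$. Pair this with $k_y\otimes e$:
\[
 \langle M_{\Phi_{m_n}}(k_y\otimes e), k_x\otimes f\rangle = k(x,y)\langle \Phi_{m_n}(x)e,f\rangle .
\]
Take $y=x$. If $k(x,x)\neq 0$, the left side converges, so $\langle \Phi_{m_n}(x)e,f\rangle$ converges for every $e,f$. The limit is a bounded sesquilinear form of norm at most $1$, because $\|\Phi_{m_n}(x)\|\le 1$. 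This follows from the multiplier positivity condition on the diagonal, or directly from the identity above. The form therefore defines $\Phi(x)\in\mathcal{B}(\mathcal{E},\mathcal{F})$ with $\Phi_{m_n}(x)\to\Phi(x)$ in WOT. If $k(x,x)=0$, then every function in $\mathcal{H}_k$ vanishes at $x$, so we set $\Phi(x)=0$. In that case the values at $x$ are irrelevant to $M_\Phi$, and the final pointwise claim should be read only at points where $k(x,x)\ne0$; this caveat should be noted, or the assumption $k(x,x)\neq0$ made explicit.

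Passing to the limit gives $\langle T(k_y\otimes e),k_x\otimes f\rangle = k(x,y)\langle\Phi(x)e,f\rangle$ for all $x,y,e,f$. Hence
\[
T^*(k_x\otimes f)=k_x\otimes \Phi(x)^*f .
\]
Now for $h\in\mathcal{H}_k\otimes\mathcal{E}$,
\[
\langle (Th)(x),f\rangle=\langle h,T^*(k_x\otimes f)\rangle=\langle \Phi(x)h(x),f\rangle ,
\]
so $Th=\Phi h$. Thus $\Phi$ is a multiplier with $M_\Phi=T$ and $\|M_\Phi\|\le1$.

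The pointwise WOT convergence of $\Phi_{m_n}(x)$ was already established along the way. The only mildly delicate step is defining $\Phi$ pointwise, including the degenerate points where $k(x,x)=0$. The rest is standard compactness.
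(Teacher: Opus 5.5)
Your proposal is correct and follows essentially the same route as the paper's appendix proof. In both, WOT compactness of the unit ball (via separability) gives a limit $T$, which is then identified through $T^*(k_x\otimes f)=k_x\otimes\Phi(x)^*f$ on kernel vectors. Your small variations improve on it rather than depart from it. Reading off $Th=\Phi h$ directly from the reproducing property makes the paper's separate $\liminf$ argument for contractivity of $\Phi$ unnecessary. Your explicit handling of points with $k(x,x)=0$ is also correct: there $\Phi_n(x)$ is unconstrained, so the pointwise WOT claim can genuinely fail. This covers a case the paper passes over when it divides by $\|k_x\|^2$.
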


Next, we will see how the Agler decomposition allows one to obtain global Douglas-Leech factorizations from partially defined Leech data. The proof is based on a variation of the
standard lurking isometry argument, which leads to a network realization formula. While results similar to these appear in the literature, in absence of a concrete
reference we have included a proof in Appendix \ref{appendLurking}.

\begin{theorem} \label{realization.}
Let $\ell$ be a reproducing kernel on a set $X$ and fix Hilbert spaces $\mathcal{F}, \mathcal{G}$ and 
a contractive multiplier $\opphi{\Phi}{X}{\mathcal{H}_{\ell}\otimes\mathcal{F}}{ \mathcal{H}_{\ell}\otimes\mathcal{G}}$, {with $\|\Phi(z)\|<1$ for all $z\in X$}. Suppose $F=\{z_1,z_2,\dots,z_n\}\subseteq X$
is finite. If   there exist Hilbert spaces $  \mathcal{K}, \mathcal{L}, \mathcal{J}$, a function $Q:X\to\mathcal{B}(\mathcal{G},\mathbb{C})$ and partially defined data $V:F\to\mathcal{B}(\mathcal{K}, \mathcal{J}), Y:F\to\mathcal{B}(\mathcal{L}, \mathcal{J})$  such that 
\[
  h(z,w):=Q(z)\big[I-\Phi(z)\Phi(w)^*\big]Q(w)^*
\]
 is non-vanishing on $X\times X$ and 
\begin{equation*} 
(V(z)V(w)^*-Y(z)Y(w)^*)h^{-1}(z, w)\succeq 0, \hspace*{0.4 cm} z, w\in F,
\end{equation*}
then there exists a contractive multiplier $\opphi{\Psi}{X}{\mathcal{H}_{\ell}\otimes\mathcal{L}}{\mathcal{H}_{\ell}\otimes\mathcal{K}}$ such that $Y(z)=V(z)\Psi(z)$ for all $z\in F.$
\end{theorem}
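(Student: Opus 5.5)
The plan is a lurking isometry argument: factor the given positivity, build an isometry from the resulting Gram identity, extend it to a contraction, and read off $\Psi$ as a transfer function built from $\Phi$.

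\emph{Factoring the data.} Since $F$ is finite and the kernel $(V(z)V(w)^*-Y(z)Y(w)^*)h^{-1}(z,w)$ is positive semi-definite on $F\times F$, Kolmogorov factorization gives a Hilbert space $\mathcal{H}$ and $\Gamma:F\to\mathcal{B}(\mathcal{H},\mathcal{J})$ with
\[
(V(z)V(w)^*-Y(z)Y(w)^*)\,h^{-1}(z,w)=\Gamma(z)\Gamma(w)^*, \qquad z,w\in F.
\]
Multiply through by the scalar $h(z,w)$, which is allowed because $h$ is non-vanishing. Set $G(z)=\Gamma(z)\otimes Q(z):\mathcal{H}\otimes\mathcal{G}\to\mathcal{J}$ and $\widetilde\Phi=I_{\mathcal{H}}\otimes\Phi$. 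The identity then becomes
\[
V(z)V(w)^*+G(z)\widetilde\Phi(z)\widetilde\Phi(w)^*G(w)^* = Y(z)Y(w)^*+G(z)G(w)^*, \qquad z,w\in F .
\]

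\emph{Lurking isometry.} By this Gram identity, the map
\[
\begin{bmatrix} V(w)^*x\\ \widetilde\Phi(w)^*G(w)^*x\end{bmatrix}\mapsto \begin{bmatrix} Y(w)^*x\\ G(w)^*x\end{bmatrix},\qquad w\in F,\ x\in\mathcal{J},
\]
extends linearly to a well-defined isometry. Its domain is a subspace of $\mathcal{K}\oplus(\mathcal{H}\otimes\mathcal{F})$ and its range lies in $\mathcal{L}\oplus(\mathcal{H}\otimes\mathcal{G})$. Setting it to be zero on the orthocomplement of its domain yields a contraction
\[
U=\begin{bmatrix} A& B\\ C& D\end{bmatrix}:\mathcal{K}\oplus(\mathcal{H}\otimes\mathcal{F})\to\mathcal{L}\oplus(\mathcal{H}\otimes\mathcal{G}).
\]
The defining relations give, for $w\in F$,
\[
Y^*=AV^*+B\widetilde\Phi^*G^*, \qquad G^*=CV^*+D\widetilde\Phi^*G^*.
\]
Since $\|D\|\le 1$ and $\|\Phi(w)\|<1$, the operator $I-D\widetilde\Phi(w)^*$ is invertible. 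Solving the second relation gives $G(w)^*=(I-D\widetilde\Phi(w)^*)^{-1}CV(w)^*$. Substituting into the first relation yields $Y(w)^*=\Psi(w)^*V(w)^*$, where for all $z\in X$ we define
\[
\Psi(z):=A^*+C^*(I-\widetilde\Phi(z)D^*)^{-1}\widetilde\Phi(z)B^*\in\mathcal{B}(\mathcal{L},\mathcal{K}).
\]
This $\Psi$ is defined on all of $X$, and $Y=V\Psi$ holds on $F$.

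\emph{Contractivity of $\Psi$ (main obstacle).} It remains to show that $\Psi$ is a contractive multiplier, i.e. that $(I-\Psi(z)\Psi(w)^*)\ell(z,w)\succeq0$ on $X\times X$. Put $R(z)=C^*(I-\widetilde\Phi(z)D^*)^{-1}$ and $P(z)=\begin{bmatrix} I & R(z)\widetilde\Phi(z)\end{bmatrix}$. The standard transfer-function computation, using $\Psi(z)=P(z)U^*\begin{bmatrix} I\\ 0\end{bmatrix}$-type bookkeeping, should give
\[
I-\Psi(z)\Psi(w)^* = P(z)(I-U^*U)P(w)^* + R(z)\big(I-\widetilde\Phi(z)\widetilde\Phi(w)^*\big)R(w)^* .
\]
I expect the main difficulty to be checking this identity carefully, including the orientation of $B$ versus $C$ and of $U$ versus $U^*$.

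Granting the identity, multiply by $\ell$. The first term times $\ell$ is positive, since it is of the form $P(z)(I-U^*U)P(w)^*$ with $I-U^*U\succeq0$, multiplied by the kernel $\ell$. The second term times $\ell$ is positive, since $\widetilde\Phi$ is a contractive multiplier of $\mathcal{H}_\ell\otimes\mathcal{H}\otimes\mathcal{F}$ and so $(I-\widetilde\Phi\widetilde\Phi^*)\ell\succeq0$; conjugating by $R$ preserves positivity. Hence $(I-\Psi\Psi^*)\ell\succeq0$, which is exactly the statement that $\Psi$ is a contractive multiplier from $\mathcal{H}_\ell\otimes\mathcal{L}$ to $\mathcal{H}_\ell\otimes\mathcal{K}$.
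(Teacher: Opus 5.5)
Your proposal is correct and follows the same lurking-isometry and transfer-function argument as the paper's appendix proof: your $\Gamma$, $G(w)^*$, $R(w)$ play the roles of the paper's $R$, $E(w)R(w)^*$, $O(w)$. The identity you flagged holds exactly as written, because for every $w\in X$ one has $UP(w)^*=\begin{bmatrix}\Psi(w)^*\\ R(w)^*\end{bmatrix}$ (the second entry since $C+D\widetilde\Phi(w)^*R(w)^*=R(w)^*$), so subtracting $P(z)U^*UP(w)^*=\Psi(z)\Psi(w)^*+R(z)R(w)^*$ from $P(z)P(w)^*=I+R(z)\widetilde\Phi(z)\widetilde\Phi(w)^*R(w)^*$ gives it; and your extension of $U$ by zero with the defect term $P(z)(I-U^*U)P(w)^*$ is slightly more careful than the paper, which obtains the exact identity $I-\Psi(z)\Psi(w)^*=O(z)\big[I-(\Phi(z)\Phi(w)^*\otimes I_{\mathcal{N}})\big]O(w)^*$ by asserting that $U$ is isometric on all of $\mathcal{K}\oplus(\mathcal{F}\otimes\mathcal{N})$, which tacitly requires an isometric extension that in general exists only after enlarging auxiliary spaces.
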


\begin{proof}[Proof of Theorem \ref{AglerdectoLeech}]
Assume $(V_iV_j^*-Y_iY_j^*)k(z_i, z_j)\succeq 0$ over a finite set $F=\{z_1, \dots, z_n\}\subset X,$
where $V_i\in\mathcal{B}(\mathcal{K},\mathcal{J})$
and $Y_i\in\mathcal{B}(\mathcal{L},\mathcal{J})$ for all $i$ and $\mathcal{K}, \mathcal{L}, \mathcal{J}$ are (finite-dimensional) Hilbert spaces. Since $(k, \ell)$ admits a weak Agler decomposition, there exists a sequence $\{i_m\}\subset\mathbb{N}$, functions $Q_m: X\to \mathcal{B}(\mathbb{C}^{i_m}, \CC)$ and contractive  multipliers $\opphi{\Phi_m}{X}{\mathcal{H}_{\ell}\otimes\mathbb{C}^{i_m}}{\mathcal{H}_{\ell}\otimes\mathbb{C}^{i_m}}$ such that
\begin{equation*}  \ 
    h_m:=Q_m\big[I-\Phi_m\Phi_m^* \big]Q_m^*\xrightarrow{\text{\hspace{0.1 cm} \hspace{0.1 cm}}}  1/k 
\end{equation*}
 pointwise on $X\times X$. Without loss of generality, assume $\|\Phi_m(z)\|<1$ for all $m$ and $z.$ Let $\{\epsilon_q\}$ denote a decreasing null sequence. Since the kernel functions $\{k_z\}$ are linearly independent, the matrix $[k(z_i, z_j)]_{1\le i, j\le n}$ is positive-definite, and thus the same holds for 
\[(\epsilon_q +V_iV_j^*-Y_iY_j^*)k(z_i, z_j), \]
for any $q.$ Since, for any $q\ge 1,$ 
\[\big[(\epsilon_q+V_iV_j^*-Y_iY_j^*)h^{-1}_{m}(z_i, z_j)\big]_{i, j}\xrightarrow[\text{}]{\text{$m$}} \big[(\epsilon_q+V_iV_j^*-Y_iY_j^*)k(z_i,z_j) \big]_{i, j}, \]
after passing to a subsequence of $\{h_m\}$ and re-indexing, we assume 
\begin{equation*} 
\big[(\epsilon_m+V_iV_j^*-Y_iY_j^*)h^{-1}_{m}(z_i, z_j)\big]_{i, j}\succeq 0,
\end{equation*}
for all $m.$ In particular, for any $m\ge 1,$ there exists an integer $p\ge 1$ and a function $R_m:\{z_1, \dots, z_n\}\to\mathcal{B}(\mathbb{C}^{p},\CC)$ such that
\begin{align*}
&\epsilon_m +V_iV_j^*-Y_iY_j^* \notag  \\ 
=\ & Q_m(z_i)\big[I-\Phi_m(z_i)\Phi_m(z_j)^* \big]Q_m(z_j)^*\, R_m(z_i)R_m(z_j)^*. 
\end{align*}
 {Since $\|\Phi_m(z)\|<1$ pointwise, by} Theorem \ref{realization.}  there exists a contractive multiplier $\opphi{\Psi_m}{X}{\mathcal{H}_{\ell}\otimes \mathcal{L}}{ \mathcal{H}_{\ell}\otimes \mathcal{K}\oplus\mathbb{C}}$ such that $Y_i=\begin{bmatrix}
   V_i &   \sqrt{\epsilon_m}
 \end{bmatrix}\Psi_m(z_i)$ for all $i.$ By Lemma \ref{noholmontel}, we obtain a subsequence $\{j_n\}$ and a contractive multiplier $\opphi{\Psi}{X}{\mathcal{H}_{\ell}\otimes \mathcal{L}}{\mathcal{H}_{\ell}\otimes \mathcal{K}\oplus\mathbb{C}}$ such that $\Psi_{j_n}(z)\to\Psi(z)$ WOT for all $z\in X.$ For all  $u\in\mathcal{K},$ and  $v\in\mathcal{L}$ and integers $n, i\ge 1$,
 \begin{align*}
\langle \left( Y_i- \begin{bmatrix}
   V_i &  \mathbf{0}
   \end{bmatrix}\Psi(z_i)\right ) u, v \rangle=&\langle  \left (\begin{bmatrix}
   V_i &   \sqrt{\epsilon_{j_n}}
   \end{bmatrix}\Psi_{j_n}(z_i) \right ) u, v\rangle-\langle  \begin{bmatrix}
   V_i &  \mathbf{0}
   \end{bmatrix}\Psi(z_i), v \rangle \\ 
   =&\big\langle  \big(\begin{bmatrix}
   V_i &   \sqrt{\epsilon_{j_n}}
   \end{bmatrix}-\begin{bmatrix}
   V_i &  \mathbf{0}
   \end{bmatrix}\big)\Psi_{j_n}(z_i) u, v\big\rangle \\
   & + \big\langle  \begin{bmatrix}
   V_i &  \mathbf{0}
   \end{bmatrix}\big(\Psi_{j_n}(z_i)-\Psi(z_i)\big)u, v\big\rangle.
 \end{align*}
 Since $\Psi_{j_n}(z_i)\to\Psi(z_i)$ WOT and also $\|\Psi_{m}(z)\|\le 1$ for all $m$ and $z,$ taking $n\to\infty$ yields $\langle  ( Y_i- \begin{bmatrix}
   V_i &  \mathbf{0}
   \end{bmatrix}\Psi(z_i) )u, v \rangle=0,$ for all $i$ and $u,v.$
  Thus, $Y_i=\begin{bmatrix}
   V_i &  \mathbf{0}
   \end{bmatrix}\Psi(z_i)$, for all $i$. Finally,
it is readily verified that 
 \[\Psi(z)=\begin{bmatrix}
     \Psi_1(z) \\ 
     \Psi_2(z)
 \end{bmatrix}, \]
 with $\Psi_1(z)\in\mathcal{B}(\mathcal{L}, \mathcal{K}), \Psi_2(z)\in\mathcal{B}(\mathcal{L}, \mathbb{C})$ and therefore $\Psi_1$ is a contractive multiplier  such that $Y_i=V_i\Psi_1(z_i),$ for all $i.$ Thus, $(k, \ell)$ is complete pseudo-Leech and the proof is complete. 
\end{proof}

\subsection{Decomposable kernels}  \label{decompsec}
A non-vanishing reproducing kernel $k$ is \df{decomposable} if $1/k$ can be written as the difference of two positive kernels. Doing so is possible if, for instance, $1/k$ only has a finite number of negative squares \cite[Theorem 1.1.3]{RKPontryagin}; 
e.g.,  $k$ is a kernel in several complex variables and  $1/k$ is a polynomial. See for instance \cite{Polynomialarazyenglis}.
For more conditions implying decomposability see  \cite[Theorem 5.2]{Gheondea} and   \cite[Theorem 6.1]{CBkernels}.  The point is that if we assume decomposability for $1/k$, then the different notions of Agler decomposition coincide.

\begin{theorem}\label{Leech=wAg=sAg}
Suppose  $(k, \ell)$ is a pair of reproducing kernels  over a set $X,$  the space $\cH_\ell$ is separable  
 and  $1/k$ is decomposable as 
 \begin{equation} \label{1/kLeech}
 1/k = VV^* -YY^*,
 \end{equation}
 where $V: X\to\mathcal{B}(\mathcal{K},\mathbb{C})$ and $Y:X\to\mathcal{B}(\mathcal{L},\mathbb{C})$ and  $\mathcal{K}, \mathcal{L}$
 are separable Hilbert spaces.

 If the kernel functions $\{k_x\}$ are linearly independent, then
the following are equivalent.
\begin{enumerate}[(i)]\itemsep=6pt
    \item  \label{LAGi} $(k, \ell)$ is a complete pseudo-Leech pair; 

     \item  \label{LAGii} $(k, \ell)$ admits a weak Agler decomposition; 

   \item  \label{LAGiii} $(k, \ell)$ admits a strong Agler decomposition of the form,
\begin{equation}\label{1/k-Leechdecomp}
     1/k = V(I-\Psi\Psi^*)V^*,
\end{equation}
 for a contractive multiplier $\Psi$ of $\cH_\ell.$
\end{enumerate}

Conversely, if $k$ admits a strong Agler decomposition, then $k$ is decomposable.
\end{theorem}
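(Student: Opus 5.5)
The plan is to prove the cycle \ref{LAGi} $\Rightarrow$ \ref{LAGiii} $\Rightarrow$ \ref{LAGii} $\Rightarrow$ \ref{LAGi}, and then treat the converse separately. The implication \ref{LAGii} $\Rightarrow$ \ref{LAGi} is exactly Theorem~\ref{AglerdectoLeech}, which applies because the kernel functions $\{k_x\}$ are assumed linearly independent. For \ref{LAGiii} $\Rightarrow$ \ref{LAGii}, write $\mathcal{K}$ with an orthonormal basis $\{e_j\}$ and let $P_m$ be the projection onto the span of $e_1,\dots,e_m$. Set $Q_m=VP_m$ (identified with a map $X\to\mathcal{B}(\CC^m,\CC)$) and $\Phi_m = P_m\Psi$. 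Taking $\Phi_m=P_m\Psi P'_m$ with finite projections $P'_m$ on $\mathcal{L}$ makes $\Phi_m$ a square $m\times m$ matrix multiplier after padding with zeros. Compressions of a contractive multiplier are contractive multipliers, and
\[
 Q_m(z)\big[I-\Phi_m(z)\Phi_m(w)^*\big]Q_m(w)^*\longrightarrow V(z)(I-\Psi(z)\Psi(w)^*)V(w)^* = 1/k(z,w)
\]
pointwise, since $VP_m\to V$ and $\Psi^* P_m V^*\to \Psi^*V^*$ in norm at each point.

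The main step is \ref{LAGi} $\Rightarrow$ \ref{LAGiii}. The hypothesis $k/k=1\succeq0$ rewrites as $\big(V(z)V(w)^*-Y(z)Y(w)^*\big)k(z,w)\succeq 0$ on all of $X$. The first task is to reduce to finite-dimensional data in order to invoke the pseudo-Leech property. Fix an exhausting sequence of finite sets $F_n\uparrow X_0$, where $X_0$ is a countable subset of $X$; take $X_0$ to be a set of uniqueness for $\cH_\ell$, which exists by separability. Truncate $V,Y$ to $V^{(m)}=VP_m$ and $Y^{(m)}=YP'_m$. Truncation spoils positivity, so add slack: on $F_n$, using that $[k(z_i,z_j)]$ is positive definite, choose $m$ large enough that
\[
 \big(\epsilon_n+V^{(m)}V^{(m)*}-Y^{(m)}Y^{(m)*}\big)k\succeq0 \quad\text{on } F_n.
\]
Treating $\sqrt{\epsilon_n}$ as an extra column of $V$, exactly as in the proof of Theorem~\ref{AglerdectoLeech}, the complete pseudo-Leech property (with $p=1$) yields contractive $\Psi_n\in\Mult(\cH_\ell\otimes\CC^{m},\cH_\ell\otimes(\CC^{m}\oplus\CC))$ with $Y^{(m)}=[V^{(m)}\ \sqrt{\epsilon_n}]\Psi_n$ on $F_n$. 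Viewing each $\Psi_n$ as a contractive multiplier $\cH_\ell\otimes\mathcal{L}\to\cH_\ell\otimes(\mathcal{K}\oplus\CC)$, Lemma~\ref{noholmontel} (here separability of $\cH_\ell$, $\mathcal{K}$, $\mathcal{L}$ is used) gives a WOT limit $\Psi'$.

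Passing to the limit as in the proof of Theorem~\ref{AglerdectoLeech} gives $Y=[V\ 0]\Psi'$ pointwise on $X_0$. The step I expect to be the main obstacle is extending this identity from $X_0$ to all of $X$. I would handle it by choosing $X_0$ large enough: the functions $Y(z)u$ and $V(z)\Psi_1(z)u$ live in a space in which the identity can be tested via inner products against kernel functions, using $M_{\Psi}^*(\ell_z\otimes v)=\ell_z\otimes\Psi(z)^*v$. Alternatively, I would simply run the argument over all finite subsets of $X$ as a net and use WOT compactness of the unit ball of multipliers, which is the cleaner route. With $Y=V\Psi_1$ on $X$ and $\Psi_1$ the $\mathcal{K}$-component of $\Psi'$, we get $1/k=VV^*-V\Psi_1\Psi_1^*V^*=V(I-\Psi_1\Psi_1^*)V^*$, which is \ref{LAGiii}.

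For the converse, suppose $1/k=Q(I-\Phi\Phi^*)Q^*$. Then $1/k=QQ^*-(Q\Phi)(Q\Phi)^*$ is a difference of two positive kernels, with $V=Q$ and $Y=Q\Phi$, so $k$ is decomposable.
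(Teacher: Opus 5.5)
Your cycle, with (ii)$\Rightarrow$(i) via Theorem~\ref{AglerdectoLeech}, (iii)$\Rightarrow$(ii) by truncation, and the converse, matches the paper. Your truncation argument in fact supplies a detail the paper simply calls obvious. The gap is in (i)$\Rightarrow$(iii) as you actually carry it out. Choosing $X_0$ to be a countable set of uniqueness for $\cH_\ell$ does not let you pass from $Y=[V\ 0]\Psi'$ on $X_0$ to all of $X$. The functions $z\mapsto Y(z)u$ and $z\mapsto V(z)\Psi_1(z)u$ are not elements of $\cH_\ell$, nor of any space in which $X_0$ is known to be a uniqueness set. So agreement on $X_0$ says nothing off $X_0$.

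Your remark about testing against kernel functions can be repaired, but only with an ingredient you do not state:
\begin{itemize}
\item The finite-set solutions $\Phi_F$ give $(VV^*-YY^*)\ell=V(I-\Phi_F\Phi_F^*)V^*\ell\succeq0$ on each finite $F$, i.e.\ $\ell/k\succeq 0$.
\item Hence $\ell_z\otimes V(z)^*\mapsto \ell_z\otimes Y(z)^*$ extends to a contraction $D$ on $\mathcal{S}=\overline{\operatorname{span}}\{\ell_z\otimes V(z)^*: z\in X\}$.
\item Choose $X_0$ so that $\{\ell_z\otimes V(z)^*\}_{z\in X_0}$ spans a dense subspace of $\mathcal{S}$, not merely so that it is a uniqueness set. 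This is possible because $\cH_\ell\otimes\mathcal{K}$ is separable.
\item Then $M_{\Psi_1}^*$ and $D$ agree on $\mathcal{S}$. This gives $\Psi_1(z)^*V(z)^*=Y(z)^*$ wherever $\ell(z,z)\neq 0$; at the remaining points the value $\Psi_1(z)$ is unconstrained and can be reset.
\end{itemize}

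The other route you mention in one sentence is exactly the paper's proof: run over the directed set of all finite subsets of $X$ and use WOT compactness of the contractive multipliers. The paper phrases it as a finite-intersection-property argument. It takes $Z_F$ to be the set of contractive multipliers $\Phi$ with $Y=V\Phi$ on $F$, shows each $Z_F$ is nonempty and WOT-compact via Lemma~\ref{noholmontel}, and intersects. This should be your main line, not an afterthought.

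Your $\epsilon$-slack truncation is also unnecessary. For finite $F$, the vectors $V(z)^*$ and $Y(z)^*$ with $z\in F$ span finite-dimensional subspaces $\mathcal{K}_F\subseteq\mathcal{K}$ and $\mathcal{L}_F\subseteq\mathcal{L}$, and $V(z)=V(z)P_{\mathcal{K}_F}$, $Y(z)=Y(z)P_{\mathcal{L}_F}$ there. So the pseudo-Leech property applies to the exact data with no $\epsilon$. If you nevertheless keep the slack inside a net argument, you need two things. First, index by pairs $(F,n)$, so that $\epsilon\to 0$ and the truncation level tends to infinity along the subnet. Second, use that a WOT limit of a net of contractive multipliers is again a contractive multiplier; the proof of Lemma~\ref{noholmontel} works verbatim for nets.
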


Note that the   implication item~\ref{LAGi} implies item~\ref{LAGiii} is the only one that
requires the assumption that $\cH_\ell$ is separable. 

\begin{proof}[Proof of Theorem~\ref{Leech=wAg=sAg}]
 The implication \ref{LAGii} implies \ref{LAGi} follows from Theorem~\ref{AglerdectoLeech}, while \ref{LAGiii} implies \ref{LAGii} is obvious. Neither of these implications require
 $\cH_\ell$ to be separable. It remains to show that \ref{LAGi} implies \ref{LAGiii}. By assumption,
\begin{equation*}
\cfrac{1}{k(z, w)}=V(z)V(w)^*-Y(z)Y(w)^*, \ \ \ \ \  z, w\in X, 
\end{equation*}
for  $V: X\to\mathcal{B}(\mathcal{K},\mathbb{C})$ and $Y:X\to\mathcal{B}(\mathcal{L},\mathbb{C})$ and  separable Hilbert spaces $\mathcal{K}, \mathcal{L}.$ It suffices to  find a contractive multiplier $\opphi{\Psi}{X}{\mathcal{H}_{\ell}\otimes\mathcal{L}}{ \mathcal{H}_{\ell}\otimes\mathcal{K}}$ such that $Y(z)=V(z)\Psi(z)$ for all $z\in X.$ 

First, observe that it is always possible to solve this Leech-type factorization problem over finite subsets of $X.$ Indeed, this follows from the trivial positivity condition
\[\big[V(z_i)V(z_j)^*-Y(z_i)Y(z_j)^*\big]k(z_i, z_j)=k(z_i, z_j)/k(z_i, z_j)=1\succeq 0 \]
and the complete pseudo-Leech property of $(k, \ell).$ 

To show that it is possible to obtain a global Leech factorization, 
we work in the setting of  $Z:=\textup{Mult}(\mathcal{H}_{\ell}\otimes\mathcal{L}, \mathcal{H}_{\ell}\otimes\mathcal{K})$ equipped with the topology of WOT convergence inherited from $\mathcal{B}(\mathcal{H}_{\ell}\otimes\mathcal{L}, \mathcal{H}_{\ell}\otimes\mathcal{K})$.  Because $\mathcal{H}_{\ell}, \,  \mathcal{L}, \,  \mathcal{K}$ are separable,  the unit ball is WOT-compact (in fact, WOT-metrizable).   
Let $A$ denote the set of all finite subsets of $X.$ For $F\in A,$ let $Z_F$ denote the set of all contractive multipliers $\Phi\in Z$ such that $Y(z)=V(z)\Phi(z)$ for all $z\in F.$ By the earlier discussion, every $Z_F$ is non-empty. 

It is also the case that each $Z_F$ is compact. Indeed, let $\{\Phi_n\}\subset Z_F$ be a given sequence. By Lemma~\ref{noholmontel}, there exists a contractive $\Phi\in Z$ and a subsequence $\{j_n\}$ such that  $\Phi_{j_n}(z)\to \Phi(z)$ in the WOT topology of $\mathcal{B}(\mathcal{L}, \mathcal{K})$, for all $z\in X.$
If $z\in F,$ then the fact that $Y(z)=V(z)\Phi_{j_n}(z),$ for all $n$, and $\Phi_{j_n}(z)\to\Phi(z)$ WOT is easily seen to imply $Y(z)=V(z)\Phi(z).$  Thus, $\Phi\in Z_F,$ which yields compactness.

Next, choose $\mathcal{A} = \{Z_F: F\in A\}$, noting that $\mathcal{A}$ consists of subsets of the unit ball of $Z,$ a compact space. Given a finite subset $\mathcal{F}$ of $\mathcal{A},$
 we can view $\mathcal{F}$ as a finite subset of $A$
 (just the usual difference between a subset of $P(K)$ and
 an indexed family of subsets of $K$).
 Setting $F^\prime =\cup_{F\in \mathcal{F}} F,$ we have
  $F^\prime \in A$ and thus
\[
  \cap_{F\in\mathcal{F}} Z_F = Z_{F^\prime} \ne \emptyset.
\]
 Thus $\mathcal{A}$ has the finite intersection property and we conclude
$ 
 \cap_{F\in A} Z_F \ne \emptyset.$
 Hence  there is a contractive multiplier 
 \[
   \Psi \in \cap_{F\in A} Z_F \ne \emptyset.
\]
 Consequently, $\Psi$ is in the unit ball of $Z$ and
 $Y(z)=V(z)\Psi(z)$ for each $z\in X$ by choosing 
 $F=\{z\}\in A.$ Thus $\eqref{1/k-Leechdecomp}$ holds, as desired. 
\end{proof}

 {We end this section with a lemma that will later allow us to pass from  a strong Agler decomposition  to an Agler decomposition.}

\begin{corollary} \label{strongAglertoAgler}
Let $(k, \ell)$ be a pair of reproducing kernels over a domain $\Omega\subset \CC^{\lcal{d}}.$  If $(k, \ell)$  admits a strong Agler decomposition and satisfies Assumption \ref{assume}, then the functions $Q, \Phi$ in \eqref{strongAglerb} can be chosen holomorphic. 
\end{corollary}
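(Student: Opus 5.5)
The plan is to keep $\Phi$ and replace only $Q$. First I will show that the multiplier in \eqref{strongAglerb} is already holomorphic. Then I will rebuild a holomorphic $Q$ from a holomorphic difference-of-squares decomposition of $1/k$, and use the Leech-type factorization in the proof of Theorem~\ref{Leech=wAg=sAg} to produce a new multiplier. Any multiplier of $\cH_\ell$ is automatically holomorphic, so only the weight needs real work.

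\emph{Step 1: multipliers are holomorphic.} Let $\Psi\in\Mult(\cH_\ell\otimes\HSF,\cH_\ell\otimes\HSG)$. For $w\in\Omega$ and $x\in\HSF$, the function $z\mapsto \Psi(z)x\,\ell(z,w)$ equals $M_\Psi(\ell_w\otimes x)$. It therefore lies in $\cH_\ell\otimes\HSG$ and is holomorphic, since $\ell$ is holomorphic in $z$ and point evaluations are continuous and locally bounded. Near any $z_0$ with $\ell(z_0,z_0)>0$ I can choose $w=z_0$ and divide by $\ell(z,z_0)$. By Assumption~\ref{assume}, polynomials are dense in $\cH_\ell$, so $1$ lies in the closure of $\cH_\ell$ restricted to any open set. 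I expect this to show $\ell(z,z)\neq 0$ off a thin analytic set, and removable singularities for the bounded function $\Psi$ then finish the step. So $\Phi$ is holomorphic, and so is any $\Psi$ produced later.

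\emph{Step 2: a holomorphic decomposition of $1/k$.} The strong Agler decomposition writes $1/k=QQ^*-(Q\Phi)(Q\Phi)^*$ as a difference of two positive kernels. Hence $1/k$ is the reproducing kernel of a reproducing kernel Krein space $\mathcal{K}$ in the sense of Schwartz. Choose a fundamental decomposition $\mathcal{K}=\mathcal{K}_+\oplus\mathcal{K}_-$ with orthonormal bases $\{e_i\}\subset\mathcal{K}_+$ and $\{f_j\}\subset\mathcal{K}_-$, and set $V(z)=(e_i(z))_i$ and $Y(z)=(f_j(z))_j$. This gives $1/k=VV^*-YY^*$. Each element of $\mathcal{K}$ is a limit of finite combinations of the functions $1/k(\cdot,w)$, which are holomorphic because $k$ is holomorphic and non-vanishing. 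Evaluation at $z$ is continuous on the associated Hilbert space. So $V$ and $Y$ are holomorphic, provided the norms of the evaluation functionals are locally bounded in $z$.

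\textbf{This local boundedness is the main obstacle.} My first attempt is to take the Hilbert majorant topology induced by the given decomposition, in which the evaluation norm is controlled by $Q(z)Q(z)^*+Q(z)\Phi(z)\Phi(z)^*Q(z)^*$. However, $Q$ is arbitrary, so this bound need not be locally uniform. If it fails, the fallback is to pass to the minimal (difference) decomposition $A-B$ of $1/k$ with $A\le QQ^*$. I would then use the holomorphy of $1/k$ and the identity $A(z,z)=1/k(z,z)+B(z,z)$ to bound $A(z,z)$ locally, at least after conjugating by the holomorphic multiplier structure.

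\emph{Step 3: Leech factorization.} With holomorphic $V,Y$ in hand, I need a contractive $\Psi\in\Mult(\cH_\ell\otimes\mathcal{L},\cH_\ell\otimes\mathcal{K})$ with $Y=V\Psi$ on $\Omega$. For a finite set $F$, the data $(V,Y)$ satisfy $(VV^*-YY^*)k=1\succeq0$. The given strong decomposition is a weak Agler decomposition, so I would argue as in the proof of Theorem~\ref{AglerdectoLeech}: the $\epsilon$-perturbation with $h_m=1/k$ and $r\Phi$, $r\uparrow1$, followed by Theorem~\ref{realization.} and Lemma~\ref{noholmontel}. This should solve the problem on $F$. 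To avoid needing linear independence of the $k_z$, I would strictly positivize the finite matrix by adding $\epsilon$ times the identity. Finally, the finite-intersection compactness argument from the proof of Theorem~\ref{Leech=wAg=sAg} upgrades the finite solutions to a global $\Psi$. This yields $1/k=V(I-\Psi\Psi^*)V^*$ with $V$ holomorphic (Step 2) and $\Psi$ holomorphic (Step 1).
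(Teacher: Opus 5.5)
\textbf{There is a genuine gap in Step 2.} Your overall route is the paper's route. Multipliers of $\cH_\ell$ are holomorphic; $1/k$ splits as $VV^*-YY^*$ with $V,Y$ holomorphic; then the Leech factorization of Theorem~\ref{Leech=wAg=sAg} gives $1/k=V(I-\Psi\Psi^*)V^*$. Two simplifications are available. In Step 1, Assumption~\ref{assume} puts the constants in $\cH_\ell$, so $\ell(z,z)>0$ everywhere and $\Psi(\cdot)x=M_\Psi(1\otimes x)$ is holomorphic outright; no ``thin set'' is needed. In Step 3, linear independence of $\{k_z\}$ follows from Assumption~\ref{assume}, as the paper notes, so your $\epsilon$-augmentation is harmless but unnecessary.

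Step 2 is the only nontrivial point of the corollary, and you do not close it. You rightly note that the Krein space built from $QQ^*-(Q\Phi)(Q\Phi)^*$ has evaluation norms controlled by $Q(z)Q(z)^*+Q(z)\Phi(z)\Phi(z)^*Q(z)^*$, which need not be locally bounded. The fallback does not repair this:
\begin{itemize}
\item For infinite-rank indefinite kernels there is in general no minimal or canonical difference decomposition to pass to, since the reproducing kernel Krein space need not be unique.
\item Even granting some $A\le QQ^*$, this bounds $A(z,z)$ only by the same uncontrolled quantity.
\item The identity $A(z,z)=1/k(z,z)+B(z,z)$ just moves the problem to $B(z,z)$.
\end{itemize}
So as written you never obtain holomorphic $V,Y$. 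The paper disposes of this step by appeal to the theory of reproducing kernel Krein spaces of holomorphic functions (Gheondea's survey).

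\textbf{A self-contained fix.} The following avoids your local-boundedness worry altogether and does not even use the strong Agler decomposition.
\begin{itemize}
\item Choose a continuous weight $\omega>0$ on $\Omega$ decaying fast enough at $\partial\Omega$ that $\int_\Omega\omega\,dV<\infty$ and $\iint|1/k(z,w)|^2\,\omega(z)\,\omega(w)\,dV(z)\,dV(w)<\infty$. This is possible by exhausting $\Omega$ by compacta.
\item Let $\beta$ be the holomorphic reproducing kernel of the weighted Bergman space $A^2_\omega(\Omega)$. Define $(Tf)(z)=\int_\Omega \frac{1}{k(z,u)}\,f(u)\,\omega(u)\,dV(u)$.
\item Subharmonicity of $|1/k(\cdot,u)|^2$ gives two facts: $T$ maps $A^2_\omega(\Omega)$ into itself as a self-adjoint Hilbert--Schmidt operator, and $1/k(\cdot,z)\in A^2_\omega(\Omega)$. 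Consequently $T\beta_w=1/k(\cdot,w)$, that is, $1/k(z,w)=\langle T\beta_w,\beta_z\rangle$.
\item Write $T=T_+-T_-$ and set $V(z)x=(T_+^{1/2}x)(z)$ and $Y(z)x=(T_-^{1/2}x)(z)$ for $x\in A^2_\omega(\Omega)$. Then $1/k=VV^*-YY^*$, with $V,Y$ holomorphic and separable coefficient space $A^2_\omega(\Omega)$.
\end{itemize}
With these $V,Y$, your Step 3, or Theorem~\ref{Leech=wAg=sAg} applied directly, completes the proof.
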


\begin{proof}
Since $(k, \ell)$ admits a strong Agler decomposition, $1/k$ can be written as the difference of two reproducing kernels. Thus, the Hermitian kernel $1/k$ corresponds to a reproducing kernel Krein space of holomorphic functions. Consequently, the fundamental decomposition of $1/k$ as the difference of two Hilbert space reproducing kernels must also be holomorphic. See \cite{gheondea2013survey}. In particular, in \eqref{1/kLeech} one may take both $V$ and $Y$ to be holomorphic functions. Further, since $\mathcal{H}_{\ell}$ is a RKHS of holomorphic functions that contains the identity, all multipliers of $\mathcal{H}_{\ell}$ are holomorphic functions. \par
Now, the kernel functions $\{k_z\}$ are linearly independent because the polynomials are multipliers of $\mathcal{H}_k$
as part of Assumption~\ref{assume}. By Theorem~\ref{Leech=wAg=sAg}
 (because $\ell$ is a holomorphic kernel, $\cH_\ell$ is
separable), 
\[\cfrac{1}{k}=V(1-\Psi\Psi^*)V^*. \]
Since $\Psi$ is a multiplier of $\mathcal{H}_{\ell},$ it is holomorphic.
\end{proof}

\section{The co-extension property}\label{Heredsection}
 \label{ss:co-ext-pairs}
It is known that  a CP kernel $\ell$  has the property that a tuple of bounded commuting operators $T$ satisfies von Neumann's inequality with respect to $\Mult(\mathcal{H}_{\ell})$ if and only if $T$ is a $1/\ell$-contraction. That this property does, in fact, characterize the CP property is shown in \cite{AgMcClocal}. See also  \cite{hyperigid}. For a general kernel $\ell,$  there is no a priori reason for a tuple $T$ satisfying the $\Mult(\mathcal{H}_{\ell})$-von Neumann inequality to be a $1/\ell$-contraction. It is, however, reasonable to expect that this property could be partially salvaged if we demand a less restrictive $1/k$-contractivity condition. These considerations motivate the notion of a co-extension pair. See Definition~\ref{coextprop}.  The highlight of this section is Theorem~\ref{COEXTISAGLER} that says, under
our regularity hypotheses, a pair is a co-extension pair if and only if it admits an Agler decomposition.

Given a  kernel $\ell$, let \index{$\mathcal{A}(\mathcal{H}_{\ell})$}  $\mathcal{A}(\mathcal{H}_{\ell})\subseteq \cB(\cH_\ell)$ denote the norm closure of the polynomials as multipliers in $\text{Mult}(\mathcal{H}_{\ell}).$ 

\begin{definition}
    \label{def:mult-contraction}
A $\lcal{d}$-tuple of bounded commuting operators  $T=(T_1, \dots, T_{\lcal{d}})$ on Hilbert space $H$ is \df{$\Mult(\cH_\ell)$-contractive} if
\begin{equation} \label{vonNeumann}
  \|P(T)\| \le \|P\|_{\text{Mult}(\mathcal{H}_{\ell}\otimes \mathbb{C}^m)}  
\end{equation}
 for all $m$ and polynomials $P\in \CC[z_1, \dots, z_{\lcal{d}}]\otimes \CC^{m\times m}.$  
\qed \end{definition}

\begin{remark} \label{SpInclusion}
 The inequality of \eqref{vonNeumann} has the following reformulation. The map \[p(M^{\ell}_z)\mapsto p(T) \]
is a unital completely contractive homomorphism from $\mathcal{A}(\mathcal{H}_{\ell})$ to $\mathcal{B}(H)$. Thus, if $\specT(M^{\ell}_z)=\overline{\Omega}$ {is polynomially convex}, then $\specT(T)\subset \overline{\Omega}$ by Lemma \ref{Taylorinclusion}.  In particular, if $(k,\ell)$ is a regular pair over $\Omega,$
then $\specT(T)\subseteq \ol{\Omega}.$ 
\qed
\end{remark}

\begin{definition} \label{coextprop}
  A regular pair  $(k, \ell)$ is a \df{co-extension pair} if each $\Mult(\cH_\ell)$-contraction
  is a $1/k$ contraction.
\qed\end{definition}

 \begin{remark}\label{well-def:rem}
 If $(k, \ell)$ is a regular pair over $\Omega,$ then  $1/k\in\Her{(\Omega')}$ for some open set $\Omega'\supseteq\overline{\Omega}$. 
 In this case, by  Remark \ref{SpInclusion}, $1/k(T, T^*)$ is defined via the hereditary functional calculus whenever $T$ is a $\Mult(\cH_\ell)$-contraction. 
 \qed
 \end{remark}

The existence of a $(k,\ell)$-exhaustion in the definition of a regular pair permits a 
significant weakening in the definition of a co-extension pair.

\begin{lemma} \label{relax-coext}
Suppose  $(k, \ell)$ is a regular pair   on a domain $\Omega\subset\mathbb{C}^{\lcal{d}}$. If
 $T$ is a $\Mult(\cH_\ell)$-contraction and $\specT(T)\subseteq \Oml$ implies $T$ is a $1/k$ contraction,
 then, $(k,\ell)$ is a co-extension pair. 
\end{lemma}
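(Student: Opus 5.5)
Let $T$ be an arbitrary $\Mult(\cH_\ell)$-contraction. By Remark~\ref{SpInclusion}, $\specT(T)\subseteq\ol{\Omega}$, but the spectrum need not lie in $\Omega$. The plan is to push $T$ strictly inside $\Omega$ using the exhaustion polynomials $r_n$, apply the hypothesis to the pushed tuples, and then pass to the limit.

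Fix a $(k,\ell)$-exhaustion $(r_n,\Omega')$ and set $T_n:=r_n(T)$, the $\lcal{d}$-tuple obtained by applying the polynomial map $r_n$ to $T$.

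First, each $T_n$ is again a $\Mult(\cH_\ell)$-contraction. Indeed, for a matrix polynomial $P$, item~\ref{i:exh:cc} of Definition~\ref{kexhaust} gives
\[
\|P(T_n)\|=\|(P\circ r_n)(T)\|\le\|P\circ r_n\|_{\Mult(\cH_\ell\otimes\CC^m)}\le\|P\|_{\Mult(\cH_\ell\otimes\CC^m)}.
\]
The first inequality uses that $T$ is a $\Mult(\cH_\ell)$-contraction, since $P\circ r_n$ is again a matrix polynomial. The second uses complete contractivity of $p(M^\ell_z)\mapsto p\circ r_n(M^\ell_z)$.

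Second, $\specT(T_n)\subseteq\Omega$. By the spectral mapping theorem for the Taylor spectrum,
\[
\specT(T_n)=r_n(\specT(T))\subseteq r_n(\ol\Omega)\subseteq\Omega,
\]
where the last inclusion is item (ii) of Definition~\ref{kexhaust}.

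The hypothesis then gives $1/k(T_n,T_n^*)\succeq0$ for every $n$.

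Third, I relate this to $T$ through the hereditary calculus. The function $1/k_n(z,w)=1/k(r_n(z),r_n(w))$ is analytic in $(z,\ol w)$ on $\Omega'\times\Omega'$, because $r_n(\Omega')\subseteq\Omega'$ and $1/k\in\Her(\Omega')$. Composition of the hereditary calculus with polynomial substitution then gives
\[
1/k_n(T,T^*)=1/k(T_n,T_n^*)\succeq0.
\]
To verify this identity, approximate $1/k$ by hereditary polynomials on a polynomial polyhedron $\Omega''$ with $\ol\Omega\subseteq\Omega''\subseteq\Omega'$, as in Remark~\ref{r:polycvx}. For hereditary polynomials the identity is immediate, and it passes to the limit by Lemma~\ref{l:Her-calc}\ref{i:her-calc:v}.

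Finally, by item~\ref{i:exh:ucs}, $1/k_n\to1/k$ uniformly on compacta of $\Omega'\times\Omega'$, that is, in $\Her(\Omega'')$. Since $\specT(T)\subseteq\ol\Omega\subseteq\Omega''$, Lemma~\ref{l:Her-calc}\ref{i:her-calc:v} gives $1/k_n(T,T^*)\to1/k(T,T^*)$ in norm. Positivity is preserved under norm limits, so $1/k(T,T^*)\succeq0$ and $T$ is a $1/k$ contraction.

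The main obstacle is the third step, the compatibility $1/k_n(T,T^*)=1/k(r_n(T),r_n(T)^*)$. Both sides must be defined on a common domain containing the relevant spectra, and the limits must commute. Choosing an intermediate polynomially convex neighbourhood as in Remark~\ref{r:polycvx} and approximating by polynomials (Oka–Weil) should handle this.
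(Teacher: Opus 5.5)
Your proof is correct and follows the paper's argument step for step. You pass to $r_n(T)$, check via complete contractivity and the spectral mapping theorem that it is a $\Mult(\cH_\ell)$-contraction with $\specT(r_n(T))\subseteq\Omega$, and then take the norm limit $1/k_n(T,T^*)\to 1/k(T,T^*)$. The only addition is that you justify, via Oka--Weil approximation on an intermediate polynomial polyhedron, the composition identity $1/k_n(T,T^*)=1/k(r_n(T),r_n(T)^*)$, which the paper uses without comment.
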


\begin{proof}
Assume $(k, \ell)$ is a regular pair and  $T$ is a $\Mult(\cH_\ell)$-contraction. By Remark \ref{SpInclusion},  $\specT(T)\subset\overline{\Omega}.$  Fix a $(k, \ell)$-exhaustion $(r_n,\Omega^\prime)$ of $\Omega$ (item~\ref{i:regular:exh} of Definition~\ref{def:regular}) and set $k_n(z,w)=k(r_n(z), r_n(w)).$  By item~\ref{i:exh:ucs} of Definition~\ref{kexhaust}, $1/k_n\to 1/k$ in $\Her(\Omega')$ for some open set $\Omega'\supset\overline{\Omega}$.  Thus, by item~\ref{i:her-calc:v} of Lemma~\ref{l:Her-calc}, 
\begin{equation}\label{1/k-approx}
1/k_n(T, T^*)\to 1/k(T, T^*)
\end{equation}
in operator norm.
Since $p(M^{\ell}_z)\mapsto p\circ r_n(M^{\ell}_z)$ is  completely contractive (by item~\ref{i:exh:cc}
of the exhaustion assumption, Definition~\ref{kexhaust}), 
  \[
\|P\circ r_n(T)\|\le \|P\circ r_n(M_z^\ell)\|
  \le \| P(M_z^\ell)\| = \|P \|_{\textup{Mult}(\mathcal{H}_{\ell}\otimes\mathbb{C}^m)}
  \]
  for {$P\in\mathbb{C}[z]\otimes\mathbb{C}^{m\times m}$}  and $m\ge 1.$ Thus, 
   $r_n(T)$ is a $\Mult(\cH_\ell)$-contraction   $\specT(r_n(T)) \subseteq r_n(\ol{\Omega}) \subseteq \Omega.$  Hence, by assumption,  $1/k_n(T, T^*)=1/k(r_n(T), r_n(T)^*)\succeq 0. $ 
   The operator-norm convergence in equation~\eqref{1/k-approx} guarantees that $T$ is a $1/k$ contraction, as desired. 
\end{proof}
We will now employ a cone separation argument to show:
\begin{theorem}\label{COEXTISAGLER}
A regular pair   $(k, \ell)$ is a co-extension pair if and only if it admits an Agler decomposition. 
\end{theorem}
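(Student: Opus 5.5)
The plan is to handle the two directions separately: the easy direction by applying the hereditary calculus to each generator of the Agler wedge, and the hard direction by a Hahn--Banach cone-separation argument followed by a GNS-type construction.

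\emph{Agler decomposition implies co-extension.} Let $T$ be a $\Mult(\cH_\ell)$-contraction. By Lemma~\ref{relax-coext} we may assume $\specT(T)\subseteq\Omega$. Then every $h\in\Her(\Omega)$ can be evaluated at $(T,T^*)$, and $h\mapsto h(T,T^*)$ is continuous (item~\ref{i:her-calc:v} of Lemma~\ref{l:Her-calc}). So it suffices to show $h(T,T^*)\succeq 0$ for each generator
\[
h=Q(z)\big[I-P(z)P(w)^*\big]Q(w)^* .
\]
By Lemma~\ref{l:Her-calc},
\[
h(T,T^*)=Q(T)\big(I-P(T)P(T)^*\big)Q(T)^*,
\]
where $Q(T)$ is a row of Taylor functional calculus operators. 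Since $T$ is a $\Mult(\cH_\ell)$-contraction and $P$ is a contractive polynomial multiplier, $\|P(T)\|\le 1$. Hence $h(T,T^*)\succeq0$. Passing to the closure gives $1/k(T,T^*)\succeq0$, and Lemma~\ref{relax-coext} finishes this direction.

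\emph{Co-extension implies Agler decomposition.} Suppose $1/k\notin\mathcal{AG}(\ell)$. By Lemma~\ref{closed-under-sums}, $\mathcal{AG}(\ell)$ is a closed convex cone in the locally convex space $\Her(\Omega)$ (or in $\Her(\Omega'')$ for a suitable neighbourhood $\Omega''$ of $\ol\Omega$ from Remark~\ref{r:polycvx}). Hahn--Banach then gives a continuous functional $L$ with $L\ge0$ on $\mathcal{AG}(\ell)$ and $L(1/k)<0$. Symmetrizing, $L(h)\mapsto \tfrac12\big(L(h)+\ol{L(h^*)}\big)$, we may take $L$ Hermitian.

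Next build a Hilbert space from $L$. On $\hol(\Omega)$ (or on polynomials) set $\langle f,g\rangle_L=L\big(f(z)\ol{g(w)}\big)$. This form is positive semidefinite because $f(z)\ol{f(w)}$ lies in the wedge (take $P=0$). Quotient out null vectors and complete to get $H_L$, and let $T_j$ be multiplication by $z_j$ on $H_L$. Three things must be checked.

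\begin{enumerate}[(a)]
\item The $T_j$ are bounded and $\|P(T)\|\le\|P\|_{\Mult}$ for matrix polynomials. Positivity of $L$ on $Q(I-PP^*)Q^*$ for contractive $P$ gives exactly $P(T)P(T)^*\preceq I$ on vectors of the form $\sum_i q_i\otimes e_i$ with polynomial $q_i$. A multiple of $z_j$ is a contractive multiplier, which gives boundedness. So $T$ is a $\Mult(\cH_\ell)$-contraction.
\item Pairings of $L$ against general hereditary functions are reproduced by the calculus. Specifically, $L(h)=\langle h(T,T^*)1,1\rangle$ for $h\in\Her$ near $\ol\Omega$. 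This holds for hereditary polynomials by construction and extends by Oka--Weil density (Remark~\ref{r:polycvx}) together with continuity of $L$ and of the calculus. This requires $\specT(T)\subseteq\ol\Omega$, which follows from Remark~\ref{SpInclusion}. Continuity of $L$ must be taken on a neighbourhood where the approximating polynomials converge; this is the main technical obstacle. I would first separate in $\Her(\Omega'')$, using the exhaustion so that $1/k$ is defined there, and rely on continuity of $L$ in that compact-open topology. Failing that, I would use Lemma~\ref{relax-coext}: replace $T$ by $r_n(T)$, whose spectrum lies inside $\Omega$, and pass to the limit using $1/k_n\to1/k$.
\item The conclusion follows. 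We get $\langle (1/k)(T,T^*)1,1\rangle=L(1/k)<0$, so $T$ is a $\Mult(\cH_\ell)$-contraction that is not a $1/k$ contraction, contradicting the co-extension property.
\end{enumerate}

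The main work lies in step (b): reconciling the compact-open topology of $\Her(\Omega)$, in which the wedge is closed, with evaluation at the boundary spectrum $\ol\Omega$. The exhaustion $r_n$ and polynomial convexity of $\ol\Omega$ are exactly what handle this.
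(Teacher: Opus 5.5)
Your architecture matches the paper's. The easy direction is the paper's argument, and the hard direction uses the same cone separation, GNS model and appeal to the co-extension property. However, there is a concrete error in the GNS step: you have interchanged the roles of $T$ and $T^*$, and with your conventions steps (a)--(c) fail as written.

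\emph{The hereditary identity in (b) fails.} With $\langle f,g\rangle_L=L(f(z)\ol{g(w)})$ and $T_j=M_{z_j}$, the computable quantity is $\langle T^{*\beta}T^{\alpha}1,1\rangle=\langle z^\alpha,z^\beta\rangle_L=L(z^\alpha\ol{w}^\beta)$. This is the anti-hereditary ordering; $\langle T^{\alpha}T^{*\beta}1,1\rangle$ is not available by construction. Already for $d=1$ and $L(z^\alpha\ol{w}^\beta)=r^{2\alpha}\delta_{\alpha\beta}$ with $0<r<1$, one has $T^*1=0$. Hence $\langle(1-TT^*)1,1\rangle=1\neq 1-r^2=L(1-z\ol{w})$.

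\emph{Mult-contractivity in (a) fails.} For a matrix polynomial $P$ and a column $F$ of polynomials, $\|F\|^2-\|P(T)F\|^2=L\big(F(w)^*[I-P(w)^*P(z)]F(z)\big)$. This is the wedge generator built from the transpose $P^{t}$, not from $P$. So positivity of $L$ on $\mathcal{AG}(\ell)$ only gives $\|P^{t}(T)\|\le\|P\|_{\Mult}$. Matrix multiplier norms are not transpose invariant: on Drury--Arveson space the row $(z_1,\dots,z_d)$ is contractive while the column has norm $\sqrt{d}$. So your $T$ is not shown to be a $\Mult(\cH_\ell)$-contraction, and $\langle (1/k)(T,T^*)1,1\rangle$ need not equal $L(1/k)$.

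\emph{The fix.} Use the paper's convention, which your phrase ``$P(T)P(T)^*\preceq I$ on vectors $\sum q_i\otimes e_i$'' already implicitly assumes: let the \emph{adjoints} act by multiplication. That is, work on $\ol{\hol(\Omega)}$ with $\langle\bar f,\bar g\rangle_0=\widetilde L(g(z)\ol{f(w)})$ and $T_j^*(\bar f+\mathcal N)=\bar z_j\bar f+\mathcal N$. Then $\langle T^{\alpha}T^{*\beta}1,1\rangle_L=\widetilde L(z^\alpha\ol{w}^\beta)$. Moreover, for $F=(\bar f_1,\dots,\bar f_n)^{t}$ and the row $f=(f_1,\dots,f_n)$, one gets $\|F\|^2-\|P(T)^*F\|^2=L\big(f(z)[I-P(z)P(w)^*]f(w)^*\big)$, which is exactly a wedge generator. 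Your tuple is $JT^*J^{-1}$ for this $T$ and the anti-unitary $J:\bar f\mapsto f$. So the fix is only a relabeling, but it is needed.

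\emph{The obstacle you flag in (b) is not one.} Your remedies are either unnecessary (passing to $r_n(T)$) or not meaningful: separating in $\Her(\Omega'')$ does not work, because the wedge generators involve $Q\in\hol(\Omega)$ and do not live on $\Omega''$. The paper handles this in one line. Remark~\ref{r:polycvx} gives an open $U$ with $\ol\Omega\subseteq U\subseteq\Omega'$ and hereditary polynomials $p_n\to 1/k$ uniformly on compacta of $U\times U$. This gives two things at once:
\begin{enumerate}[(i)]
\item $p_n\to 1/k$ in $\Her(\Omega)$, so $L(p_n)\to L(1/k)$ by continuity of $L$ on $\Her(\Omega)$ itself;
\item $p_n(T,T^*)\to (1/k)(T,T^*)$ in norm, because $\specT(T)\subseteq\ol\Omega\subseteq U$ by Remark~\ref{SpInclusion}.
\end{enumerate}
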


\begin{proof}   We will first show that $(k, \ell)$ having an Agler decomposition implies that it is a co-extension pair. The proof is standard. Assume $1/k\in \mathcal{AG}(\ell)$, so there exists a sequence $\{i_n\}\subset\mathbb{N}$, analytic functions $Q_n:\Omega\to \mathcal{B}(\mathbb{C}^{i_n}, \CC)$ and contractive polynomial multipliers $P_n:\mathcal{H}_{\ell}\otimes\mathbb{C}^{i_n}\to\mathcal{H}_{\ell}\otimes\mathbb{C}^{i_n}$ such that
\begin{equation}  \label{Aglerconv}
    h_n(z, w):=Q_n(z)\big[I_{i_n\times i_n}-P_n(z)P_n(w)^* \big]Q_n(w)^*\xrightarrow{\text{\hspace{0.1 cm} \hspace{0.1 cm}}}  1/k(z, w) 
\end{equation}
 uniformly on compact subsets of $\Omega\times\Omega.$
Assuming $T\in\mathcal{B}(H)^{\lcal{d}}$ is a $\Mult(\cH_\ell)$-contraction,
 the goal is to show that $1/k(T, T^*)\succeq 0.$ 
\par

In view of Lemma \ref{relax-coext}, it suffices to assume $\specT(T)\subset\Omega$. The convergence in equation~\eqref{Aglerconv} implies by continuity of the hereditary functional calculus, that
\begin{equation}\label{limit}
    h_n(T, T^*)=Q_n(T)\big[I-P_n(T)P_n(T)^*\big]Q_n(T)^*\xrightarrow{\text{\hspace{0.1 cm} \hspace{0.1 cm}}}1/k(T, T^*)
\end{equation}
in the operator norm. Also, in view of \eqref{vonNeumann}, $P_n$ being a contractive multiplier implies $I-P_n(T)P_n(T)^*\succeq 0$. Conjugating, we obtain
\begin{equation*} 
Q_n(T)\big[I-P_n(T)P_n(T)^*\big]Q_n(T)^*\succeq 0, \hspace{0.4 cm} \forall n\ge 1.
\end{equation*}
Hence equation~\eqref{limit} yields $T$ is a $1/k$ contraction, as desired.  \par 

Now, we prove that the Agler decomposition is necessary; that is, assuming $(k,\ell)$ is a co-extension pair,
we will show $1/k\in \mathcal{AG}(\ell)$ using a standard cone separation argument.\footnote{The rest of this proof works even if the convergence $1/k_n\to 1/k$ only takes place in $\Omega\times\Omega$.}
 
The set 
\[\mathcal{R}=\{h\in\text{Her}(\Omega):\ h(z, w)=\overline{h(w, z)}\}.\]
 is a real subspace of $\text{Her}(\Omega)$. Observe that $1/k\in \mathcal{R}$ and, by Lemma~\ref{closed-under-sums}
 $\mathcal{AG}(\ell)$ is a closed cone that lies in $\mathcal{R}$. 
Thus, the assertion $1/k\in \mathcal{AG}(\ell)$ follows from the Hahn-Banach separation theorem once
it is shown that 
\begin{equation}\label{cone1}
L(1/k)\ge 0    
\end{equation}
whenever $L\in \mathcal{R}^*$ (where $\mathcal{R}^*$ is the dual of $\mathcal{R})$ and 
\begin{equation}\label{cone2}
 L(h)\ge 0
\end{equation}
for all  $h\in \mathcal{AG}(\ell).$
Accordingly, fix a continuous real linear functional $L$ so that \eqref{cone2} holds and note it extends
 (uniquely) to a continuous complex linear functional $\widetilde{L}$ defined on all of $\text{Her}(\Omega)$ via the formula $\widetilde{L}(h)=L(\Re{h})+iL(\Im{h})$. The functional $\widetilde{L}$  gives rise to a sesquilinear form on $\overline{\textup{Hol}(\Omega)}:=\{\bar{f}:\ f\in\textup{Hol}(\Omega)\}$:
\[
  \langle \bar{f}, \bar{g}\rangle_0=\widetilde{L}(g(z)\overline{f(w)}), 
\]
 for $f, g\in \textup{Hol}(\Omega).$
 For 
 $f\in \textup{Hol}(\Omega),$  choosing $Q=f$ and $\Phi=0$ in definition~\ref{Aglerwedge},  $f(z)\overline{f(w)}\in \mathcal{AG}(\ell)$   and thus
\[\langle \bar{f}, \bar{f}\rangle_0=\widetilde{L}(f(z)\overline{f(w)})=L(f(z)\overline{f(w)})\ge 0.\]
Consequently, $\langle \cdot, \cdot\rangle_0$ is positive semi-definite,   $\mathcal{N}=\{\bar{f}\in\overline{\textup{Hol}(\Omega)}: \langle \bar{f}, \bar{f}\rangle_0=0\}$ is a subspace of $\overline{\textup{Hol}(\Omega)} $ and 
\[\langle \overline{f}+\mathcal{N}, \overline{g}+\mathcal{N}\rangle_L:=\langle \overline{f}, \overline{g}\rangle_0, \hspace{0.4 cm} f, g\in\textup{Hol}(\Omega)\]
is an inner product on the quotient $\overline{\textup{Hol}(\Omega)}/\mathcal{N}.$
Finally, let $H_L$ denote the completion of $\overline{\textup{Hol}(\Omega)}/\mathcal{N}$ with respect to $\langle \cdot, \cdot\rangle_L$. \par 
Since $z_1, \dots, z_{\lcal{d}}\in\text{Mult}(\mathcal{H}_{\ell})$, there exists a constant $C$ such that
\[C^2-z_i\overline{w}_i\in \mathcal{AG}(\ell), \hspace{0.4 cm} 1\le i\le d.\]
Thus,
\[C^2\|\bar{f}+\mathcal{N}\|^2_L-\|\bar{z_i}\bar{f}+\mathcal{N}\|^2_L=L\big(f(z)(C^2-z_i\overline{w}_i)\overline{f(w)}\big)\ge 0, 
\]
 for all $f\in \textup{Hol}(\Omega),$
where the last inequality follows from \eqref{cone2}.
It follows that, for every $1\le i\le d.$, the operator $T_i:H_L\to H_L$ defined by 
\[T^*_i\big(\bar{f}+\mathcal{N}\big)=\bar{z_i}\bar{f}+\mathcal{N}\]
is bounded. The operators $T_i$ commute since their adjoints do. Next, consider an arbitrary $P\in \CC[z_1, \dots, z_{\lcal{d}}]\otimes\mathbb{C}^{n\times n}$, with $n\ge 1.$ Without loss of generality assume $\|P\|_{\text{Mult}(\mathcal{H}_{\ell}\otimes\mathbb{C}^n)}\le 1$. Our goal is to show that $P(T)\in \mathcal{B}(H_L\otimes\mathbb{C}^n)$
is a contraction. Accordingly, let  $f_1, \dots, f_n\in \textup{Hol}(\Omega)$ and observe that
\begin{equation}\label{key}
\begin{bmatrix}
 f_1(z)& \cdots & f_n(z)   
\end{bmatrix}\big(I_{n\times n}-P(z)P(w)^* \big)\begin{bmatrix}
\overline{f_1(w)} \\ 
\vdots \\
\overline{f_n(w)} 
\end{bmatrix}\in  \mathcal{AG}(\ell).
\end{equation}
Next, set $P=[p_{ij}]$ and write
\begin{align*}
\Bigg|\Bigg|P(T)^*\begin{bmatrix}
\bar{f_1}+\mathcal{N} \\ \vdots \\\bar{f_n}+\mathcal{N}
\end{bmatrix}\Bigg|\Bigg|^2_{H_L\otimes\mathbb{C}^n}
=&\Bigg|\Bigg|\begin{bmatrix}
\sum_{j=1}^np_{j1}(T)^*(\bar{f_j}+\mathcal{N}) \\ \vdots \\ \sum_{j=1}^np_{jn}(T)^*(\bar{f_j}+\mathcal{N})
\end{bmatrix}\Bigg|\Bigg|^2_{H_L\otimes\mathbb{C}^n}
\\
=&\Bigg|\Bigg|\begin{bmatrix}
\sum_{j=1}^n\overline{p}_{j1}\bar{f}_j+\mathcal{N} \\ \vdots \\ \sum_{j=1}^n\overline{p}_{jn}\bar{f}_j+\mathcal{N}
\end{bmatrix}\Bigg|\Bigg|^2_{H_L\otimes\mathbb{C}^n}.
\end{align*}
Hence 
\begin{align*}
&\ \Bigg|\Bigg|\begin{bmatrix}
\bar{f_1}+\mathcal{N} \\ \vdots \\\bar{f_n}+\mathcal{N}
\end{bmatrix}\Bigg|\Bigg|^2_{H_L\otimes\mathbb{C}^n}-\Bigg|\Bigg|P(T)^*\begin{bmatrix}
\bar{f_1}+\mathcal{N} \\ \vdots \\\bar{f_n}+\mathcal{N}
\end{bmatrix}\Bigg|\Bigg|^2_{H_L\otimes\mathbb{C}^n}
\\
=&\ 
\sum_{i=1}^n\|\bar{f}_i+\mathcal{N}\|^2_{H_L}-\sum_{i=1}^n\Big|\Big|\sum_{j=1}^n\overline{p}_{ji}\bar{f}_j+\mathcal{N}\Big|\Big|^2_{H_L}
\\
=&\ L\Big(\sum_{i=1}^nf_i(z)\overline{f_i(w)}\Big)- L\Big(\sum_{i=1}^n\Big[\sum_{j=1}^np_{ji}(z)f_j(z)\Big]\Big[\sum_{j=1}^n\overline{p_{ji}(w)f_j(w)}\Big] \Big)\\
=&\ L\Bigg(\begin{bmatrix}
 f_1(z)& \cdots & f_n(z)   
\end{bmatrix}\big(I_{n\times n}-P(z)P(w)^* \big)\begin{bmatrix}
\overline{f_1(w)} \\ 
\vdots \\
\overline{f_n(w)} 
\end{bmatrix}\Bigg) \\
\ge&\ 0,
\end{align*}
where the last inequality follows from \eqref{cone2} and \eqref{key}. Thus, $P(T)$ is a contraction, and so $T$ is a  $\Mult(\cH_\ell)$-contraction.  Since $(k, \ell)$ is a co-extension pair by assumption,  $1/k(T, T^*)\succeq 0$ (recall that, by Remarks~\ref{SpInclusion} and  \ref{well-def:rem}, $1/k(T, T^*)$ is  a bounded operator defined
through the hereditary functional calculus).

By regularity, there exists an open set $\Omega'\supseteq \ol{\Omega}$ such that $1/k\in\Her(\Omega')$.  By Remark~\ref{r:polycvx}, there is an open
set $\ol{\Omega} \subseteq U \subseteq \Omega^\prime$  and a sequence
of hereditary polynomials $p_n$ that converge to $1/k$ uniformly
on compact subsets of $U\times U.$
The sequence $(p_n(T, T^*))_n$ converges to $1/k(T, T^*)$ in operator norm
by Lemma~\ref{l:Her-calc} item~\ref{i:her-calc:v}. Setting $p_n(z,w)=\sum p_{n,\alpha, \beta}z^{\alpha}\overline{w}^{\beta}$, 
\begin{align*}
0\le &\ \big\langle 1/k(T, T^*)(1+\mathcal{N}), 1+\mathcal{N}\big\rangle_{L}\\ 
=&\lim_n\big\langle p_n(T, T^*)(1+\mathcal{N}), 1+\mathcal{N}\big\rangle_L  
\\
=&\lim_n\sum p_{n,\alpha,\beta}\big\langle T^{\alpha}T^{*\beta}(1+\mathcal{N}), 1+\mathcal{N}\big\rangle_L  
\\
=&\lim_n\sum p_{n, \alpha,\beta}\ \widetilde{L}(z^{\alpha}\overline{w}^{\beta})  
\\
=&\lim_n {\widetilde{L}(p_n)} 
\\
=& L(1/k),
\end{align*}
where the last equality follows from continuity of $L$ and from having $p_n\to1/k$ in $\Her(\Omega).$ Thus,
 \eqref{cone1} holds and our proof is complete.
\end{proof}

\section{The BLH and co-extension properties}\label{BLHandco-extSec}

The main result of this section,  Theorem~\ref{BLHISCOEXT}, asserts that the BLH and co-extension properties coincide for strongly regular pairs.  A barebones version of our argument is as follows. Fix a strongly regular pair $(k, \ell)$ and set $S=M^{\ell}_z$. 
If $(k,\ell)$ is either a BLH pair or co-extension pair, then $S$ is a $1/k$ contraction 
 and hence so is its restriction to any co-invariant subspace. However, the BLH property is equivalent to the assertion  that the restriction of any direct sum of copies of $S$ to an invariant subspace is a $1/k$ contraction; see Lemma \ref{recastBLH}. Combining these two facts, we obtain that $(k, \ell)$ is BLH if and only if every operator that dilates to  copies of  $S$ is a $1/k$ contraction; see Lemma \ref{intermediate}. By standard operator model theory, this last property turns out to be equivalent to the co-extension property, which concludes the proof. \par We point out that strong regularity is only used when passing from the BLH to the co-extension property. The issue is that arbitrary $\text{Mult}(\mathcal{H}_{\ell})$-contractions dilate to operators made up of copies of $S$ plus a residual   summand. 
The BLH property, however, only gives information about invariant-subspace-restrictions of direct sums of $S$ that do not involve any residual summands. Lemma \ref{approx1-ell} (which requires strong regularity)
 allows us to reduce to dilations that are free of such summands.  Szeg\"o's kernel,
 $\ell(z,w)= (1-z\ol{w})^{-1},$ provides an illustrative example. 
  In this case, $S$ is the unilateral shift and a $1/\ell$-contraction
 is a contraction. A version 
 of the Sz.-Nagy Dilation Theorem says if $T$ is a contraction, then there
 exists a cardinal $\omega$ and unitary operator $U$ such that $T^*$ is the restriction of 
 $(\oplus_{j=1}^\omega S^*)\oplus U$ to an invariant subspace.  Here $U$ is the residual summand. 
 If $0<t_n<1$ is a sequence that converges to $1,$ then  the sequence
 $r_n(z)=t_n z$ induces a strong exhaustion and $r_n(T)=t_n T$  so that the residual unitary summand is absent
 in the  Sz.-Nagy dilation of $r_n(T).$  The strategy is now clear: Verify that $r_n(T)$ satisfies the BLH hypothesis and
 take a limit.

The proofs use the following two well-known tools: a Sz.-Nagy-Foias-type model for $1/k$ contractions (Theorem \ref{arazyenglis}) and the Arveson Extension Theorem (Theorem \ref{Arvesonsext}). 

  \begin{theorem}[{\cite[Corollary~7]{Polynomialarazyenglis}}]
 \label{arazyenglis}
Assume $k$ is a non-vanishing kernel on $\Omega\subset\mathbb{C}^{\lcal{d}}$ and let $T\in\mathcal{B}(H)^{\lcal{d}}$ be a  tuple of commuting operators with $\specT(T)\subset \Omega$. If $T$ is a $1/k$ contraction, then there exists a Hilbert space $\mathcal{E}$ such that $T$ is unitarily equivalent to the restriction of $M^k_z\otimes I_{\mathcal{E}}$ to a co-invariant subspace. 
 \end{theorem}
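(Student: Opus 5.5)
The plan is to build the standard "defect-operator" embedding of $H$ into $\cH_k\otimes\HSE$ and check that it is an isometry intertwining $T^*$ with $(M^k_z\otimes I_\HSE)^*$. Set $D=\big(1/k(T,T^*)\big)^{1/2}\succeq 0$, which is defined by the hereditary calculus since $\specT(T)\subset\Omega$. Let $\HSE=\ol{DH}$. Since polynomials are dense in $\cH_k$ (Assumption~\ref{assume}), I will fix an orthonormal basis $\{u_n\}$ of $\cH_k$ consisting of polynomials. Then $k(z,w)=\sum_n u_n(z)\ol{u_n(w)}$, with convergence uniform on compact subsets of $\Omega\times\Omega$. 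Define
\[
 Vx=\sum_n u_n\otimes D\,u_n(T)^*x ,\qquad x\in H,
\]
so that formally $(Vx)(z)=D\,k(T,z)^*x$, where $k(T,z)$ denotes the Taylor calculus applied to the holomorphic function $w\mapsto k(w,z)$.

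\textbf{Step 1 (isometry).} For the partial sums, item (v) of Lemma~\ref{l:Her-calc}, with the holomorphic factor on the left and the antiholomorphic factor on the right, gives
\[
 \sum_{n\le N}\|D u_n(T)^*x\|^2=\Big\langle h_N(T,T^*)x,x\Big\rangle,\qquad h_N(z,w)=\Big(\sum_{n\le N}u_n(z)\ol{u_n(w)}\Big)\frac{1}{k(z,w)} .
\]
Since $\specT(T)$ is a compact subset of $\Omega$, I can choose an open set $U$ with $\specT(T)\subset U\Subset\Omega$. On $U\times U$ the kernel series converges uniformly on compacta and $1/k$ is holomorphic, so $h_N\to 1$ in $\Her(U)$. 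Item (vi) of Lemma~\ref{l:Her-calc} then gives $h_N(T,T^*)\to I$ in norm. Hence the series defining $V$ converges and $\|Vx\|=\|x\|$. I expect this to be the main technical point. The hereditary calculus is \emph{not} multiplicative, so one must be careful that only the "left-holomorphic / right-antiholomorphic" product rule (v) and the continuity property (vi) are used. One must also check that working on $U$ rather than $\Omega$ does not change the calculus.

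\textbf{Step 2 (the adjoint).} Next I compute $V^*$. For $f\in\cH_k$ and $e\in\HSE$,
\[
 \langle x,V^*(f\otimes e)\rangle=\sum_n\langle Du_n(T)^*x,e\rangle\,\ol{\langle f,u_n\rangle}.
\]
This yields $V^*(f\otimes e)=\sum_n\langle f,u_n\rangle\,u_n(T)De=f(T)De$. The last equality holds because $\sum_{n\le N}\langle f,u_n\rangle u_n\to f$ in $\cH_k$, hence uniformly on compacta of $\Omega$, and the Taylor calculus is continuous for that topology.

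\textbf{Step 3 (intertwining).} By Assumption~\ref{assume}, $z_jf\in\cH_k$, so Step~2 gives
\[
 V^*\big((M^k_{z_j}\otimes I)(f\otimes e)\big)=T_jf(T)De=T_jV^*(f\otimes e).
\]
Taking adjoints, $VT_j^*=(M^k_{z_j}\otimes I)^*V$. Therefore $VH$ is a co-invariant subspace of $M^k_z\otimes I_\HSE$, and $V$ implements a unitary equivalence between $T$ and the compression, that is, the restriction of the adjoint tuple, of $M^k_z\otimes I_\HSE$ to $VH$.
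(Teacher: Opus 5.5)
Your proof is correct. It is the standard defect-operator embedding $Vx=\sum_n u_n\otimes D\,u_n(T)^*x$ that underlies the cited result of Ambrozie--Engli\v{s}--M\"uller; the paper itself gives no proof and only cites that work. You correctly obtain the key identity $\sum_n u_n(T)\,(1/k)(T,T^*)\,u_n(T)^*=I$ using only items (v) and (vi) of Lemma~\ref{l:Her-calc}. The one fact worth stating explicitly is that $\sum_n u_n(z)\overline{u_n(w)}\to k(z,w)$ uniformly on compacta of $\Omega\times\Omega$, which follows from Dini's theorem on the diagonal together with Cauchy--Schwarz. That already gives $h_N\to 1$ in $\Her(\Omega)$, so the auxiliary set $U\Subset\Omega$ is not needed.
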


\begin{theorem}\label{Arvesonsext}
Let $\mathcal{A}$ be a unital subalgebra of the unital $C^*$-algebra $\mathcal{C}$ and $H$ a Hilbert space. If $\rho: \mathcal{A}\to\mathcal{B}(H)$ is a completely contractive unital homomorphism, then there exists a Hilbert space $K\supset H$ and a $*$-representation $\pi: \mathcal{C}\to\mathcal{B}(K)$ such that,
letting $V:H\to K$ denote the inclusion, 
\[
 \rho(a)=V^* \pi(a) V, 
\]
for all $a\in\mathcal{A}.$ 
\end{theorem}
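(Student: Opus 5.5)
The proof combines the three classical steps behind Arveson's theorem: pass from the algebra to an operator system, extend completely positively to all of $\mathcal{C}$, then dilate with Stinespring. The multiplicativity of $\rho$ plays no role; only the fact that $\rho$ is unital and completely contractive is used.

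First, let $\mathcal{S}=\mathcal{A}+\mathcal{A}^*\subseteq\mathcal{C}$, which is an operator system (unital and self-adjoint). Define $\widetilde{\rho}:\mathcal{S}\to\mathcal{B}(H)$ by
\[
\widetilde{\rho}(a+b^*)=\rho(a)+\rho(b)^*, \qquad a,b\in\mathcal{A}.
\]
\begin{enumerate}[(i)]
\item For well-definedness, I would show that if $a+b^*=a'+b'^*$, then $\rho(a-a')=\rho(b'-b)^*$. Setting $c=a-a'=(b'-b)^*$, the element $c$ lies in $\mathcal{A}\cap\mathcal{A}^*$. A unital contraction on the operator system $\mathcal{A}\cap\mathcal{A}^*$ is positive, and hence self-adjoint: $\rho(c^*)=\rho(c)^*$. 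This is the standard fact that unital contractions on operator systems are positive, proved via states and the numerical range.
\item For complete positivity, I would apply the same argument at every matrix level $M_n(\mathcal{A})$, using that $\rho$ is completely contractive. This gives that each $\widetilde{\rho}_n$ is positive on $M_n(\mathcal{S})=M_n(\mathcal{A})+M_n(\mathcal{A})^*$, so $\widetilde{\rho}$ is unital completely positive (Paulsen's Proposition 3.5 / Arveson's argument).
\end{enumerate}

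I expect this passage from a completely contractive map on a nonselfadjoint algebra to a completely positive map on $\mathcal{S}$ to be the main obstacle. I would try to cite it directly from the standard references rather than reprove it.

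Second, I would invoke Arveson's extension theorem for completely positive maps. Since $\mathcal{B}(H)$ is injective (a Hahn–Banach argument for completely positive maps, via finite-rank compressions and a point-weak$^*$ compactness limit), $\widetilde{\rho}$ extends to a unital completely positive map $\Phi:\mathcal{C}\to\mathcal{B}(H)$ with $\|\Phi\|_{cb}=\|\Phi(1)\|=1$.

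Third, I would apply Stinespring's dilation theorem to $\Phi$. This gives a Hilbert space $K$, a $*$-representation $\pi:\mathcal{C}\to\mathcal{B}(K)$ and $V\in\mathcal{B}(H,K)$ with $\Phi(c)=V^*\pi(c)V$. Because $V^*V=\Phi(1)=I_H$, the operator $V$ is an isometry, so identifying $H$ with $VH\subseteq K$ makes $V$ the inclusion. Restricting to $\mathcal{A}$ then yields $\rho(a)=V^*\pi(a)V$ for all $a\in\mathcal{A}$.
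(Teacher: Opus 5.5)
Your proposal is correct and is exactly the standard argument: completely positive extension of $\rho$ to $\mathcal{A}+\mathcal{A}^*$, then Arveson's extension theorem, then Stinespring with $V^*V=\Phi(1)=I_H$. The paper gives no proof of its own and simply invokes this as the well-known Arveson extension theorem; note only that your step (i) gives well-definedness of $\widetilde{\rho}$, while positivity of each $\widetilde{\rho}_n$ comes from the separate state-extension (Hahn--Banach) argument in the result you cite, so that citation is genuinely needed.
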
\noindent

The following general fact about $1/k$ contractions is standard. A proof is provided for the sake of completeness.

\begin{lemma}\label{compresstocoinv}
Assume $k$ is a kernel on a domain $\Omega\subset\mathbb{C}^{\lcal{d}}$ with polynomially convex closure. Assume also that $1/k\in\Her(\Omega')$ for some open set $\Omega'\supset\ol{\Omega}$ and let $T\in \mathcal{B}(H)^{\lcal{d}}$ be a  tuple of commuting operators with $\specT(T)\subset \overline{\Omega}.$ If $T$ is a $1/k$ contraction, $\cM\subset H$ is a co-invariant subpace for $T$ and
$V:\cM\to H$ is the inclusion,  then $V^* T| V$ is a $1/k$ contraction for any co-invariant subspace $\mathcal{M}$ of $T$.
\end{lemma}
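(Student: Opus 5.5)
The plan is to reduce to hereditary polynomials, for which compression to a co-invariant subspace is an exact algebraic identity, and then pass to the limit. Write $S=V^*TV$, the tuple $(V^*T_1V,\dots,V^*T_{\lcal{d}}V)$. Since $\cM$ is invariant for each $T_j^*$, we have $T_j^*V=VS_j^*$. Consequently $T^{*\beta}V=VS^{*\beta}$ and $V^*T^\alpha=S^\alpha V^*$ for all multi-indices. The tuple $S$ is commuting because its adjoints are restrictions of commuting operators. For every hereditary polynomial $p(z,w)=\sum p_{\alpha\beta}z^\alpha\ol{w}^\beta$ this gives
\[
V^*p(T,T^*)V=\sum p_{\alpha\beta}V^*T^\alpha T^{*\beta}V=\sum p_{\alpha\beta}S^\alpha S^{*\beta}=p(S,S^*).
\]

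Next, I need $1/k(S,S^*)$ to be defined and the identity to extend to $1/k$. For the spectrum, the map $p(T)\mapsto p(S)=V^*p(T)V$ is a compression onto a co-invariant subspace, hence a completely contractive unital homomorphism on the polynomial algebra generated by $T$. So Lemma~\ref{Taylorinclusion}, applied with $K=\ol{\Omega}$ (polynomially convex), gives $\specT(S)\subset\ol{\Omega}$. Thus both $T$ and $S$ have Taylor spectrum in $\ol{\Omega}\subset\Omega'$.

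By Remark~\ref{r:polycvx} there is an open set $U$ with $\ol{\Omega}\subseteq U\subseteq\Omega'$ and hereditary polynomials $p_n\to 1/k$ uniformly on compacta of $U\times U$, hence in $\Her(U)$. By Lemma~\ref{l:Her-calc}\ref{i:her-calc:v}, applied on $U$, we get $p_n(T,T^*)\to 1/k(T,T^*)$ and $p_n(S,S^*)\to1/k(S,S^*)$ in norm. Taking limits in the identity above yields
\[
1/k(S,S^*)=V^*\,1/k(T,T^*)\,V\succeq 0,
\]
which is the claim.

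The main subtlety is the spectral inclusion for $S$ together with making sure the hereditary calculus on $\Omega'$ is consistent with its evaluation on the smaller set $U$. I would handle this by invoking Lemma~\ref{Taylorinclusion} as above, and by noting that the hereditary calculus is independent of the open set containing the spectrum. This independence follows from the polynomial approximation itself, since polynomials are defined globally.
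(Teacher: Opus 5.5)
Your proposal is correct and follows essentially the same route as the paper's proof. Both compress hereditary polynomials exactly using $V^*T^\alpha T^{*\beta}V=S^\alpha S^{*\beta}$, both get $\specT(S)\subseteq\ol{\Omega}$ from Lemma~\ref{Taylorinclusion} via the completely contractive map $p(T)\mapsto p(S)$, and both approximate $1/k$ by hereditary polynomials near $\ol{\Omega}$ (Remark~\ref{r:polycvx}) before passing to the norm limit. One small correction to your justification: the fact that the hereditary calculus does not depend on which neighborhood of the spectrum you use is a standard property of Taylor's calculus (it depends only on germs near $\specT(T)$), which the paper also uses without comment; it does not follow from the polynomial approximation itself.
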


\begin{proof}
Suppose $\cM$ is co-invariant for $T,$ let $V:\cM\to \HSH$ denote the inclusion
and set $J=V^*T V$. We are to show $1/k(J,J^*)\succeq 0.$
Since $\mathcal{M}$ is co-invariant for $T$, it is invariant for $T^*$ and hence 
\[
   J^{\alpha}=V^* T^{\alpha} V,
\]
 for $\alpha\in\mathbb{N}^{\lcal{d}}.$ 
 It follows that the map $p(T) \mapsto p(J)$ for polynomials $p$ is a unital completely contractive
 representation. Since $\specT(T)\subset \overline{\Omega}$ and $\ol{\Omega}$ is polynomially convex, $\specT(J)\subseteq \ol{\Omega}$
 by Lemma~\ref{Taylorinclusion}. From Remark~\ref{r:polycvx}, 
 there is an open set $W$ such that 
  $\ol{\Omega}\times \ol{\Omega} \subseteq W \subseteq \Omega'\times\Omega'$ and a
   sequence of hereditary polynomials $(p_n)$ that converges uniformly to $1/k$ on 
   $W.$  Hence, by the hereditary functional calculus,
   $p_n(T,T^*)$  and $p_n(J,J^*)$ converge in operator norm to $1/k(T,T^*)$ and $1/k(J,J^*)$, 
   respectively. On the other hand,
   since for all $\alpha, \beta\in\mathbb{N}^{\lcal{d}},$ 
\begin{equation*} V^* T^{\alpha}T^{*\beta} V =V^* T^{\alpha}P^2_{\mathcal{M}}T^{*\beta}V =J^{\alpha}J^{*{\beta}},
\end{equation*}
 for each $n,$
 \[
  p_n(J,J^*) =   V^* p_n(T,T^*) V.
 \]
 Hence,
\[
 1/k(J,J^*) = V^*( 1/k(T,T^*) ) V\succeq 0,
\]
 where the last inequality holds since $T$ is a $1/k$ contraction.
\end{proof}

We now prove another reformulation of the BLH property. 
 \begin{lemma}\label{recastBLH}
If  $(k, \ell)$ be a  regular pair, then  $(k, \ell)$ is a BLH pair if and only if, for any Hilbert space $\mathcal{E}$, the restriction of $M^{\ell}_z\otimes I_{\mathcal{E}}$ to any invariant subspace yields a $1/k$ contraction. 
 \end{lemma}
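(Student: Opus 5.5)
The plan is to reduce both directions to Proposition~\ref{l_m/k}, via the identity
\[
\langle h(T,T^*)k^\cM_b, k^\cM_a\rangle\text{-type formulas}
\]
for the restriction of $S:=M^\ell_z\otimes I_\HSE$ to an invariant subspace. Fix $\HSE$ and a subspace $\cM\subseteq\cH_\ell\otimes\HSE$ invariant under $S$. By item~\ref{i:regular:include} of Definition~\ref{def:regular} together with Proposition~\ref{allequiv}, or more directly by regularity, shift-invariance, $\Mult(\cH_\ell)$-invariance and $\Mult(\cH_k)$-invariance should all coincide here. I will check this carefully: Proposition~\ref{allequiv} needs WOT-density of polynomials, so I may instead argue through the BLH definition using only $\Mult(\cH_k)$-invariant subspaces. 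Since coordinate functions are multipliers of $\cH_k$, $\Mult(\cH_k)$-invariant subspaces are shift-invariant. The converse direction is the delicate point, and I flag it below.

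Let $T=S|_\cM$. First I locate its spectrum. The map $p(M^\ell_z)\mapsto p(T)=P_\cM p(S)|_\cM$ is completely contractive, since it is compression to an invariant subspace and $p(S)=p(M^\ell_z)\otimes I$. Lemma~\ref{Taylorinclusion} with item~\ref{i:regular:poly-cvx} then gives $\specT(T)\subseteq\ol\Omega$, so $1/k(T,T^*)$ is defined. The key computation: for $a,b\in\Omega$ and $e,f\in\HSE$, put $x_b=P_\cM(\ell_b\otimes e)$. Then
\[
T^*x_b=P_\cM S^*(\ell_b\otimes e)=\ol b\,x_b,
\]
because $\cM$ is $S$-invariant and hence $\cM^\perp$ is $S^*$-invariant, so $P_\cM S^*=P_\cM S^*P_\cM$ on the whole space. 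Thus the $x_b$ are joint eigenvectors of $T^*$. Approximating $1/k$ by hereditary polynomials, as in Remark~\ref{r:polycvx} and Lemma~\ref{l:her-calc+}, yields
\[
\big\langle 1/k(T,T^*)x_b,x_a\big\rangle=\frac{\langle \ell_\cM(a,b)e,f\rangle}{k(a,b)}.
\]
The $x_b\otimes$-span, that is the span of $\{P_\cM(\ell_b\otimes e)\}$, is dense in $\cM$. Hence $1/k(T,T^*)\succeq0$ if and only if $\ell_\cM/k\succeq0$ as a $\cB(\HSE)$-valued kernel.

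With this equivalence in hand, both directions follow from Proposition~\ref{l_m/k}. That proposition says BLH is equivalent to $\ell_\cM/k\succeq0$ for all $\Mult(\cH_k)$-invariant $\cM$. So if the restrictions to invariant subspaces are $1/k$ contractions, every shift-invariant $\cM$, and in particular every $\Mult(\cH_k)$-invariant one, satisfies $\ell_\cM/k\succeq0$, which gives BLH. Conversely, BLH gives $\ell_\cM/k\succeq0$ for $\Mult(\cH_k)$-invariant $\cM$, and this must be extended to every $S$-invariant $\cM$.

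The main obstacle is exactly this gap between shift-invariant and $\Mult(\cH_k)$-invariant subspaces in the converse. I would try to close it with the exhaustion. By item~\ref{i:exh:cc}, the map $p\mapsto p\circ r_n$ is completely contractive. Moreover $r_n(\ol\Omega)\subseteq\Omega$, so Oka--Weil applies on a polynomially convex neighborhood. Together these should show that any $\varphi\in\Mult(\cH_k)$ composed with $r_n$ acts on shift-invariant subspaces via the Taylor calculus of $S$, which preserves $\cM$. The $1/k_n$ approximation, as in Lemma~\ref{relax-coext}, then passes to the limit $1/k_n(T,T^*)\to1/k(T,T^*)$. If this route stalls, I would fall back on reading the lemma with "invariant" meaning $\Mult(\cH_k)$-invariant, which item~\ref{i:regular:include} makes coincide with $\Mult(\cH_\ell)$-invariant, so that the argument above suffices.
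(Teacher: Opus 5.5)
Your core argument is the paper's proof. You get $\specT(T)\subseteq\ol{\Omega}$ from Lemma~\ref{Taylorinclusion}, use that the vectors $P_\cM(\ell_b\otimes e)$ are joint eigenvectors of $T^*$, approximate by hereditary polynomials to get $\langle 1/k(T,T^*)x_b,x_a\rangle=\langle\ell_\cM(a,b)e,f\rangle/k(a,b)$, and finish with density and Proposition~\ref{l_m/k}. The paper proves exactly your fallback version. It takes $\cM$ to be $\Mult(\cH_k)$-invariant and uses item~\ref{i:regular:include} of Definition~\ref{def:regular} only to know that $\cM$ is invariant for $M^\ell_z\otimes I_{\HSE}$. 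It then shows $\ell_\cM/k\succeq 0\iff 1/k(T,T^*)\succeq 0$ for such $\cM$. Subspaces that are merely invariant for $M^\ell_z\otimes I_{\HSE}$ are not treated in its proof.

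You should drop the exhaustion route, because it does not work as sketched. Item~\ref{i:exh:cc} of Definition~\ref{kexhaust} gives complete contractivity of $p\mapsto p\circ r_n$ only for polynomials $p$. So nothing in the hypotheses bounds $\|M_{\varphi\circ r_n}\|$ uniformly in $n$ for a general $\varphi\in\Mult(\cH_k)$. The definition also does not force $r_n\to\mathrm{id}$: for Szeg\"o's kernel, $r_n(z)=t_ne^{i\theta_n}z$ with $0<t_n<1$, $t_n\to 1$ and arbitrary $\theta_n$ is an exhaustion. Hence you cannot get $M_{\varphi\circ r_n}\to M_\varphi$ in WOT, and so cannot conclude that a shift-invariant $\cM$ is $\Mult(\cH_k)$-invariant.

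Passing through $1/k_n(T,T^*)=1/k(r_n(T),r_n(T)^*)$ does not help either. The tuple $r_n(T)$ is the restriction of $r_n(M^\ell_z)\otimes I_{\HSE}$, not of $M^\ell_z\otimes I_{\HSE}$, to $\cM$, so BLH gives no information about it. The stronger reading needs a hypothesis that makes the invariance notions coincide, such as WOT-density of polynomials in $\Mult(\cH_\ell)$ (Proposition~\ref{allequiv}) or unitary invariance (Corollary~\ref{c:allequiv}).

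Your caution is well placed. Lemma~\ref{intermediate} later applies this lemma to $\mathcal{N}_-\oplus\cM$, which is only known to be invariant for $M^\ell_z\otimes I_{\HSE}$.
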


\begin{proof}[Proof sketch] 
 Assume $\mathcal{M}$ is
 a $\Mult(\cH_k)$-invariant subspace of $\mathcal{H}_{\ell}\otimes {\mathcal{E}}.$ By the regularity assumption $\cM$ is
 $\Mult(\cH_\ell)$-invariant and in particular invariant
 for $M_z^\ell\otimes I_{\mathcal{E}}.$ Let $T=P_\mathcal{M}(M^{\ell}_z\otimes I_{\mathcal{E}})|_{\mathcal{M}}=\big(P_{\mathcal{M}}(M^{\ell}_{z_1}\otimes I_{\mathcal{E}})|_{\mathcal{M}}, \dots, P_{\mathcal{M}}(M^{\ell}_{z_{\lcal{d}}}\otimes I_{\mathcal{E}})|_{\mathcal{M}}\big)$. Since $\specT(M^{\ell}_z)\subset \overline{\Omega} $ and $ \overline{\Omega} $ is polynomially convex,  $\specT(T)\subset \overline{\Omega}$ by Lemma \ref{Taylorinclusion}. {In view of Proposition \ref{l_m/k}}, it suffices to show that $\ell_{\cM}/k$ being positive-semidefinite is equivalent to $1/k(T, T^*)\succeq 0.$

Since $\ol{\Omega}$ is polynomially convex and $1/k\in\Her(\Omega')$ for 
some open set $\Omega'\supset\ol{\Omega}$,   by Remark~\ref{r:polycvx}, there is a sequence of hereditary polynomials that converges uniformly to $1/k$ on some neighborhood of $\ol{\Omega}.$ Thus, it suffices to prove that, for any hereditary polynomial $p,$
\begin{equation}\label{operatorhered}
  p(T, T^*)\succeq 0 \Longleftrightarrow p(z,w)\cdot \ell_{\mathcal{M}}(z,w)\succeq 0.  
\end{equation}
Let $g_{w,\eta}\in \mathcal{M}$ denote the kernel vector at $w\in\Omega$ with coefficient
$\eta\in \mathcal{E}$, so that
$g_{w,\eta}(\cdot)=\ell_M(\cdot,w)\eta$
and
$\langle f, g_{w,\eta}\rangle = \langle f(w), \eta\rangle_{\mathcal{E}}$
for $f\in \mathcal{M}.$ Since $\mathcal{M}$ is invariant for $M^{\ell}_z\otimes I_{\mathcal{E}}$, each adjoint $T_j^*$ acts on kernel vectors by
$
T_j^* g_{w,\eta} = \overline{w_j}\, g_{w,\eta}.$
By linearity,
\[
\langle p(T,T^*) g_{w,\eta},\, g_{z,\xi}\rangle
=
\left\langle p(z,w)\,\ell_M(z,w)\eta,\xi \right\rangle,
\]
for all $z, w\in \Omega$ and $\eta, \xi\in\mathcal{E}$. Since finite linear combinations of kernel functions are dense in
$\mathcal{M}$, \eqref{operatorhered} follows easily.
\end{proof}

 We will require the notion, due to Sarason, of a \df{semi-invariant subspace}; these are subspaces that are differences of two invariant subspaces. More precisely,  given a tuple $T\in\mathcal{B}(H)^{\lcal{d}}$ of commuting operators, a  subspace $\mathcal{M}$ of $H$ is a semi-invariant subspace of $T$ if there exists a decomposition 
$H=H_{-}\oplus\mathcal{M}\oplus H_{+}$ with respect to which 
\[T_i=\begin{bmatrix}
  * & * & * \\
  0 & P_{\mathcal{M}}T_i|_{\mathcal{M}} & * \\
  0 & 0 & *
\end{bmatrix}, \hspace*{0.4 cm} 1\le i \le d. \]
The following result is due to Sarason (see e.g. \cite[Theorem 7.6]{Paulsenbook}).

\begin{lemma}\label{dilation-decomp}
If $T\in\mathcal{B}(H)^{\lcal{d}}$ and  $S\in\mathcal{B}(K)^{\lcal{d}}$ are tuples of commuting operators and  $H$ is a  subspace of $K$ and $V:H\to K$ is the inclusion, then 
\[p(T)=V^* p(S) V \]
  $\text{ for all }p\in\mathbb{C}[z_1, \dots, z_{\lcal{d}}]$
if and only if $H$ is semi-invariant for $S$.
\end{lemma}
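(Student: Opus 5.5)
The plan is to prove both directions directly from block-matrix bookkeeping, taking $\mathcal{M}=H$ in the definition of a semi-invariant subspace. Throughout, $P_H$ is the projection of $K$ onto $H$, so $V^*=P_H$ as a map $K\to H$. Taking $p(z)=z_i$ in the hypothesis of the forward direction gives $T_i=P_HS_i|_H$. So in both directions we may regard $T$ as the compression of $S$ to $H$, which is the reading implicit in the semi-invariant decomposition.

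\emph{Semi-invariance implies multiplicativity.} Suppose $K=H_-\oplus H\oplus H_+$ and each $S_i$ is block upper triangular with middle diagonal entry $T_i=P_HS_i|_H$. Products of block upper triangular operators are again block upper triangular, and their diagonal entries multiply. Hence $S^\alpha$ is upper triangular with middle diagonal entry $T^\alpha$, that is, $V^*S^\alpha V=T^\alpha$ for every multi-index $\alpha$. Linearity then gives $p(T)=V^*p(S)V$ for all $p\in\CC[z_1,\dots,z_{\lcal{d}}]$.

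\emph{Multiplicativity implies semi-invariance.} Let $L$ be the closed span of $\{p(S)h : p \text{ a polynomial},\ h\in H\}$. Then $L$ is $S$-invariant and contains $H$. Set $N=L\ominus H$; the claim is that $N$ is also $S$-invariant. For $h\in H$ and a polynomial $p$, put $n_{p,h}=p(S)h-p(T)h$. The hypothesis $P_Hp(S)h=p(T)h$ gives $n_{p,h}\in N$.

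\textbf{Step 1: these vectors span $N$ densely.} If $x\in N$ is approximated by $\sum_j p_j(S)h_j$, rewrite the sum as $\sum_j n_{p_j,h_j}+\sum_j p_j(T)h_j$. Applying $P_N$, which is continuous and kills $H$, shows $x$ is a limit of elements of $\operatorname{span}\{n_{p,h}\}$.

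\textbf{Step 2: the spanning vectors stay in $N$ under $S_i$.} Writing $h'=p(T)h\in H$, we have
\[
S_in_{p,h}=(z_ip)(S)h-S_ih' = n_{z_ip,h}-n_{z_i,h'},
\]
using $(z_ip)(T)h=T_ih'$. Both terms lie in $N$, so $S_iN\subseteq N$ by Step 1 and continuity.

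With $H_-=N$ and $H_+=K\ominus L$, the subspaces $N$ and $N\oplus H=L$ are both invariant for each $S_i$. This is exactly the statement that the matrix of $S_i$ with respect to $K=H_-\oplus H\oplus H_+$ is upper triangular with middle diagonal entry $P_HS_i|_H=T_i$, which is the required semi-invariant decomposition.

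The only step needing care is Step 1, the density of $\operatorname{span}\{n_{p,h}\}$ in $N$; it follows from the projection argument given there. The rest is formal.
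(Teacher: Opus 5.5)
Your proof is correct. The paper gives no proof of its own and simply cites Sarason's lemma (Paulsen, Theorem~7.6); your argument is the standard one given there, with $H_-=L\ominus H$ and $H_+=K\ominus L$ for $L$ the closed span of $\{p(S)h\}$, and you were right to read the ``if'' direction with $T_i=P_HS_i|_H$ as part of the hypothesis.

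The only difference from the textbook version is that it skips your density Step~1 by checking $P_HS_ix=0$ for $x\in N$ directly: if $\sum_j p_j(S)h_j\to x$, then $\sum_j p_j(T)h_j\to P_Hx=0$, and hence $P_HS_i\sum_j p_j(S)h_j=T_i\sum_j p_j(T)h_j\to 0$.
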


Now, we show that $(k, \ell)$ being BLH actually implies a stronger restriction property.

\begin{lemma} \label{intermediate}
If $(k, \ell)$ be a regular BLH pair and $\mathcal{E}$ a Hilbert space,
then, the compression of $M^{\ell}_z\otimes I_{\mathcal{E}}$ 
to any of its semi-invariant subspaces is a $1/k$ contraction.
\end{lemma}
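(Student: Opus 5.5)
Let $S=M^{\ell}_z\otimes I_{\mathcal{E}}$ on $K=\mathcal{H}_\ell\otimes\mathcal{E}$, and let $\mathcal{N}$ be semi-invariant for $S$. The idea is to realize the compression as a compression to a co-invariant subspace of a restriction to an invariant subspace, and then combine Lemma~\ref{recastBLH} with Lemma~\ref{compresstocoinv}.

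First, write $K=H_-\oplus\mathcal{N}\oplus H_+$ as in the definition of semi-invariance. The block upper-triangular form of each $S_i$ shows that $\mathcal{M}:=H_-\oplus\mathcal{N}$ is invariant for $S$. Indeed, the third row of the matrix is $(0,0,*)$, so $P_{H_+}S_i$ annihilates $H_-\oplus\mathcal{N}$. Likewise $H_-$ is invariant for $S$, because the first column is $(*,0,0)^T$.

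Set $R=S|_{\mathcal{M}}$. Then $H_-$ is invariant for $R$, so $\mathcal{N}=\mathcal{M}\ominus H_-$ is co-invariant for $R$. Moreover $P_{\mathcal{N}}R_i|_{\mathcal{N}}=P_{\mathcal{N}}S_i|_{\mathcal{N}}$, so the compression of $S$ to $\mathcal{N}$ equals the compression of $R$ to the co-invariant subspace $\mathcal{N}$.

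By Lemma~\ref{recastBLH}, since $(k,\ell)$ is a regular BLH pair, $R$ is a $1/k$ contraction. Here a check is needed: Lemma~\ref{recastBLH} as stated refers to invariant subspaces of $M^\ell_z\otimes I_{\mathcal{E}}$, but its proof used $\Mult(\mathcal{H}_k)$-invariance to invoke the BLH property. This is the main obstacle. The plan is to show that a closed subspace invariant under $S$ (that is, shift-invariant) is $\Mult(\mathcal{H}_\ell)$-invariant, hence $\Mult(\mathcal{H}_k)$-invariant. First try the following route. Under regularity, polynomials are dense in $\mathcal{H}_\ell$ and the polynomial functional calculus extends to the norm closure $\mathcal{A}(\mathcal{H}_\ell)$. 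The inclusion $\Mult(\mathcal{H}_k)\subseteq\Mult(\mathcal{H}_\ell)$ (Corollary~\ref{ldividesk}, since $\mathcal{H}_\ell$ is itself $\Mult(\mathcal{H}_k)$-invariant by item~\ref{i:regular:include}) then reduces the question to WOT-density of polynomials in the relevant multipliers, as in Proposition~\ref{allequiv}.

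If that density is unavailable, the fallback is that the proof of Lemma~\ref{recastBLH} really only uses that $\mathcal{M}$ is invariant for $S$ when computing $1/k(T,T^*)$ on kernel vectors. The BLH input there is $\ell_{\mathcal{M}}/k\succeq0$ for such $\mathcal{M}$. One could instead obtain this directly by reducing to the case of $\Mult(\mathcal{H}_k)$-invariant subspaces, taking the smallest $\Mult(\mathcal{H}_k)$-invariant subspace containing $\mathcal{M}$ and using that it coincides with $\mathcal{M}$ under the regularity hypotheses. I expect that in the paper's setting invariance under $S$ and $\Mult(\mathcal{H}_k)$-invariance are treated as interchangeable, so this step should be brief.

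Finally, $\specT(R)\subseteq\ol{\Omega}$ by Lemma~\ref{Taylorinclusion}, because $p(S)\mapsto p(R)$ is completely contractive and $\ol{\Omega}$ is polynomially convex. Also $1/k\in\Her(\Omega')$ for some open $\Omega'\supseteq\ol{\Omega}$ by the exhaustion assumption. Thus Lemma~\ref{compresstocoinv} applies to $R$ and the co-invariant subspace $\mathcal{N}$, and yields that $P_{\mathcal{N}}S|_{\mathcal{N}}$ is a $1/k$ contraction.
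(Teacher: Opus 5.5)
Your argument is the same as the paper's proof. The paper also notes that $\mathcal{N}_-\oplus\mathcal{M}$ is invariant for $M^\ell_z\otimes I_{\HSE}$ and applies Lemma~\ref{recastBLH} to get that the restriction $J$ is a $1/k$ contraction. It then applies Lemma~\ref{compresstocoinv} to the co-invariant subspace $\mathcal{M}$ of $J$. The paper does not discuss the point you flag; it simply reads Lemma~\ref{recastBLH} as covering every invariant subspace of $M^\ell_z\otimes I_{\HSE}$.

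\textbf{The gap.} You are right that this step is a genuine gap, and none of your patches closes it. The proof of Lemma~\ref{recastBLH} only treats $\Mult(\cH_k)$-invariant subspaces, and the BLH hypothesis says nothing about a subspace that is merely shift-invariant. Your three repairs each fail:
\begin{itemize}
\item Item~\ref{i:regular:include} of Definition~\ref{def:regular} only speaks about subspaces that are already $\Mult(\cH_k)$-invariant. It does not assert that $\cH_\ell\otimes\HSE$ is one, so Corollary~\ref{ldividesk} cannot be used to get $\Mult(\cH_k)\subseteq\Mult(\cH_\ell)$.
\item WOT-density of polynomials in $\Mult(\cH_\ell)$ is not one of the regularity axioms.
\item Your fallback presupposes that $\Mult(\cH_k)$ acts on $\cH_\ell\otimes\HSE$. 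The claim that the generated $\Mult(\cH_k)$-invariant subspace equals $\mathcal{M}$ is just a restatement of what has to be proved.
\end{itemize}

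\textbf{A counterexample.} The gap cannot be closed from the stated hypotheses. Let $k$ be the Szeg\"o kernel and $\ell(z,w)=\sum_{n\ge 0}(z\ol{w})^n/(n+1)$ the Dirichlet kernel on $\DD$.
\begin{itemize}
\item Items~\ref{i:regular:specT}--\ref{i:regular:exh} of Definition~\ref{def:regular} hold with $r_n(z)=t_nz$; complete contractivity follows from Example~\ref{eg:certify:cc}.
\item Suppose $0\neq F\in\mathcal{M}\subseteq\mathcal{D}\otimes\HSE$ with $\mathcal{M}$ being $H^\infty$-invariant. Some scalar component $f=\langle F(\cdot),e\rangle\neq 0$ then satisfies $\theta f\in\mathcal{D}$ for all $\theta\in H^\infty$.
\item By the closed graph theorem, $\|\theta f\|_{\mathcal{D}}\le C\|\theta\|_\infty$. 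This fails for $\theta=z^N$, because $\|z^Nf\|_{\mathcal{D}}^2\ge (N+1)\|f\|_{H^2}^2$ while $\|z^N\|_\infty=1$.
\item Hence $\{0\}$ is the only $\Mult(\cH_k)$-invariant subspace. So item~\ref{i:regular:include} and the BLH property both hold vacuously.
\item The compression of $M^\ell_z$ to the trivially semi-invariant subspace $\mathcal{D}$ is $M^\ell_z$ itself, which has norm $\sqrt{2}$. Therefore $1/k\big(M^\ell_z,(M^\ell_z)^*\big)=I-M^\ell_z(M^\ell_z)^*\not\succeq 0$, and the statement fails.
\end{itemize}
This pair is in fact strongly regular, so the same issue affects the second half of Theorem~\ref{BLHISCOEXT}.

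\textbf{What fixes it.} The lemma needs an extra hypothesis ensuring that every $M^\ell_z\otimes I_{\HSE}$-invariant subspace is $\Mult(\cH_k)$-invariant. The hypotheses of Proposition~\ref{allequiv} suffice: $\Mult(\cH_k)\subseteq\Mult(\cH_\ell)$ together with WOT-density of polynomials in $\Mult(\cH_\ell)$. With such a hypothesis, your argument (which is the paper's) is complete.
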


\begin{proof}
Set $T=M^{\ell}_z\otimes I_{\mathcal{E}}$ and $\mathcal{N}=\mathcal{H}_{\ell}\otimes \mathcal{E}$ and let $\mathcal{M}\subset\mathcal{N}$ be a semi-invariant subspace of $T$. Thus, there exists a decomposition  $\mathcal{N}=\mathcal{N}_{-}\oplus \mathcal{M}\oplus \mathcal{N}_{+}$ so that 
\[T_i=\begin{bmatrix}
    * & * & * \\
    0 &  P_{\mathcal{M}}T_i|_{\mathcal{M}} & * \\
    0 & 0 & * 
\end{bmatrix}, \]
for all $1\le i\le d.$ Our goal is to show that $P_{\mathcal{M}}T|_{\mathcal{M}}$ is a $1/k$ contraction. Since $\mathcal{N}_{-}\oplus\mathcal{M}$ is an invariant subspace of $T$, Lemma \ref{recastBLH} implies that the compression of $T$ to $\mathcal{N}_{-}\oplus\mathcal{M}$, call it $J$, is a $1/k$ contraction. But $\mathcal{M}$ is a co-invariant subspace of $J$, and so, by Lemma, \ref{compresstocoinv} $P_{\mathcal{M}}J|_{\mathcal{M}}=P_{\mathcal{M}}T|_{\mathcal{M}}$ is a $1/k$ contraction, as desired.
\end{proof}

We now show that the BLH and co-extension properties coincide. 

\begin{theorem} \label{BLHISCOEXT}
If $(k,\ell)$ is a regular co-extension pair, then $(k,\ell)$  is a BLH pair.

If $(k,\ell)$ is a strongly regular BLH pair, then it is a co-extension pair. 
\end{theorem}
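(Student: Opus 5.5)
For the first assertion I will argue through Lemma~\ref{recastBLH}. Suppose $(k,\ell)$ is a regular co-extension pair, fix a Hilbert space $\HSE$ and an invariant subspace $\cM$ of $M^\ell_z\otimes I_\HSE$, and let $T$ be the restriction of $M^\ell_z\otimes I_\HSE$ to $\cM$. By Lemma~\ref{recastBLH} it suffices to show that $T$ is a $1/k$ contraction.

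Restriction to an invariant subspace commutes with the polynomial calculus, so for every matrix polynomial $P$ we have $P(T)=P(M^\ell_z\otimes I_\HSE)|_{\cM\otimes\CC^m}$. Hence
\[
\|P(T)\|\le \|P(M^\ell_z)\otimes I_\HSE\|=\|P\|_{\Mult(\cH_\ell\otimes\CC^m)},
\]
and $T$ is a $\Mult(\cH_\ell)$-contraction. The co-extension property then gives $1/k(T,T^*)\succeq 0$, which is well defined by Remark~\ref{well-def:rem}. This direction is routine.

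For the second assertion, suppose $(k,\ell)$ is a strongly regular BLH pair and fix a strong exhaustion $(r_n,\Omega')$.
\begin{enumerate}[(a)]
\item By Lemma~\ref{relax-coext} it suffices to treat a $\Mult(\cH_\ell)$-contraction $T$ on $H$ with $\specT(T)\subseteq\Omega$. Alternatively, following the exhaustion argument in that proof, I will replace $T$ by $r_n(T)$ and pass to the limit using $1/k_n\to 1/k$ in the hereditary calculus.
\item The map $p(M^\ell_z)\mapsto p(T)$ is unital and completely contractive on $\mathcal A(\cH_\ell)$. Arveson's extension theorem (Theorem~\ref{Arvesonsext}), applied with $\mathcal C=C^*(\mathcal A(\cH_\ell))$, gives a $*$-representation $\pi$ on $K\supseteq H$ with $p(T)=V^*\pi(p(M^\ell_z))V$. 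By Sarason's Lemma~\ref{dilation-decomp}, $H$ is semi-invariant for the commuting tuple $\pi(M^\ell_z)$.
\item The crux is to show that $\pi(M^\ell_z)$ is, up to unitary equivalence, of the form $M^\ell_z\otimes I_\HSE$. Once that holds, Lemma~\ref{intermediate} shows $T$ is a $1/k$ contraction.
\end{enumerate}

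To achieve (c) I will use Lemma~\ref{approx1-ell} on the tuple $r_n(T)$, which is again a $\Mult(\cH_\ell)$-contraction, dilated via $\pi(r_n(M^\ell_z))$. Write $X=\pi(r_n(M^\ell_z))$. Since $\pi$ is a $*$-representation and the hereditary calculus is continuous, $1/\ell(X,X^*)=\pi\big(1/\ell_n(M^\ell_z,(M^\ell_z)^*)\big)\succeq 0$. This step needs $1/\ell$ to be approximated by hereditary polynomials on the relevant spectrum, which is item~\ref{i:regular:lnot0}, and $\specT(X)\subseteq r_n(\ol\Omega)\subseteq\Omega$, via Lemma~\ref{Taylorinclusion}. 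So $X$ is a $1/\ell$ contraction with spectrum in $\Omega$. Theorem~\ref{arazyenglis}, applied with $\ell$ in place of $k$, then shows that $X$ is the compression of $M^\ell_z\otimes I_\HSE$ to a co-invariant subspace.

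Composing, $r_n(T)$ is the compression of $X$ to a semi-invariant subspace, and $X$ is the compression of $M^\ell_z\otimes I_\HSE$ to a co-invariant subspace. Hence $r_n(T)$ is the compression of $M^\ell_z\otimes I_\HSE$ to a semi-invariant subspace, checked via polynomial compressions and Lemma~\ref{dilation-decomp}. Lemma~\ref{intermediate} then gives $1/k(r_n(T),r_n(T)^*)\succeq 0$, that is, $1/k_n(T,T^*)\succeq0$. Letting $n\to\infty$ yields $1/k(T,T^*)\succeq 0$.

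The main obstacle is the identification $1/\ell(X,X^*)=\pi(1/\ell_n(M^\ell_z,(M^\ell_z)^*))$. The hereditary calculus for $X$ must match $\pi$ applied to the calculus for $M^\ell_z$. I will handle this by approximating $1/\ell$ by hereditary polynomials $q_j$ uniformly near $r_n(\ol\Omega)$, using that $\pi(a b^*)=\pi(a)\pi(b)^*$ termwise and norm continuity.
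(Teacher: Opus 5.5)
Your proposal is correct and follows the paper's proof essentially step for step. For the first assertion you use restriction to an invariant subspace plus Lemma~\ref{recastBLH}. For the second you use Arveson's extension, then Lemma~\ref{approx1-ell} with item~\ref{i:regular:lnot0} of Definition~\ref{def:regular} to make $\pi(r_n(M^\ell_z))$ a $1/\ell$ contraction. Theorem~\ref{arazyenglis}, Lemma~\ref{dilation-decomp}, Lemma~\ref{intermediate}, and the limit $1/k_n(T,T^*)\to 1/k(T,T^*)$ then finish the argument.

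Two small corrections. First, the ``crux'' in (c) is overstated: you neither need nor prove that $\pi(M^\ell_z)$ is unitarily equivalent to $M^\ell_z\otimes I_\HSE$. What you need, and what your later paragraph actually establishes, is only that $\pi(r_n(M^\ell_z))$ is a co-invariant compression of it.

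Second, $\specT(X)\subseteq\Omega$ does not follow by applying Lemma~\ref{Taylorinclusion} directly to $X$, because $r_n(\ol{\Omega})$ need not be polynomially convex. Instead, apply Lemma~\ref{Taylorinclusion} to $\pi(M^\ell_z)$ with $K=\ol{\Omega}$ and then use the spectral mapping theorem for $r_n$. Alternatively, as the paper does, use $\specT(\pi(S_n))\subseteq\specT(S_n)$ for the $*$-representation $\pi$.
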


\begin{proof}
We prove the easy direction first. Assume $(k, \ell)$ is a co-extension pair. To prove $(k,\ell)$ is BLH, let $\mathcal{E}$ be a Hilbert space and $\mathcal{M}$ a $ \Mult(\cH_k)$-invariant subspace of $\mathcal{H}_{\ell}\otimes {\mathcal{E}}$. By the regularity hypothesis,
$\cM$ is invariant for $M_z^\ell \otimes I_\HSE.$  Set $T=P_{\mathcal{M}}(M^{\ell}_z\otimes I_{\mathcal{E}})|_{\mathcal{M}}$ and let $Q\in\mathbb{C}[z_1, \dots, z_{\lcal{d}}]\otimes\mathbb{C}^{m\times m}$,  $m\ge 1.$ Then,
\begin{align*}
\|Q(T)\|=&\ \|(P_{\mathcal{M}}\otimes I_{m\times m})Q(M^{\ell}_z\otimes I_{\mathcal{E}})|_{\mathcal{M}\otimes \mathbb{C}^m}\| \\
\le& \ \|Q(M^{\ell}_z\otimes I_{\mathcal{E}})\|\\
=&\ \|Q(M^{\ell}_z)\otimes I_{\mathcal{E}}\| \\
=&\ \|Q(M^{\ell}_z)\| \\
=&\ \|Q\|_{\textup{Mult}(\mathcal{H}_{\ell}\otimes\mathbb{C}^m)},
\end{align*}
and thus $T$ is  $\textup{Mult}(\mathcal{H}_{\ell})$-contractive. Since $(k, \ell)$ is a co-extension pair, $T$ is a $1/k$ contraction. By Lemma \ref{recastBLH} $(k, \ell)$ is a BLH pair, as desired. 

\par 
 
To prove the converse, assume $(k, \ell)$ is a strongly regular BLH pair, fix a $(k, \ell)$-exhaustion $(r_n,\Omega^\prime)$ 
and set $k_n(z, w)=k(r_n(z), r_n(w)), \ell_n(z, w)=\ell(r_n(z), r_n(w))$.   Let  $T=(T_1, \dots, T_{\lcal{d}})$ be a $\textup{Mult}(\mathcal{H}_{\ell})$-contractive tuple of operators on a Hilbert space $H$. Thus, by Remark~\ref{SpInclusion} 
there exists a unital completely contractive homomorphism
$\rho: \mathcal{A}(\mathcal{H}_{\ell})\to\mathcal{B}(H)$ with 
\[
 \rho(M_z^{\ell})=T.
\] 
Letting $C^*(\mathcal{A}(\mathcal{H}_{\ell}))$ denote the $C^*$-algebra generated by $\mathcal{A}(\mathcal{H}_{\ell})$,
Theorem \ref{Arvesonsext} produces a Hilbert space $K$, an isometry $V:H\to K$ and a $*$-representation 
$\pi: C^*(\mathcal{A}(\mathcal{H}_{\ell}))\to\mathcal{B}({K})$ such that, for all polynomials $p,$
\[ p(T)=\rho(p(M^{\ell}_z))=V^* \pi(p(M^{\ell}_z))V. \]
 Fix $n\ge 1$ and set $S_n:=r_n(M^{\ell}_z)$. Clearly,
\begin{equation}\label{dasrep}
    p(r_n(T))=V^* p(\pi(S_n))V,
\end{equation}
 for all polynomials $p$. Now, since $\specT(M^{\ell}_z)\subset\ol{\Omega},$  the properties of
 an exhaustion and the Taylor spectrum give,
 \[
  \specT(\pi(S_n))
  \subset \specT(S_n) = \specT(r_n(M_z^\ell))
  = r_n(\specT(M_z^\ell)) \subset\Omega. 
 \]
 Moreover, by item~\ref{i:regular:lnot0} of Definition \ref{def:regular},  there exists a sequence of hereditary polynomials $(q_m)$ such that $q_m\to1/\ell$ in $\Her(\Omega).$ Since $q_m(\pi(S_n), \pi(S_n)^*)=\pi(q_m(S_n, S^*_n))$ for all $m$ and since $\pi$ is norm-continuous, letting $n\to\infty$  obtains
 \[
   \frac{1}{\ell}(\pi(S_n), \pi(S_n)^*)
    = \pi\left ( \frac{1}{\ell}(S_n, S_n^*) \right )
    =\pi\left ( \frac{1}{\ell_n}\big(M^{\ell}_z, (M^{\ell}_z)^*\big) \right ) \succeq 0,
 \]
 where the last assertion follows from Lemma \ref{approx1-ell} and the fact that $\pi$ preserves positivity. An application of Theorem~\ref{arazyenglis} now implies
there is an auxiliary
Hilbert space $\cF$ such that $\pi(S_n)$ is the compression of  $M_z^\ell\otimes I_\cF$ to a co-invariant subspace. At the same time, \eqref{dasrep} in combination with Lemma \ref{dilation-decomp} imply that $r_n(T)$ is the compression of  $\pi(S_n)$ to a semi-invariant subspace. Consequently, $r_n(T)$ is the compression of $M_z^\ell\otimes I_\cF$ to a semi-invariant subspace. Lemma \ref{intermediate} then yields that $r_n(T)$ is a $1/k$ contraction. Since $1/k_n\to1/k$ in $\Her(\Omega')$ (as part 
of the exhaustion hypothesis),
 item~\ref{i:her-calc:v} of Lemma~\ref{l:Her-calc} says $0 \preceq 1/k(r_n(T),r_n(T)^*)=1/k_n(T,T^*)$ converges to $1/k(T,T^*).$  Thus,  $T$ is a $1/k$ contraction, and the proof is complete. 
\end{proof}

\section{The Characterization} \label{mastersec}

{The following theorem functions as a summary of the main results in this paper.} 

\begin{theorem}
 \label{DerSatz}
    The relations among the following  conditions on  a pair $(k, \ell)$ of reproducing kernels,
    \begin{enumerate}[(a)]  \itemsep=6pt
        \item \label{i:weakAgler}  $(k, \ell)$ admits a weak Agler decomposition; 
        \item \label{i:strongAgler}  $(k, \ell)$ admits a strong Agler decomposition;
    \item \label{i:Leech}   $(k, \ell)$ is a complete pseudo-Leech pair; 
        \item \label{i:BLH}  $(k, \ell)$ is BLH,
    \end{enumerate}

\noindent are summarized in the diagram below.

\begin{center}
\begin{tikzpicture}[
    scale=0.80,
    every node/.style={scale=0.80},
    >=Latex,
    implication/.style={
        double distance=1.0pt,
        line width=0.4pt,
        -{Latex[length=2.4mm,width=1.8mm]}
    }
]

\node (a) at (0,0) {$ (a) $};
\node (c) at (6.1,0) {$ (c) $};
\node (d) at (13.0,0) {$ (d) $};
\node (b) at (2.6,-4) {$ (b) $};

\draw[implication]
    (a) -- node[midway, above, font=\small, align=center, text width=3.8cm, yshift=1pt]
    {if $\{k_z\}$ linearly independent}
    (c);

\draw[implication]
    (c) -- node[midway, above, font=\small, align=center, text width=7.0cm, yshift=1pt]
    {if $\Mult(\mathcal{H}_k)$-invariance $\text{implies}$ $\Mult(\mathcal{H}_{\ell})$-invariance}
    (d);

\draw[implication]
    (c) -- node[midway, sloped, above, font=\small, align=center, text width=4.4cm]
    {if $1/k$ decomposable and $H_{\ell}$ is separable}
    (b);

\draw[implication]
    (b) -- (a);

\end{tikzpicture}
\end{center}

\noindent Suppose now that $k, \ell$ satisfy Assumption \ref{assume}. The relation between the conditions above and those below, 

\begin{enumerate}[(a)] \itemsep=6pt
\setcounter{enumi}{4}
 \item    \label{i:regularAg}   $(k, \ell)$ admits an Agler decomposition; 
           \item   \label{i:co-extpair}  $(k, \ell)$ is a co-extension pair, 
\end{enumerate}

\noindent are summarized in the diagram below.

\begin{center}
\begin{tikzpicture}[
    scale=0.85,
    every node/.style={scale=0.85},
    >=Latex,
    implication/.style={
        double distance=1.0pt,
        line width=0.4pt,
        -{Latex[length=2.4mm,width=1.8mm]}
    }
]

\node (b) at (0,0) {$ (b) $};
\node (e) at (5.0,0) {$ (e) $};
\node (f) at (11.2,0) {$ (f) $};

\node (a) at (4,-2.2) {$ (a) $};
\node (c) at (3.0,-4.4) {$ (c) $};
\node (d) at (9.0,-4.4) {$ (d) $};

\draw[implication]
    (b) -- node[midway, above, font=\small, align=center, text width=7.0cm, yshift=1pt]
    {if $(k,\ell)$  regular}
    (e);

\draw[implication] (e) to (a);
\draw[implication] (a) to (c);

\draw[implication]
    (c) -- node[midway, sloped, above, font=\small, align=center, text width=3.3cm]
    {if $1/k$ decomposable}
    (b);
    
\draw[implication]
    (b) -- (a);
\draw[implication]
    (c) -- node[midway, above, font=\small, align=center, text width=7.0cm, yshift=1pt]
    {if $\Mult(\mathcal{H}_k)$-invariance $\text{implies}$ $\Mult(\mathcal{H}_{\ell})$-invariance}
    (d);

\draw[implication]
    (d) -- node[midway, sloped, above, font=\small, align=center, text width=4.2cm]
    {if $(k,\ell)$ strongly regular}
    (f);

\draw[double distance=1.0pt, line width=0.4pt, <->, shorten <=1pt, shorten >=1pt]
    (e.east) -- node[midway, above, font=\small]
    {if $ (k,\ell) \text{ regular} $}
    (f.west);

\end{tikzpicture}
\end{center}

\noindent In particular, if $(k, \ell)$ is strongly regular and $1/k$ is decomposable, 
then all items from (a) to (f) are equivalent. 
\end{theorem}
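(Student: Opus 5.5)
Theorem~\ref{DerSatz} is essentially a bookkeeping statement, so the plan is to verify each arrow in the two diagrams by citing the earlier result that establishes it, checking only that the stated side hypotheses suffice. In the first diagram: (a)$\Rightarrow$(c) under linear independence of $\{k_z\}$ is Theorem~\ref{AglerdectoLeech}; (c)$\Rightarrow$(d) under the invariance hypothesis is Proposition~\ref{LeechtoBLH}, whose only other hypothesis (non-vanishing $k$) we obtain as follows: the complete pseudo-Leech property gives $\ell_\cM/k\succeq 0$ only after writing $1/k$ on finite sets, so we note $k$ non-vanishing is implicit in the definitions; (c)$\Rightarrow$(b) when $1/k$ is decomposable and $\cH_\ell$ separable is the implication \ref{LAGi}$\Rightarrow$\ref{LAGiii} of Theorem~\ref{Leech=wAg=sAg} (here linear independence of kernel functions is also used, which we either add or observe is not needed for that direction, since the proof of \ref{LAGi}$\Rightarrow$\ref{LAGiii} only uses the pseudo-Leech property on finite sets); (b)$\Rightarrow$(a) is immediate by taking the constant sequence, after truncating the separable space $\mathcal{G}$ to finite-dimensional compressions $Q P_n$, $P_n\Phi$ to get $\CC^{m_n}$-valued data converging pointwise.

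For the second diagram, under Assumption~\ref{assume}: linear independence of $\{k_z\}$ holds automatically since coordinate functions are multipliers, and $\cH_\ell$ is separable since $\ell$ is holomorphic, so the arrows (a)$\Rightarrow$(c), (c)$\Rightarrow$(b) (if $1/k$ decomposable), (b)$\Rightarrow$(a) and (c)$\Rightarrow$(d) are inherited from the first diagram. (e)$\Rightarrow$(a) holds because an Agler decomposition gives, by definition of $\mathcal{AG}(\ell)$, holomorphic $Q_n$ and contractive polynomial multipliers with locally uniform, hence pointwise, convergence to $1/k$. For (b)$\Rightarrow$(e) when $(k,\ell)$ is regular, I would use Corollary~\ref{strongAglertoAgler} to take $Q,\Phi$ holomorphic, then approximate: first compress to finite-dimensional $\CC^m$ blocks, then replace $\Phi$ by $\Phi\circ r_n$-type or dilated approximants that are contractive polynomial multipliers (using the exhaustion and Remark~\ref{Dullcone}); alternatively and more cleanly, note (b)$\Rightarrow$(a)$\Rightarrow$(c)$\Rightarrow$(d) and regularity gives (d)$\Rightarrow$... no — so instead route through co-extension: a strong holomorphic Agler decomposition directly shows every $\Mult(\cH_\ell)$-contraction with spectrum in $\Omega$ is a $1/k$ contraction (the argument in the first half of Theorem~\ref{COEXTISAGLER}, using that $I-\Phi(T)\Phi(T)^*\succeq 0$ via WOT polynomial approximation), then Lemma~\ref{relax-coext} gives (f), and Theorem~\ref{COEXTISAGLER} gives (e).

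Finally (e)$\Leftrightarrow$(f) for regular pairs is Theorem~\ref{COEXTISAGLER}, and (d)$\Rightarrow$(f) for strongly regular pairs is the second half of Theorem~\ref{BLHISCOEXT}. The closing assertion follows by chasing arrows: with strong regularity (which includes item~\ref{i:regular:include}, i.e.\ the invariance hypothesis) and decomposability, (a)$\Rightarrow$(c)$\Rightarrow$(d)$\Rightarrow$(f)$\Rightarrow$(e)$\Rightarrow$(a) and (c)$\Rightarrow$(b)$\Rightarrow$(a). The main obstacle is the arrow (b)$\Rightarrow$(e): showing $\Phi(T)$ makes sense and is contractive for a non-polynomial holomorphic multiplier of $\cH_\ell$ with infinite-dimensional $\mathcal{G}$; I would handle it by approximating $T$ via $r_n(T)$ (spectrum inside $\Omega$) and using the polynomial density in $\cH_\ell$ plus Taylor functional calculus, falling back on the cone-separation argument if needed.
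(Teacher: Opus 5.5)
Your routing matches the paper's. You cite the same result for each arrow, and you send (b)$\Rightarrow$(e) through (b)$\Rightarrow$(f), then (f)$\Rightarrow$(e) by Theorem~\ref{COEXTISAGLER}. The (b)$\Rightarrow$(f) step runs the first half of Theorem~\ref{COEXTISAGLER} with the holomorphic $Q,\Phi$ of Corollary~\ref{strongAglertoAgler}, plus Lemma~\ref{relax-coext}. Your side remarks are correct:
\begin{itemize}
\item (i)$\Rightarrow$(iii) of Theorem~\ref{Leech=wAg=sAg} never uses linear independence;
\item (b)$\Rightarrow$(a) needs a finite-dimensional truncation, and you should compress on both sides, $P_m\Phi P_m'$, to get a square multiplier;
\item your closing chain never uses (b)$\Rightarrow$(e).
\end{itemize}

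The genuine gap is the step you flag yourself. (b)$\Rightarrow$(f) needs $\|\Phi(T)\|\le 1$ for a non-polynomial holomorphic contractive multiplier $\Phi$ and a $\Mult(\mathcal{H}_\ell)$-contraction $T$, and none of your suggested justifications supplies it:
\begin{itemize}
\item WOT-density of contractive polynomial multipliers is not part of regularity; the remark after the theorem treats it as an additional assumption.
\item Density of polynomials in $\mathcal{H}_\ell$ as vectors gives no control of multiplier norms.
\item Cone separation only moves the same question to the tuple built on $H_L$.
\end{itemize}
The paper covers this step with ``mutatis mutandis,'' so you are not missing an argument it writes down. Still, as it stands the arrow is unproved. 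Since your final chain avoids it, the gap affects only the standalone arrow (b)$\Rightarrow$(e), not the final equivalence.

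Passing to $r_n(T)$ is the right idea, and it works once the exhaustion is strong, i.e.\ $\ell=\ell_n g_n$ with $g_n\succeq 0$.
\begin{enumerate}
\item Since $r_n(\ol{\Omega})\subset\Omega$, pulling the positive kernel $(I-\Phi(z)\Phi(w)^*)\ell(z,w)$ back along $r_n$ shows that $\Phi\circ r_n$ is a contractive multiplier of $\mathcal{H}_{\ell_n}$. By the Schur product with $g_n$, it is then a contractive multiplier of $\mathcal{H}_\ell$.
\item $\Phi\circ r_n$ is holomorphic near the polynomially convex set $\ol{\Omega}=\specT(M_z^\ell)$. Oka--Weil (with Remark~\ref{r:polycvx}) gives polynomials $p_j\to\Phi\circ r_n$ uniformly on a neighbourhood of $\ol{\Omega}$.
\item The Taylor calculus yields $\|p_j\|_{\Mult}\to\|M_{\Phi\circ r_n}\|\le 1$ and $p_j(T)\to\Phi(r_n(T))$, so $\|\Phi(r_n(T))\|\le 1$.
\item Hence $1/k_n(T,T^*)\succeq 0$. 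Let $n\to\infty$ using item (iii) of Definition~\ref{kexhaust}; truncate $\mathcal{F},\mathcal{G}$ first to stay matrix-valued.
\end{enumerate}
With only item (iv) you control $p\mapsto p\circ r_n$ on polynomials, and some further argument is needed to reach $\Phi$.

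A smaller correction: non-vanishing of $k$, needed in Proposition~\ref{LeechtoBLH}, is not implicit in (c). The diagonal kernel $k=\ell=\delta$ on a set with at least two points is complete pseudo-Leech, by the Douglas lemma applied pointwise. So carry non-vanishing as a hypothesis. It holds automatically under (a), (b), (e) or regularity, which is all the final equivalence uses.
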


\begin{remark}
Recall that  $\Mult(\mathcal{H}_k)$-invariance coinciding with  $\Mult(\mathcal{H}_{\ell})$-invariance is guaranteed under very light additional assumptions (Proposition \ref{allequiv} and Corollary \ref{c:allequiv}).
\qed\end{remark}

\begin{proof}[Proof of Theorem \ref{DerSatz}]
Consider the first diagram. The implication \ref{i:Leech}$\Rightarrow$\ref{i:BLH} is Proposition \ref{LeechtoBLH};   \ref{i:weakAgler}$\Rightarrow$\ref{i:Leech} is Theorem \ref{AglerdectoLeech}; and  \ref{i:strongAgler} $\Rightarrow$ \ref{i:weakAgler} is trivial. Finally, \ref{i:Leech} $\Rightarrow$ \ref{i:strongAgler} follows from Theorem \ref{Leech=wAg=sAg}. 
Now, consider the second diagram. The equivalence \ref{i:regularAg} $\Leftrightarrow$ \ref{i:co-extpair} is Theorem \ref{COEXTISAGLER}. For \ref{i:strongAgler}$\Rightarrow$\ref{i:regularAg}, assume $1/k=Q(1-\Phi\Phi^*)Q^*$ for a function $Q$ and an operator-valued contractive multiplier $\Phi$ of $\mathcal{H}_{\ell}$. By Corollary~\ref{strongAglertoAgler}, both $Q$ and $\Phi$ can be taken holomorphic. One can then repeat, mutatis mutandis, the argument showing \ref{i:regularAg}$\Rightarrow$\ref{i:co-extpair} that is contained at the beginning of the proof of Theorem \ref{COEXTISAGLER} to obtain 
\ref{i:strongAgler}$\Rightarrow$\ref{i:co-extpair}, yielding the desired implication. The implication \ref{i:regularAg}$\Rightarrow$\ref{i:weakAgler} is trivial. The implications 
 \ref{i:weakAgler}$\Rightarrow$\ref{i:Leech}, \ref{i:Leech} $\Rightarrow$ \ref{i:BLH}
and \ref{i:Leech} $\Rightarrow$ \ref{i:strongAgler} are all proved as in the first diagram (for the proof of item~\ref{i:weakAgler} implies item~\ref{i:Leech}, notice that the needed independence of  $\{k_z\}$ follows automatically from Assumption \ref{assume}). Finally, \ref{i:BLH}$\Rightarrow$\ref{i:co-extpair} follows from Theorem \ref{BLHISCOEXT}. 
\end{proof}

\begin{remark}\rm
A direct proof of the implication \ref{i:strongAgler}$\Rightarrow$\ref{i:regularAg}
 that bypasses  regularity is possible
 with the assumption that contractive polynomial multipliers are WOT-dense in the space of contractive $\mathcal{H}_{\ell}$-multipliers. This assumption
 holds for a number of natural classes of kernels that are ubiquitous in the literature, including  regular unitary invariant kernels on the ball
  (see Example \ref{regularballex}), where one has access to Fejér's kernel. 
\qed
\end{remark}

\section{Examples}\label{Exampsec}
  This section contains three examples.  Example~\ref{bidiskex}
  characterizes strongly regular BLH pairs over the bidisk,
  $\mathbb{D}^2,$ whose multiplier algebra is $H^\infty(\mathbb{D}^2)$ via a criteria that flows from  Agler-Pick interpolation  on the bidisk \cite{Hellinger}.
  Example~\ref{regularballex} shows, for unitarily invariant kernels 
  $(k,\ell)$ over the ball $\mathbb{B}^\lcal{d}$ that satisfy a spectral condition and for which $M_z^\ell$ is a row contraction,  $(k,\ell)$
  is BLH if and only if the criteria of Proposition~\ref{CPinbetween} hold with $s$ equal to the
  Drury-Arveson kernel.   Example~\ref{eg:recursivel} exhibits
  a strongly regular BLH pair of diagonal holomorphic kernels that does not satisfy the 
  criteria of Proposition~\ref{CPinbetween}. See Remark~\ref{r:more} and the discussion in the preamble
  of Subsection~\ref{CPsubsec}.

 \begin{example} \label{bidiskex}
\pushQED{\qed}
Assume $(k, \ell)$ is a strongly regular pair on $\DD^2$ such that $\Mult(\mathcal{H}_{\ell})=H^{\infty}(\DD^2)$ isometrically. 
Recall that every holomorphic Schur function\footnote{A holomorphic Schur function is a contractive multiplier of $H^2(\DD^\lcal{d})$,
in this case with $\lcal{d}=2$.} $\Phi:\DD^2\to\mathcal{B}(\mathcal{E})$ can be represented as 
\[1-\Phi(z)\Phi(w)^*= (1-z_1\overline{w_1})G_1(z,w)+(1-z_2\overline{w_2})G_2(z,w)\]
for operator-valued kernels $G_1, G_2$ \cite{Hellinger} (see also \cite{BickelKnese}). Thus, the set of Agler decompositions of the form $(1-z_1\overline{w_1})g_1+(1-z_2\overline{w_2})g_2,$ where $g_1, g_2$ scalar-valued holomorphic kernels, is dense in 
$\mathcal{AG}(\ell)$. But it can be easily verified that the set of all such representations is closed in $\Her(\DD^2)$, and hence
\[
\mathcal{AG}(\ell)=\big\{ (1-z_1\overline{w_1})g_1(z,w)+(1-z_2\overline{w_2})g_2(z,w) \ |\  g_1, g_2\text{ holomorphic kernels}\big\}.
\]
Thus, Proposition~\ref{gensuffBLH} implies  if $(k,\ell)$ has
a weak Agler decomposition, then $(k,\ell)$ is BLH;  the implication item~\ref{i:BLH} implies \ref{i:weakAgler} gives the converse.
Hence, from the remark above, 
   $(k, \ell)$ is a BLH pair if and only if there exist holomorphic kernels $g_1, g_2$ on $\DD^2$ such that 
\[\cfrac{1}{k(z,w)}= (1-z_1\overline{w_1})g_1(z,w)+(1-z_2\overline{w_2})g_2(z,w).  \qedhere \popQED 
\]
 \end{example}

\begin{example} \label{regularballex}
Let $k, \ell$ be unitarily  invariant kernels on the ball $\mathbb{B}_{\lcal{d}}$ with 
\[ \ell(z, w)=\sum_{n=0}^{\infty}\ell_{n}\langle z, w\rangle^{n}\]
and $\lim_n\ell_{n+1}/\ell_n=1$. This assumption is fairly common in the study of
unitarily invariant spaces (see for example \cite[Section 4]{GreeneRichtSund} or \cite[Proposition 8.5]{HartzIsomorphism}).
In particular, it implies the spectral conditions $\ol{\mathbb{B}_{\lcal{d}}}= \specT(M_z^\ell).$ Let $\mathfrak{s}(z,w)=(1-\langle z, w\rangle)^{-1}$ and assume $\ell/\mathfrak{s}\succeq 0$ (equivalently, the $\lcal{d}$-shift is a row contraction over $\mathcal{H}_{\ell}\otimes\mathbb{C}^{\lcal{d}}$). \par 
Now, assume further that $(k, \ell)$ is a BLH pair. By Proposition \ref{l_m/k}, $\ell_{\mathcal{M}}/k\succeq 0$ for every $\Mult(\mathcal{H}_k)$-invariant subspace.   Using the standard multi-index
notation, $z^{\alpha} = z_1^{\alpha_1} \cdots z_{\lcal{d}}^{\alpha_{\lcal{d}}}$ and $|\alpha| 
=\sum_j \alpha_j$ for $\alpha \in \NN^{\lcal{d}},$ given
a positive integer $m,$ let
\[
  \mathcal{M}=\{f\in \cH_\ell:  \langle f, z^{\alpha}\rangle =0, \, |\alpha|\le m-1\}. 
\] 
In this case, $\ell_{\mathcal{M}}(z,w)=\sum_{n=m}^{\infty}\ell_n\langle z, w\rangle^n.$ Thus, writing $1/k=\sum_n b_n\langle z, w\rangle^n$, the condition $\ell_{\mathcal{M}}/k \succeq 0$ yields 
\begin{equation*} 
\sum_{n=0}^p b_{p-n}\ell_{m+n}\ge 0,\qquad p\ge 0.
\end{equation*}
Dividing by $\ell_m$ and letting $m\to\infty$ then obtains
\begin{equation} \label{bcoeffs}
\sum_{n=0}^p b_{p-n}\ge 0,\qquad p\ge 0,
\end{equation}
and thus $\mathfrak{s}/k\succeq 0$. By Proposition \ref{CPinbetween}, having $\ell/\mathfrak{s}\succeq 0, \mathfrak{s}/k\succeq 0$ is sufficient for $(k, \ell)$ to be BLH. We conclude that, if $(k, \ell)$ is a pair of unitarily invariant kernels on 
$\mathbb{B}_{\lcal{d}}$ such that $\lim_n\ell_{n+1}/\ell_n=1$ and the $\lcal{d}$-shift  is contractive over $\mathcal{H}_{\ell}\otimes\mathbb{C}^{\lcal{d}}$, then $(k, \ell)$ is BLH if and only if $\mathfrak{s}/k\succeq 0$ (equivalently, \eqref{bcoeffs} holds).
\qed
\end{example}

\begin{example}\rm 
\label{eg:recursivel} We now construct a strongly regular BLH pair of kernels $(k, \ell)$ on the unit disk $\DD$ such that no CP kernel $s$ exists with $\ell/s\succeq 0, s/k\succeq 0.$
First, define the reproducing kernel $k$ on $\mathbb{D}$ by
\[
 \frac{1}{k(x)} = 2(3-2x-x^2)+x^3(3-x-2x^2)
\]
 where $x=z\ol{w}.$  To see why $k$ is a   positive-definite kernel
on $\mathbb{D}$  (and hence $1/k$ is non-zero on $\mathbb{D}\times \mathbb{D}$) and at the same time $M_z$ is a bounded operator ($z$ is a multiplier),  write 
\[
k(x)=\frac{1}{(1-x)Q(x)},
\]
where $Q(x)=6+2x+3x^3+2x^4.$ Isolating the principal part ($Q(1)=13$) gives,
\[
k(x)=\frac{1}{13(1-x)}+H(x),
\]
where
\[
H(x)=\frac{7+5x+5x^2+2x^3}{13\,Q(x)}.
\]
Write
\[
k(x)=\sum_{n\ge 0} a_n x^n,
\qquad
H(x)=\sum_{n\ge 0} b_n x^n.
\]
Thus
\[
a_n=\frac{1}{13}+b_n,
\]
and it suffices to show that  $b_n> -1/13$ for all sufficiently large $n$ and  $b_n\to 0,$ and then check the remaining values of $n$ directly. Doing so immediately shows $k\succeq 0$. It also shows for sufficiently large $C>0$ that
\[
 ((C^2+1)-x)k(x) = C^2 k(x) + H(x) \succeq 0
\]
and therefore $M_z$ is a bounded operator on $\cH_k.$

Math software tells us that $Q$ has roots
\[
\rho,\ \overline{\rho},\ \sigma,\ \overline{\sigma},
\]
with
\[
\rho \approx 0.64025063148+0.97927927858\,i,
\qquad
\sigma \approx -1.39025063148+0.50865487710\,i.
\]
Their moduli are
\[
|\rho|\approx 1.17000375066\ldots,
\qquad
|\sigma|\approx 1.48038055997\ldots.
\]
The polynomial $H(x)$ has the partial fraction decomposition
\[
H(x)=\frac{A}{1-x/\rho}+\frac{\overline A}{1-x/\overline\rho}
+\frac{B}{1-x/\sigma}+\frac{\overline B}{1-x/\overline\sigma}.
\]
We obtain that the constants are approximately
\[
A=0.02358575827+0.03213408107\,i,
\qquad
|A|=0.03986084745\ldots,
\]
\[
B=0.02128603660-0.01474247235\,i,
\qquad
|B|=0.02589277593\ldots.
\]
From the partial fraction decomposition,
\[
b_n=A\rho^{-n}+\overline A\,\overline\rho^{-n}
+B\sigma^{-n}+\overline B\,\overline\sigma^{-n}.
\]
Hence
\[
|b_n|
\le 2|A|\,|\rho|^{-n}+2|B|\,|\sigma|^{-n}.
\]
Using the numerical values above,
\[
|b_n|
\le 2(0.03986084745)(1.17000375066)^{-n}
+2(0.02589277593)(1.48038055997)^{-n}.
\]
For \(n\ge 3\), the right-hand side is \(<1/13\). Therefore
\[
a_n=\frac{1}{13}+b_n>0
\qquad (n\ge 3).
\]
Checking the first three coefficients directly yields
\[
a_0=\frac16,\qquad a_1=\frac19,\qquad a_2=\frac7{54},
\]
all of which are positive. We have, in fact, shown that $(a_n)$ converges to $1/13$, 
{and thus $k$ is  a diagonal holomorphic kernel on the unit disk}.
See Appendix~\ref{append-more-example} for an analytic verification
that $a_n>0$ for all $n$ and $b_n$ converges to $0.$

Next, choose $\ell(x)= 1 +\sum \ell_n x^n$ as 
 follows with the aim of ensuring that both
\[
  (3-2x-x^2)\ell(x)\succeq 0, \  \ (3-x-2x^2)\ell(x) \succeq 0,
\]
 which then ensures that $1/k$ is in the Agler wedge of $\ell.$
The coefficient $\ell_1$ must satisfy both,
\[
 -2 + 3\ell_1 \ge0, \ \ -1 +3\ell_1 \ge0.
\]
Choose $\ell_1 = 2/3.$  Next, the coefficient $\ell_2$ needs to satisfy,
\[
\begin{split}
 0 & \le 3\ell_2 - 2\ell_1 -1 = 3\ell_2 - 7/3;
 \\ 0 & \le  3\ell_2 - \ell_1 - 2 =  3\ell_2 - 8/3.
\end{split}
\]
 Choose $\ell_2 = 8/9.$ It is clear that we may continue in this fashion
 to construct $\ell$:  In each iteration there are two constraints
 and the active one alternates.  For instance, 
 $\ell_3$ must satisfy both
\[
  \begin{split}
       0 & \le  3\ell_3-2\ell_2 -\ell_1 = 3\ell_3 - 16/9-2/3 = 3\ell_3- 22/9
    \\ 0 & \le  3\ell_3-\ell_2-2\ell_1 = 3\ell_3-8/9-4/3
     = 3\ell_3 - 20/9. 
  \end{split}
\]
 Hence $\ell_3\ge 22/27$ and we choose $\ell_3=22/27.$ Continuing in this way yields
 \begin{equation}
     \label{e:double-rec}
  \ell_{2k+2} = \frac{\ell_{2k+1} + 2\ell_{2k}}{3}, \ \ 
 \ell_{2k+3} = \frac{2\ell_{2k+2} + \ell_{2k+1}}{3}.
\end{equation}
 Thus $\ell$ is positive semi-definite.
 By construction $(3-2x-x^2)\ell$ and $(3-x-2x^2)\ell$ are positive semi-definite.
 Thus, $1/k$ is in the Agler wedge of $\ell$.

 The recurrence relations of equation~\eqref{e:double-rec} leads to a closed form for $\ell.$ Indeed, setting $a_k=\ell_{2k}, b_k=\ell_{2k+1},$
  equation~\eqref{e:double-rec} becomes, 
\[
a_{k+1}=\frac{2a_k+b_k}{3},\qquad b_{k+1}=\frac{2a_{k+1}+b_k}{3},
\]
Since 
\[
4a_{k+1}+3b_{k+1}
=4\cdot\frac{2a_k+b_k}{3}+3\cdot\frac{4a_k+5b_k}{9}
=4a_k+3b_k,
\]
 $4a_k+3b_k$ is constant. At $k=0$,
$4a_0+3b_0=6$,
hence $4a_k+3b_k=6$ for all $k$.
 
Letting $d_k=b_k-a_k$ gives,
\[
d_{k+1}=b_{k+1}-a_{k+1}
=\frac{2a_{k+1}+b_k}{3}-a_{k+1}
=\frac{b_k-a_{k+1}}{3}
=\frac{2}{9}\,d_k,
\]
and thus
\[
d_k=\bigl(\ell_1-\ell_0\bigr)\left(\frac{2}{9}\right)^k
=-\frac{1}{3}\left(\frac{2}{9}\right)^k.
\]
 Solving the system
\[
b_k-a_k=d_k,\qquad 4a_k+3b_k=6
\]
gives
\begin{equation} \label{closedcoeff}
a_k=\frac{6}{7}+\frac{1}{7}\left(\frac{2}{9}\right)^k,\qquad
b_k=\frac{6}{7}-\frac{4}{21}\left(\frac{2}{9}\right)^k.    
\end{equation}
Therefore
\[
\ell(x)=\sum_{k\ge0}a_k x^{2k}+\sum_{k\ge0}b_k x^{2k+1}
=\frac{6}{7}\frac{1+x}{1-x^2}+\left(\frac{1}{7}-\frac{4}{21}x\right)\frac{1}{1-\frac{2}{9} x^2},
\]
and simplifying,
\begin{equation}\label{ellclosed}
\ell(x)=\frac{3(3-x)}{(1-x)(9-2x^2)}.
\end{equation}

To rule out the possibility of the existence of a CP kernel  $s$
and positive kernels $Q$ and $P$ such that $\ell=sQ$ and $s=Pk,$
  we argue by contradiction and assume such an $s$ exists. 
Since $\ell$ is non-vanishing, the same is true for $s$ (and $Q$
and therefore $P$). In particular, 
\[
  \ell/s = Q, \ \ \ P/s =1/k.
\]
Since $k(z,0)=1,$ it follows that $s(z,0)=1$ so that,
\[
  s(0,0)- \frac{s(z,0)s(0,w)}{s(z,w)} = 1 - B(z,w).
\]
 where $P(z,0)=0$ and $B\succeq 0$. Thus,
by Lemma~\ref{l:l/s}, without loss of generality,
\[
  1/s = I-B(z,w) = 1-\Psi(z)\Psi(w)^*,
\]
for a Hilbert space $\mathcal{K}$ and contractive multiplier $\Psi\in \Mult(\cH_\ell\otimes \mathcal{K},\cH_\ell).$ 
By virtue of being a normalized    holomorphic kernel,
the space $\cH_\ell$ contains the constant function $1$. Therefore, 
the fact that $\Psi$ is a multiplier of $\cH_\ell$ implies
$\Psi$ is analytic, and so $1/s$ is a   holomorphic kernel.
Write,
\[
1/s= 1-B(z,w) =1 - \sum_{j,k=1}^\infty b_{jk}z^j \ol{w}^k.
\]
Since $1/s, \, \ell$  and $1/k$ are holomorphic kernels, so are $P$ and $Q.$ Thus $P$ and $Q$ have power series expansions, $P=6 \sum_{j,k=0}^\infty  p_{j,k} z^j\ol{w}^k$ and $Q=\sum_{j,k=0}^\infty q_{j,k} z^j \ol{w}^k.$   From
 \[
  2(3-2x-x^2)+x^3(3-x-2x^2) = 1/k= P/s = P(1-B),
  \]
it follows that  $p_{00}=1$ and $p_{j,0}=0=p_{0,j}$ for all $j\ge 1$
and 
\[
 -4 = 6(p_{11} -b_{11}) \ge -6 b_{11}.
\]
 Thus $b_{11} \ge 2/3.$ On the other hand,
 from  $Q=\ell(1-B)\succeq 0,$ 
 \[
  0\le  q_{11} = \ell_{1}-b_{11} = 2/3-b_{11}.
\]
 Thus $b_{11}=2/3$ and $p_{11}=0=q_{11}$. Hence $p_{1j}=0=p_{j1}$
 and $q_{1j}=0=q_{j1}$ for all $j.$   Since, for $j\ge 1,$ 
\[
 0 = p_{1j}- p_{00}b_{1j},
\]
 it follows that $b_{1j}=0=b_{j1}$ for  all $j\ge 1$ too. 

 Similarly, 
\[
 -2 =  6( p_{22}-p_{10}b_{12} -p_{01}b_{21} -b_{22})
 = 6( p_{22} - b_{22}) \ge  -6 b_{22}
\]
and thus $b_{22}\ge 1/3.$ On the other hand, 
 using $-b_{22} \le -1/3,$ 
\[
\begin{split}
     0  & \le q_{33} = \ell_3 - b_{11}\ell_2 -b_{22}\ell_1 - b_{33}
    \\ & = 22/27 - 16/27 -b_{22} (2/3) -b_{33} \le 
    22/27-16/27- 2/9 = 0. 
\end{split}
\]
Hence, $b_{22}=1/3$ and $b_{33}=0.$  Since $B\succeq0$
and $b_{33}=0$ it also follows that $b_{j3}=0=b_{3j}$ for all $j.$  At this point,
\[
 \begin{pmatrix} b_{j,k} \end{pmatrix}_{j,k=0}^3
  = \begin{pmatrix} 1&0&0&0\\ 0& 2/3 & 0&0 \\ 0 &0& 1/3&0 \\0&0&0&0
  \end{pmatrix}.
\]

 The coefficients $\ell_4$ and $\ell_5$ are  given by
\[
 0 = 3\ell_4 - \ell_3 -2\ell_2 
  = 3\ell_4 - 22/27- 16/9 
\]
 so that
\[
 \ell_4 = 70/81;
\]
and 
\[
0 = 3 \ell_5 -2\ell_4-\ell_3 = 3\ell_5 -  140/81 - 22/27 = 206/81.
\]
Hence, 
\[
 \ell_5 = 206/243.
\]
Thus,
\[
\begin{split}
0 & \le  q_{55} =  \ell_5 - b_{11} \ell_4 - b_{22} \ell_3 -b_{33} \ell_2 -b_{44} \ell_1 -b_{55}   
\\ &  = 206/243 - 140/243 - 22/81 -b_{44} (2/3) -b_{55} 
\\ &  = (206-140-66)/243 - b_{44}(2/3)-b_{55}  = -(2/3)b_{44} -b_{55}.
 \end{split}
\]
Hence $b_{44}=0=b_{55}$ in addition to $b_{33}=0.$

We claim that $b_{nn}=0$ for $n\ge 4$ (and in particular
$B$ is diagonal) now follows 
using only $\ell/s \succeq 0$ and the already uniquely
determined values of $b_{jj}$ for $0\le j\le 5.$ Namely,
we will show  for each $m\ge 2$ that $b_{2m,2m}=0=b_{2m+1,2m+1}$
by induction, the base case $m=2$ being clear.
Assuming, for a given $m\ge 2$ that $b_{2m,2m},b_{2m+1,2m+1}=0$ holds,  
to prove the same holds with $m$ replaced by $m+1,$ recall that 
\[
 \ell_{2m+2} = \frac{\ell_{2m+1} + 2\ell_{2m}}{3}, \ \ 
 \ell_{2m+3} = \frac{2\ell_{2m+2} + \ell_{2m+1}}{3}
\]
and $\ell/s\succeq 0$ requires,
\[
 0 \le q_{2m+3,2m+3} = \ell_{2m+3} - b_{11} \ell_{2m+2} - b_{22}\ell_{2m+1} - b_{2m+2,2m+2} \ell_1 - b_{2m+3,2m+3}.
\]
 On the other hand,
\[
\ell_{2m+3} - (2/3) \ell_{2m+2} - (1/3)  \ell_{2m+1}
 = \frac{ 2\ell_{2m+2} + \ell_{2m+1} - 2 \ell_{2m+2} - \ell_{2m+1}}{3} =0.
\]
Hence $b_{2m+2,2m+2}=0=b_{2m+3,2m+3}$ and  the induction argument is complete. Hence, 
\[
 s= \frac{1}{1-(2/3)x-(1/3)x^2}.
\]
However, 
\[
s/k = 6 + 3 x^3 + x^4 - x^5/3 + O[x]^6
\]
so that $s/k$ is not positive semi-definite, a contradiction
that proves there does not exist   
a CP kernel $s$ and positive kernels $Q$ and $P$  such that $\ell=Qs$ and $s=Pk.$

\par 

 To use Corollary~\ref{c:AD+invariant} to verify that
  $(k,\ell)$ is a BLH pair, it only remains to observe that
  since $M_z$ is bounded on $\cH_k,$ that $k$ satisfies 
  Assumption~\ref{assume}; and, since $\ell/k\succeq0$
  (because $k/\ell$ admits a strong Agler decomposition),
   $\Mult(\cH_k)\subseteq \Mult(\cH_\ell)$ and therefore
   $\ell$ satisfies Assumption~\ref{assume} too.

 It remains to show that $(k, \ell)$is a strongly regular pair.  First, we claim that $\ell$ satisfies the spectral condition, arguing as follows. From \eqref{closedcoeff},  $\lim_k \ell_k=6/7.$ 
It thus follows that
\[
 \lim_{n\to \infty} \frac{\ell_{n+1}}{\ell_n} =1
\]
 and therefore,
\[
 \lim_n \bigg| \frac{\ell_m}{\ell_{n+m}}
 \bigg|^{\frac{1}{n}} =1,
\]
for any $m.$ Hence, radius  of convergence of
 $\ell$ and the spectral radius of $M_z^\ell$ are both $1.$ Since $M_z^\ell$
 is a weighted shift, $\sigma(M_z^\ell) =\overline{\mathbb{D}}$ (see e.g. \cite[Proposition~15]{Shields}), which is
 also the domain of convergence of $\ell$. Note that the same holds for the shift over $\mathcal{H}_k,$ since the coefficients in the expansion of $k$ tend to $1/13.$ Finally, item ~\ref{i:annoying} of Definition ~\ref{kexhaust}  is obtained as follows. By \eqref{ellclosed},  for $0<r<1,$
\[
\frac{\ell(x)}{\ell_r(x)}
=
\frac{(1-rx)\left(1-\frac{2}{9} r^2 x^2\right)\left(1-\frac{x}{3}\right)}
{(1-x)\left(1-\frac{2}{9} x^2\right)\left(1-\frac{r x}{3}\right)}
=
F_1(x)\,F_2(x),
\]
where
\[
F_1(x):=\frac{(1-rx)\left(1-\frac{x}{3}\right)}{(1-x)\left(1-\frac{r x}{3}\right)},
\qquad
F_2(x):=\frac{1-\frac{2}{9} r^2 x^2}{1-\frac{2}{9} x^2}.
\]
Now
\[
(1-rx)\left(1-\frac{x}{3}\right)-(1-x)\left(1-\frac{r x}{3}\right)
=\frac{2}{3}(1-r)x,
\]
so
\[
F_1(x)
=
1+\frac{2}{3}(1-r)\,\frac{x}{(1-x)\left(1-\frac{r x}{3}\right)},
\]
and hence the power series of $F_1$ has nonnegative coefficients.
Also,
\[
F_2(x)
=
1+\frac{2}{9}(1-r^2)\,\frac{x^2}{1-\frac{2}{9} x^2},
\]
and thus the same holds for $F_2.$ Therefore, $\ell(x)/\ell_r(x)=F_1(x)F_2(x)$ has nonnegative   coefficients, and thus $\ell/\ell_r$
is positive semi-definite. 
\qed
 \end{example}

\section{Questions and Problems} \label{questions}

As the title suggests, we list here several questions and a problem.

\subsection{Dropping strong regularity} Assume $(k, \ell)$ is a regular pair on $\Omega,$ set $S=M^{\ell}_z$ and  consider  a $(k, \ell)$-exhaustion $(r_n, \Omega')$. If $T$ is $\text{Mult}(\mathcal{H}_{\ell})$-contractive, then so is $r_n(T)$
for each $n$ by item~\ref{i:exh:cc} of Definition~\ref{kexhaust}. Clearly, $\specT(r_n(T))\subset\Omega$. If we knew the existence of a cardinal $\omega$ such that $r_n(T)$ is the compression of 
 $\oplus_{j=1}^\omega S$ to a semi-invariant subspace, then it would no longer be  necessary to invoke Lemma~\ref{approx1-ell} (and hence, item~\ref{i:annoying} of Definition \ref{kexhaust})
in the proof of Theorem~\ref{BLHISCOEXT}. Thus, we are led to the following, of independent interest, question.

\begin{question}
Let $\ell$ be a kernel over a domain $\Omega\subseteq\CC^{\lcal{d}}$ that satisfies Assumption \ref{assume}, with $\ol{\Omega}$ polynomially convex and $\sigma(S)=\ol{\Omega}$. Assume $T$ is $\text{Mult}(\mathcal{H}_{\ell})$-contractive with $\specT(T)\subset\Omega.$ Does  $T$ dilate to
 $\oplus_{j=1}^\kappa S$ for some cardinal $\kappa$?
\end{question}

\subsection{The pseudo-Pick property}

Definition \ref{Leechdef} motivates the following alternate Pick property for a pair of reproducing kernels.

\begin{definition}
A pair $(k, \ell)$ on a set $X$ is a \textit{complete pseudo-Pick} pair if, whenever  positive integer  $n$ and $k$, points  $z_1, \dots, z_n\in X$ and  matrices $W_1, \dots, W_n\in M_k(\CC)$ satisfy
\[(I-W_iW_j^*)k(z_i, z_j)\succeq 0, \]
there exists a contractive multiplier $\opphi{\Phi}{X}{ \mathcal{H}_{\ell}\otimes\CC^k}{\mathcal{H}_{\ell}\otimes\CC^k}$ such that $\Phi(z_i)=W_i$ for all $i$.
\qed\end{definition}
Clearly, complete pseudo-Leech implies complete pseudo-Pick. Also, $(k, \ell)$ being complete pseudo-Pick implies the same for $(k, \ell_{\mathcal{M}}),$ for any $\Mult(\mathcal{H}_{\ell})$-invariant $\mathcal{M}.$ Thus, in view of Proposition \ref{l_m/k}, to prove complete pseudo-Pick implies BLH, it suffices to show that complete pseudo-Pick implies $\ell/k\succeq 0$. This discussion motivates:

\begin{question}\label{q:pseudo-Pick}
    Let $(k, \ell)$ be a pair of reproducing kernels with $k$ non-vanishing. If $(k, \ell)$ is complete pseudo-Pick, do we have $\ell/k\succeq 0$?  
\end{question}
 \noindent

In any case, 
Theorem~\ref{CPpaircharact}, Theorem~\ref{Leech=wAg=sAg} and Example \ref{eg:recursivel} together imply that  the complete pseudo-Pick property is weaker than the complete Pick property, at least for diagonal holomorphic pairs. What more can be said? Can one compare the definitions directly?

\begin{problem}
    Determine the relationship between the complete pseudo-Pick
    and complete Pick properties for pairs.
\end{problem}

\subsection{Truncating the Agler wedge}
It is the case that a kernel is a complete Pick kernel if and only if it admits solutions to every row-valued interpolation problem. In other words, the ``row-valued Pick property" automatically implies the complete Pick property (see  \cite{Pickbook} or \cite{Gregsimple}),  motivating
the following two questions.

\begin{question}
 \label{q:not-row}
Is there a regular pair $(k, \ell)$ that is BLH but such that $1/k$ is not in the Agler wedge generated by all row-valued multipliers of $\mathcal{H}_{\ell}$? 
\end{question}
\begin{question} \label{q:not-row-strong}
     If,  in the 
context of Definition~\ref{def:mult-contraction},
the inequality of equation~\eqref{vonNeumann} holds
 for all positive integers $m$ and $P\in \CC[z_1, \dots, z_{\lcal{d}}]\otimes \CC^{1\times m},$  is $T$ $\Mult(\cH_\ell)$-contractive? That is, does the inequality of equation~\eqref{vonNeumann} hold for all positive integers $m$ and $P\in \CC[z_1, \dots, z_{\lcal{d}}]\otimes \CC^{m\times m}$? 
\end{question}
\noindent The answer is yes  in the settings of
 Examples~\ref{bidiskex} and \ref{regularballex}. Note that if the answer to Question~\ref{q:not-row-strong} is yes, then Question~\ref{q:not-row} has a negative answer.

\printbibliography

@book {Paulsenbook,
    AUTHOR = {Paulsen, Vern},
     TITLE = {Completely bounded maps and operator algebras},
    SERIES = {Cambridge Studies in Advanced Mathematics},
    VOLUME = {78},
 PUBLISHER = {Cambridge University Press, Cambridge},
      YEAR = {2002},
     PAGES = {xii+300},
}

@article {mcctrent,
    AUTHOR = {McCullough, S. and Trent, T. T.},
     TITLE = {Invariant subspaces and {N}evanlinna-{P}ick kernels},
   JOURNAL = {J. Funct. Anal.},
  FJOURNAL = {Journal of Functional Analysis},
    VOLUME = {178},
      YEAR = {2000},
    NUMBER = {1},
     PAGES = {226--249},
}

@article {Beurlingfactor,
    AUTHOR = {Clou\^atre, R. and Hartz, M. and Schillo, D.},
     TITLE = {A {B}eurling-{L}ax-{H}almos theorem for spaces with a complete
              {N}evanlinna-{P}ick factor},
   JOURNAL = {Proc. Amer. Math. Soc.},
  FJOURNAL = {Proceedings of the American Mathematical Society},
    VOLUME = {148},
      YEAR = {2020},
    NUMBER = {2},
     PAGES = {731--740},
}

@article {Charadvances,
    AUTHOR = {Bhattacharyya, T. and Jindal, A.},
     TITLE = {Complete {N}evanlinna-{P}ick kernels and the characteristic
              function},
   JOURNAL = {Adv. Math.},
  FJOURNAL = {Advances in Mathematics},
    VOLUME = {426},
      YEAR = {2023},
     PAGES = {Paper No. 109089, 25},
}

@article {mcctsik,
    AUTHOR = {McCullough, S. and Tsikalas, G.},
     TITLE = {The complete {P}ick property for pairs of kernels and
              {S}himorin's factorization},
   JOURNAL = {J. Lond. Math. Soc. (2)},
  FJOURNAL = {Journal of the London Mathematical Society. Second Series},
    VOLUME = {112},
      YEAR = {2025},
    NUMBER = {4},
     PAGES = {Paper No. e70325, 71},
}

@article {hyperigid,
    AUTHOR = {Clou\^atre, R. and Hartz, M.},
     TITLE = {Multiplier algebras of complete {N}evanlinna-{P}ick spaces:
              dilations, boundary representations and hyperrigidity},
   JOURNAL = {J. Funct. Anal.},
  FJOURNAL = {Journal of Functional Analysis},
    VOLUME = {274},
      YEAR = {2018},
    NUMBER = {6},
     PAGES = {1690--1738},
}

@incollection {AgMcClocal,
    AUTHOR = {Agler, J. and McCarthy, J. E.},
     TITLE = {Nevanlinna-{P}ick kernels and localization},
 BOOKTITLE = {Operator theoretical methods ({T}imi\c soara, 1998)},
     PAGES = {1--20},
 PUBLISHER = {Theta Found., Bucharest},
      YEAR = {2000},
}

@article {Polynomialarazyenglis,
    AUTHOR = {Ambrozie, C.-G. and Engli\v s, M. and M\"uller, V.},
     TITLE = {Operator tuples and analytic models over general domains in
              {$\mathbb C^n$}},
   JOURNAL = {J. Operator Theory},
  FJOURNAL = {Journal of Operator Theory},
    VOLUME = {47},
      YEAR = {2002},
    NUMBER = {2},
     PAGES = {287--302},
}

@article {HartzJEMS,
    AUTHOR = {Davidson, K. R. and Hartz, M.},
     TITLE = {Interpolation and duality in algebras of multipliers on the
              ball},
   JOURNAL = {J. Eur. Math. Soc. (JEMS)},
  FJOURNAL = {Journal of the European Mathematical Society (JEMS)},
    VOLUME = {25},
      YEAR = {2023},
    NUMBER = {6},
     PAGES = {2391--2434},
}

@book {mullerbook,
    AUTHOR = {M\"{u}ller, V.},
     TITLE = {Spectral theory of linear operators and spectral systems in
              {B}anach algebras},
    SERIES = {Operator Theory: Advances and Applications},
    VOLUME = {139},
   EDITION = {Second},
 PUBLISHER = {Birkh\"{a}user Verlag, Basel},
      YEAR = {2007},
     PAGES = {x+439},
      ISBN = {978-3-7643-8264-3},
}

@incollection {Curtosurveyref,
    AUTHOR = {Curto, R. E.},
     TITLE = {Applications of several complex variables to multiparameter
              spectral theory},
 BOOKTITLE = {Surveys of some recent results in operator theory, {V}ol.\
              {II}},
    SERIES = {Pitman Res. Notes Math. Ser.},
    VOLUME = {192},
     PAGES = {25--90},
 PUBLISHER = {Longman Sci. Tech., Harlow},
      YEAR = {1988},
      ISBN = {0-582-00518-3},
}

@article {Wrobel,
    AUTHOR = {Wrobel, V.},
     TITLE = {Joint spectra and joint numerical ranges for pairwise
              commuting operators in {B}anach spaces},
   JOURNAL = {Glasgow Math. J.},
  FJOURNAL = {Glasgow Mathematical Journal},
    VOLUME = {30},
      YEAR = {1988},
    NUMBER = {2},
     PAGES = {145--153},
}

@article {BallBolotnikovBLH,
    AUTHOR = {Ball, J. A. and Bolotnikov, V.},
     TITLE = {Multivariable {B}eurling-{L}ax representations: the
              commutative and free noncommutative settings},
   JOURNAL = {Acta Sci. Math. (Szeged)},
  FJOURNAL = {Acta Universitatis Szegediensis. Acta Scientiarum
              Mathematicarum},
    VOLUME = {88},
      YEAR = {2022},
    NUMBER = {1-2},
     PAGES = {5--52},
}

@article {LuoForwBackw,
    AUTHOR = {Gu, C. and Luo, S.},
     TITLE = {Invariant subspaces of the direct sum of forward and backward
              shifts on vector-valued {H}ardy spaces},
   JOURNAL = {J. Funct. Anal.},
  FJOURNAL = {Journal of Functional Analysis},
    VOLUME = {282},
      YEAR = {2022},
    NUMBER = {9},
     PAGES = {Paper No. 109419, 31},
}

@article {BeurlingModulesSarkar,
    AUTHOR = {Bhattacharjee, M. and Das, B. K. and Debnath, R.
              and Sarkar, J.},
     TITLE = {Beurling quotient modules on the polydisc},
   JOURNAL = {J. Funct. Anal.},
  FJOURNAL = {Journal of Functional Analysis},
    VOLUME = {282},
      YEAR = {2022},
    NUMBER = {1},
     PAGES = {Paper No. 109258, 18},
}

@article {RaulBLH,
    AUTHOR = {Curto, R. E. and Hwang, I. S. and Lee, W.Y.},
     TITLE = {The {B}eurling-{L}ax-{H}almos theorem for infinite
              multiplicity},
   JOURNAL = {J. Funct. Anal.},
  FJOURNAL = {Journal of Functional Analysis},
    VOLUME = {280},
      YEAR = {2021},
    NUMBER = {6},
     PAGES = {Paper No. 108884, 101},
}

@article {LuoRecentInvariant,
    AUTHOR = {Gu, C. and Luo, S. and Ma, P.},
     TITLE = {Beurling type invariant subspaces on {H}ardy and {B}ergman
              spaces of the unit ball or polydisk},
   JOURNAL = {Canad. Math. Bull.},
  FJOURNAL = {Canadian Mathematical Bulletin. Bulletin Canadien de
              Math\'ematiques},
    VOLUME = {68},
      YEAR = {2025},
    NUMBER = {1},
     PAGES = {232--245},
}

@article {Beuroriginal,
    AUTHOR = {Beurling, A.},
     TITLE = {On two problems concerning linear transformations in {H}ilbert
              space},
   JOURNAL = {Acta Math.},
  FJOURNAL = {Acta Mathematica},
    VOLUME = {81},
      YEAR = {1948},
     PAGES = {239--255},
}

@article {Halmosoriginal,
    AUTHOR = {Halmos, P. R.},
     TITLE = {Shifts on {H}ilbert spaces},
   JOURNAL = {J. Reine Angew. Math.},
  FJOURNAL = {Journal f\"ur die Reine und Angewandte Mathematik. [Crelle's
              Journal]},
    VOLUME = {208},
      YEAR = {1961},
     PAGES = {102--112},
}

@article {LaxOriginal,
    AUTHOR = {Lax, P. D.},
     TITLE = {Translation invariant spaces},
   JOURNAL = {Acta Math.},
  FJOURNAL = {Acta Mathematica},
    VOLUME = {101},
      YEAR = {1959},
     PAGES = {163--178},
}

@book {Helson,
    AUTHOR = {Helson, H.},
     TITLE = {Lectures on invariant subspaces},
 PUBLISHER = {Academic Press, New York-London},
      YEAR = {1964},
     PAGES = {xi+130},
}

@book {Phillipsscattering,
    AUTHOR = {Lax, P. D. and Phillips, R. S.},
     TITLE = {Scattering theory},
    SERIES = {Pure and Applied Mathematics},
    VOLUME = {26},
   EDITION = {Second},
      NOTE = {With appendices by Cathleen S. Morawetz and Georg Schmidt},
 PUBLISHER = {Academic Press, Inc., Boston, MA},
      YEAR = {1989},
     PAGES = {xii+309},
      ISBN = {0-12-440051-5},
}

@book {Nagybook,
    AUTHOR = {Sz.-Nagy, B. and Foias, C. and Bercovici, H. and
              K\'{e}rchy, L.},
     TITLE = {Harmonic analysis of operators on {H}ilbert space},
    SERIES = {Universitext},
   EDITION = {Second},
   EDITION = {enlarged},
 PUBLISHER = {Springer, New York},
      YEAR = {2010},
     PAGES = {xiv+474},
}

@article {BeurBergman,
    AUTHOR = {Aleman, A. and Richter, S. and Sundberg, C.},
     TITLE = {Beurling's theorem for the {B}ergman space},
   JOURNAL = {Acta Math.},
  FJOURNAL = {Acta Mathematica},
    VOLUME = {177},
      YEAR = {1996},
    NUMBER = {2},
     PAGES = {275--310},
}

@article {ShimBer1,
    AUTHOR = {Shimorin, S.},
     TITLE = {Wold-type decompositions and wandering subspaces for operators
              close to isometries},
   JOURNAL = {J. Reine Angew. Math.},
  FJOURNAL = {Journal f\"ur die Reine und Angewandte Mathematik. [Crelle's
              Journal]},
    VOLUME = {531},
      YEAR = {2001},
     PAGES = {147--189},
}

@article {ShimBer2,
    AUTHOR = {Shimorin, S.},
     TITLE = {On {B}eurling-type theorems in weighted {$l^2$} and {B}ergman
              spaces},
   JOURNAL = {Proc. Amer. Math. Soc.},
  FJOURNAL = {Proceedings of the American Mathematical Society},
    VOLUME = {131},
      YEAR = {2003},
    NUMBER = {6},
     PAGES = {1777--1787},
}

@article {ScottBergman,
    AUTHOR = {McCullough, S. and Richter, S.},
     TITLE = {Bergman-type reproducing kernels, contractive divisors, and
              dilations},
   JOURNAL = {J. Funct. Anal.},
  FJOURNAL = {Journal of Functional Analysis},
    VOLUME = {190},
      YEAR = {2002},
    NUMBER = {2},
     PAGES = {447--480},
}

@book {Nikeasy1,
    AUTHOR = {Nikolski, N. K.},
     TITLE = {Operators, functions, and systems: an easy reading. {V}ol. 1},
    SERIES = {Mathematical Surveys and Monographs},
    VOLUME = {92},
      NOTE = {Hardy, Hankel, and Toeplitz,
              Translated from the French by Andreas Hartmann},
 PUBLISHER = {American Mathematical Society, Providence, RI},
      YEAR = {2002},
     PAGES = {xiv+461},
      ISBN = {0-8218-1083-9},
}

@article {GreeneRichtSund,
    AUTHOR = {Greene, D. C. V. and Richter, S. and Sundberg, C.},
     TITLE = {The structure of inner multipliers on spaces with complete
              {N}evanlinna-{P}ick kernels},
   JOURNAL = {J. Funct. Anal.},
  FJOURNAL = {Journal of Functional Analysis},
    VOLUME = {194},
      YEAR = {2002},
    NUMBER = {2},
     PAGES = {311--331},
}

@book {Pickbook,
    AUTHOR = {Agler, J. and McCarthy, J. E.},
     TITLE = {Pick interpolation and {H}ilbert function spaces},
    SERIES = {Graduate Studies in Mathematics},
    VOLUME = {44},
 PUBLISHER = {American Mathematical Society, Providence, RI},
      YEAR = {2002},
     PAGES = {xx+308},
      ISBN = {0-8218-2898-3},
}

@article {ArvesonIII,
    AUTHOR = {Arveson, W.},
     TITLE = {Subalgebras of {$C^*$}-algebras. {III}. {M}ultivariable
              operator theory},
   JOURNAL = {Acta Math.},
  FJOURNAL = {Acta Mathematica},
    VOLUME = {181},
      YEAR = {1998},
    NUMBER = {2},
     PAGES = {159--228},
}

@article {ShimorinDirichletCP,
    AUTHOR = {Shimorin, S.},
     TITLE = {Complete {N}evanlinna-{P}ick property of {D}irichlet-type
              spaces},
   JOURNAL = {J. Funct. Anal.},
  FJOURNAL = {Journal of Functional Analysis},
    VOLUME = {191},
      YEAR = {2002},
    NUMBER = {2},
     PAGES = {276--296},
}

@incollection {BesovPick,
    AUTHOR = {Aleman, A. and Hartz, M. and McCarthy, J. E. and
              Richter, S.},
     TITLE = {Radially weighted {B}esov spaces and the {P}ick property},
 BOOKTITLE = {Analysis of operators on function spaces},
    SERIES = {Trends Math.},
     PAGES = {29--61},
 PUBLISHER = {Birkh\"auser/Springer, Cham},
      YEAR = {2019},
}

@incollection {Hartzsurvey,
    AUTHOR = {Hartz, M.},
     TITLE = {An invitation to the {D}rury-{A}rveson space},
 BOOKTITLE = {Lectures on analytic function spaces and their applications},
    SERIES = {Fields Inst. Monogr.},
    VOLUME = {39},
     PAGES = {347--413},
 PUBLISHER = {Springer, Cham},
      YEAR = {2023},
}

@misc{shalit2014operator,
      title={Operator theory and function theory in Drury-Arveson space and its quotients}, 
      author={O. Shalit},
      year={2014},
      eprint={1308.1081},
      archivePrefix={arXiv},
}

@article {AHMRfact,
    AUTHOR = {Aleman, A. and Hartz, M. and McCarthy, J. E. and
              Richter, S.},
     TITLE = {Factorizations induced by complete {N}evanlinna-{P}ick
              factors},
   JOURNAL = {Adv. Math.},
  FJOURNAL = {Advances in Mathematics},
    VOLUME = {335},
      YEAR = {2018},
     PAGES = {372--404},
}

@article {AHMRinterpol,
    AUTHOR = {Aleman, A. and Hartz, M. and McCarthy, J. E. and
              Richter, S.},
     TITLE = {Interpolating sequences in spaces with the complete {P}ick
              property},
   JOURNAL = {Int. Math. Res. Not. IMRN},
  FJOURNAL = {International Mathematics Research Notices. IMRN},
    VOLUME = {2019},
      YEAR = {2019},
    NUMBER = {12},
     PAGES = {3832--3854},
}

@article {Interpoltsik,
    AUTHOR = {Tsikalas, G.},
     TITLE = {Interpolating sequences for pairs of spaces},
   JOURNAL = {J. Funct. Anal.},
  FJOURNAL = {Journal of Functional Analysis},
    VOLUME = {285},
      YEAR = {2023},
    NUMBER = {7},
     PAGES = {Paper No. 110059, 43},
}

@article {Shimorin,
    AUTHOR = {Shimorin, S.},
     TITLE = {Commutant lifting and factorization of reproducing kernels},
   JOURNAL = {J. Funct. Anal.},
  FJOURNAL = {Journal of Functional Analysis},
    VOLUME = {224},
      YEAR = {2005},
    NUMBER = {1},
     PAGES = {134--159},
}

@article {Timotinsarkar,
    AUTHOR = {Deepak, K. D. and Pradhan, D. K. and Sarkar, J. and
              Timotin, D.},
     TITLE = {Commutant lifting and {N}evanlinna-{P}ick interpolation in
              several variables},
   JOURNAL = {Integral Equations Operator Theory},
  FJOURNAL = {Integral Equations and Operator Theory},
    VOLUME = {92},
      YEAR = {2020},
    NUMBER = {3},
     PAGES = {Paper No. 27, 15},
}

@article {ColRow,
    AUTHOR = {Hartz, M.},
     TITLE = {Every complete {P}ick space satisfies the column-row property},
   JOURNAL = {Acta Math.},
  FJOURNAL = {Acta Mathematica},
    VOLUME = {231},
      YEAR = {2023},
    NUMBER = {2},
     PAGES = {345--386},
}

@article {freeouter,
    AUTHOR = {Aleman, A. and Hartz, M. and McCarthy, J. E. and
              Richter, S.},
     TITLE = {Free outer functions in complete {P}ick spaces},
   JOURNAL = {Trans. Amer. Math. Soc.},
  FJOURNAL = {Transactions of the American Mathematical Society},
    VOLUME = {376},
      YEAR = {2023},
    NUMBER = {3},
     PAGES = {1929--1978},
}

@article {BB1,
    AUTHOR = {Ball, J. A. and Bolotnikov, V.},
     TITLE = {A {B}eurling type theorem in weighted {B}ergman spaces},
   JOURNAL = {C. R. Math. Acad. Sci. Paris},
  FJOURNAL = {Comptes Rendus Math\'ematique. Acad\'emie des Sciences. Paris},
    VOLUME = {351},
      YEAR = {2013},
    NUMBER = {11-12},
     PAGES = {433--436},
}

@article {BB2,
    AUTHOR = {Ball, J. A. and Bolotnikov, V.},
     TITLE = {Weighted {B}ergman spaces: shift-invariant subspaces and
              input/state/output linear systems},
   JOURNAL = {Integral Equations Operator Theory},
  FJOURNAL = {Integral Equations and Operator Theory},
    VOLUME = {76},
      YEAR = {2013},
    NUMBER = {3},
     PAGES = {301--356},
}

@article {SarkarInvI,
    AUTHOR = {Sarkar, J.},
     TITLE = {An invariant subspace theorem and invariant subspaces of
              analytic reproducing kernel {H}ilbert spaces. {I}},
   JOURNAL = {J. Operator Theory},
  FJOURNAL = {Journal of Operator Theory},
    VOLUME = {73},
      YEAR = {2015},
    NUMBER = {2},
     PAGES = {433--441},
}

@article {SarkarInvII,
    AUTHOR = {Sarkar, J.},
     TITLE = {An invariant subspace theorem and invariant subspaces of
              analytic reproducing kernel {H}ilbert spaces---{II}},
   JOURNAL = {Complex Anal. Oper. Theory},
  FJOURNAL = {Complex Analysis and Operator Theory},
    VOLUME = {10},
      YEAR = {2016},
    NUMBER = {4},
     PAGES = {769--782},
}

@book{Shields,
    AUTHOR = {Shields, Al. L.},
     TITLE = {Weighted shift operators and analytic function theory},
 BOOKTITLE = {Topics in operator theory},
     PAGES = {49--128. Math. Surveys, No. 13},
      YEAR = {1974},
}

@article {Quiggin,
    AUTHOR = {Quiggin, P.},
     TITLE = {For which reproducing kernel {H}ilbert spaces is {P}ick's
              theorem true?},
   JOURNAL = {Integral Equations Operator Theory},
  FJOURNAL = {Integral Equations and Operator Theory},
    VOLUME = {16},
      YEAR = {1993},
    NUMBER = {2},
     PAGES = {244--266},
}

@incollection {McCulloughlocal,
    AUTHOR = {McCullough, S.},
     TITLE = {The local de {B}ranges-{R}ovnyak construction and complete
              {N}evanlinna-{P}ick kernels},
 BOOKTITLE = {Algebraic methods in operator theory},
     PAGES = {15--24},
 PUBLISHER = {Birkh\"{a}user Boston, Boston, MA},
      YEAR = {1994},
}

@article {McCulloughcarath,
    AUTHOR = {McCullough, S.},
     TITLE = {Carath\'{e}odory interpolation kernels},
   JOURNAL = {Integral Equations Operator Theory},
  FJOURNAL = {Integral Equations and Operator Theory},
    VOLUME = {15},
      YEAR = {1992},
    NUMBER = {1},
}

@book {PaulsenRagh,
    AUTHOR = {Paulsen, V. I. and Raghupathi, M.},
     TITLE = {An introduction to the theory of reproducing kernel {H}ilbert
              spaces},
    SERIES = {Cambridge Studies in Advanced Mathematics},
    VOLUME = {152},
 PUBLISHER = {Cambridge University Press, Cambridge},
      YEAR = {2016},
     PAGES = {x+182},
      ISBN = {978-1-107-10409-9},
}

@article {Boas,
AUTHOR={Boas},
TITLE={Lecture Notes on Several Complex Variables},
URL={https://haroldpboas.gitlab.io/courses/650-2013c/notes.pdf}
}

@article {Chalmsimply,
    AUTHOR = {Chalmoukis, N. and Dayan, A. and Hartz, M.},
     TITLE = {Simply interpolating sequences in complete {P}ick spaces},
   JOURNAL = {Trans. Amer. Math. Soc.},
  FJOURNAL = {Transactions of the American Mathematical Society},
    VOLUME = {377},
      YEAR = {2024},
    NUMBER = {5},
     PAGES = {3261--3286},
}

@article {weakPickprod,
    AUTHOR = {Jury, M. T. and Martin, R. T. W.},
     TITLE = {Factorization in weak products of complete pick spaces},
   JOURNAL = {Bull. Lond. Math. Soc.},
  FJOURNAL = {Bulletin of the London Mathematical Society},
    VOLUME = {51},
      YEAR = {2019},
    NUMBER = {2},
     PAGES = {223--229},
}

@article {Wickcorona,
    AUTHOR = {Costea, \c{S}. and Sawyer, E. T. and Wick, B. D.},
     TITLE = {The corona theorem for the {D}rury-{A}rveson {H}ardy space and
              other holomorphic {B}esov-{S}obolev spaces on the unit ball in
              {$\Bbb C^n$}},
   JOURNAL = {Anal. PDE},
  FJOURNAL = {Analysis \& PDE},
    VOLUME = {4},
      YEAR = {2011},
    NUMBER = {4},
     PAGES = {499--550},
}

@article {DBrtoPick,
    AUTHOR = {H. Ahmed, and B.K. Das, and S. Panja},
     TITLE = {De Branges–Rovnyak spaces which are Complete
Nevanlinna-Pick spaces},
   JOURNAL = {J. Geom. Anal.},
  FJOURNAL = {Journal of Geometric Analysis},
    VOLUME = {35},
      YEAR = {2025},
    NUMBER = {61},
}

@article {Gheondea,
    AUTHOR = {Constantinescu, T. and Gheondea, A.},
     TITLE = {On {L}. {S}chwartz's boundedness condition for kernels},
   JOURNAL = {Positivity},
  FJOURNAL = {Positivity. An International Mathematics Journal Devoted to
              Theory and Applications of Positivity},
    VOLUME = {10},
      YEAR = {2006},
    NUMBER = {1},
     PAGES = {65--86},
}

@article {CBkernels,
    AUTHOR = {Bhattacharyya, T. and Dritschel, M. A. and Todd,
              C. S.},
     TITLE = {Completely bounded kernels},
   JOURNAL = {Acta Sci. Math. (Szeged)},
  FJOURNAL = {Acta Universitatis Szegediensis. Acta Scientiarum
              Mathematicarum},
    VOLUME = {79},
      YEAR = {2013},
    NUMBER = {1-2},
     PAGES = {191--217},
}

@book {RKPontryagin,
    AUTHOR = {Alpay, D. and Dijksma, A. and Rovnyak, J. and de Snoo,
              H.},
     TITLE = {Schur functions, operator colligations, and reproducing kernel
              {P}ontryagin spaces},
    SERIES = {Operator Theory: Advances and Applications},
    VOLUME = {96},
 PUBLISHER = {Birkh\"auser Verlag, Basel},
      YEAR = {1997},
     PAGES = {xii+229},
}

@misc{gheondea2013survey,
      title={A Survey on Reproducing Kernel Krein Spaces}, 
      author={A. Gheondea},
      year={2013},
      eprint={1309.2393},
      archivePrefix={arXiv},
      url={https://arxiv.org/abs/1309.2393}, 
}

@book {Hellinger,
     TITLE = {Topics in operator theory: {E}rnst {D}. {H}ellinger memorial
              volume},
    SERIES = {Operator Theory: Advances and Applications},
    VOLUME = {48},
    EDITOR = {de Branges, L. and Gohberg, I. and Rovnyak, J.},
 PUBLISHER = {Birkh\"auser Verlag, Basel},
      YEAR = {1990},
     PAGES = {viii+448},
}

@article {BickelKnese,
    AUTHOR = {Bickel, K. and Knese, G.},
     TITLE = {Canonical {A}gler decompositions and transfer function
              realizations},
   JOURNAL = {Trans. Amer. Math. Soc.},
  FJOURNAL = {Transactions of the American Mathematical Society},
    VOLUME = {368},
      YEAR = {2016},
    NUMBER = {9},
     PAGES = {6293--6324},
}

@article {Leechoriginal,
    AUTHOR = {Leech, R. B.},
     TITLE = {Factorization of analytic functions and operator inequalities},
   JOURNAL = {Integral Equations Operator Theory},
  FJOURNAL = {Integral Equations and Operator Theory},
    VOLUME = {78},
      YEAR = {2014},
    NUMBER = {1},
     PAGES = {71--73},
}

@book {RandR,
    AUTHOR = {Rosenblum, M. and Rovnyak, J.},
     TITLE = {Hardy classes and operator theory},
      NOTE = {Corrected reprint of the 1985 original},
 PUBLISHER = {Dover Publications, Inc., Mineola, NY},
      YEAR = {1997},
     PAGES = {xiv+161},
      ISBN = {0-486-69536-0},
}

@article {HartzIsomorphism,
    AUTHOR = {Hartz, M.},
     TITLE = {On the isomorphism problem for multiplier algebras of
              {N}evanlinna-{P}ick spaces},
   JOURNAL = {Canad. J. Math.},
  FJOURNAL = {Canadian Journal of Mathematics. Journal Canadien de
              Math\'ematiques},
    VOLUME = {69},
      YEAR = {2017},
    NUMBER = {1},
     PAGES = {54--106},
}

@article {Gregsimple,
    AUTHOR = {Knese, G.},
     TITLE = {A simple proof of necessity in the {M}c{C}ullough-{Q}uiggin
              theorem},
   JOURNAL = {Proc. Amer. Math. Soc.},
  FJOURNAL = {Proceedings of the American Mathematical Society},
    VOLUME = {148},
      YEAR = {2020},
    NUMBER = {8},
     PAGES = {3453--3456},
}

@book {StoutPolyCvx,
    AUTHOR = {Stout, E. L.},
     TITLE = {Polynomial convexity},
    SERIES = {Progress in Mathematics},
    VOLUME = {261},
 PUBLISHER = {Birkh\"auser Boston, Inc., Boston, MA},
      YEAR = {2007},
     PAGES = {xii+439},
      ISBN = {978-0-8176-4537-3; 0-8176-4537-3},
   MRCLASS = {32E20},
  MRNUMBER = {2305474},
MRREVIEWER = {Marshall\ A.\ Whittlesey},
       DOI = {10.1007/978-0-8176-4538-0},
       URL = {https://doi.org/10.1007/978-0-8176-4538-0},
}
\appendix 
\section{ Proof of WOT Montel} \label{appendMontel}

\begin{proof}[Proof of Lemma \ref{noholmontel}]
Since $\mathcal{H}_k\otimes\mathcal{E}$ and $\mathcal{H}_k\otimes\mathcal{F}$ are separable Hilbert spaces, the closed unit ball of
$\mathcal{B}(\mathcal{H}_k\otimes\mathcal{E},\mathcal{H}_k\otimes\mathcal{F})$ is WOT-compact and metrizable. Hence, the bounded sequence
$(M_{\Phi_n})$ has a WOT-convergent subsequence, which we relabel as $(M_{\Phi_n})$. Let $T\in \mathcal{B}(\mathcal{H}_k\otimes\mathcal{E},\mathcal{H}_k\otimes\mathcal{F})$ denote the limit of this subsequence. Clearly, $\|T\|\le 1.$ We will show that $T$ is a multiplication operator. Note that WOT-convergence is preserved under taking adjoints. Thus, for fixed $x \in X$ and $f \in \mathcal{F}$, 
\[
M_{\Phi_n}^*(k_x \otimes f) = k_x \otimes \Phi_n(x)^* f
\]
converges weakly to $T^*(k_x \otimes f)$. Since $k_x\otimes\mathcal{E}$ is a (WOT) closed subspace, there exists a uniquely determined vector $\Phi(x)^* f \in \mathcal{E}$ 
such that 
\begin{equation}
    \label{e:WOT-montel:1}
T^*(k_x \otimes f) = k_x \otimes \Phi(x)^* f.
\end{equation}
Since $T^*$ is contractive, equation~\eqref{e:WOT-montel:1} implies $\Phi(x)^*$ is a bounded (in fact contractive)
linear operator from $\mathcal{F}$ to $\mathcal{E}.$ 
Next, 
 for every $e \in \mathcal{E}$ and $f \in \mathcal{F}$, 
\[
\langle \Phi_n(x)e, f \rangle
= \frac{1}{\|k_x\|^2}
\langle k_x \otimes e, M_{\Phi_n}^*( k_x \otimes f )\rangle.
\]
Passing to the limit gives
\[
\langle \Phi_n(x)e, f \rangle \to \langle \Phi(x)e, f \rangle,
\]
so $\Phi_n(x) \to \Phi(x)$ WOT on $\mathcal{B}(\mathcal{E}, \mathcal{F})$, for each $x \in X$.
 Next, we show that $\Phi: X\to \mathcal{B}(\mathcal{E}, \mathcal{F})$ is a contractive multiplier. Indeed, for any $x_1, \dots, x_n\in X$ and $f_1, \dots, f_n\in\mathcal{F},$ 
  convergence of $\sum_jk_{x_j}\otimes \Phi^*_{n}(x_j)f_j$  to $\sum_jk_{x_j}\otimes \Phi^*(x_j)f_j$  in the weak topology of $\mathcal{H}_k\otimes\mathcal{E}$ gives, 
\begin{align*}
\Big|\Big|\sum_jk_{x_j}\otimes \Phi^*(x_j)f_j \Big|\Big|& \le \liminf_n \Big|\Big|\sum_jk_{x_j}\otimes \Phi^*_{n}(x_j)f_j \Big|\Big| \\ 
& = \liminf_n \Big|\Big|M_{\Phi_{n}}^*\Big(\sum_jk_{x_j}\otimes f_j\Big) \Big|\Big| \\ 
& \le \| \sum_jk_{x_j}\otimes f_j\|.     
\end{align*}
Thus $\Phi$ is a contractive multiplier. Finally, since $T^*$ and $M^*_{\Phi}$ agree on vectors of the form $k_x\otimes f,$ we conclude $T=M_{\Phi}$,  and our proof is complete. 
\end{proof}

\section{From Agler to Realizations} \label{appendLurking}

\begin{proof}[Proof of Theorem~\ref{realization.}] 
The proof is a variation on the lurking isometry argument.
It is convenient to make a slight change in notation. Let $T(w)=Q(w)^*$ and 
view $T(w)$ as an element of $\mathcal{G}$. 
By assumption, there exists a Hilbert space $\mathcal{N}$ and a function 
$R:F\to\mathcal{B}(\mathcal{N}, \mathcal{J})$ such that, for $z, w\in F,$
\[
  V(z)V(w)^*-Y(z)Y(w)^*= \, T(z)^* \big[I-\Phi(z)\Phi(w)^*\big]T(w) \, R(z)R(w)^*.
\]
Rearranging yields, for $z,w\in F,$ 
\begin{equation*} 
\begin{split}  
  V(z)V(w)^*+ & R(z)R(w)^*T(z)^*\Phi(z)\Phi(w)^*T(w)
\\[3pt] & = Y(z)Y(w)^*+ T(z)^* T(w)\, R(z)R(w)^*
\end{split}
\end{equation*}
Define $E(w):\mathcal{N}\to \mathcal{G}\otimes \mathcal{N}$ via
\[
 E(w) v =  T(w)\otimes v
\]
 and note that 
\[
   E(w)R(w)^* \gamma = T(w)\otimes R(w)^* \gamma 
\]
for $\gamma\in \mathcal{J}.$ Hence, for $\gamma,\delta\in \mathcal{J},$
\[
  \langle  R(z) E(z)^* E(w) R(w)^*  \gamma,\delta \rangle
  = \langle  T(w),T(z) \rangle \, \langle R(w)^* \gamma, R(z)^* \delta\rangle.
\]
 Set 
\[
 \mathcal{Z}_1=\textup{span}\Biggl\{\begin{pmatrix}
V(w)^* \\[3pt]
\big(\Phi(w)^*\otimes I_{\mathcal{N}}\big)E(w)R(w)^*
\end{pmatrix}u\ : u\in\mathcal{J},\  w\in F  \Biggl\}\subset \LK \oplus (\mathcal{F}\otimes\mathcal{N}) 
\]
 and 
\[\mathcal{Z}_2=\textup{span}\Biggl\{\begin{pmatrix}
Y(w)^* \\[3pt]
E(w)R(w)^*
\end{pmatrix}u\ : u\in\mathcal{J},\  w\in F  \Biggl\}\subset \mathcal{L}\oplus (\mathcal{G}\otimes\mathcal{N}) .
\]
Since, 
\begin{align*}
 & \Bigg\langle  \begin{pmatrix}
V(w)^* \\[3pt]
\big(\Phi(w)^*\otimes I_{\mathcal{N}}\big)E(w)R(w)^*
\end{pmatrix} u, \, \begin{pmatrix}
V(z)^* \\[3pt]
\big(\Phi(z)^*\otimes I_{\mathcal{N}}\big)E(z)R(z)^*
\end{pmatrix} v \Bigg\rangle \\[3pt]
   = & \langle V(z) V(w)^* u,v\rangle
  + \langle  R(z) E(z)^* (\Phi(z)\Phi(w)^*\otimes I) E(w) R(w)^* u,v\rangle
 \\[3pt]  =  &\langle V(z) V(w)^* u,v\rangle
  + \langle  \Phi(z)\Phi(w)^* T(w) \otimes R(w)^* u,  T(z)\otimes R(z)^*v\rangle
  \\[3pt]  =  &\langle V(z) V(w)^* u,v\rangle +  \langle \Phi(w)^* T(w), \Phi(z)^* T(z)\rangle \,
   \langle R(w)^* u,R(z)^* v\rangle 
  \\[3pt]   = &\langle \left ( V(z)V(w)^* +  T(z)^* \Phi(z)\Phi(w)^* T(w) \, R(z)R(w)^* \right) u,v\rangle
  \\[3pt]   =  &\langle \left (  Y(z)Y(w)^*+ T(z)^* T(w)\, R(z)R(w)^* \right ) u,v\rangle
  \\[3pt]   = &\Bigg\langle \begin{pmatrix}
Y(w)^* \\[3pt]
E(w)R(w)^*
\end{pmatrix}u, \begin{pmatrix}
Y(z)^* \\[3pt]
E(z)R(z)^*
\end{pmatrix}v \Bigg\rangle,
\end{align*}
we obtain  a linear isometry  $U:\mathcal{Z}_1\to\mathcal{Z}_2$ determined by
\[ 
U:  \begin{pmatrix}
V(w)^* \\[3pt]
\big(\Phi(w)^*\otimes I_{\mathcal{N}}\big)E(w)R(w)^*
\end{pmatrix}u\mapsto \begin{pmatrix}
Y(w)^* \\[3pt]
E(w)R(w)^*
\end{pmatrix}u.
\] 
Decompose $U$ as 
 \[
\begin{array}{c c}
  & \begin{array}{cc}
    \mathcal{K} & \mathcal{F}\otimes\mathcal{N}
    \end{array} \\
  \begin{array}{c}
    \mathcal{L} \\
   \mathcal{G}\otimes\mathcal{N}
  \end{array}
  &
  \left(
  \begin{array}{cc}
    A \ \ \ \ & B \  \\
    C \ \ \ \  & D \ 
  \end{array}
  \right)
\end{array}
\]
and observe that
\begin{align}
AV(w)^*+B\big(\Phi(w)^*\otimes I_{\mathcal{N}}\big)E(w)R(w)^*&=Y(w)^* \label{eq1} \\ 
CV(w)^*+D\big(\Phi(w)^*\otimes I_{\mathcal{N}}\big)E(w)R(w)^*&=E(w)R(w)^* \label{eq2},
\end{align}
for all $w\in F.$
 Since $\|\Phi(w)\|<1$ for all $w,$ the operator $I_{\mathcal{G}\otimes\mathcal{N}}-D\big(\Phi(w)^*\otimes I_{\mathcal{N}}\big)$  is invertible. Solving  \eqref{eq2} for $E(w)R(w)^*$ obtains
\begin{equation}\label{eq3}
E(w)R(w)^*=\big(I_{\mathcal{G}\otimes\mathcal{N}}-D\big(\Phi(w)^*\otimes I_{\mathcal{N}}\big)\big)^{-1}CV(w)^*.
\end{equation}
Substituting equation~\eqref{eq3} into \eqref{eq1} yields 
\begin{equation}\label{eq4}
AV(w)^*+B\big(\Phi(w)^*\otimes I_{\mathcal{N}}\big)\big(I_{\mathcal{G}\otimes\mathcal{N}}-D\big(\Phi(w)^*\otimes I_{\mathcal{N}}\big)\big)^{-1}CV(w)^*=Y(w)^*,
\end{equation}
for all $w\in F.$ Now, define $\Psi:X\to \mathcal{B}(\mathcal{K},\mathcal{L})$ by 
\[\Psi(w)^*=A+B(\Phi(w)^*\otimes I_{\mathcal{N}})\big(I_{\mathcal{G}\otimes\mathcal{N}}-D\big(\Phi(w)^*\otimes I_{\mathcal{N}}\big)\big)^{-1}C .\]
The identity of equation~\eqref{eq4} immediately yields $V(z)\Psi(z)=Y(z)$, for all $z\in F.$ \par  It remains to show that $\Psi$ is, in fact, a contractive multiplier. Define $O:X\to\mathcal{B}(\mathcal{G}\otimes\mathcal{N}, \mathcal{K})$ by $O(w)^*=\big(I_{\mathcal{G}\otimes\mathcal{N}}-D\big(\Phi(w)^*\otimes I_{\mathcal{N}}\big)\big)^{-1}C$. Equivalently,
\begin{equation}
     \label{eq:O-variant}
     C+D\big(\Phi(w)^*\otimes I_{\mathcal{N}}\big)O(w)^*=O(w)^*,
\end{equation}
for all $w\in X.$   By definition of $\Psi,$ 
\begin{equation}
    \label{e:Psi}
A+B\big(\Phi(w)^*\otimes I_{\mathcal{N}}\big)O(w)^*=\Psi(w)^* 
\end{equation}
for all $w\in X.$ The equalities of equations~\eqref{eq:O-variant} and \eqref{e:Psi} in turn, imply that $U$ maps 
\[\begin{pmatrix}
I_{\mathcal{K}} \\
\big(\Phi(w)^*\otimes I_{\mathcal{N}}\big)O(w)^*
\end{pmatrix}v  \ \  \ \ \textup{ to }  \ \ \ \ 
\begin{pmatrix}
\Psi(w)^* \\
O(w)^*
\end{pmatrix}v,
\]
for all $w\in X$ and $v\in\mathcal{K}$.
 Since $U$ is isometric,
\[ I_{\mathcal{K}}+O(z)\big(\Phi(z)\otimes I_{\mathcal{N}}\big)\big(\Phi(w)^*\otimes I_{\mathcal{N}}\big)O(w)^*=\Psi(z)\Psi(w)^*+O(z)O(w)^*, \]
which, after rearranging and multiplying by $\ell(z, w)$, becomes
\begin{align}
& \ \big[I_{\mathcal{K}}-\Psi(z)\Psi(w)^*\big]\ell(z, w) \notag \\ 
=&\ O(z)\Big(\big[I_{\mathcal{G}\otimes\mathcal{N}}- \big(\Phi(z)\otimes I_{\mathcal{N}}\big) \big(\Phi(w)^*\otimes I_{\mathcal{N}}\big)     \big]\ell(z, w)\Big)O(w)^*. \label{anothermult}  
\end{align}
 Since $\opphi{\Phi}{X}{\mathcal{H}_{\ell}\otimes\mathcal{F}}{ \mathcal{H}_{\ell}\otimes\mathcal{G}}$ is a contractive multiplier,
 \[\big[I_{\mathcal{G}\otimes\mathcal{N}}- \big(\Phi(z)\otimes I_{\mathcal{N}}\big) \big(\Phi(w)^*\otimes I_{\mathcal{N}}\big)     \big]\ell(z, w)\succeq 0,\]
 and thus \eqref{anothermult} is positive semi-definite.  Hence $\opphi{\Psi}{X}{\mathcal{H}_{\ell}\otimes\mathcal{K}}{\mathcal{H}_{\ell}\otimes\mathcal{L}}$ is a contractive multiplier, as desired. 
\end{proof}

\section{An analytic proof of positivity of \texorpdfstring{$k$}{1/k}
in example~\ref{eg:recursivel}} \label{append-more-example}
This appendix provides an analytic proof of the positivity of
$k,$ one that avoids numerical approximations of the roots
of $Q$ and the coefficients in the partial fraction decomposition
of $H.$

To prove that $Q$ has no real zeros, observe that 
\[
  M = \begin{pmatrix} 4 & 1 & -1 \\ 1 & 2 & \frac32 \\ -1 & \frac32 & 2\end{pmatrix}
    = 4 \left [ \begin{pmatrix} 1\\ \frac14 \\ -\frac14 \end{pmatrix}
    \begin{pmatrix} 1&  \frac14 & -\frac14 \end{pmatrix}+
      \frac{7}{16} \begin{pmatrix} 0&0&0\\0&  1 & 1  \\ 0& 1 & 1 \end{pmatrix} \right ]
\]
 is positive semi-definite and hence, for $x$ real, 
\[
Q(x) = {2+}\begin{pmatrix} 1 &  x &  x^2 \end{pmatrix}  M \begin{pmatrix} 1 \\ x\\ x^2 \end{pmatrix} \ge 2.
\]
 It follows that $Q$ has two pairs of complex conjugate roots, say $\rho,\ol{\rho}$ and $\sigma, \ol{\sigma}.$
 
The real part of $Q$ for $z=e^{it}$ (on the boundary of the disk) is
\[
  h(t) = 6 + 2\cos(t)+3\cos(3t)+2\cos(4t) = 
   8-7x-16x^2+12x^3+16x^4= g(x),
\]
 where $x=\cos(t).$  Let 
\[
  M(y) =\begin{pmatrix} 11/2 & - 7/2 & -8-y \\
    -7/2 & 2y & 6\\ -8-y & 6 & 16\end{pmatrix}
\]
 The upper $2\times 2$ block is psd for $y>\frac{49}{44}$
 and  
 \[
 \det(M(y)) =1/4 (-8 y^3 - 128 y^2 + 360 y - 232)
 \]
  so that 
\[
 \det(M(5/4)) >1/2. 
\]
 Hence the polynomial,
 \[
  \frac{11}{2}-7x-16x^2+12x^3+16x^4 \ge 0
 \]
  for all $x$ and therefore $g(x)\ge 5/2$ for all $x$

 Now, the strong form of Rouché's Theorem {(the $|f(z)+g(z)|<|f(z)|+|g(z)|$ version)} applies to $f=Q$ and $g=-1$ 
 and the closed unit disk (since $f$ is never a positive multiple of $g$ 
 on the boundary) with the conclusion that  $Q$ has no zeros in the closed unit disk.

Next, for $0<s \le 1/8,$ 
 the real part of $Q((1+s) e^{it})$ is
 \[
 \begin{split}
  6+ & 2(1+s)\cos(t)+3(1+s)^3 \cos(3t)+ 2(1+s)^4\cos(4t) 
  \\ &  = h(t) +  2s\cos(t) + (3s+3s^2+s^3)\cos(3t) + (4s+6s^2+4s^3+s^4)\cos(4t)
\\  & = h(t)+F(s,t), 
\end{split}
\]
where
\[
\begin{split}
 F(s,t)= 2s\cos(t) & + (8s+9s^2+3s^3)\left(\cos(3t)+\cos(4t)\right)
  + s\cos(3t) \\ & + (3s^2+5s^3+2s^4)\cos(4t).
 \end{split}
\]
 Making the substitution $x=\cos(t),$
\[
 \cos(3t)+\cos(4t)  = 8x^4+4x^3 -8x^2-3x+1 = f(x).
\]
  The matrix
 \[
   M=\begin{pmatrix} 2.82 & -1.5 & -4.5\\
      -1.5 & 1 & 2 \\ -4.5 & 2 & 8 \end{pmatrix}
 \]
 is psd since its determinant is $.03$ and 
 the determinants of the principal minors are also positive definite. It follows that
\begin{align*}
f(x) + 1.82 &= 8x^4+4x^3 -8x^2-3x+2.82  \\
&=\begin{pmatrix}
1 & x & x^2
\end{pmatrix} M \begin{pmatrix}
1 \\ x \\ x^2
\end{pmatrix}  \\
&\ge 0
\end{align*}
 for all $x$ and hence
\[
 \cos(3t)+\cos(4t) + 1.82 \ge 0
\]
 for all $t.$
  For $t\in [-\pi/2,\pi/2],$ where $\cos(t)\ge 0,$  and $0<s \le 1/8,$
\[
  h(t) +  F(s,t) \ge 
  5/2 -(1.82)(8s+9s^2+3s^3) - (s+3s^2+5s^3+2s^4) >0,
\]
where we have used $h(t)\ge 5/2.$
  With $r(x)=f(-x)= 8x^4-4x^3-8x^2+3x+1$ and
  $\psi=r+1,$ routine calculus shows, for $0\le c \le 1,$
  that
\[
 \psi^\prime(c) \ge -25
\]
  and therefore, for $0\le x \le 1,$
\[
  \psi(x+h) \ge \psi(x)-25h.
\]
  Choosing $h=.01$ and verifying that $\psi(nh)-0.25 \ge 0$
  for $n=0,\dots,99$ shows $\psi \ge 0$ on the interval
  $[0,1].$ Hence  $f(x) >-1$ on the interval $[-1,0]$
  and consequently, for $t\in [\pi/2,3\pi/2]$ and $0<s \le 1/8,$
\[
  h(t) + F(s,t) 
  \ge 5/2 - (8s+9s^2+3s^3) - (3s+3s^2+5s^3+2s^4) >0.
\]
We conclude that the smallest modulus of a root of \(Q\) is at least
\(\frac{9}{8}\). Since every root of \(Q\) has modulus at least
\(\frac{9}{8}\), and since the product of the four roots is \(3\)
(the constant term of the monic polynomial associated with \(Q\)),
it follows that every root has modulus at most
\begin{equation}
    \label{e:max-mod-Q}
\sqrt{3}\,\frac{8}{9}  < 1.54.
\end{equation}

 Let  $z=a+ib$ denote  a root of $Q$ and  let $R(x,y)$ and $I(x,y)$
 denote  the real and imaginary parts of $Q$ respectively
 (where $z=x+iy$), then, from 
\[
\begin{aligned}
Q(z) &= 6 + 2(a+bi)
      + 3\!\left[(a^{3}-3ab^{2}) + (3a^{2}b-b^{3})\,i\right] \\
     &\quad+ 2\!\left[(a^{4}-6a^{2}b^{2}+b^{4}) + (4a^{3}b-4ab^{3})\,i \right],
\end{aligned}
\]
collecting the real and imaginary parts separately gives,
\[
\begin{aligned}
R(a,b) &=
  (6 + 2a + 3a^{3}  + 2a^{4}) + ( - 12a^{2}b^{2} - 9ab^{2} + 2b^{4})
   = Q(a) + T(a,b), \\[6pt]
I(a,b) &=
  2b + 9a^{2}b - 3b^{3} + 8a^{3}b - 8ab^{3}
\end{aligned}
\]
 and we make the following observation based on $R.$ Since
 $-12a^2 -9a = -12a(a+ \frac{3}{4})>0$ for $-3/4 < a < 0,$ 
 it follows that  $T(a,b)> 0$ for $0<a<-3/4$ and $b\neq 0.$ Hence $R(a,b)> 0,$
 for $-3/4<a<0.$  On the other hand, since the sum of the real parts of the roots of $Q$ is $-\frac32,$ 
  one of the conjugate pairs of roots has negative real parts and the other conjugate pair has
  positive real parts. Let $r_j \pm i s_j$ denote the four roots
  with $r_1 \le -3/4$ and $r_2>0.$ 

 We now determine lower bound(s) for $|b|.$ Rearranging, $I(a,b)$ gives,
\[
 b^3(3+8a)=b(2+9a^2+8a^3).
\]
 Since $Q>0$ on the real axis, $b\ne 0.$ 
 In particular, $3+8a\ne 0,$ since the right hand side is not
 zero for $a=-3/8.$ 
Solving for $b^2,$ 
\[
 b^2 =\frac{2+9a^2+8a^3}{3+8a} = r(a)
\]
In the case that $-3/8 <a,$ we have $-3/8 \le a\le 1.59,$
since the real part of a  root is at most its magnitude.

In this interval, the minimum of $r(a)$ is achieved in
$[0,1.59].$ Since, for $a\ge 0,$
\[
  2+9a^2+8a^3 -(1/2)(3+8a) = \frac12 -4a + 9a^2 +8a^3
  = (\frac{1}{\sqrt{2}} - 2\sqrt{2} a)^2 + a^2 +8 a^3 >0,
\]
 we conclude that $r(a)-\frac12 \ge 0$ and hence $|b|\ge 1/2.$

 If $a\le -3/8,$ then $-1.59 \le a \le -\frac38.$ For $a$ 
 in this interval and $b$ fixed,
\[
 T(a,b)= -9ab^2-12 a^2 b^2 + 2b^4 =(-9a-12a^2)b^2+2b^4
\]
 is minimized at $a=-1.59$ with value
 \[
   T_{\text{min}} = -16.0272 b^2+2b^4.
 \]
 It follows that $T(a,b)>-2$ for $|b|\le .35.$
Summarizing. Either $a>0$ or $a<-3/4.$  Further, if $a>0,$ then $|b|>1/2$ and if $a<-3/4,$ then $|b|>.35.$

We now seek lower bounds on $|a|.$  For $0 \le a\le \frac{9}{17},$ 
\[
 I(a,1) = 2+9a^2+8a^3 -3-8a=-1 -8a + 9a^2+8a^3 \le -1-8a+17a^2 = r(a)
\]
 and the roots of $r(a)$ are $\frac{1}{17} ( 4\pm \sqrt{33}).$
 Since
 \[
 \frac{1}{17}(4+\sqrt{33}) > \frac{9}{17} > \frac{1}{17}(4-\sqrt{33}),
 \]
  it follows  that
\[
 I(a,1)\le  r(a)<0 
\]
  for $0\le a\le \frac{9}{17}.$ 
On the other hand, for $0\le a\le \frac{9}{17}$ and $b\ge 1,$
\[
\begin{split}
 \frac{\partial I}{\partial b}(a,b)  & =(2+9a^2+8a^3) - (9+24a)b^2
 \\ &  < -7 -24a + 9a^2 + 8 a^3 < -7 + 9 (9/17)^2 + 8(9/17)^3  < 0. 
\end{split}
\]
 We conclude that $I(a,b)< 0$ for $0\le a\le \frac{9}{17}$ and $b\ge 1.$ 
 Consequently, if $z=a+bi$ is a root and $a,b>0,$ then  $b<1.$ From
 $a^2+b^2 \ge (12/11)^2,$ we conclude that $a\ge \sqrt{23}/{11}.$
 Hence, the real part of the roots of $Q$ with positive real part is at least $\sqrt{23}/{11}$.

  Summarizing, for the  roots $r_1\pm i s_1$  we have $r_1 \le -3/4$
  and $|s_1| \ge 1/3;$ 
  and for $r_2\pm i s_2,$ we have 
  $r_2\ge  \sqrt{23}/{11}$ and $|s_2|\ge 1/2.$ 

 Now, for any three roots $w_1,w_2,w_3$ of $Q,$  
\[
|w_1-w_2|\, |w_2-w_3|\, |w_3-w_1| \ge  
  \Big(\frac{2}{3}\Big)\Big(\frac34+\cfrac{\sqrt{23}}{11}\Big) 
   \left (\Big(\frac{5}{6}\Big)^2 + \Big(\frac34+\frac{\sqrt{23}}{11}\Big)^2\right)^\frac12
   \ge 1.4
\]
  Moreover, letting $w_4$ denote the remaining root, 
  from the inequality of equation~\eqref{e:max-mod-Q},
\[
 3= |w_1w_2w_3|\, |w_4|  \le |w_1 w_2 w_3| \sqrt{3}\frac{8}{9},
\]
from which it follows that 
\[
|w_1w_2w_3|\ge \sqrt{3}(9/8).
\]
Thus the  coefficient $A$ corresponding to the root $w_4$
 in the partial fraction decomposition of $H$ satisfies
\[
 |A| = \bigg|\frac{N(r_4)}{13 w_1 w_2 w_3 (w_1-w_2)(w_2-w_3)(w_3-w_1)}\bigg| 
 \le \frac{8\cdot N(1.54)}{13\cdot 9 \cdot(1.4) \sqrt{3}}
 \le 1.005.
\]

  Since the same inequality holds for each of the other
 three terms in the partial fraction decomposition of $H,$
 letting $\rho$ and $\sigma$ denote the magnitudes
 of the roots of $Q$ with positive and negative real
 parts respectively and using $\rho \sigma=\sqrt{3},$ 
 \[
 |b_n| \le 2 \cdot (1.005)[ \rho^{-n} + (\frac{\rho}{\sqrt{3}})^n. ]
 \]
  Maximizing over $\frac{9}{8} \le \rho \le \sqrt{3}\frac{11}{12}$
  and using the maximum occurs at the endpoints and 
  the estimate for $|b_n|$ decreases with $n,$
\[
 |b_n| \le  (2.01) [ (8/9)^n + (9/(8 \sqrt{3})^n) ]
 <1/13
\]
  for $n\ge 28.$  Now one checks that $-1/13 < b_n$
  for 
  $n\le 27.$  See the power series expansion for $H(x)$ below.

\bigskip

\begin{equation*}
\begin{split}
0.0897436 & + 0.034188 x + 0.0527066 x^2 - 0.0367996 x^3 - 0.034742 x^4 - 0.0261686 x^5  
\\  &+ 0.00955383 x^6 + 0.0264529 x^7 + 0.0158473 x^8 - 0.00133649 x^9
\\ & - 0.0159656 x^{10}
 - 0.0114195 x^{11} - 0.000807717 x^{12} + 0.00869753 x^{13}
 \\ &+ 0.00813241 x^{14} + 0.00149954 x^{15}
 - 0.00457937 x^{16} - 0.00543892 x^{17}
 \\ & - 0.0016476 x^{18} + 0.00233904 x^{19} + 0.00346624 x^{20} + 0.00148136 x^{21}
 \\ & - 0.00111411 x^{22} - 0.00214143 x^{23} - 0.00118228 x^{24}
 + 0.000457361 x^{25}
 \\  & + 0.00128963 x^{26} + 0.000875075 x^{27} +O(x^{28}).
 \end{split}
 \end{equation*}

 \newpage

 \printindex
 \end{document}